\documentclass[12pt,a4paper]{amsart}
\usepackage{amssymb}
\usepackage{amsfonts}
\usepackage{amsmath}
\usepackage[mathscr]{eucal}

\usepackage{color}

\setlength{\oddsidemargin}{0cm}
\setlength{\evensidemargin}{0cm}
\setlength{\textwidth}{16cm}

\newtheorem{thm}{Theorem}[section]
\newtheorem{prop}[thm]{Proposition}
\newtheorem{lem}[thm]{Lemma}
\newtheorem{cor}[thm]{Corollary}

\numberwithin{equation}{section}

\def\N{{\Bbb N}}
\def\Z{{\Bbb Z}}
\def\Q{{\Bbb Q}}
\def\R{{\Bbb R}}
\def\C{{\Bbb C}}
\def\A{{\Bbb A}}
\def\bT{{\Bbb T}}

\def\emp{\varnothing}

\def\fa{{\frak a}}
\def\fb{{\frak b}}
\def\fd{{\frak d}}

\def\ff{{\frak f}}

\def\fn{{\frak n}}
\def\fo{{\frak o}}
\def\fp{{\frak p}}

\def\ft{{\frak t}}

\def\fS{{\frak S}}

\def\fF{{\frak F}}

\def\fE{{\frak E}}

\def\fJ{{\frak J}}

\def\fc{{\frak c}}
\def\fW{{\frak W}}
\def\fX{{\frak X}}
\def\fC{{\frak C}}
\def\fQ{{\frak Q}}
\def\fP{{\frak P}}


\def\cO{\frak o}

\def\cB{{\mathcal B}}

\def\cE{{\mathcal E}}

\def\cQ{{\mathcal Q}}

\def\cW{{\mathcal W}}
\def\cU{{\mathcal U}}

\def\GL{{\operatorname {GL}}}

\def\SL{{\operatorname{SL}}}
\def\SO{{\operatorname{SO}}}

\def\PGL{{\operatorname{PGL}}}

\def\Re{{\operatorname {Re}}}
\def\Im{{\operatorname {Im}}}
\def\tr{{\operatorname{tr}}}
\def\nr{{\operatorname{N}}}
\def\Mat{{\operatorname{M}}}

\def\sgn{{\operatorname {sgn}}}
\def\Ad{{\operatorname{Ad}}} 
\def\vol{{\operatorname{vol}}}

\def\ch{{\cosh\,}}
\def\sh{{\sinh\,}}

\def\leq{\leqslant}
\def\geq{\geqslant}
\def\bsl{\backslash}
\def\le{\leq}
\def\ge{\geq}

\def\d {{{d}}}

\def\JJ{{\Bbb J}}
\def\bs{{\bold s}}
\def\bK{{\bold K}}

\def\1{{\bold 1}}


\renewcommand{\a}{\alpha}
\renewcommand{\b}{\beta}

\newcommand{\e}{\epsilon}

\renewcommand{\l}{\lambda}
\renewcommand{\k}{\kappa}

\newcommand{\s}{\sigma}

\newcommand{\ga}{{\mathfrak{a}}}

\newcommand{\gf}{{\mathfrak{f}}}

\newcommand{\gn}{{\mathfrak{n}}}
\newcommand{\go}{{\mathfrak{o}}}
\newcommand{\gp}{{\mathfrak{p}}}


\newcommand{\Acal}{{\mathcal A}}
\newcommand{\Bcal}{{\mathcal B}}

\newcommand{\Ecal}{{\mathcal E}}

\newcommand{\Gcal}{{\mathcal G}}

\newcommand{\Ical}{{\mathcal I}}

\newcommand{\Ncal}{{\mathcal N}}
\newcommand{\Ocal}{{\mathcal O}}
\newcommand{\Pcal}{{\mathcal P}}
\newcommand{\Qcal}{{\mathcal Q}}

\newcommand{\Scal}{{\mathcal S}}
\newcommand{\Tcal}{{\mathcal T}}
\newcommand{\Ucal}{{\mathcal U}}

\newcommand{\Wcal}{{\mathcal W}}

\renewcommand{\AA}{\mathbb{A}}

\newcommand{\CC}{\mathbb{C}}

\newcommand{\II}{\mathbb{I}}

\newcommand{\NN}{\mathbb{N}}
\newcommand{\PP}{\mathbb{P}}
\newcommand{\QQ}{\mathbb{Q}}
\newcommand{\RR}{\mathbb{R}}

\newcommand{\ZZ}{\mathbb{Z}}


\newcommand{\bfc}{{\mathbf c}}

\newcommand{\bfs}{{\mathbf s}}

\newcommand{\bfx}{{\mathbf x}}

\newcommand{\bfB}{{\mathbf B}}

\newcommand{\bfK}{{\mathbf K}}



\newcommand{\ord}{\operatorname{ord}}

\newcommand{\fin}{{\rm fin}}
\renewcommand{\Re}{\operatorname{Re}}

\newcommand{\Res}{\operatorname{Res}}

\def\tint{\textstyle{\int}}
\def\ro{\varrho}

\title{An explicit trace formula of Jacquet-Zagier type for Hilbert modular forms}

\author{Shingo Sugiyama}
\author{Masao Tsuzuki}

\pagestyle{plain}

\subjclass[2010]{Primary 11F72; Secondary 11F67.}
\keywords{trace formulas, adjoint $L$-functions.}
\begin{document}

	\maketitle
\begin{abstract}
	We give an exact formula of the average of adjoint $L$-functions of holomorphic Hilbert cusp forms with a fixed weight and a square-free level, which is a generalization of Zagier's formula known for the case of elliptic cusp forms on $\SL_2(\Z)$. As an application, we prove that the Satake parameters of Hilbert cusp forms with a fixed weight and with growing square-free levels are equidistributed in an ensemble constructed by values of the adjoint $L$-functions.
\end{abstract}
	\setcounter{tocdepth}{1}

\section{Introduction}

\subsection{Background and motivation} \label{Background and motivation}
In \cite{Zagier}, Zagier proposed an elegant way to compute the traces of Hecke operators on the space of elliptic cusp forms by means of the Rankin-Selberg method. This is a more direct way than the Selberg's one because it does not need a deliberate rearrangement for renormalization of divergent terms which are inevitably produced by truncation process. Later, a similar study was conducted for Maass forms on $\SL_2(\Z)$ in \cite{Zagier2}, and for general cusp forms on the adelization $\GL_2(\A_F)$ with an arbitrary number field $F$ in the work of Jacquet and Zagier \cite{JacquetZagier}.
The formula proved in \cite{JacquetZagier} can be viewed as an ``arithmetic deformation'' by a complex parameter $z$ of the usual Arthur-Selberg trace formula for $\GL(2)$ because the latter one is expected to be recovered from the former one as the residue at $z=1$. Although their general formula is less explicit than Zagier's one, it provides us with a different proof of holomorphicity of the symmetric square $L$-function of a cuspidal representation of $\GL_2(\A_F)$, which was first proved by Shimura \cite{Shimura} in a classical setting and later was generalized to an adelic setting by \cite{GelbartJacquet2}. The paper \cite{Zagier} also gave an application of the formula to the proof of the algebraicity of the critical values of the symmetric square $L$-functions for elliptic cusp forms, which  was independently obtained by Strum \cite{Strum} based on \cite{Shimura}.  Mizumoto \cite{Mizumoto} and Takase \cite{Takase} extended Zagier's method to Hilbert cusp forms under the assumption that the narrow class number of the base field is one. For application to special values, the explicit nature of their formula is crucial. 

In this paper, motivated by these works and intending further potential applications, we shall calculate Jacquet-Zagier's trace formula for holomorphic Hilbert cusp forms as explicitly as possible when the level is square-free without the assumption on the class number of the base field $F$. For a technical reason, we assume that the prime $2$ splits completely in $F$.
We remark that this assumption is mild enough
to include the interesting cases $F=\QQ$ and $F=\QQ(\sqrt{D})$ with $D>0$ and $D\equiv 1 (\text{mod } 8)$, and our formulas for Hilbert modular forms with large levels are new even for $F=\QQ$.
Since we use the matrix coefficients of discrete series representations at archimedean places, which are not compactly supported contrary to the test functions dealt in \cite{JacquetZagier}, we have to modify the convergence proof of the geometric side in a substantial way. Moreover, we completely calculate all local terms in the formula for a large class of test functions. As an application, we prove an equidistribution result of Satake parameters in the ensemble defined by the symmetric square $L$-functions $L(z,\pi;\Ad)$ of holomorphic Hilbert cusp forms $\pi$ of a fixed weight with the varying square-free levels, in such a way that a part of the famous Serre's equidistribution theorem (\cite[Th\'{e}or\`{e}me 1]{Serre}) is recovered from our formula by the specialization at $z=1$. The non-vanishing of the symmetric square $L$-function of an elliptic modular form at a point in the critical strip has been pursued by many authors (\cite{Khan}, \cite{Kohnen-Seng}, \cite{Blomer}). As a corollary to our asymptotic formula, we obtain infinitely many Hilbert cusp forms with a fixed weight of growing levels whose symmetric square $L$-functions are non-vanishing at a given point in the critical strip. 
Our method is not the Rankin-Selberg method in the accurate sence, because the Eisenstein series is not unfolded on the convergence region as was done in \cite{Zagier} and \cite{JacquetZagier}; actually, the same proof works if the Eisenstein series is replaced with a Maass cusp form. To illustrate the robustness of our method, we deduce Theorem~\ref{MAINTHM2} which is viewed as an adelic version of Gon's formula in an opposite setting to \cite{Gon}.

\subsection{Description of main results} \label{DMR}
Let us state our main result introducing notation used in this article on the way. Let $\NN$ be the set of all positive integers and we write $\NN_{0}$ for $\NN \cup \{0\}$. For any condition $P$, we put $\delta(P)=1$ if $P$ is true, and
$\delta(P)=0$ if $P$ is false, respectively. We set $\Gamma_\R(s)=\pi^{-s/2}\Gamma(s/2)$ and $\Gamma_{\C}(s)=2(2\pi)^{-s}\Gamma(s)$. All the fractional ideals appearing in this paper are supposed to be invertible. 

Let $F$ be a totally real number field,
$\cO$ the integer ring of $F$ and $\A$ the adele ring of $F$. The completed Dedekind zeta function of $F$ is denoted by $\zeta_F(s)$. (All $L$-functions in this article are supposed to be completed by appropriate gamma factors.)
The set of finite places and the set of archimedean places of $F$ are denoted by $\Sigma_\fin$ and $\Sigma_\infty$, respectively. Set $\Sigma_F=\Sigma_\fin \cup \Sigma_\infty$. The completion of $F$ at $v\in \Sigma_F$ is denoted by $F_v$ and the modulus of $F_v$ is by $|\,|_v$.
Then, $|\,|_\A = \prod_{v \in \Sigma_F}|\,|_v$ defines the idele norm on $\AA^\times$.
When $v\in \Sigma_\fin$, $\cO_v$, $\fp_v$ and $q_v$ denote the maximal order of $F_v$, the maximal ideal of $\cO_v$ and $\#(\go_v/\gp_v)$, respectively.
For a non-zero ideal $\ga \subset \go$, let $\nr(\ga)=\prod_{v \in \Sigma_\fin}q_v^{\ord_v(\ga)}$ denote the absolute norm of $\ga$ and $S(\ga)$ the set of $v \in \Sigma_\fin$ dividing $\ga$.
Let $D_F$ be the absolute discriminant of $F/\Q$ and $\psi=\otimes_{v\in \Sigma_F}\psi_v$ the character of $\A/F$ defined as $\psi_{\Q}\circ \tr_{F/\Q}$ where $\psi_\Q$ is the character of $\A_\Q/ \QQ$ such that $\psi_\Q(x)=e^{2\pi i x}$ for $x\in \R$.
Let $B$, $H$ and $N$ be $F$-subgroups of $G=\GL(2)$ defined symbolically as $B=\{\left[\begin{smallmatrix} * & * \\ 0 & * \end{smallmatrix}\right]\}$, $H=\{\left[\begin{smallmatrix} * & 0 \\ 0 & * \end{smallmatrix}\right]\}$ and $N=\{\left[\begin{smallmatrix} 1 & * \\ 0 & 1 \end{smallmatrix}\right]\}$; $B=HN$ is a Borel subgroup of $G$. Let $Z$ be the center of $G$. For $v\in \Sigma_F$, let $\bK_v$ denote $G(\cO_v)$ if $v\in \Sigma_\fin$ and ${\rm O}(2)$ if $v\in \Sigma_\infty$. Set $\bK=\prod_{v\in \Sigma_F}\bK_v$ viewed as a subgroup of the adelization $G_\A =\prod_{v \in \Sigma_F}' G_v$. Given a non-zero ideal $\fn\subset \cO$ and an even weight $l=(l_{v})_{v\in \Sigma_\infty} \in (2\N)^{\Sigma_{\infty}}$, let $\Pi_{\rm{cus}}(l,\fn)$ be the set of all those irreducible cuspidal automorphic representations $\pi \cong \otimes_v'\pi_v$ of $Z_\A\bsl G_\A$ such that $\pi_v$ is a discrete series representation of $Z_v\bsl G_v$ of weight $l_v$ for all $v\in \Sigma_\infty$ and $\pi_v$ has a non-zero vector invariant by the group $\bK_0(\fn\cO_v)=\{\left[\begin{smallmatrix} a & b \\ c & d \end{smallmatrix}\right]\in \bK_v|\,c\in \fn\cO_v\,\}$ for all $v\in \Sigma_\fin$. Let $v\in \Sigma_\fin$ for a while. Set $\fX_v=\C/4\pi i (\log q_v)^{-1}\Z$. For a $\bK_v$-spherical irreducible representation $\pi_v$ of $G_v$ with trivial central character, we define 
\begin{align}
	Q(\pi_v)=x_v(\pi)/(q_v^{1/2}+q_v^{-1/2}), \quad x_v(\pi)=a_v+a_v^{-1},\quad  a_v=q_v^{-\nu_v(\pi)/2}
	\label{Qpi}
\end{align}
with $(a_v,a_v^{-1})$ the Satake parameter of $\pi_v$; note that the exponent $\nu_v(\pi)\in \fX_v$ is determined only up to sign. Let $I(|\,|_v^{s})$ denote the normalized induced representation ${\rm Ind}_{B_v}^{G_v}(|\,|_v^{s}\boxtimes |\,|_v^{-s})$. Then $\pi_v \cong I(|\,|_v^{\nu_v(\pi)/2})$ and $Q(\pi_v)\in \R$ if $\pi_v$ is generic and unitarizable. For $\delta \in F_v^\times$, let $\varepsilon_\delta$ denote the real-valued character of $F_v^\times$ corresponding to the extension $F_v(\sqrt{\delta})/F_v$ by local class field theory; it depends only on the coset $\delta(F_v^{\times})^2$. For $z\in \C$ and $s\in \C$ (such that $\Re(s)>(|\Re(z)|-1)/2$), we define complex-valued functions $\Ocal_{v,\e}^{\delta,(z)}$ ($\e=0,1$) and $\Scal_{v}^{\delta,(z)}$ on $F_v^\times$ as 
{\allowdisplaybreaks
	\begin{align*}
		\Ocal_{v,\e}^{\delta,(z)}(a)&=\frac{\zeta_{F_v}(-z)}{L_{F_v}\left(\tfrac{-z+1}{2},\varepsilon_{\delta}\right)}\biggl(\frac{1+q_v^{\frac{z+1}{2}}}{1+q_v}\biggr)^{\e}
		|a|_v^{\frac{-z+1}{4}}
		+\frac{\zeta_{F_v}(z)}{L_{F_v}\left(\tfrac{z+1}{2},\varepsilon_{\delta}\right)}
		\biggl(\frac{1+q_v^{\frac{-z+1}{2}}}{1+q_v}\biggr)^{\e}
		|a|_v^{\frac{z+1}{4}}
\end{align*}}
and
{\allowdisplaybreaks\begin{align}
		\Scal_{v}^{\delta,(z)}(s;a)
		&=-q_v^{-\frac{s+1}{2}}\frac{\zeta_{F_v}\left(s+\frac{z+1}{2}\right)\zeta_{F_v}\left(s+\tfrac{-z+1}{2}\right)}{L_{F_v}(s+1,\varepsilon_{\delta})}
		|a|_v^{\frac{s+1}{2}}, \quad (|a|_v\leq 1), 
		\label{Scaldeltazsa-f1}
		\\
		\Scal_v^{\delta,(z)}(s;a)
		&=-q_v^{-\frac{s+1}{2}}
		\left\{\frac{\zeta_{F_v}(-z)\zeta_{F_v}\left(s+\tfrac{z+1}{2}\right)}{L_{F_v}\left(\tfrac{-z+1}{2},\varepsilon_{\delta}\right)}\left|a\right|_v^{\frac{-z+1}{4}}
		+\frac{\zeta_{F_v}(z)\zeta_{F_v}\left(s+\tfrac{-z+1}{2}\right)}{L_{F_v}\left(\tfrac{z+1}{2},\varepsilon_{\delta}\right)}\left|a\right|_v^{\frac{z+1}{4}}\right\}, \quad (|a|_v>1).
		\notag
\end{align}}We remark that, when viewed as meromorphic functions in $z$, the singularities of these functions at $q_v^{z/2}=1$ are removable. A computation reveals{\allowdisplaybreaks\begin{align}
		\Ocal_{v,0}^{\delta,(z)}(a)=1, \quad
		\Ocal_{v,1}^{\delta,(z)}(a)=\tfrac{1}{1+q_v}\begin{cases}
			2 \quad (\text{$v$ splits in $F(\sqrt{\delta})/F$}), \\
			1 \quad (\text{$v$ ramifies in $F(\sqrt{\delta})/F$}),\\
			0  \quad (\text{$v$ remains prime in $F(\sqrt{\delta})/F$})
		\end{cases}
		\quad \text{
			for all $a\in \cO_v^\times$.}
		\label{Ovanish} 
	\end{align}
}
For a finite subset $S\subset \Sigma_\fin$, a square-free ideal $\fn$ such that $S\cap S(\fn)=\emp$, an element $\Delta\in F^\times$, and a non-zero fractional ideal $\fa\subset F$, the product
\begin{align*}
&{\bf B}_{\fn}^{(z)}(\bfs|\Delta;\fa)=\prod_{v\in \Sigma_\fin-(S\cup S(\fn))}\Ocal_{0,v}^{\Delta,(z)}(a_v)\prod_{v\in S(\fn)}\Ocal_{1,v}^{\Delta,(z)}(a_v)\prod_{v\in S}
\Scal_v^{\Delta,(z)}(s_v,a_v), \qquad \\
&\quad (\ \bfs \in \fX_S, \quad \min_{v \in S}\Re(s_v)>(|\Re(z)|-1)/2\ )
\end{align*}
is well-defined due to \eqref{Ovanish}, where $(a_v)\in \A_{\fin}^\times$ is the idele corresponding to $\fa$ and $\fX_S=\prod_{v \in S}\fX_v$.
Let $v\in \Sigma_\infty$. We define a complex-valued function $\Ocal_{v}^{\pm ,(z)}$ on $F_v^\times$ as
{\allowdisplaybreaks
	\begin{align*}
		\Ocal_v^{+,(z)}(a)&=\frac{2\pi}{\Gamma(l_v)}\frac{\Gamma\left(l_v+\tfrac{z-1}{2} \right) \Gamma\left(l_v+\tfrac{-z-1}{2} \right)}{\Gamma_\R\left(\tfrac{1+z}{2}\right)\Gamma_\R\left(\tfrac{1-z}{2}\right)}\delta(|a|>1)(a^{2}-1)^{1/2}\fP^{1-l_v}_{\frac{z-1}{2}}(|a|), \quad \\
		\Ocal_v^{-,(z)}(a)&=\frac{\pi i }{\Gamma(l_v)} \Gamma\left(l_v+\tfrac{z-1}{2} \right) \Gamma\left(l_v+\tfrac{-z-1}{2} \right)\,{\rm sgn}(a)\, (1+a^{2})^{1/2}\{\fP^{1-l_v}_{\frac{z-1}{2}}(ia)-\fP^{1-l_v}_{\frac{z-1}{2}}(-ia)\},
\end{align*}}where $\fP_{\nu}^{\mu}(x)$ is the Legendre function of the 1st kind which is defined for points $x\in \C$ outside the interval $(-\infty,+1]$ of the real axis (\cite[\S 4.1]{MOS}).    

Suppose $\fn$ is square-free from now on. For $\pi\cong \otimes_{v}\pi_v\in \Pi_{\rm cus}(l,\fn)$ and a finite set $S\subset \Sigma_\fin$ disjoint from $S(\fn)$, set $\nu_S(\pi)=\{\nu_v(\pi)\}_{v\in S}$, an element of $\fX_S/\{\pm 1\}^{S}=\prod_{v\in S}(\fX_v/\{\pm 1\})$. We set 
\begin{align*}
	W^{(z)}_{\fn}(\pi)&=
	\nr(\fn\ff_{\pi}^{-1})^{(1-z)/2}
	\prod_{v\in S(\fn\ff_\pi^{-1})}
	\biggl\{1+\frac{Q(I_v(|\,|_v^{z/2}))-Q(\pi_v)^2}{1-Q(\pi_v)^2}\biggr\},
\end{align*}
where $\ff_{\pi}$ denotes the conductor of $\pi$. Since $-1<Q(\pi_v)<1$ by the unitarity of $\pi_v$, we have $W^{(z)}_\fn(\pi)\geq 0$ when $z$ is a non-negative real number. Let $L(s,\pi;\Ad)$ be the symmetric square $L$-function of $\pi\in \Pi_{\rm cus}(l,\fn)$, which on $\Re(s)>1$ is defined by the Euler product of $(1-a_v^2q_v^{-s})^{-1}(1-q_v^{-s})^{-1}(1-a_v^{-2}q_v^{-s})^{-1}$ over $v\in \Sigma_\fin-S(\ff_\pi)$ completed by the gamma factor $\Gamma_\R(s+1)\Gamma_{\C}(s+l_v-1)$ over $v\in \Sigma_\infty$ and by finitely many factors $(1-q_v^{-s-1})^{-1}$ over $v\in S(\ff_\pi)$; it is known to be entire on $\C$ and is identified with the standard $L$-function of an automorphic representation $\Ad(\pi)$ of $\GL(3,\A)$ (\cite{GelbartJacquet2}). We have the functional equation $L(s,\pi; \Ad)=D_F^{3(1/2-s)}\,\nr(\gf_{\pi})^{2(1/2-s)}\,L(1-s,\pi;\Ad)$. In particular, the sign of the functional equation is plus.
Let $\hat I_{\rm{cusp}}^0(\bfs, z)$ be a meromorphic function on $\fX_S\times \C$ defined as
{\allowdisplaybreaks \begin{align}\label{Icusp0sz}
		& \hat I_{\rm cusp}^0(\bfs, z) \\
		=&C(l,\fn)\sum_{\pi \in \Pi_{\rm{cus}}(l,\fn)}
		\frac{2^{-1}D_F^{z-1/2}\nr(\gn)^{(z-1)/2}W_{\gn}^{(z)}(\pi)}{\prod_{v\in S} \{(q_v^{(1+\nu_v(\pi))/2}+q_v^{(1-\nu_v(\pi))/2})-(q_v^{(1+s_v)/2}+q_v^{(1-s_v)/2})\}}
		\frac{L(\frac{z+1}{2},\pi;\Ad)}{L(1,\pi;\Ad)},\notag 
\end{align}
}where
\begin{align}
	C(l,\fn)=D_F^{-1}\,\{\prod_{v\in \Sigma_\infty}\tfrac{4\pi}{l_v-1}\}\,\{\prod_{v\in S(\fn)}(1+q_v)^{-1}\}.
	\label{SPEC-const}
\end{align}
For $\Delta \in F^\times-(F^\times)^2$, let $L(s,\varepsilon_{\Delta})$ be the completed $L$-function of the class field character $\varepsilon_{\Delta}$ of the quadratic extension $F(\sqrt{\Delta})/F$, $\fd_{\Delta}$ the relative discriminant of $F(\sqrt{\Delta})/F$, and $\ff_\Delta$ a fractional ideal uniquely determined by $\Delta\cO=\fd_{\Delta}\ff_{\Delta}^{2}$. 

\begin{thm}\label{thm0} Let $l=(l_v)_{v\in \Sigma_\infty}\in (2\N)^{\Sigma_\infty}$ with $l_v\geq 4$ for all $v\in \Sigma_\infty$ and $\fn$ a non-zero square-free ideal of $\cO$. Let $S$ be a finite subset of $\Sigma_\fin$ such that $S\cap S(\fn)=\emp$. We suppose $2$ splits completely in $F/\Q$ and $|2|_v=1$ for all $v\in S\cup S(\fn)$.
	Then, for $\bfs \in \fX_S$ and $z\in \CC$ such that
	$\min_{v \in S}\Re(s_v) > 2\min_{v\in \Sigma_{\infty}}l_v-1$ and $|\Re(z)|<\min_{v \in \Sigma_{\infty}}l_v-3$, we have the identity
	\begin{align*}
		\hat I^0_{\rm cusp}(\bfs,z)=&D_{F}^{\frac{z}{4}}\{\hat J_{\rm unip}^{0}(\bfs,z)+\hat J_{\rm unip}^{0}(\bfs,-z)\}+\hat J_{\rm hyp}^0(\bfs,z)+\hat J_{\rm ell}^0(\bfs,z),
	\end{align*}
	where the terms on the right-hand side are described as follows. The first term is defined as 
	\begin{align*}
		\hat J_{\rm unip}^0(\bfs,z)&=D_{F}^{-\frac{z+2}{4}}\zeta_F(-z)\,
		\prod_{v \in S}
		\frac{-q_v^{-{(s_v+1)}/{2}}}{1-q_v^{-s_v-(z+1)/{2}}}
		\, 
		\prod_{v \in S(\gn)}\frac{1+q_v^{\frac{z+1}{2}}}{1+q_v}\,
		\prod_{v \in \Sigma_{\infty}}2^{1-z}\pi^{\frac{3-z}{4}}
		\frac{\Gamma\left(l_v+\frac{z-1}{2}\right)}{\Gamma\left(\tfrac{z+1}{4}\right)\Gamma(l_v)}.
	\end{align*}
	The second is given by the absolutely convergent sum
	\begin{align*}
		\hat J_{\rm hyp}^0(\bfs,z)&=\tfrac{1}{2}D_F^{-1/2}\zeta_{F}\left(\tfrac{1-z}{2}\right)\,\sum_{a\in \cO(S)_{+}^\times-\{1\}} {\bf B}_{\fn}^{(z)}(\bfs|1;a(a-1)^{-2}\cO)\,\prod_{v\in \Sigma_\infty}\Ocal_{v}^{+,(z)}((a+1)/(a-1)),
	\end{align*}
	where $\cO(S)_{+}^\times$ is the totally positive units of the $S$-integer ring of $F$. The third term is given by the absolutely convergent sum  
	\begin{align*}
		\hat J_{\rm ell}^0(\bfs,z)&=\tfrac{1}{2}D_F^{\frac{z-1}{2}} \sum_{(t:n)_F} \nr(\fd_{\Delta})^{\frac{z+1}{4}}L\left(\tfrac{z+1}{2},\varepsilon_{\Delta}\right)\,{\bf B}_{\fn}^{(z)}(\bfs|\Delta;n\ff_{\Delta}^{-2})\,\prod_{v\in \Sigma_\infty}\Ocal_v^{\sgn(\Delta^{(v)}),(z)}(t|\Delta|_v^{-1/2}),
	\end{align*}
	where $(t:n)_F$ runs over different cosets $\{(ct,c^2n)\in F^2|\,c\in F^\times\}$ such that $\Delta=t^2-4n\in F^\times-(F^\times)^2$, $(t,n) \in \{(c_v t_v,c_v^2n_v)|\,c_v\in F_v^\times,\,t_v\in \cO_v,\,n_v\in \cO_v^\times\}$ for all $v\in \Sigma_\fin-S$, and
	$\ord_v(n\gf_\Delta^{-2})<0$ for all $v \in S(\fn)$ with $\varepsilon_{\Delta,v}$ being unramified and non-trivial.
\end{thm}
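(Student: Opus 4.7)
The plan is to derive the identity as the spectral and geometric expansions of a regularized kernel of Jacquet--Zagier type. I would take $\Phi = \otimes_v \Phi_v$ on $G_\A$ with trivial central character, tailored so that its local orbital integrals reproduce the functions $\Ocal$ and $\Scal$. At $v \in \Sigma_\infty$, $\Phi_v$ is a normalized matrix coefficient of the weight-$l_v$ holomorphic discrete series, whose orbital integrals along split and elliptic tori are classical computations involving the Legendre function $\fP^{1-l_v}_{(z-1)/2}$ and evaluate to $\Ocal_v^{+,(z)}$ and $\Ocal_v^{-,(z)}$; at $v \in S(\fn)$, $\Phi_v$ is adapted to the $\bK_0(\fn\cO_v)$-invariance structure, yielding $\Ocal_{v,1}^{\delta,(z)}$; at $v \in S$, $\Phi_v$ is a Green-type function whose torus orbital integral realizes $\Scal_v^{\delta,(z)}(s_v;\cdot)$; and elsewhere $\Phi_v = \mathbf{1}_{G(\cO_v)}$, giving $\Ocal_{v,0}^{\delta,(z)} = 1$. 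The standing assumptions that $2$ splits completely in $F/\QQ$ and $|2|_v = 1$ on $S \cup S(\fn)$ keep these local calculations in the generic regime, avoiding exceptional dyadic corrections.

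I would then form the regularized kernel
\[
K_\Phi^z(g) = \sum_{\gamma \in Z(F)\bsl G(F)} \Phi(g^{-1}\gamma g) \,-\, (\text{Eisenstein subtraction of parameter }z),
\]
and integrate against a parameter-$z$ Eisenstein series over a fundamental domain for $Z_\A G(F)\bsl G_\A$. Spectrally, decomposing $K_\Phi^z$ into cuspidal, residual, and continuous parts, the $z$-dependence of the Eisenstein subtraction is chosen precisely to cancel the non-cuspidal pieces, and the cuspidal contribution of each $\pi \in \Pi_{\rm cus}(l,\fn)$ factors as a product of local spherical/Hecke transforms of $\Phi_v$ on $\pi_v$ (producing the denominator at $v \in S$ and the oldform factor $W_\fn^{(z)}(\pi)$ at $v \in S(\fn)$) times the Rankin--Selberg period, which by $L(s,\pi\times\tilde\pi) = \zeta_F(s)\,L(s,\pi;\Ad)$ yields $L(\tfrac{z+1}{2},\pi;\Ad)/L(1,\pi;\Ad)$. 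Summing over $\pi$ reconstructs $\hat I_{\rm cusp}^0(\bfs,z)$.

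Geometrically, I would partition $Z(F)\bsl G(F)$ into unipotent, split-hyperbolic, and elliptic $F$-rational conjugacy classes and factorize each orbital sum into local orbital integrals of $\Phi_v$. The unipotent classes contribute $D_F^{z/4}\{\hat J_{\rm unip}^0(\bfs,z) + \hat J_{\rm unip}^0(\bfs,-z)\}$ via a Tate-type computation whose two summands correspond to the two Bruhat cells (equivalently, the $z \leftrightarrow -z$ symmetry of the Eisenstein constant term). Split classes are represented by $\diag(a,1)$ with $a \in F^\times - \{1\}$ modulo the stabilizer action of $\cO(S)_+^\times$; their orbital integrals reassemble into $\hat J_{\rm hyp}^0$, with archimedean factors $\Ocal_v^{+,(z)}((a+1)/(a-1))$ and finite-place factors evaluated at the split invariant $a(a-1)^{-2}$. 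Elliptic classes parametrized by $(t:n)_F$ with non-square $\Delta = t^2-4n$ yield $\hat J_{\rm ell}^0$; the finite-place orbital integrals along the non-split torus produce inverse Euler factors of $L(\tfrac{z+1}{2},\varepsilon_\Delta)$ which, via the factorization $\Delta\cO = \fd_\Delta\ff_\Delta^2$, assemble globally into the $L$-value prefactor $\nr(\fd_\Delta)^{(z+1)/4}L(\tfrac{z+1}{2},\varepsilon_\Delta)$.

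The main obstacle is justifying absolute convergence of both sides in the stated region $|\Re(z)| < \min_v l_v - 3$, $\min_{v\in S}\Re(s_v) > 2\min_v l_v - 1$. Because the archimedean $\Phi_v$ is only of moderate decay --- not compactly supported, in contrast to \cite{JacquetZagier} --- the geometric convergence argument of that paper does not transfer directly, as emphasized in the introduction. I would control the archimedean orbital integrals using asymptotic estimates of the Legendre function $\fP^{1-l_v}_{(z-1)/2}$ both near its branch point at $1$ and at infinity; combined with Dirichlet's unit theorem to count the hyperbolic sum over $\cO(S)_+^\times \setminus \{1\}$, and convex bounds for $L(\tfrac{z+1}{2},\varepsilon_\Delta)$ stratified by $|\nr(\Delta)|$ for the elliptic sum, one obtains absolute convergence exactly in the stated range of $z$; the analogous decay estimate for $\Scal_v^{\delta,(z)}$ at $v \in S$ dictates the lower bound on $\Re(s_v)$. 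These estimates simultaneously justify the spectral--geometric interchange needed to equate the two sides.
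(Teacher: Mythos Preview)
Your overall architecture matches the paper's: build an adelic kernel from $\Phi=\otimes_v\Phi_v$ (discrete-series matrix coefficients at $\infty$, Green functions at $v\in S$, characteristic functions of $Z_v\bK_0(\fn\cO_v)$ elsewhere), pair it with an Eisenstein series in the variable $z$, and equate spectral and geometric expansions. Two mechanisms, however, differ from the paper. First, the paper does not regularize the kernel by subtraction; instead it smooths the Eisenstein series, setting $\Ecal_\b^*(g)=\int_{L_\sigma}\b(z)E^*(z;g)\,\d z$ with $\b\in\cB_1$ (so $\b(0)=\b(\pm1)=\b'(\pm1)=0$), which makes $\Ecal_\b^*$ rapidly decreasing on a Siegel domain (Proposition~\ref{SmoothEis}) and hence integrable against the raw diagonal kernel; the pointwise-in-$z$ identity is then extracted from the $\b$-integrated one by a density argument (Lemma~\ref{Fequal0}). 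Second --- and this is the more substantive discrepancy --- the spectral side requires no subtraction at all: Proposition~\ref{SPECTRALEXP} shows that with discrete-series matrix coefficients at archimedean places, $h\mapsto {\bf\Phi}^l(\fn|\bfs;g,h)$ already lies in ${\mathcal A}_{\rm cusp}(l,\fn)$ (via Wallach's criterion), so there is no residual or continuous contribution to cancel and your proposed ``Eisenstein subtraction'' is unnecessary. A smaller correction: in the elliptic term the factor $\nr(\fd_\Delta)^{(z+1)/4}L(\tfrac{z+1}{2},\varepsilon_\Delta)$ does not assemble from the local orbital integrals of $\Phi$ as you suggest; it arises from the period of the Eisenstein series along the elliptic torus (Hecke's formula, Proposition~\ref{EisPeriod}), while the orbital integrals of $\Phi$ produce the ${\bf B}$-factors. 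Your convergence sketch is in the right spirit; the paper's hyperbolic estimate proceeds via a contour-shifting computation of the split-torus period of $\Ecal_\b^*$ (\S\ref{UnConSh}) rather than a direct unit count, and the elliptic estimate (\S\ref{ABCelliptic}) combines the convexity bound for $L(\tfrac{z+1}{2},\varepsilon_\Delta)$ with lattice-point counting as you anticipate.
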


As a corollary of Theorem \ref{thm0}, we have a generalization of \cite{Zagier}, \cite{Mizumoto} and  \cite{Takase}.
Let $\Acal_v$ be the space of holomorphic functions on $\fX_v$ such that $\a(-s_v)=\a(s_v)$. Let $\d\mu_v(s_v)$ denote the holomorphic $1$-form $2^{-1}(\log q_v)(q_v^{(1+s_v)/2}-q_v^{(1-s_v)/2})\,\d s_v$ on $\fX_v$, and let $L_v(c)$ be the contour $y\mapsto c+iy\,(c, y\in \R,\,|y|\leq {2\pi}{(\log q_v)}^{-1})$. For $\a\in \Acal_v$, set 
$$
\hat \Scal_v^{\delta,(z)}(\a;a)=
\tfrac{1}{2\pi i}\int_{L_v(c)} \Scal_v^{\delta,(z)}(s;a)\,\a(s)\,\d\mu_v(s). 
$$
For a finite subset $S\subset \Sigma_\fin$, let $\a(\bfs)=\otimes_{v\in S}\a_v(s_v)$ be a pure tensor of $\Acal_S=\bigotimes_{v\in S}\Acal_v$, viewed as a function on $\fX_S$. For a square-free ideal $\fn$ such that $S\cap S(\fn)=\emp$, an element $\Delta\in F^\times$, and a non-zero fractional ideal $\fa\subset F$
and $z\in \CC$, set
$$
{\bf B}_{\fn}^{(z)}(\a|\Delta;\fa)=\prod_{v\in \Sigma_\fin-(S\cup S(\fn))}\Ocal_{0,v}^{\Delta,(z)}(a_v)\prod_{v\in S(\fn)}\Ocal_{1,v}^{\Delta,(z)}(a_v)\prod_{v\in S}
\hat\Scal_v^{\Delta,(z)}(\a_v,a_v),
$$
$$
\Upsilon^{(z)}(\a)=
\prod_{v \in S}
\tfrac{1}{2\pi i} \int_{L_v(c_v)}\frac{-q_v^{-{(s_v+1)}/{2}}}{1-q_v^{-s_v-(z+1)/{2}}}\a_v(s_v)\d\mu_v(s_v)
$$
and
\begin{align}
	\II^0_{\rm cusp}(\fn|\a,z)=&2^{-1}D_F^{z-1/2}\nr(\fn)^{(z-1)/2}\sum_{\pi \in \Pi_{\rm cus}(l,\fn)}W_{\fn}^{(z)}(\pi)\,\frac{L\left(\tfrac{z+1}{2},\pi;\Ad\right)}{L(1,\pi;\Ad)}\,\a(\nu_S(\pi)).
	\label{AverageAdL}
\end{align} 

\begin{cor}\label{MAINTHM}
	Let $l$, $\gn$, $S$  be as in Theorem \ref{thm0}.
	We suppose $2$ splits completely in $F/\Q$ and $|2|_v=1$ for all $v\in S\cup S(\fn)$. 
	Then for $\a\in \Acal_S$ and $z\in \CC$ such that $|\Re(z)|<\min_{v \in \Sigma_{\infty}}l_v-3$, we have the identity
	\begin{align*}
		(-1)^{\# S}C(l,\fn)\,\II^0_{\rm cusp}(\fn|\a,z)=&D_{F}^{\frac{z}{4}}\{\JJ_{\rm unip}^{0}(\fn|\a,z)+\JJ_{\rm unip}^{0}(\fn|\a,-z)\}+\JJ_{\rm hyp}^0(\fn|\a,z)+\JJ_{\rm ell}^0(\fn|\a,z),
	\end{align*}
	where $C(l,\gn)$ is the number defined as \eqref{SPEC-const} and the terms on the right-hand side are described as follows. The first term is defined as
	\begin{align*}
		\JJ_{\rm unip}^0(\fn|\a,z)&=D_{F}^{-\frac{z+2}{4}}\zeta_F(-z)\,\Upsilon^{(z)}(\a)\, 
		\prod_{v \in S(\gn)}\frac{1+q_v^{\frac{z+1}{2}}}{1+q_v}\,
		\prod_{v \in \Sigma_{\infty}}2^{1-z}\pi^{\frac{3-z}{4}}
		\frac{\Gamma\left(l_v+\frac{z-1}{2}\right)}{\Gamma\left(\tfrac{z+1}{4}\right)\Gamma(l_v)}.
	\end{align*}
	The second is given by the absolutely convergent sum
	\begin{align*}
		\JJ_{\rm hyp}^0(\fn|\a,z)&=\tfrac{1}{2}D_F^{-1/2}\zeta_{F}\left(\tfrac{1-z}{2}\right)\,\sum_{a\in \cO(S)_{+}^\times-\{1\}} {\bf B}_{\fn}^{(z)}(\a|1;a(a-1)^{-2}\cO)\,\prod_{v\in \Sigma_\infty}\Ocal_{v}^{+,(z)}((a+1)/(a-1)).
	\end{align*}
	The third term is given by the absolutely convergent sum  
	\begin{align*}
		\JJ_{\rm ell}^0(\fn|\a,z)&=\tfrac{1}{2}D_F^{\frac{z-1}{2}} \sum_{(t:n)_F} \nr(\fd_{\Delta})^{\frac{z+1}{4}}L\left(\tfrac{z+1}{2},\varepsilon_{\Delta}\right)\,{\bf B}_{\fn}^{(z)}(\a|\Delta;n\ff_{\Delta}^{-2})\,\prod_{v\in \Sigma_\infty}\Ocal_v^{\sgn(\Delta^{(v)}),(z)}(t|\Delta|_v^{-1/2}),
	\end{align*}
	where
	$\Delta=t^2-4n$ and $(t:n)_F$ runs over the same set as in Theorem \ref{thm0}.
\end{cor}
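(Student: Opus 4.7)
The plan is to deduce Corollary \ref{MAINTHM} from Theorem \ref{thm0} by integrating that theorem's identity in $\bfs\in\fX_S$ against $\a(\bfs)$ along a multi-contour $\prod_{v\in S}L_v(c_v)$, with $c_v>2\min_{v\in\Sigma_\infty}l_v-1$ so as to lie strictly inside the region of validity of Theorem \ref{thm0}. On both sides, we apply the operator $\prod_{v\in S}\frac{1}{2\pi i}\int_{L_v(c_v)}(\cdot)\,\a_v(s_v)\,\d\mu_v(s_v)$, and $\bfs$-independent factors simply pass through.

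For the spectral side, the key observation is the identity $\d\mu_v(s_v)=\d f_v(s_v)$, where $f_v(s)=q_v^{(1+s)/2}+q_v^{(1-s)/2}$ descends to an injection $\fX_v/\{\pm 1\}\hookrightarrow \CC$; under this map, any $\a_v\in\Acal_v$ (which is even by definition) becomes a holomorphic function of $t=f_v(s)$. For the chosen $c_v$, the image $f_v(L_v(c_v))$ is an ellipse traversed once counterclockwise whose interior contains every $f_v(\nu_v(\pi))$ with $\pi\in\Pi_{\rm cus}(l,\fn)$ (recall that $\nu_v(\pi)\in i\R\cup(-1,1)$ by unitarity of $\pi_v$). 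Cauchy's integral formula then yields
$$
\frac{1}{2\pi i}\int_{L_v(c_v)}\frac{\a_v(s_v)\,\d\mu_v(s_v)}{f_v(\nu_v(\pi))-f_v(s_v)}=-\a_v(\nu_v(\pi)).
$$
Taking the product over $v\in S$ introduces a sign $(-1)^{\#S}$, and since $\Pi_{\rm cus}(l,\fn)$ is finite we may freely interchange summation with integration. Comparing with the definition of $\II^0_{\rm cusp}(\fn|\a,z)$ shows that the integrated $\hat I^0_{\rm cusp}(\bfs,z)$ equals $(-1)^{\#S}C(l,\fn)\,\II^0_{\rm cusp}(\fn|\a,z)$.

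For the geometric side, each of $\hat J^0_{\rm unip}$, $\hat J^0_{\rm hyp}$, $\hat J^0_{\rm ell}$ depends on $\bfs$ purely through local data at $v\in S$: the factors $\prod_{v\in S}\Scal_v^{\Delta,(z)}(s_v,a_v)$ appearing in ${\bf B}^{(z)}_\fn(\bfs|\Delta;\fa)$ for the hyperbolic and elliptic pieces, and the factor $\prod_{v\in S}\frac{-q_v^{-(s_v+1)/2}}{1-q_v^{-s_v-(z+1)/2}}$ for the unipotent piece. Applying the integration operator converts these, by the very definitions of $\hat\Scal_v^{\Delta,(z)}(\a_v,a_v)$, ${\bf B}^{(z)}_\fn(\a|\Delta;\fa)$, and $\Upsilon^{(z)}(\a)$ given just before the Corollary, into the corresponding terms in the Corollary's formula. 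The absolute convergence of the hyperbolic and elliptic sums asserted in Theorem \ref{thm0} is preserved by this local operation, legitimizing the interchange of summation and integration. Assembling both sides and invoking Theorem \ref{thm0} pointwise on the contour yields the stated identity for a pure tensor $\a$, and multilinearity extends it to general $\a\in\Acal_S$. The main technical point is verifying that $f_v(L_v(c_v))$ encloses all spectral parameters $f_v(\nu_v(\pi))$ — this is where the finiteness of $\Pi_{\rm cus}(l,\fn)$ and the uniform bound on $|\Re\nu_v(\pi)|$ come in — along with justifying the contour choice from the explicit bounds in Theorem \ref{thm0}.
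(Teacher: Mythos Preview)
Your proposal is correct and follows essentially the same route as the paper: integrate the identity of Theorem~\ref{thm0} over the multi-contour $\prod_{v\in S}L_v(c_v)$ against $\a(\bfs)\,\d\mu_S(\bfs)$, identify the geometric terms with their $\a$-versions by definition, and evaluate the spectral side via the residue formula $\tfrac{1}{2\pi i}\int_{L_v(c_v)}\{f_v(\nu)-f_v(s_v)\}^{-1}\a_v(s_v)\,\d\mu_v(s_v)=-\a_v(\nu)$. The paper simply states this last formula, whereas you supply the justification through $\d\mu_v=\d f_v$ and Cauchy's theorem on the ellipse $f_v(L_v(c_v))$; otherwise the arguments coincide.
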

Corollary~\ref{MAINTHM} includes known formulas as a special case. Indeed, it is confirmed that \cite[Theorem 1]{Zagier}, \cite[(4.6) and (5.9)]{Mizumoto}, and \cite[Proposition 2]{Takase} are all recovered from Corollary~\ref{MAINTHM}. Moreover, we can deduce an explicit trace formula of Hecke operators in a more general setting than in these works. 

\subsubsection{A limit formula}
For a real number $z\in [0,1]$ and a square-free ideal $\fn\subset \cO$, let us define a discrete Radon measure on $\Omega_S=[-2,+2]^{S}$ as 
\begin{align*}
	\langle \Lambda_{l,\fn}^{(z)},f \rangle =\frac{1}{{\rm M}(\fn)^{\delta(z=0)}}\,\,\prod_{v\in S(\fn)}\frac{q_v^{(z-1)/2}}{1+q_v^{(z+1)/2}}\,\sum_{\pi \in \Pi_{\rm cus}(l,\fn)}
	W^{(z)}_{\fn}(\pi)
	\frac{L(\tfrac{z+1}{2},\pi;{\rm Ad})}{L(1,\pi;{\rm Ad})}f({\bf x}_S(\pi)), \quad f\in C(\Omega_S),
\end{align*}
where ${\bf x}_S(\pi)=\{x_v(\pi)\}_{v\in S}$, and ${\rm M}(\fn)=\sum_{v\in S(\fn)}{\log q_v}/(1+q_v^{-1/2})$. Note that $2^{-1}\log \nr(\fn)\leq {\rm M}(\fn) \leq \log \nr(\fn)$. 
The value $L(\tfrac{z+1}{2},\pi;\Ad)$ is non-negative for $z$ lying in $\Re(z)\geq 1$, the closure of the absolute convergence region of the Euler product. In some cases, it is more enlightening to work with the assumption
\begin{center}
	{\bf (P)} : $L(s,\pi,\Ad)\geq 0$ for any $s\in [0,1)$ and for all holomorphic cuspidal automorphic representations $\pi$ of $\PGL(2,\A)$ with weight $l$,
\end{center}
which ensures that the measure $\Lambda_{l,\fn}^{(z)}$ is non-negative for $z\in [0,1]$. The statement {\bf (P)} is not proved up to now, however it is highly expected to be true from the entireness of $L(s,\pi;\Ad)$ (\cite{Shimura}, \cite{GelbartJacquet2}) combined with the Riemann hypothesis of the $L$-function $L(s,\pi;\Ad)$.

For $z\in [0,1]$ and $v\in S$, we define a non-negative Radon measure on $[-2,2]$ as 
\begin{align*}
	\langle \lambda_v^{(z)}, f_v\rangle=\frac{1+q_v^{(z+1)/2}}{\pi}\int_{-2}^{2} f_v(x)\frac{(1-x^2/4)^{1/2}}{(q_v^{(1+z)/4}+q_v^{-(1+z)/4})^{2}-x^2}\,\d x, \quad f_v\in C([-2,2]).
\end{align*}
Note that the measure $\lambda_v^{(1)}$ coincides with the limit measure in Serre's theorem \cite[Th\'{e}or\`{e}me 1]{Serre}. 
%
For $z\in [0,1]$, set 
\begin{align*}
	C_l^{(z)}=2D_F^{3/2}\left\{2^{\frac{1-z}{2}}\pi^{-\frac{3z+1}{4}}\Gamma\left(\tfrac{z+3}{4}\right)\right\}^{[F:\Q]}\,\prod_{v\in \Sigma_\infty}\frac{\Gamma\left(l_v+\tfrac{z-1}{2} \right)}{4\pi \Gamma(l_v-1)}
\end{align*}
and $r(z)=\zeta_{F,\fin}(z+1)$ if $z>0$ and $r(0)={\rm Res}_{z=1}\zeta_{F,\fin}(z)$. 
Let $\Pcal(\Omega_S)$ be the space of complex-valued polynomial functions on $\Omega_S$; by the Stone-Weierstrass theorem, $\Pcal(\Omega_S)$ is a dense subspace of $C(\fX_S^0)$. Fix a set of prime ideals $\fa_{j}\,(1\leq j\leq h)$ of $\cO$  different from $\fp_v\,(v\in S)$ and mapped bijectively to the ideal class group of $F$; this is possible by Chebotarev's density theorem.

\begin{thm} \label{MVT-thm} Let $F$ be as in Corollary~\ref{MAINTHM}. Let $l=(l_v)_{v\in \Sigma_\infty}\in (2\N)^{\Sigma_\infty}$ be such that ${\underline l}=\min_{v\in \Sigma_\infty}l_v \geq 4$. Let $S$ be a finite set of non-dyadic finite places of $F$. 
	Let $\fn \subset \cO$ be a square-free ideal relatively prime to the ideals $2\cO$, $\fa_i\,(1\leq j\leq h)$ and $\fp_v\,(v\in S)$.
	\begin{itemize}
		\item[(1)] Let $0<\s<\underline{l}-3$. As $\nr(\fn)\rightarrow \infty$, 
		$$\sup_{z\in [0,\min(1,\s)]}\left|\langle \Lambda_{l,\fn}^{(z)},f\rangle-r(z)C_l^{(z)}\langle \otimes_{v\in S}\lambda_v^{(z)},f\rangle \right|\rightarrow 0\quad \text{for any $f\in \Pcal(\Omega_S)$}.
		$$
		Under the condition {\bf (P)}, the same limit formulas are true for all $f\in C(\Omega_S)$, i.e., the measure $\Lambda_{l,\fn}^{(z)}$ converges $*$-weakly to $r(z)\,C_l^{(z)}\,\otimes_{v\in S}\lambda_v^{(z)}$ on $\fX_S^0$.
		\item[(2)] Assume {\bf (P)}. Let $J_{v}=[t_v,t_v']\,(t_v<t_v',\,v\in S)$ be a family of closed subintervals of $[-2,2]$ and $0<\s<\underline{l}-3$.
		There exists $M>0$ such that for any prime ideal $\fn$ with $\nr(\fn)>M$ and relatively prime to $2\prod_{v \in S}(\gp_v\cap\go)\prod_{j=1}^{h}\fa_j$ and for any $z\in [0,\min(1,\s)]$, there exists $\pi \in \Pi_{cus}(l,\fn)$ with the following properties: $\ff_\pi=\fn$, $L(\frac{z+1}{2},\pi; \Ad)\not=0$, and $\bfx_S(\pi)\in J$. When $J=\Omega_S$, the same conclusion holds without {\bf(P)}.
		
	\end{itemize}
\end{thm}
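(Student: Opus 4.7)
The strategy is to evaluate the trace formula of Corollary~\ref{MAINTHM} on a family of test functions $\alpha_f$ indexed by polynomials $f\in \Pcal(\Omega_S)$, and to extract the asymptotic behaviour of each geometric term as $\nr(\fn)\to\infty$. By Stone--Weierstrass it suffices to prove part~(1) for pure-tensor polynomials $f(\bfx)=\prod_{v\in S}f_v(x_v)$; to such an $f$ I associate $\alpha_{f,v}(s_v)=f_v(q_v^{s_v/2}+q_v^{-s_v/2})\in \Acal_v$, which is holomorphic on $\fX_v$ and even in $s_v$ by periodicity modulo $4\pi i(\log q_v)^{-1}\Z$. Because holomorphic Hilbert cusp forms are tempered at every unramified finite place by Deligne, $\nu_v(\pi)\in i\R/(4\pi i(\log q_v)^{-1}\Z)$ for every $v\in S$ and every $\pi\in \Pi_{\rm cus}(l,\fn)$, so $\bfx_S(\pi)\in \Omega_S$ and $\alpha_f(\nu_S(\pi))=f(\bfx_S(\pi))$.

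Using $\nr(\fn)=\prod_{v\in S(\fn)}q_v$ one checks that the left-hand side of Corollary~\ref{MAINTHM} equals $\langle \Lambda_{l,\fn}^{(z)},f\rangle$ times the product $M(\fn)^{\delta(z=0)}\prod_{v\in S(\fn)}\frac{1+q_v^{(z+1)/2}}{1+q_v}$, up to an $(l,F,S)$-dependent constant. Crucially, precisely this product appears verbatim in the $+z$-unipotent term $D_F^{z/4}\JJ^0_{\rm unip}(\fn|\alpha_f,z)$, so after dividing we are reduced to $\zeta_F(-z)\,\Upsilon^{(z)}(\alpha_f)$ multiplied by fixed archimedean factors. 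The core calculation is to evaluate $\Upsilon^{(z)}(\alpha_f)$ by shifting each contour $L_v(c_v)$ past the simple pole of $(1-q_v^{-s_v-(z+1)/2})^{-1}$ at $s_v=-(z+1)/2$ and parametrizing the remaining integral on $\Re(s_v)=0$ by $s_v=i\theta$; the measure $\d\mu_v(i\theta)$ then unfolds into the kernel of $\lambda_v^{(z)}$, yielding $\Upsilon^{(z)}(\alpha_f)$ as an explicit multiple of $\prod_{v\in S}\int f_v\,\d\lambda_v^{(z)}$. Matching constants against the claimed answer produces $r(z)C_l^{(z)}\langle \otimes_v\lambda_v^{(z)},f\rangle$ for $z\in (0,\min(1,\s)]$.

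The remaining geometric contributions will be shown negligible. The $-z$-unipotent term, after the same cancellation, carries an extra factor $\prod_{v\in S(\fn)}\frac{1+q_v^{(1-z)/2}}{1+q_v^{(1+z)/2}}=O(\nr(\fn)^{-z})$, vanishing for $z>0$. In the hyperbolic and elliptic sums, bound~\eqref{Ovanish} gives $\Ocal_{1,v}^{\Delta,(z)}(a_v)\le \tfrac{2}{1+q_v}$ at every $v\in S(\fn)$ irrespective of $\Delta$, so the normalized local factor becomes $O((1+q_v^{(z+1)/2})^{-1})$ uniformly in the summation variable and the absolute convergence supplied by Theorem~\ref{thm0} promotes to $o(1)$ uniformly on $[0,\min(1,\s)]$. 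The case $z=0$ is subtler: each of $\JJ^0_{\rm unip}(\fn|\alpha_f,\pm z)$ has a simple pole from $\zeta_F(\mp z)$, but their sum is holomorphic at $0$ by the functional equation, and a first-order Laurent expansion produces a term of order $M(\fn)$ coming from the logarithmic derivative $\tfrac{d}{dz}\big|_{z=0}\log\prod_{v\in S(\fn)}\frac{1+q_v^{(z+1)/2}}{1+q_v}=\tfrac12 M(\fn)$. This is exactly what cancels the $M(\fn)^{-1}$ in $\Lambda_{l,\fn}^{(0)}$ and produces $r(0)C_l^{(0)}\langle \otimes_v\lambda_v^{(0)},f\rangle$ with $r(0)=\Res_{s=1}\zeta_{F,\fin}(s)$. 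Continuity of the expansion in $z$ delivers the uniformity on $[0,\min(1,\s)]$.

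For part~(2), under hypothesis~{\bf (P)} the measure $\Lambda_{l,\fn}^{(z)}$ is non-negative, so sandwiching $\chi_J$ between continuous $0\le f_-\le \chi_J\le f_+\le 1$ with $\langle \otimes_v\lambda_v^{(z)},f_-\rangle$ bounded below by a positive constant uniformly on $[0,\min(1,\s)]$, part~(1) gives $\langle \Lambda_{l,\fn}^{(z)},\chi_J\rangle>0$ for all sufficiently large prime $\fn$. Hence some $\pi\in \Pi_{\rm cus}(l,\fn)$ with $\bfx_S(\pi)\in J$ satisfies $L(\tfrac{z+1}{2},\pi;\Ad)\ne 0$. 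To force $\ff_\pi=\fn$ when $\fn$ is prime, the old-form contributions ($\ff_\pi=\cO$) to $\II^0_{\rm cusp}$ are bounded by $\#\Pi_{\rm cus}(l,\cO)\cdot \nr(\fn)^{(1-z)/2}$, which after the normalization becomes $O(\nr(\fn)^{-(1+z)/2})=o(1)$ uniformly in $z$, so the positive asymptotic mass is carried by new forms. For $J=\Omega_S$ one takes $f\equiv 1$ and dispenses with~{\bf (P)}, since nontriviality of the sum itself forces $L(\tfrac{z+1}{2},\pi;\Ad)\ne 0$ for some new $\pi$. The main obstacle will be the delicate Laurent analysis at $z=0$ and its uniform continuation to $z>0$; a secondary difficulty is making all decay estimates in the hyperbolic and elliptic sums uniform in $z$ across the full compact interval.
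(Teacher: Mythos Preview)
Your overall architecture is correct and matches the paper's: normalize the trace formula so that the $+z$ unipotent term becomes the claimed limit $r(z)C_l^{(z)}\langle\otimes_v\lambda_v^{(z)},f\rangle$, then show that the $-z$ unipotent, hyperbolic, and elliptic contributions are negligible. The $z=0$ Laurent analysis and the removal of old forms in part~(2) are also essentially as in the paper.

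There is, however, a genuine gap in your treatment of the hyperbolic and elliptic terms. You write that ``bound~\eqref{Ovanish} gives $\Ocal_{1,v}^{\Delta,(z)}(a_v)\le \tfrac{2}{1+q_v}$ at every $v\in S(\fn)$ irrespective of $\Delta$,'' but \eqref{Ovanish} is stated only for $a_v\in\cO_v^\times$. In the hyperbolic sum the argument at $v\in S(\fn)$ is $(a-1)^{-2}a$ with $a\in\cO(S)_+^\times$; whenever $a\equiv 1\pmod{\fp_v}$ this lies outside $\cO_v^\times$ and the correct bound (Lemma~\ref{Sfn-Est}, or equivalently Lemma~\ref{ABC-L1234}(2)) carries an extra factor $(1+q_v^{1/2})(1+\ord_v(a-1))\,|a-1|_v^{-(|\Re z|+1)/2}$, which is \emph{not} $O((1+q_v)^{-1})$. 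The same phenomenon occurs in the elliptic sum: the summation conditions in Theorem~\ref{thm0} force $\ord_v(n\ff_\Delta^{-2})<0$ at inert places $v\in S(\fn)$, so again $a_v\notin\cO_v^\times$ there. Consequently the claimed uniform product bound $\prod_{v\in S(\fn)}\tfrac{2}{1+q_v}$ is not available, and the ``absolute convergence supplied by Theorem~\ref{thm0}'' does not by itself promote to the required decay in $\nr(\fn)$.

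The paper closes this gap with substantial additional work (Lemmas~\ref{MVT-L4} and~\ref{MVT-L5}, which in turn rest on Lemma~\ref{ABC-L8} and the lattice-point estimate Lemma~\ref{SugTsudLA}). The idea is to stratify by the divisor $\fc\mid\fn$ recording the set of $v\in S(\fn)$ at which the argument leaves $\cO_v^\times$; on each stratum the dangerous factor $\prod_{v\in S(\fc)}(1+q_v^{1/2})$ appears, but the summation variable is then constrained to a sublattice of covolume $\asymp\nr(\fc)$, and a Minkowski-type count shows the sparsity beats the growth. Summing over $\fc\mid\fn$ yields $|\JJ_{\rm hyp}^0|,|\JJ_{\rm ell}^0|\ll\nr(\fn)^{-\delta+\e}$ with some $\delta>\tfrac12$, uniformly in $z$, which is exactly what is needed. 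This stratified lattice argument is the missing ingredient in your proposal; without it the error terms are not controlled.
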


\subsection{Organization of paper}
In \S 2, we construct a kernel function ${\mathbf\Phi}^l(\gn|\bfs;g,h)$ on $G_\A\times G_\A$ which represents the resolvent $\prod_{v\in S}\{\bT_v-(q_v^{(1-s_v)/2}+q_v^{(1+s_v)/2})\}^{-1}$ of the product of the shifted normalized Hecke operators at $v\in S$ depending on a set of parameters $\bfs=(s_v)_{v\in S}$ and acting on the space of Hilbert modular cusp forms on $G_\A$ of a fixed weight invariant by the open compact subgroup $\bK_0(\fn)$. In \S 3, we introduce the smoothed Eisenstein series $\Ecal_\b^{*}(g)$ on $G_\A$ depending on an entire function $\b(z)$ vertically of rapid decay. Our definition is similar to but a bit different from the usual definition of the wave-packet in that we start from the normalized Eisenstein series. By a constraint on $\b(z)$ posed to cancel the poles of the Eisenstein series, our $\Ecal_\b^*(g)$ is shown to be rapidly decreasing on a Siegel domain (Proposition~\ref{SmoothEis}). As in \cite{JacquetZagier}, we compute the inner-product of $\Ecal_\b^{*}$ and the diagonal restriction of ${\mathbf\Phi}^l(\gn|\bfs;g,g)$ on $Z_\A G_F \bsl G_\A$ in two ways obtaining its two expressions referred to the spectral side and the geometric side. The spectral side is constructed in \S 3 without any difficulty by the Rankin-Selberg integral; the main issue here is the calculation of local zeta integrals for old forms. The computation of the geometric side is harder to accomplish. Since our test function is not compactly supported, the argument in \cite{JacquetZagier} to deal with the unipotent and the hyperbolic terms by the Poisson summation formula is not applied as it is. By a counter shifting argument inspired by a similar analysis in \cite{Shintani}, we entirely circumvent the usage of the Poisson summation formula. 
The computations of unipotent terms and hyperbolic terms are done in \S \ref{singular terms}
and \S \ref{The $F$-hyperbolic term}, respectively:
We prove the absolute convergence of the hyperbolic term by estimating
local orbital integrals whose exact formulas are calculated in \S \ref{Localorbitalintegrals}. 
By the local multiplicity one theorem of the Waldspurger model (\cite[Proposition 9']{Waldspurger0}, \cite{Waldspurger01}, \cite{Waldspurger}, \cite[\S1]{BFF}), the computation of the elliptic term boils down to the determination of the Eisenstein periods along elliptic tori and the calculation of local orbital integrals. In \S 7, we recall the formula of Eisenstein period originally due to Hecke, providing a proof in a modern style, and establish the absolute convergence of the elliptic term, granting the formulas of local orbital integrals to be proved in \S 10.
The main results are proved in \S 8. In \S 9, we state Theorem~\ref{MAINTHM2} which should be regarded as a version of Gon's formula. Since its proof is almost identical to Corollary~\ref{MAINTHM} and is much easier than that in some aspect, only a brief indication for proof will be given there. We suggest the readers to move to \S~9 to see the statement of Theorem~\ref{MAINTHM2} after finishing this introduction. Theorem~\ref{MAINTHM2} has an application to non-vanishing $L$-values, which we refer to our forthcoming work \cite{SugiyamaTsuzuki2018}.

\subsection{Notation}
Throughout this paper,
we adapt Vinogradov's notation $\ll$.
For any complex-valued functions $f$ and $g$ on a set $X$,
we write $f(x)\ll g(x)$ if there exists a constant $C>0$ independent of $x \in X$
such that $|f(x)| \le C |g(x)|$ for all $x$.
We write $f(x)\asymp g(x)$ when both $f(x)\ll g(x)$ and $g(x)\ll f(x)$ hold.
If we emphasize dependence of the implied constant $C$ on some parameters $a,b,c,\ldots$,
we write $f(x) \ll_{a,b,c,\ldots} g(x)$.

\section{Construction of the kernel function}
The Haar measures we work with in this article are fixed in the following way. For $v\in \Sigma_F$, let $\d x_v$ be the additive Haar measure of $F_v$ such that $\int_{\cO_v}\d x_v=q_v^{-d_v/2}$ if $v\in \Sigma_\fin$ and $\int_{|x|_v<1}\d x_v=2$ if $v\in \Sigma_\infty$,
where $d_v$ is the exponent of the local different of $F_v$. Fix measures $\d^\times x_v$ on $F^\times_v$ and $\d^\times x $ on $\A^\times$ by $\d^\times x_v=\zeta_{F_v}(1)\,\d x_v/|x_v|_v$ and by $\d^\times x=\prod_{v\in \Sigma_F}\d^\times x_v$, respectively. We fix Haar measures on $Z_v\cong F_v^\times$ and $Z_\A\cong \A^\times$ accordingly. We endow $H_v$, $N_v$, and $\bK_v$ with measures $\d h_v$, $\d n_v$, and $\d k_v$, respectively by setting $\d h_v=\d^\times t_{1,v}\,\d^\times t_{2,v}$ if $h_v=\left[\begin{smallmatrix} t_{1,v} & 0 \\ 0 & t_{2,v} \end{smallmatrix}\right]$, $\d n_v=\d x_v$ if $n_v=\left[\begin{smallmatrix} 1 & x_v \\ 0 & 1 \end{smallmatrix}\right]$, and by requiring $\vol(\bK_v, d k_v)=1$. Then our Haar measure on $G_v$ is defined as $\d g_v=\d h_v\,\d n_v\,\d k_v$ by the Iwasawa decomposition $G_v=H_vN_v\bK_v$. Note that $\vol(\bfK_v, dg_v)=q_v^{-3d_v/2}$. On $\bK$, $N_\A$, $H_\A$, $Z_\A$ and $G_\A$, we always use the product measures of their factors.
Let ${\rm ch}_X$ denote the characteristic function of a set $X$. We omit its domain as we guess easily
from context.

\subsection{Convergence lemmas}
Let $\Ad:G\rightarrow \GL({\frak s}{\frak l}_2)$ be the adjoint representation of $G$ on ${\frak{sl}}_2$. For $g\in G$, let $(\Ad(g)_{ij})_{1\leq i,j\leq 3}$ be the representing matrix with respect to the $F$-basis $\{\left[\begin{smallmatrix} 1 & 0 \\ 0& -1 \end{smallmatrix}\right],\,\left[\begin{smallmatrix} 0 & 1 \\ 0 & 0 \end{smallmatrix}\right],\,\left[\begin{smallmatrix} 0 & 0 \\ 1 & 0 \end{smallmatrix}\right]\}$ of ${\frak{sl}}_2(F)$. For $g_v\in G_v$, set
\begin{align*}
	\|g_v\|_v=
	\begin{cases}
		&\max\{|\Ad(g_v)_{ij}|_v\,|\,1\leq i,j\leq 3\,\}, \quad (v\in \Sigma_\fin), \\
		&\{\sum_{ij}|\Ad(g_v)_{ij}|_v^2\}^{1/2}, \quad (v\in \Sigma_\infty).
	\end{cases}
\end{align*}
Then, $\|z_vk_vg_vk'_v\|_v=\|g_v\|_v$ for any $z_v\in Z_v$ and $k_v,k_v'\in \bK_v$. The norm of an adele point $g=(g_v)\in G_\A$ is defined by $\|g\|_\A=\prod_{v\in \Sigma_F} \|g_v\|_v$. We remark that the norm $\|g\|_\A$ is a $Z_\A$-invariant and bi-$\bK$-invariant function on $G_\A$ taking values in $[1,+\infty)$. It satisfies $\|gh\|_\A\leq \|g\|_\A\|h\|_\A$ for all $g,\,h\in G_\A$ and $\|g\|_\A \asymp y(g)$, $g\in \fS^{1}$, where $\fS^1$ is a Siegel domain of $G_\A^1=\{g\in G_\A|\,|\det g|_\A=1\,\}$ and $y:G_\A\rightarrow \R_+$ is defined as
$$y\left(\left[\begin{smallmatrix} a & b \\ 0 & d \end{smallmatrix}\right]k\right)=|a/d|_\A, \quad \left[\begin{smallmatrix} a & b \\ 0 & d \end{smallmatrix}\right]\in B_\A,\, k\in \bK.$$

\begin{lem} \label{L3}
	For $\s\in \R$ and $h\in G_\A$, set $\Xi_{\s}(h)=\sum_{\gamma\in Z_F\bsl G_F} \|\gamma h\|_\A^{-\sigma}$. If $\s>1$, then the series $\Xi_\s(h)$ converges absolutely and locally uniformly in $h\in G_\A$; moreover, the function $h\mapsto \Xi_\s(h)$ is bounded on $G_\A$. 
\end{lem}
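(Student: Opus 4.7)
The plan is to prove Lemma~\ref{L3} in two stages: first the local uniform convergence, then the uniform boundedness. For the local uniform convergence, I would apply submultiplicativity of the adelic norm, $\|g_1 g_2\|_\A \leq \|g_1\|_\A \|g_2\|_\A$, to the identity $\gamma = (\gamma h) h^{-1}$; this yields $\|\gamma\|_\A \leq \|\gamma h\|_\A \|h^{-1}\|_\A$, and hence
\[
\Xi_\sigma(h) \leq \|h^{-1}\|_\A^{\sigma} \cdot \Xi_\sigma(e).
\]
Since $h \mapsto \|h^{-1}\|_\A$ is continuous on $G_\A$, local uniform convergence reduces to the finiteness of $\Xi_\sigma(e)$. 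The latter I would establish by the Bruhat decomposition $G_F = B_F \sqcup B_F w N_F$, which parameterizes $Z_F \backslash G_F$ by $(x,t) \in F \times F^\times$ on the Borel cell and by $(x,y,t) \in F \times F \times F^\times$ on the big cell. A direct computation with the explicit matrix formula for $\Ad$ shows that $\|\gamma\|_\A$ is comparable to a product of adelic heights in $x$, $y$, $t$; by the product formula applied to rational points, the cell-wise sums factor as products of height zeta sums of the form $\sum_{x \in F} H_F(x)^{-s}$ on $\PP^1(F)$, which converge in the claimed range by the Schanuel asymptotic $\#\{P \in \PP^1(F) : H_F(P) \leq T\} \asymp T^2$.

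For the uniform boundedness on $G_\A$, the plan is to exploit the left $G_F$-invariance, right $\bfK$-invariance, and $Z_\A$-invariance of $\Xi_\sigma$ to descend to the quotient $Z_\A G_F \backslash G_\A / \bfK$, and then by reduction theory for $\GL(2)/F$ restrict to bounding $\Xi_\sigma$ on a Siegel domain $\fS^{1} \cdot Z_\A$. The naive estimate $\Xi_\sigma(h) \ll \|h^{-1}\|_\A^{\sigma} \asymp y(h)^{\sigma}$ blows up in the cusp and is therefore insufficient. Instead I would refine the Bruhat-cell computation from the first stage, substituting the Iwasawa coordinates $h = n(x_0) a(t_0) k$ into the norm formula for $\|\gamma h\|_\A$ and performing a change of summation variables internal to each cell (for instance $(x,t) \mapsto (x + t x_0, t t_0)$ on the Borel cell, and an analogous manoeuvre on the big cell). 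Since $x_0$ may be assumed to lie in a compact fundamental domain for $F \backslash \A$, and $t_0$ in a compactly-supported idele class modulo $F^\times$ (after normalizing by $G_F$-action), the shifted adelic heights appearing in the sum are uniformly comparable to the unshifted ones, and the bound from the first stage transfers to a uniform bound on all of $\fS^{1}$.

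The main obstacle is the second stage. The submultiplicativity bound from the first stage grows like $y(h)^{\sigma}$ in the cusp direction and is too crude for uniform boundedness, so the proof must exploit the $G_F$-left-invariance more geometrically, reorganizing each Bruhat-cell sum via a change of summation variables so that the cusp parameters of $h$ are absorbed into a shift ranging over a compact region. Controlling this absorption uniformly in the idele class of $t_0$ is the delicate point of the argument.
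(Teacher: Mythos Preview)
Your reduction to $\Xi_\sigma(e)$ via submultiplicativity is exactly what the paper does for local uniform convergence, so that part is fine. The substantive divergence is in how you establish the finiteness of $\Xi_\sigma(e)$ and, more seriously, how you obtain the uniform bound on all of $G_\A$.

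For $\Xi_\sigma(e)<\infty$ and the uniform bound simultaneously, the paper uses a standard thickening trick that you have missed: pick a compact neighbourhood $\cU$ of the identity in $Z_\A\bsl G_\A$ small enough that the translates $\cU\gamma h$ over $\gamma\in Z_F\bsl G_F$ are pairwise disjoint; then from $\|\gamma h\|_\A^{-\sigma}\leq c_0^\sigma\|g\gamma h\|_\A^{-\sigma}$ for $g\in\cU$ one gets
\[
c_0^{-\sigma}\vol(\cU)\,\Xi_\sigma(h)\ \leq\ \sum_{\gamma}\int_{\cU}\|g\gamma h\|_\A^{-\sigma}\,\d g\ =\ \int_{\bigcup_\gamma \cU\gamma h}\|g\|_\A^{-\sigma}\,\d g\ \leq\ \int_{Z_\A\bsl G_\A}\|g\|_\A^{-\sigma}\,\d g,
\]
a bound independent of $h$. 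The right-hand integral factors over places and is evaluated by the Cartan decomposition; the finite-place factor is $(1+q_v^{-\sigma-1})/(1-q_v^{-\sigma})$, whose Euler product converges for $\sigma>1$. This single inequality delivers both convergence and uniform boundedness at once and renders your entire Stage~2 unnecessary.

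Your Bruhat-cell approach to $\Xi_\sigma(e)$ could in principle be made to work, but the asserted factorisation of $\|\gamma\|_\A$ into a product of projective heights in the Bruhat coordinates is not as clean as you suggest (already on the Borel cell the entries of $\Ad(\gamma)$ mix $t$, $t^{-1}$, $x$, $x/t$, $x^2/t$ and the place-wise max does not split multiplicatively), so more care is needed there. More importantly, your Stage~2 has a genuine gap: after passing to a Siegel set you still have an unbounded cusp parameter $|t_0|_\A\in\R_+$, and this cannot be ``absorbed'' by a change of summation variable over $F^\times$, since $t_0$ is adelic while the summation runs over rational points. Quotienting by $F^\times$ leaves the norm direction $\A^\times/F^\times\cong \R_+\times(\text{compact})$ intact, so the claim that $t_0$ becomes compactly supported after normalising by $G_F$ is false. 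The paper's thickening argument is precisely the device that circumvents this obstruction.
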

\begin{proof}
	Set $\overline{G}=Z\bsl G$.
	Let $\cU$ be a compact neighborhood of the identity of ${\overline G}_\A=Z_\A\bsl G_\A$ such that ${\overline G}_F \cap \cU^{-1}\cU=\{1_2\}$. Then $\bigcup_{\gamma \in  {\overline G}_F} \cU \gamma h$ is a disjoint union for any $h\in G_\A$. Set $c_0=\sup_{g\in \cU}\|g\|_\A$. We have $\|g\gamma h\|_\A \leq \|g\|_\A \|\gamma h\|_\A \leq c_0\|\gamma h\|_\A$, and thus 
	$$
	\|\gamma h\|_{\AA}^{-\sigma}\leq c_0^{\s}\|g\gamma h\|_\A^{-\s}\quad (\sigma>0)
	$$
	for any $g\in \Ucal,\,h\in {\overline G}_\A$ and $\gamma \in {\overline G}_F$. From this, \begin{align*}
		c_0^{-\s}\vol(\cU)\,\Xi_\s(h)\leq & \sum_{\gamma \in {\overline G}_F}\tint_{\cU}\|g\gamma h\|_\A^{-\s}\,\d g 
		=\tint_{\bigcup_{\gamma \in {\overline G}_F}\cU\gamma h}\|g\|_\A^{-\s}\,\d g 
		\leq\tint_{{\overline G}_\A} \|g\|_\A^{-\s}\,\d g.
	\end{align*} 
	Thus it suffices to show that the function $g\mapsto \|g\|_\A^{-\s}$ on $Z_\A\bsl G_\A$ is integrable if $\s>1$. For $\sigma\in \R$, set $\fJ_v(\sigma)=\int_{Z_v\bsl G_v}\|g_v\|_v^{-\sigma}\,\d g_v.$ If $v\in \Sigma_\fin$ and $\s>0$, then by the Cartan decomposition $Z_v\bsl G_v =\bigcup_{n\in \N_0} \bK_v \left[\begin{smallmatrix} \varpi_v^{n} & 0 \\ 0 & 1 \end{smallmatrix}\right]\bK_v$, we have
	\begin{align*}
		\fJ_v(\s)&=\sum_{n=0}^{\infty} q_v^{-n\s}\vol\left(\bK_v \left[\begin{smallmatrix} \varpi_v^{n} & 0 \\ 0 & 1 \end{smallmatrix}\right]\bK_v\right)
		=1+\sum_{n=1}^{\infty} q_v^{-n\s}\,q_v^{n}(1+q_v^{-1})
		=(1+q_v^{-\s-1})/(1-q_v^{-\s}).
	\end{align*}
	From this, the convergence of $\prod_{v\in \Sigma_{\fin}} \fJ_v(\s)$ for $\s>1$ is evident. If $v\in \Sigma_\infty$ and $\s>1$, then by the Cartan decomposition $G_v=\bK_v H_v \bK_v$, we have
	\begin{align*}
		\fJ_v(\s)&=4\pi \tint_{0}^{\infty}\left\|\left[\begin{smallmatrix} e^{t} & 0 \\ 0 & e^{-t} \end{smallmatrix}\right]\right\|^{-\s}\,\sh 2t\,\d t
		=4\pi \tint_{0}^{\infty} (e^{4t}+e^{-4t}+1)^{-\s/2}\,\sh 2t \,\d t<+\infty. 
	\end{align*}
	This completes the proof of the pointwise convergence. It remains to confirm the local uniformity of the convergence. Let $U$ be a compact subset of ${\overline G}_\A$. Then, $\|\gamma\|_\A\leq \,(\sup_{h\in U}\|h^{-1}\|_\A)\|\gamma h\|_\A$ for any $\gamma\in {\overline G}_F$ and $h\in U$. From this, $\Xi_\s(h)$ $(h\in U)$ is majorized term-wisely by the convergent series $\Xi_\s(1_2)$ for $\s>1$.
\end{proof}

\begin{prop} \label{CONVERGENCE-L}
	Let $\varphi:G_\A\rightarrow \C$ be a function such that $\varphi(zg)=\varphi(g)$ for all $z\in Z_\A$ and $|\varphi(g)|\ll \|g\|_\A^{-m}$ on $G_\A$ with $m>1$. Then, the series
	$$
	K_\varphi(g,h)=\sum_{\gamma \in Z_F\bsl G_F}\varphi(g^{-1}\gamma h), \quad (g,h)\in G_\A\times G_\A
	$$
	converges absolutely and locally uniformly. Moreover, the following holds.
	\begin{itemize}
		\item[(i)] $\sup_{h\in G_\A} |K_\varphi(g,h)|\ll \|g\|_\A^{m}, \quad g\in G_\A$ with the implied constant independent of $g$. 
		\item[(ii)] For any $g\in G_\A$, the function $K_\varphi(g,-)$ belongs to $L^{q}(Z_\A G_F\bsl G_\A)$ for any $q>0$.
		
	\end{itemize}
\end{prop}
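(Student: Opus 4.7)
The plan is to bound the series termwise by one of the form $\sum_\gamma \|\gamma h\|_\A^{-m}$, to which Lemma \ref{L3} applies directly.

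From the submultiplicativity $\|xy\|_\A \le \|x\|_\A\,\|y\|_\A$ of the adelic norm, applied to $\gamma h = g\cdot(g^{-1}\gamma h)$, we obtain
$$\|\gamma h\|_\A \le \|g\|_\A\,\|g^{-1}\gamma h\|_\A,$$
and hence the pointwise inequality
$$\|g^{-1}\gamma h\|_\A^{-m} \le \|g\|_\A^{m}\,\|\gamma h\|_\A^{-m}.$$
Using the hypothesis $|\varphi(x)| \ll \|x\|_\A^{-m}$, together with the $Z_\A$-invariance of $\varphi$ that makes $\varphi(g^{-1}\gamma h)$ well-defined for $\gamma$ running over $Z_F\bsl G_F$, we arrive at
$$\sum_{\gamma\in Z_F\bsl G_F} |\varphi(g^{-1}\gamma h)| \ll \|g\|_\A^{m}\,\Xi_m(h).$$
Since $m>1$, Lemma \ref{L3} guarantees that $\Xi_m$ converges absolutely and locally uniformly, and is bounded on $G_\A$ by a constant independent of $h$. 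Coupled with the continuity of $g\mapsto \|g\|_\A^m$, this establishes both the absolute and locally uniform convergence of $K_\varphi(g,h)$ in the pair $(g,h)$, and assertion (i).

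For (ii), assertion (i) shows that $h\mapsto K_\varphi(g,h)$ is bounded on the quotient $Z_\A G_F\bsl G_\A$, which is canonically identified with $\PGL(2,F)\bsl \PGL(2,\A)$ and has finite volume by the classical reduction theory. A bounded measurable function on a finite-measure space lies in $L^q$ for every $q>0$, since
$$\tint_{Z_\A G_F\bsl G_\A}|K_\varphi(g,h)|^{q}\,\d h \;\le\; \|K_\varphi(g,\cdot)\|_\infty^{q}\,\vol(Z_\A G_F\bsl G_\A) \;<\; \infty,$$
which proves (ii). The only subtle point is the choice of direction in which submultiplicativity is applied: it must be arranged so that the translation by $g$ is absorbed into the prefactor $\|g\|_\A^m$ rather than into the summation variable, after which Lemma \ref{L3} finishes the argument with no further obstacle.
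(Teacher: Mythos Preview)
Your proof is correct and follows essentially the same approach as the paper: both use submultiplicativity to obtain $\sum_\gamma |\varphi(g^{-1}\gamma h)|\ll \|g\|_\A^m\,\Xi_m(h)$ and then invoke Lemma~\ref{L3} for the convergence and boundedness, with (ii) following from the finite volume of $Z_\A G_F\bsl G_\A$. The only cosmetic difference is that the paper separately argues locally uniform convergence by fixing a compact $U$ and noting $\|g^{-1}\gamma h\|_\A^{-m}\asymp \|\gamma\|_\A^{-m}$ for $(g,h)\in U\times U$, whereas you deduce it directly from the bound $\|g\|_\A^m\,\Xi_m(h)$ together with the local uniform convergence of $\Xi_m$ from Lemma~\ref{L3}.
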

\begin{proof}
	Let $U$ be a compact subset of ${\overline G}_\A$. Then 
	$$
	|\varphi(g^{-1}\gamma h)|\ll \|g^{-1}\gamma h\|_\A^{-m}\asymp \|\gamma\|_\A^{-m} ,\quad (g,h)\in U\times U,\,\gamma\in {\overline G}_F.$$
	From this, the series $\sum_{\gamma \in \overline{G}_F}|\varphi(g^{-1}\gamma h)|$ $(g,h\in U)$ is dominated by $\Xi_m(1_2)$, which is convergent if $m>1$ by Lemma~\ref{L3}.  
	
	(i) From $\|\gamma h\|_\A\leq \|g\|_\A\|g^{-1}\gamma h\|_\A$, we have
	\begin{align*}
		\sum_{\gamma \in {\overline G}_F} |\varphi(g^{-1}\gamma h)| \ll \sum_{\gamma \in {\overline G}_F}\|g^{-1}\gamma h\|_\A^{-m} 
		\leq \|g\|_\A^{m}\,\Xi_{m}(h)
	\end{align*}
	for any $g,\,h\in {\overline G}_\A$. Since $\Xi_m(h)$ is bounded in $h\in {\overline G}_\A$, we are done. 
	
	(ii) From (i), for a fixed $g$, the function $K_\varphi(g,h)$ in $h$ is bounded. Since $Z_\A G_F \bsl G_\A$ is of finite volume, $K_{\varphi}(g,-)\in L^{q}(Z_\A G_F \bsl G_\A)$ for any $q>0$. 
\end{proof}

\subsection{Matrix coefficients of the discrete series} \label{MCDS}
We recall a basic material from \cite[\S 11, 14]{KnightlyLi} to fix notation. For $l\in 2\N$, let $(\delta_l,D_l)$ be the discrete series representation of $\PGL(2,\R)$ with minimal $\SO(2)$-type $\tau_{l}$, where 
$\tau_l$ is the character of $\SO(2)$ given by $\tau_l(\begin{smallmatrix}
\cos \theta & \sin \theta \\
-\sin \theta & \cos \theta
\end{smallmatrix})=e^{il\theta}$. The matrix coefficient of $D_l$ is defined by $$
\Phi^{l}(g)=(\delta_l(g)v_{l}|v_{l}), \qquad g\in \PGL(2,\R),
$$
where $v_{l}$ is a unit vector belonging to $\tau_{l}$. It satisfies the conditions :
\begin{itemize}
	\item[(a)] $R({W})\Phi^{l}(g)=0$ with $W=\frac{1}{2}\left(\begin{smallmatrix} 1 & -i \\ -i & -1 \end{smallmatrix}\right)$, 
	\item[(b)] $\Phi^{l}(k gk')=\tau_{l}(k')\,\tau_{l}(k)\,\Phi^{l}(g)$ for $k,k'\in \SO(2)$, 
	\item[(c)] $\Phi^{l}(1_2)=1$.
\end{itemize}
These conditions uniquely determine the $C^\infty$-function $\Phi^{l}$ as 
\begin{align}
	\Phi^{l}(g)=\delta(\det g>0)\,(4\det g)^{l/2}
	\{(a+d)-i(b-c)\}^{-l}
	\quad \text{for}\,g=\left[\begin{smallmatrix} a & b \\ c & d \end{smallmatrix}\right]. 
	\label{DSMexplicitformula}
\end{align}

Let $v\in \Sigma_\infty$. We denote by $\Phi_v^{l}$ the function $\Phi^{l}$ on $G_v\cong \GL(2,\R)$. Then, 
\begin{align}
	|\Phi_v^{l}(g)|\ll \|g_v\|_v^{-l/2}, \quad g\in G_v.
	\label{MCDSest}
\end{align}

\begin{lem} \label{MATCOEFF-L1}
	Let $l\in 2\N$. Let $v\in \Sigma_\infty$ and $\varphi:G_v\rightarrow \C$ a $C^\infty$-function such that $\varphi(zgk)=\tau_{l}(k)\varphi(g)$ for $(z,k)\in Z_v \times \bK_v^{0}$ and such that $R(W_v)\varphi=0$. Then we have
	the relation
	$$
	\int_{Z_v\bsl G_v}\Phi_v^{l}(h_v)\,\overline{\varphi( h_v)}\,\d h_v=
	\tfrac{4\pi}{l-1}\,\overline{\varphi(1_2)}.$$
	
\end{lem}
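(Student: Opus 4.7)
The strategy is to exploit that the hypotheses on $\varphi$ mirror those of $\Phi_v^l$: both transform by $\tau_l$ on the right under $\bK_v^0$ and are annihilated by the weight-lowering operator $W_v$. This identifies $\varphi$ with a classical holomorphic function on $\mathbb{H}$, and the integral becomes a Bergman-type reproducing formula.

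By \eqref{DSMexplicitformula} the function $\Phi_v^l$ vanishes on the $\{\det g<0\}$ component, so the integration restricts to $Z_v\bsl G_v^+\cong \PSL(2,\RR)$. I would parameterize this by Iwasawa coordinates $g=n(x)a(y^{1/2})k(\theta)$ with $a(y^{1/2})=\diag(y^{1/2},y^{-1/2})$, setting $z:=x+iy\in\mathbb{H}$; the Haar measure becomes a known constant times $y^{-2}dx\,dy\,d\theta$ (fixed by $dh=d^\times t_1\, d^\times t_2$ and $\vol(\bK_v)=1$). A direct substitution in \eqref{DSMexplicitformula} gives $\Phi_v^l(n(x)a(y^{1/2})k(\theta))=2^l y^{l/2}(1-iz)^{-l}e^{il\theta}$, while the $\bK_v^0$-equivariance of $\varphi$ yields $\varphi(n(x)a(y^{1/2})k(\theta))=y^{l/2}F(z)e^{il\theta}$ for some $C^\infty$ function $F$ on $\mathbb{H}$; in particular $\varphi(1_2)=F(i)$. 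Translating $R(W_v)\varphi=0$ into these coordinates, using that $W_v$ lowers the $\bK_v^0$-weight by two and that after stripping $y^{l/2}$ its action on weight-$l$ functions becomes a multiple of $\partial_{\bar z}$, one concludes $\partial_{\bar z}F=0$, i.e., $F$ is holomorphic on $\mathbb{H}$.

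Since $\Phi_v^l\overline{\varphi}$ is right-$\bK_v^0$-invariant, the $\theta$-integration produces an explicit numerical factor $C_K$ and reduces the integral to
$$C_K\cdot 2^l\int_{\mathbb{H}}(1-iz)^{-l}\,\overline{F(z)}\,y^{l-2}\,dx\,dy.$$
Using $1-iz=-i(z-\bar i)$, the factor $(1-iz)^{-l}$ is, up to $i^{-l}$, the unnormalized weight-$l$ Bergman kernel on $\mathbb{H}$ at $w_0=i$; the normalized reproducing kernel $\tfrac{l-1}{4\pi}\bigl(\tfrac{z-\bar w}{2i}\bigr)^{-l}$ for the space of holomorphic functions with inner product $\int F\bar G\, y^{l-2}dx\,dy$ then evaluates the integral as a multiple of $F(i)=\varphi(1_2)$, and the constants combine into the claimed coefficient $4\pi/(l-1)$.

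The main obstacle is justifying the Bergman reproducing identity without any $L^2$ or growth hypothesis on $\varphi$. I would handle this by verifying the identity first on the distinguished case $\varphi=\Phi_v^l$ itself, where it reduces to $\|\Phi_v^l\|_{L^2(Z_v\bsl G_v)}^2=1/d(\delta_l)=4\pi/(l-1)$, i.e., Schur orthogonality for the discrete series $\delta_l$ with its standard formal degree. The extension to general $\varphi$ then follows once one checks that $(1-iz)^{-l}y^{l-2}$ is absolutely integrable on $\mathbb{H}$ for $l\ge 2$, so that the linear functional $F\mapsto\int_\mathbb{H}(1-iz)^{-l}\overline{F(z)}y^{l-2}dx\,dy$ is well-defined on all holomorphic $F$ and depends only on $F(i)$ (as seen directly by a Cayley transform to the unit disk followed by Cauchy's integral formula). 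Beyond this, the principal technical point is a careful bookkeeping of the measure and normalization constants (the $2^l$, the $i^{-l}$, and $C_K$).
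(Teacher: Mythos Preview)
Your approach is essentially correct and is a genuinely different route from the paper's. The paper argues representation-theoretically: it averages $\varphi$ over the \emph{left} $\bK_v^0$-action against $\tau_{-l}$ to produce $f_\varphi(h)=\vol(\bK_v^0)^{-1}\int_{\bK_v^0}\varphi(kh)\tau_{-l}(k)\,dk$, observes that $f_\varphi$ now satisfies both the left and right $\tau_l$-equivariance (condition (b) of \S\ref{MCDS}) as well as $R(W_v)f_\varphi=0$ (condition (a)), and invokes the uniqueness of such smooth functions to conclude $f_\varphi=\varphi(1_2)\Phi_v^l$ pointwise. Since $|\Phi_v^l|$ is left $\bK_v^0$-invariant, the original integral equals $\int\Phi_v^l\overline{f_\varphi}=\overline{\varphi(1_2)}\|\Phi_v^l\|_{L^2}^2=\tfrac{4\pi}{l-1}\overline{\varphi(1_2)}$. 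This avoids any coordinates and explains conceptually why the value depends only on $\varphi(1_2)$: only the left $\tau_l$-component of $\varphi$ survives the pairing, and that component is forced to be a scalar multiple of $\Phi_v^l$. Your Bergman-kernel computation gives the same content in classical dress, and the formal-degree formula you invoke is exactly the $\|\Phi_v^l\|_{L^2}^2$ computation the paper cites.

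One correction: your claimed justification that $(1-iz)^{-l}y^{l-2}$ is absolutely integrable on $\mathbb{H}$ for $l\ge 2$ is false. After integrating out $x$ one gets a constant times $\int_0^\infty (1+y)^{1-l}y^{l-2}\,dy$, whose integrand is $\sim y^{-1}$ as $y\to\infty$, so the kernel alone is \emph{not} in $L^1(\mathbb{H})$. Consequently the functional is not ``well-defined on all holomorphic $F$.'' The repair is simple and is already implicit in your last sentence: assume the original integral converges absolutely (this is how the lemma is used in the paper, and the paper's own Fubini step over $\bK_v^0$ needs it too). After Cayley transform the integral becomes $\int_{\mathbb{D}}\overline{G(w)}(1-|w|^2)^{l-2}\,dA(w)$; by Fubini in polar coordinates and the mean value property $\tfrac{1}{2\pi}\int_0^{2\pi}G(re^{i\theta})\,d\theta=G(0)$, this equals $\overline{G(0)}\int_{\mathbb{D}}(1-|w|^2)^{l-2}\,dA=\tfrac{\pi}{\,l-1\,}\overline{G(0)}$, which is the Bergman reproducing identity at the origin without any growth hypothesis beyond convergence of the given integral.
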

\begin{proof}
	Set $f_\varphi(h_v)=\vol(\bfK_v^{0})^{-1}\int_{\bK_v^{0}}\varphi(kh_v)\,\tau_{-l}(k)\,\d k$ for $h_v\in G_v$, where $\bK_v^{0}\cong \SO(2)$. Then $f_\varphi$ satisfies the same conditions (a) and (b) as $\Phi^{l}$. By the uniqueness of the solution to the differential equation (a), there is a constant $C$ such that $f_{\varphi}(h_v)=C\,{{\Phi_v^{l}(h_v)}}$ for all $h_v\in G_v$. By putting $h_v=1_2$, we have $C=f_{\varphi}(1_2)=\varphi(1_2)$. Substituting the equation $\overline{f_{\varphi}(h)} = \overline{ \varphi(1_2)}\, \overline{\Phi_v^{l}(h)}$, we have
	\begin{align*}
		\tint_{Z_v\bsl G_v}\Phi_v^{l}(h_v)\,\overline{\varphi(h_v)}\,\d h_v
		&=\tint_{Z_v\bsl G_v}\Phi_v^{l}(h_v)\,\overline{f_\varphi(h_v)}\,\d h_v=\overline{\varphi(1_2)}\,\int_{Z_v\bsl G_v}|\Phi_v^{l}(h_v)|^2\,\d h_v
		=\tfrac{4\pi}{l-1}\,\overline{\varphi(1_2)}
	\end{align*}
	as desired. We refer to \cite[Proposition 14.4]{KnightlyLi} for the last identity.
\end{proof}

\subsection{Green functions on $\GL(2)$ over non-Archimedean local fields} \label{Greenftn} 
Let $v\in \Sigma_\fin$. For $s\in \C$ such that $\Re(s)>1$, it is easy to show that there exists a unique function $\varphi:G_v\rightarrow \C$ with the properties:
\begin{itemize}
	\item[(a)] $\varphi(zkgk')=\varphi(g)$ for any $k,k'\in \bK_v$, $z\in Z_v$.
	\item[(b)] $[R(\bT_v-(q_v^{(1+s)/2}+q_v^{(1-s)/2})\, 1_{\bK_v})]\,\varphi={\rm ch}_{Z_v\bK_v}$.
	\item[(c)] $\varphi(g)=O(1)$, $g\in G_v$. 
\end{itemize}
Here $\bT_v=\vol(\bfK_v; dg_v)^{-1} {\rm ch}_{\bfK_v[\begin{smallmatrix}
	\varpi_v & 0 \\ 0 & 1
	\end{smallmatrix}]\bfK_v}$ and $1_{\bK_v}=\vol(\bK_v;\d g_v)^{-1}{\rm ch}_{\bK_v}$. We denote this function $\varphi$ by $\Phi_v(s;-)$. By the Cartan decomposition, it is explicitly written as  
\begin{align*}
	\Phi_v\left(s;\left[\begin{smallmatrix} \varpi_v^{n_1} & 0 \\ 0 & \varpi_v^{n_2} \end{smallmatrix}\right]\right)
	=(q_v^{-(s+1)/2}-q_v^{(s+1)/2})^{-1}\,q_v^{-\frac{(s+1)}{2}(n_1-n_2)}, \quad (n_1,n_2\in \Z,\,n_1\geq n_2). 
\end{align*}
We also have
\begin{align}
	\Phi_v(s;g)=(q_v^{-(s+1)/2}-q_v^{(s+1)/2})^{-1}\,\{|\det g|_v^{-1}\,\max(|a|_v,|b|_v,|c|_v,|d|_v)^2\}^{-(s+1)/2} \quad \text{for}\,
	g=\left[\begin{smallmatrix} a & b \\ c & d \end{smallmatrix}\right]. 
	\label{nonarchGreenftn}
\end{align}
Let $U$ be a compact set of the half plane $\Re(s)>1$. Then, from the formula above,
\begin{align}
	|\Phi_v(s;g_v)|\ll_{U} \|g_v\|_v^{-(\Re(s)+1)/2}, \quad g_v\in G_v,\,s\in U.
	\label{GreenFtnEst}
\end{align}

\begin{lem}\label{Greenequation}
	Let $\varphi : G_v \rightarrow \CC$ be a smooth function such that $\varphi(zgk)=\varphi(g)$
	for all $g \in G_v$, $z\in Z_v$ and $k \in \bfK_v$. Then we have
	\begin{align*}
		\int_{Z_v\bsl G_v} \Phi_v(s;g_v)\,[R(\bT_v-(q_v^{(1+s)/2}+q_v^{(1-s)/2})\,\1_{\bK_v})\,\varphi](g_v)\,\d g_v = \vol(Z_v\bsl Z_v\bfK_v)\varphi(1_2)
	\end{align*}
	as long as the integral of the left-hand side is absolutely convergent.
\end{lem}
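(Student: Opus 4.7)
The plan is to exploit the self-adjointness of the spherical Hecke operators on the space of $Z_v$-invariant functions in order to transfer the operator $R(\bT_v-(q_v^{(1+s)/2}+q_v^{(1-s)/2})\1_{\bK_v})$ from $\varphi$ to $\Phi_v(s;-)$, and then apply the defining property (b) of the Green function. Write $\phi=\bT_v-(q_v^{(1+s)/2}+q_v^{(1-s)/2})\1_{\bK_v}$ and $\phi^{\vee}(h)=\phi(h^{-1})$. Under the hypothesis of absolute convergence, Fubini's theorem together with the substitution $g\mapsto gh^{-1}$ yields
$$
\int_{Z_v\bsl G_v}\Phi_v(s;g)\,[R(\phi)\varphi](g)\,\d g
=\int_{Z_v\bsl G_v}\varphi(g)\,[R(\phi^{\vee})\Phi_v(s;-)](g)\,\d g.
$$

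Next I will verify that $R(\phi^{\vee})\Phi_v(s;-)=R(\phi)\Phi_v(s;-)$. For the $\1_{\bK_v}$-summand this is immediate since $\bK_v$ is a group, so $\1_{\bK_v}^{\vee}=\1_{\bK_v}$. For the $\bT_v$-summand, conjugation by the Weyl element $w=\bigl[\begin{smallmatrix}0&1\\1&0\end{smallmatrix}\bigr]\in\bK_v$ gives $w\,\diag(\varpi_v,1)\,w^{-1}=\diag(1,\varpi_v)=\varpi_v\diag(\varpi_v^{-1},1)$, so
$$
\bK_v\diag(\varpi_v^{-1},1)\bK_v=\varpi_v^{-1}\,\bK_v\diag(\varpi_v,1)\bK_v,
$$
which means $\bT_v^{\vee}(h)=\bT_v(\varpi_v h)$. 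Since $\Phi_v(s;-)$ is $Z_v$-invariant by condition (a), substituting $h\mapsto\varpi_v^{-1}h$ in the right convolution gives $R(\bT_v^{\vee})\Phi_v(s;-)=R(\bT_v)\Phi_v(s;-)$.

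Invoking the defining property (b) of the Green function now yields $R(\phi^{\vee})\Phi_v(s;-)=R(\phi)\Phi_v(s;-)=\mathrm{ch}_{Z_v\bK_v}$, so that the integral reduces to
$$
\int_{Z_v\bsl G_v}\varphi(g)\,\mathrm{ch}_{Z_v\bK_v}(g)\,\d g.
$$
Finally, using the right-$\bK_v$-invariance and $Z_v$-invariance of $\varphi$, this last integral evaluates to $\vol(Z_v\bsl Z_v\bK_v)\,\varphi(1_2)$, which is the claimed identity. The only non-formal point in the argument is the self-adjointness verified in the middle paragraph; this is a standard feature of the spherical Hecke algebra on $\PGL_2(F_v)$, and the rest is a clean application of Fubini and property (b).
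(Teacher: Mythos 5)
Your argument is correct, and it is the natural one: the paper states Lemma~\ref{Greenequation} without proof, and the intended justification is exactly your unfolding — move the compactly supported element $\phi=\bT_v-(q_v^{(1+s)/2}+q_v^{(1-s)/2})\1_{\bK_v}$ onto the Green function by the substitution $g\mapsto gh^{-1}$, observe that $\phi^{\vee}$ acts on the $Z_v$-invariant, bi-$\bK_v$-invariant function $\Phi_v(s;\cdot)$ in the same way as $\phi$ (your Weyl-element computation, i.e.\ self-adjointness of the Hecke operator on $\PGL_2(F_v)$), and then apply the defining property (b) together with the right-$\bK_v$- and $Z_v$-invariance of $\varphi$. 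The only point worth noting is that your Fubini step really uses absolute convergence of the double integral $\iint |\Phi_v(s;g)\,\varphi(gh)\,\phi(h)|\,\d h\,\d g$, equivalently integrability of each finite translate $g\mapsto\varphi(gx_j)$ against $|\Phi_v(s;\cdot)|$, which is formally slightly stronger than absolute convergence of the left-hand side alone (cancellation in $R(\phi)\varphi$ could in principle mask divergence of the individual terms); this is harmless in the paper, since in the application (Proposition~\ref{SPECTRALEXP}) $\varphi$ is bounded and $\Phi_v(s;\cdot)$ is integrable on $Z_v\bsl G_v$ for $\Re(s)>1$ by \eqref{GreenFtnEst}, so the stronger condition holds.
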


\subsection{The kernel functions} \label{The kernel function} 
Let $\fn\subset \cO$ be a non-zero ideal and $l=(l_v)_{v\in \Sigma_\infty}$ an element of $(2\N)^{\Sigma_\infty}$ such that $l_v\geq 4$ for all $v\in \Sigma_\infty$. Let $S\subset \Sigma_\fin$ be a finite set. For any $\bs=(s_v)_{v\in S} \in \fX_S$, set
\begin{align*}
	\Phi^{l}(\fn|\bs;g)=\{\prod_{v\in \Sigma_\infty}\Phi_v^{l_v}(g_v)\}\,\{\prod_{v\in S}\Phi_v(s_v;g_v)\}\,\{\prod_{v\in \Sigma_\fin-S}\Phi_{\fn,v}(g_v)\}, \quad g=(g_v)\in G_\A,
\end{align*}
where $\Phi_{\fn,v}={\rm ch}_{Z_v\bK_0(\fn\cO_v)}$. Define a function on $G_\A\times G_\A$ by 
\begin{align}
	{\bf \Phi}^{l}(\fn|\bs;g,h)&=\sum_{\gamma\in Z_F\bsl G_F} \Phi^{l}(\fn|\bs;g^{-1}\gamma h), \quad g,h\in G_\A.
	\label{Kernel}
\end{align}

\begin{lem}\label{CONVERGE}
	Let $\cU$ be a compact subset of $\{\bfs\in \fX_S|\,\Re(s_v)>1\,(\forall v\in S)\,\}$. Then the series \eqref{Kernel} converges absolutely and 
	\begin{align*}
		\sum_{\gamma \in Z_F\bsl G_F} | \Phi^{l}(\fn|\bs;g^{-1}\gamma h)|\ll_{\cU} y(g)^{m/2}, \qquad (g,h)\in \fS^{1}\times G_\A,\, \bs\in \cU 
	\end{align*}
	with $m$ being the minimum of the set $\{l_v|\,v\in \Sigma_\infty\}\cup\{\Re(s_v)+1|\,\bs\in \cU\,\}$.  
\end{lem}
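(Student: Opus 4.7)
The plan is to reduce the claim to Proposition~\ref{CONVERGENCE-L} via a global pointwise bound of the form $|\Phi^{l}(\fn|\bs;g)|\ll_{\cU}\|g\|_\A^{-m/2}$ valid for every $g\in G_\A$ and every $\bs\in\cU$. I establish this local-by-local, using the three estimates already at hand. For $v\in\Sigma_\infty$, \eqref{MCDSest} gives $|\Phi_v^{l_v}(g_v)|\ll\|g_v\|_v^{-l_v/2}$. For $v\in S$, \eqref{GreenFtnEst} gives $|\Phi_v(s_v;g_v)|\ll_{\cU}\|g_v\|_v^{-(\Re(s_v)+1)/2}$, with the implied constant uniform in the compact parameter set. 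For $v\in\Sigma_\fin-S$, the factor $\Phi_{\fn,v}={\rm ch}_{Z_v\bK_0(\fn\cO_v)}$ is supported inside $Z_v\bK_v$, on which $\|g_v\|_v=1$, so the pointwise inequality $\Phi_{\fn,v}(g_v)\leq\|g_v\|_v^{-m/2}$ is trivial (equality on the support, $0\leq\text{positive}$ off it).

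Since $\|g_v\|_v\geq 1$ at every place and the exponents $l_v/2$ and $(\Re(s_v)+1)/2$ are all $\geq m/2$ by the very definition of $m$, each of the local bounds above can be uniformly weakened to $\|g_v\|_v^{-m/2}$. Taking the product over $v\in\Sigma_F$ (which converges because the $v\in\Sigma_\fin-S$ factors are bounded by $1$ and the remaining ones come from a finite set) yields the desired global majorant $\|g\|_\A^{-m/2}$.

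The hypothesis $l_v\geq 4$ and the compactness of $\cU$ inside $\{\Re(s_v)>1\}$ force $m\geq 2+\delta$ for some $\delta=\delta(\cU)>0$, in particular $m/2>1$. Proposition~\ref{CONVERGENCE-L}, applied with $\varphi=\Phi^{l}(\fn|\bs;\cdot)$ (which is $Z_\A$-invariant because each of its local factors is $Z_v$-invariant) and with its exponent taken to be our $m/2$, then gives the absolute convergence of the series \eqref{Kernel} together with the uniform bound
$$\sum_{\gamma\in Z_F\bsl G_F}|\Phi^{l}(\fn|\bs;g^{-1}\gamma h)|\ll_{\cU}\|g\|_\A^{m/2},\qquad h\in G_\A,\ \bs\in\cU.$$
Restricting $g$ to the Siegel domain $\fS^{1}$ and substituting the equivalence $\|g\|_\A\asymp y(g)$ recalled in \S2.1 converts this into the asserted estimate $\ll_{\cU}y(g)^{m/2}$.

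There is no real obstacle in the argument; the only two points worth watching are the uniformity in $\bs\in\cU$ (already built into the statement of \eqref{GreenFtnEst}) and the independence of the implied constant from the ideal $\fn$, which is automatic because $\Phi_{\fn,v}\leq 1$ everywhere and $\fn$ affects only the supports, not the sizes, of the local factors.
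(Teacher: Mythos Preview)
Your proof is correct and follows exactly the approach sketched in the paper: combine the local bounds \eqref{MCDSest} and \eqref{GreenFtnEst} with the trivial bound on $\Phi_{\fn,v}$ to get $|\Phi^{l}(\fn|\bs;g)|\ll_{\cU}\|g\|_\A^{-m/2}$, then feed this into Proposition~\ref{CONVERGENCE-L} and use $\|g\|_\A\asymp y(g)$ on $\fS^1$. The paper's own proof is simply the one-line citation of these three inputs, and you have spelled out the details accurately.
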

\begin{proof}
	This follows from \eqref{MCDSest}, \eqref{GreenFtnEst} and Proposition~\ref{CONVERGENCE-L}. 
\end{proof}
Let ${\mathcal A}_{\rm cusp}(l,\fn)$ denote the space of all those functions $\varphi\in C^{\infty}(Z_\A G_F\bsl G_\A)[\tau_l]^{\bK_0(\fn)}$ square integrable on $Z_\A G_F\bsl G_\A$ and $R(W_v)\,\varphi=0$ for all $v\in \Sigma_\infty$. Then ${\mathcal A}_{\rm cusp}(l,\gn)$ is contained in $L_{\rm cusp}^{2}(Z_\AA G_F\bsl G_\AA)$ by Wallach's criterion \cite[Theorem 4.3]{Wallach}. It is known that the space ${\mathcal A}_{\rm cusp}(l,\fn)$ is finite dimensional.

\begin{prop} \label{SPECTRALEXP}
	Suppose $\bs\in \fX_S=\prod_{v \in S}\CC/4\pi i(\log q_v)^{-1}\ZZ$ with $\Re(s_v)>1$ for all $v\in S$ and $l_v\geq 4$ for all $v\in \Sigma_\infty$. 
	\begin{itemize}
		\item[(1)]
		For any $g\in G_\A$, the function $h\mapsto {\bf \Phi}^{l}(\fn|\bs;g,h)$ belongs to the space ${\mathcal A}_{\rm cusp}(l,\fn)$. 
		\item[(2)] 
		Let $\cB(l,\fn)$ be an orthonormal basis of the space ${\mathcal A}_{\rm cusp}(l,\fn)$ endowed with the induced $L^2$-inner product from $L^2(Z_\A G_F\bsl G_\A)$. Let $(\nu_v(\varphi))_{v \in S} \in \fX_S$ for each $\varphi\in \cB(l,\fn)$ be a family such that
		\begin{align}\label{SPECTRALEXP-1}
			R(\bT_v)\,\varphi=(q_v^{(1+\nu_v(\varphi))/2}+q_v^{(1-\nu_v(\varphi))/2})\,\varphi
		\end{align}
		for all $v\in S$. Then, ${\bf \Phi}^{l}(\fn|\bfs;g,h)$ with $g,h\in G_\A$ equals 
		\begin{align}
			&C(l,\fn)\,
			\sum_{\varphi\in \cB(l,\fn)}{{\overline{\varphi(g)}}\,\varphi(h)}{\prod_{v\in S} \{(q_v^{(1+\nu_v(\varphi))/2}+q_v^{(1-\nu_v(\varphi))/2})-
				(q_v^{(1+s_v)/2}+q_v^{(1-s_v)/2})\}^{-1}},
			\label{SPECTRALEXP-f1}
		\end{align}
		where $C(l,\fn)$ is the constant \eqref{SPEC-const}. 
	\end{itemize}
\end{prop}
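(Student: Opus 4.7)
Part (1) is a direct verification of the conditions defining ${\mathcal A}_{\rm cusp}(l,\fn)$, culminating in an appeal to Wallach's criterion; Part (2) then follows by expanding $h\mapsto\mathbf{\Phi}^{l}(\fn|\bfs;g,h)$ in the finite orthonormal basis $\cB(l,\fn)$ and evaluating the coefficient integrals via the local convolution identities of Lemmas~\ref{MATCOEFF-L1} and~\ref{Greenequation}.

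\textbf{Part (1).} The right-equivariance of $h\mapsto\mathbf{\Phi}^{l}(\fn|\bfs;g,h)$ is inherited place-by-place from the local factors: at $v\in\Sigma_\infty$, property (b) of $\Phi_v^{l_v}$ gives $\tau_{l_v}$-equivariance under $\bK_v^0$ and property (a) gives $R(W_v)\mathbf{\Phi}^{l}(\fn|\bfs;g,\cdot)=0$; at $v\in S$, $\Phi_v(s_v;-)$ is bi-$\bK_v$-invariant; at $v\in\Sigma_\fin-S$, $\Phi_{\fn,v}={\rm ch}_{Z_v\bK_0(\fn\cO_v)}$ is right $\bK_0(\fn\cO_v)$-invariant. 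Together with the manifest left $G_F$- and $Z_\A$-invariance of the sum over $\gamma$, these give all group-theoretic conditions in ${\mathcal A}_{\rm cusp}(l,\fn)$. Smoothness and absolute convergence come from Lemma~\ref{CONVERGE}. Square integrability, in fact membership in $L^q$ for every $q>0$, follows from Proposition~\ref{CONVERGENCE-L}(ii) applied to $\Phi^{l}(\fn|\bfs;-)$, whose estimate $|\Phi^{l}(\fn|\bfs;g)|\ll\|g\|_\A^{-m}$ with $m>1$ is immediate from \eqref{MCDSest}, \eqref{GreenFtnEst}, and the support of $\Phi_{\fn,v}$. Wallach's criterion (\cite[Theorem 4.3]{Wallach}) then upgrades this to cuspidality.

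\textbf{Part (2).} Since ${\mathcal A}_{\rm cusp}(l,\fn)$ is finite-dimensional, Part (1) yields the expansion
$$\mathbf{\Phi}^{l}(\fn|\bfs;g,h)=\sum_{\varphi\in\cB(l,\fn)}a_\varphi(g)\,\varphi(h),\qquad a_\varphi(g)=\int_{Z_\A G_F\bsl G_\A}\mathbf{\Phi}^{l}(\fn|\bfs;g,h)\,\overline{\varphi(h)}\,dh.$$
Unfolding the $\gamma$-sum against the $\varphi$-integral and substituting $h=gy$ gives $a_\varphi(g)=\int_{Z_\A\bsl G_\A}\Phi^{l}(\fn|\bfs;y)\,\overline{\varphi(gy)}\,dy$, which is evaluated as an iterated integral against the factorization $\Phi^{l}(\fn|\bfs;-)=\otimes_v\Phi_v$. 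At each $v$, the function $\psi^g_v(y_v):=\varphi(gy_v)$ inherits from $\varphi$ its right-equivariance, the annihilation $R(W_v)\psi^g_v=0$ at archimedean $v$, and the Hecke eigenvalue $q_v^{(1+\nu_v(\varphi))/2}+q_v^{(1-\nu_v(\varphi))/2}$ at $v\in S$; this eigenvalue is real because $R(\bT_v)$ is self-adjoint on $L^2_{\rm cusp}(Z_\A G_F\bsl G_\A)$, so $\overline{\psi^g_v}$ is also a $\bT_v$-eigenfunction with the same eigenvalue. Consequently Lemma~\ref{MATCOEFF-L1} contributes $\frac{4\pi}{l_v-1}\overline{\varphi(g)}$ at $v\in\Sigma_\infty$, while Lemma~\ref{Greenequation} contributes
$$\frac{\vol(Z_v\bsl Z_v\bK_v)\,\overline{\varphi(g)}}{(q_v^{(1+\nu_v(\varphi))/2}+q_v^{(1-\nu_v(\varphi))/2})-(q_v^{(1+s_v)/2}+q_v^{(1-s_v)/2})}$$
at $v\in S$; the remaining finite places contribute $\vol(Z_v\bsl Z_v\bK_v)\overline{\varphi(g)}$ and $\vol(Z_v\bsl Z_v\bK_0(\fn\cO_v))\overline{\varphi(g)}$ by the respective right-invariance of $\overline\varphi$. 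The identity $\vol(Z_v\bsl Z_v\bK_v)=q_v^{-d_v}$, giving $\prod_{v\in\Sigma_\fin}\vol(Z_v\bsl Z_v\bK_v)=D_F^{-1}$, combined with $[\bK_v:\bK_0(\fn\cO_v)]=1+q_v$ for $v\in S(\fn)$, collapses the global product to $C(l,\fn)\,\overline{\varphi(g)}\,\prod_{v\in S}\{\cdots\}^{-1}$ with $C(l,\fn)$ as in \eqref{SPEC-const}, yielding \eqref{SPECTRALEXP-f1}.

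\textbf{Main obstacle.} The subtle point is cuspidality in Part (1): the bound from Lemma~\ref{CONVERGE} is only a growth estimate on a Siegel domain, so rapid decay is not visible directly from the defining sum, and a direct constant-term calculation would be cumbersome in the present adelic setting. Wallach's criterion is essential as it trades cuspidality for the infinitesimal condition $R(W_v)=0$, which our kernel satisfies automatically by property (a) of the archimedean matrix coefficients.
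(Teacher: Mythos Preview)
Your proposal is correct and follows essentially the same route as the paper: Part (1) via the local equivariance properties, Proposition~\ref{CONVERGENCE-L}(ii), and Wallach's criterion; Part (2) via unfolding the inner product $\langle\mathbf{\Phi}^{l}(\fn|\bfs;g,\cdot)\,|\,\varphi\rangle$ to $\int_{Z_\A\bsl G_\A}\Phi^{l}(\fn|\bfs;y)\,\overline{\varphi(gy)}\,dy$ and evaluating place-by-place using Lemmas~\ref{MATCOEFF-L1} and~\ref{Greenequation} together with the volume identities $\vol(Z_v\bsl Z_v\bK_v)=q_v^{-d_v}$ and $[\bK_v:\bK_0(\fn\cO_v)]=1+q_v$. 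The paper's proof differs only in presentation: it first integrates out the trivial $\Sigma_\fin-S$ factor via the support of $\Phi_{\fn,v}$ before applying the lemmas at $\Sigma_\infty$ and $S$, rather than framing everything as an iterated local integral.
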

\begin{proof}
	The assertion (1) follows from Proposition~\ref{CONVERGENCE-L} (ii) together with the properties (a) and (b) of $\Phi_v^{l_v}$ in \S~\ref{MCDS}. 
	
	(2)	To simplify notation, we write ${\Phi}(h)$ in place of ${\Phi}^{l}(\fn|\bfs;h)$ in this proof. For any $\varphi \in \cB(l,\fn)$, from the well-known computation (\cite{GelbartJacquet}), we have 
	\begin{align*}
		\langle {\bf \Phi}^l(\gn|\bfs; g,\bullet)|\varphi\rangle_{L^2}
		=\tint_{Z_\A\bsl G_\A} {\Phi}(h)\,\bar \varphi(gh)\,\d h.
	\end{align*}
	Let us write a general element $h\in G_\A$ as $h=h_{\infty} h_{S} h^{S,\infty}$ with $h_\infty\in G_\infty$, $h_S\in G_S=\prod_{v\in S}G_v$ and $h^{S,\infty}\in \prod_{v\in \Sigma_\fin-S}G_v$. Suppose ${\Phi}(h)\not=0$. Then $h^{S,\infty}\in \prod_{v\in \Sigma_\fin-S}\bK_0(\fn\cO_v)$, and thus $\varphi(gh)=\varphi(g h_\infty h_S)$. Hence the $h^{S,\infty}$-integral yields the volume factor $$\prod_{v\in \Sigma_\fin-S}\vol(Z_v\bsl Z_v \bK_0(\fn\cO_v);\d h_v)
	=[\bK_\fin:\bK_0(\fn)]^{-1}\,\prod_{v\in \Sigma_\fin-S}\vol(Z_v\bsl Z_v\bK_v;\d h_v).
	$$ 
	The $h_\infty$-integral and $h_{S}$-integral are computed by Lemmas~\ref{MATCOEFF-L1} and \ref{Greenequation}, respectively. To complete the proof, we use $\vol(Z_v\bsl Z_v\bK_v)=q_v^{-d_v}$ for $v\in \Sigma_\fin$ and the relation $\nu_v(\bar\varphi)=\pm\nu_v(\varphi)$ in $\fX_v=\CC/4\pi i(\log q_v)^{-1}\ZZ$ for $v\in S$ which results from the self-adjointness of $\bT_v$. 
\end{proof}

\noindent
{\bf Remark} : Let $\pi$ be the cuspidal automorphic representation generated by
$\varphi \in \cB(l,\gn)$. Then, $\nu_v(\varphi)=\pm\nu_v(\pi)$ for any $v \in S$.
It is known to be purely imaginary by Blasius \cite{Blasius}.
We do not need this fact in this paper.

\section{Smoothed convolution product: The spectral side}

\subsection{Smoothed Eisenstein series} 

Let 
$$
E(z;g)=\sum_{\gamma \in B_F\bsl G_F} y(\gamma g)^{\frac{z+1}{2}}, \quad \Re(z)>1,\,g\in G_\A
$$
be the $\bK$-spherical Eisenstein series on $G_\A$. As a function in $z$, it has a meromorphic continuation to $\C$, holomorphic on $\Re(z)\geq 0$ away from the simple pole at $z=1$ and satisfying the functional equation $E^*(-z;g)=E^{*}(z;g)$, where
$$
E^{*}(z;g)=\Lambda_F(z+1)\,E(z;g),  \quad \Lambda_F(z)=D_F^{z/2}\zeta_F(z). 
$$
We have the functional equation $\Lambda_F(1-z)=\Lambda_F(z)$ and the Fourier expansion 
\begin{align}
	E^{*}(z;g)&=\Lambda_F(-z)\,y(g)^{(1+z)/2} +\Lambda_F(z)\,y(g)^{(1-z)/2}
	+\Lambda_F(-z)\,\sum_{a\in F^\times} 
	W_{\psi}\left(z; \left[\begin{smallmatrix} a & 0 \\ 0 & 1 \end{smallmatrix} \right] g\right), 
	\label{EisFexp}
\end{align}
where $W_\psi(z)$ is the global Whittaker function defined as 
$$
W_{\psi}(z;g)=\int_{N_\A} y\left(w_0\left[\begin{smallmatrix} 1 & x \\ 0 & 1 \end{smallmatrix}\right] g \right)^{(z+1)/2}\,\psi(-x)\d x, \quad g\in G_\A.
$$
We have the product formula $W_\psi(z;g)=\prod_{v\in \Sigma_F}W_v(z;g_v)$, where $W_v(z;g_v)$ is the $\bK_v$-invariant Whittaker function on $G_v$ determined by 
\begin{align}
	W_v\left(z;\left[\begin{smallmatrix} t & 0 \\ 0 & 1 \end{smallmatrix}\right]\right)=\zeta_{F_v}(z+1)^{-1}\delta(\varpi_v^{d_v}\,t\in \cO_v)\,
	|\varpi_v^{d_v}t|_v^{1/2}\,\tfrac{|\varpi_v^{d_v+1} t|_v^{z/2}-|\varpi_v^{d_v+1} t|_v^{-z/2}}{q_v^{-z/2}-q_v^{z/2}}, \quad t\in F_v^\times
	\label{nonarchLocWhitt}
\end{align}
if $v\in \Sigma_\fin$ and by 
\begin{align}
	W_v\left(z;\left[\begin{smallmatrix} t & 0 \\ 0 & 1 \end{smallmatrix}\right]\right)=\zeta_{F_v}(z+1)^{-1}\,2\,|t|_v^{1/2}\,K_{z/2}(2\pi |t|_v), \quad t\in F_v^\times
	\label{archLocWhitt}
\end{align}
if $v\in \Sigma_\infty$. We need the uniform estimate of the Eisenstein series:


\begin{lem}\label{Eisunifest}
	For any $0<\s_1<\s_2$, there exists $m>0$ such that,
	for any element $D$ of the universal enveloping algebra of $G(F_\infty)\cap G_\A^1$, it holds that 
	$$|(z-1)R(D)\,E^*(z;g)|\ll_D y(g)^{m},\quad \Re(z) \in [\s_1,\s_2],\,g\in \fS^{1}.
	$$
\end{lem}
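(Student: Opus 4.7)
The plan is to apply the Fourier expansion \eqref{EisFexp}, which provides the meromorphic continuation of $E^*(z;g)$ to $\CC$ and decomposes it into its constant term along $B$ plus the sum of non-trivial Whittaker coefficients. I will bound the two pieces separately.

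For the constant term $(z-1)\{\Lambda_F(-z)\,y(g)^{(1+z)/2}+\Lambda_F(z)\,y(g)^{(1-z)/2}\}$: this is holomorphic on the strip $\sigma_1\le \Re(z)\le \sigma_2$, since the only pole of $\Lambda_F(z)$ in this strip, at $z=1$, is cancelled by the factor $(z-1)$, and $\Lambda_F(-z)$ is already regular there because $\sigma_1>0$. The right regular action of $D$ in the universal enveloping algebra acts on $y(g)^{(1\pm z)/2}$ through multiplication by a polynomial in $z$, so this contribution is bounded by $y(g)^{(1+\sigma_2)/2}$ times a constant depending on $D$ (and implicitly on $\sigma_1,\sigma_2$), using that $y(g)$ is bounded below on $\fS^1$.

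For the Whittaker piece $\Lambda_F(-z)\sum_{a\in F^\times} R(D)\,W_\psi(z;\mathrm{diag}(a,1)g)$, one invokes the product formula together with the explicit local expressions \eqref{nonarchLocWhitt} and \eqref{archLocWhitt}. At each finite place, $W_v(z;\mathrm{diag}(t,1))$ is supported on the lattice $\varpi_v^{-d_v}\cO_v$ and is polynomially bounded on its support, restricting the sum over $a$ to a fractional ideal of $F$ depending on $g_{\mathrm{fin}}$. At each archimedean place, $W_v$ is the Macdonald Bessel function $K_{z/2}(2\pi|\cdot|_v)$ up to gamma factors, which together with its $R(D)$-derivatives obeys the classical asymptotics $K_\nu(x)\sim\sqrt{\pi/(2x)}\,e^{-x}$ uniformly for $\nu$ in compact subsets of $\CC$, giving super-polynomial decay in the archimedean argument. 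Putting $g$ in Iwasawa form $g=\mathrm{diag}(b_1,b_2)\,n\,k$ with $\prod_v|b_1^{(v)}/b_2^{(v)}|_v=y(g)$, a standard AM-GM/theta-series estimate on the resulting lattice sum gives $O_N(y(g)^{-N})$ for every $N>0$; this is simply the well-known rapid decay of the cuspidal part of an automorphic form on a Siegel set. The overall factor $\Lambda_F(-z)$ is bounded on compacta of the strip.

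The principal technical step is the uniform exponential decay of the archimedean Whittaker functions together with their derivatives under $R(D)$, uniformly in $z$ on the strip; this follows from classical asymptotics for $K_\nu$ and $\partial^{k}K_\nu/\partial\nu^{k}$ on compact subsets of the parameter plane, but some care is required to convert the individual archimedean decay rates into decay in the global invariant $y(g)$. Combining the two bounds yields the lemma with $m=(1+\sigma_2)/2$ (or any slightly larger constant absorbing the polynomial loss from $R(D)$).
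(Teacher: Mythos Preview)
Your approach via the Fourier expansion \eqref{EisFexp} is different from the paper's, which instead invokes the Godement--Jacquet/Tate construction: one realizes $R(D)E^*(z;g)$ as (a constant multiple of) $D_F^{z/2}E(g.D\Phi,|\,|_\A^{z/2},|\,|_\A^{-z/2})$ for the standard Schwartz--Bruhat function $\Phi=\otimes_v\Phi_v^0$ on $\A^2$, and then reads off the moderate-growth bound from the theta-integral representation \cite[(19.4)]{Jacquet} via $\theta^0(|\,|_\A^{(z+1)/2},g.\Phi)$. That route sidesteps the term-by-term analysis of Whittaker functions entirely and yields uniformity in $z$ over the whole strip almost for free, since in the theta integral only $|t|_\A^{(\Re(z)+1)/2}$ carries any $z$-dependence.

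Your argument is workable in principle but has a gap as written: the lemma requires uniformity in $z$ over the full strip $\Re(z)\in[\s_1,\s_2]$ with $\Im(z)$ unbounded, whereas you only invoke the Bessel asymptotics ``uniformly for $\nu$ in compact subsets of $\CC$'' and assert that ``$\Lambda_F(-z)$ is bounded on compacta of the strip''. To repair this you need three additional observations: (i) the inequality $|K_\nu(x)|\le K_{|\Re\nu|}(x)$ for $x>0$, which follows from $K_\nu(x)=\int_0^\infty e^{-x\cosh t}\cosh(\nu t)\,dt$ and makes the archimedean Whittaker bound uniform in $\Im(z)$; (ii) by the functional equation $\Lambda_F(-z)=\Lambda_F(1+z)$, the prefactor $\Lambda_F(-z)\prod_v\zeta_{F_v}(z+1)^{-1}$ in front of the normalized Whittaker sum collapses to $D_F^{(1+z)/2}$, hence is bounded on the strip; and (iii) $(z-1)\Lambda_F(\pm z)$ is itself bounded on the strip because the archimedean gamma factors in the completed zeta function decay exponentially in $|\Im(z)|$, which also absorbs the polynomial in $z$ coming from $R(D)$ acting on $y(g)^{(1\pm z)/2}$. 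With these points made explicit your Fourier-expansion argument goes through; the paper's theta-integral route simply avoids having to isolate them.
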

\begin{proof} 
	This follows from the materials proved in \cite[\S 19]{Jacquet}. Here is a brief indication of the proof. Let $\Phi$ be a Schwartz-Bruhat function on $\A^2$ defined as $\otimes_{v\in \Sigma_F}\Phi^0_v$ with $\Phi^{0}_v={\rm ch}_{\cO_v\oplus \cO_v}$ for $v\in \Sigma_\fin$ and $\Phi_v^{0}(x,y)=e^{-\pi(x^2+y^2)}$ for $v\in \Sigma_\infty$. By the multiplication from the right, $G(F_\infty)$ acts on $\A^2$ and hence on the space of Schwartz-Bruhat functions; by the induced action of the universal enveloping algebra of $G(F_\infty)$, we can define the derivative $D\Phi$ of $\Phi$. Let $D$ be as in the lemma. Then $D_F^{z/2}E(g.D\Phi,|\,|_\A^{z/2},|\,|_\A^{-z/2})$ in \cite[\S 19]{Jacquet} coincides with our $R(D)E^{*}(z,g)$ up to a non-zero constant multiple; this is checked by a computation on their absolute-convergence region $\Re (z)>1$. The desired bound follows from a similar bound of $\theta^0(|\,|_\A^{(z+1)/2},g.\Phi)$ by the formula \cite[(19.4)]{Jacquet}.
\end{proof}

Let $\cB_1$ be the space of entire functions $\b(z)$ such that $\b(0)=\b(\pm1)=\b'(\pm1)=0$ and such that on any interval $[\s_1,\s_2]\subset \R$ and $N>0$, 
$$|\b(z)|\ll (1+|\Im (z)|)^{-N}, \quad \Re(z)\in [\s_1,\s_2].
$$ 
For $\beta\in \cB_1$ and $\s>0$, we set
\begin{align*}
	\cE_{\beta}^*(g)=\int_{L_\sigma} \beta(z)
	\,E^{*}(z;g)\,\d z, \quad g\in G_\A.
\end{align*}

\begin{prop} \label{SmoothEis}
	The contour integral $\cE_{\b}^*(g)$ converges absolutely and is independent of the choice of a contour $L_\s\,(\s>0)$. For any $N>0$, 
	\begin{align*}
		|\cE_{\b}^*(g)|\ll_{} y(g)^{-N}, \quad g\in \fS^{1}.
	\end{align*}
\end{prop}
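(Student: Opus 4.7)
The argument splits into three parts: absolute convergence, independence of the contour, and the rapid-decay estimate. For convergence, Lemma~\ref{Eisunifest} applied with the trivial differential operator gives $|(z-1)E^{*}(z;g)| \ll y(g)^{m}$ uniformly for $\Re(z)$ in any compact interval, and $E^{*}(z;g)$ has polynomial growth in $|\Im z|$ on vertical strips. Combined with the super-polynomial vertical decay of $\b \in \cB_{1}$, this makes $\b(z)E^{*}(z;g)$ absolutely integrable along every $L_{\sigma}$. The only singularity of $E^{*}(z;g)$ in $\Re(z) \geq 0$ is the simple pole at $z=1$, and it is cancelled by $\b(1) = 0$; hence $z \mapsto \b(z)E^{*}(z;g)$ is holomorphic on $\Re(z) > 0$. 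The rapid decay of $\b$ forces the horizontal sides of any bounding rectangle to vanish in the limit, so Cauchy's theorem gives independence of $\cE_{\b}^{*}(g)$ on $\sigma > 0$.

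For the rapid-decay bound on $g \in \fS^{1}$, I would substitute the Fourier expansion \eqref{EisFexp} into the integral and decompose
\begin{align*}
	\cE_{\b}^{*}(g) = \Jcal^{+}(g) + \Jcal^{-}(g) + \Jcal^{W}(g),
\end{align*}
where
\begin{align*}
	\Jcal^{+}(g) &= \int_{L_{\sigma}} \b(z)\,\Lambda_{F}(-z)\,y(g)^{(z+1)/2}\,dz, \\
	\Jcal^{-}(g) &= \int_{L_{\sigma}} \b(z)\,\Lambda_{F}(z)\,y(g)^{(1-z)/2}\,dz, \\
	\Jcal^{W}(g) &= \int_{L_{\sigma}} \b(z)\,\Lambda_{F}(-z) \sum_{a \in F^{\times}} W_{\psi}\!\left(z;\left[\begin{smallmatrix} a & 0 \\ 0 & 1 \end{smallmatrix}\right] g\right) dz.
\end{align*}
In $\Jcal^{+}$ the integrand is entire in $z$ because the simple poles of $\Lambda_{F}(-z)$ at $z = 0$ and $z = -1$ are killed by $\b(0) = 0$ and $\b(-1) = 0$. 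The product $\b(z)\Lambda_{F}(-z)$ decays super-polynomially in $|\Im z|$ via Stirling bounds on the archimedean $\Gamma$-factors of $\Lambda_{F}$, so I may shift the contour from $L_{\sigma}$ to $L_{-2N-1}$; the factor $|y(g)^{(z+1)/2}| = y(g)^{-N}$ then yields $|\Jcal^{+}(g)| \ll_{N} y(g)^{-N}$. An identical argument, shifting $\Jcal^{-}$ to $L_{2N+1}$ (the pole of $\Lambda_{F}(z)$ at $z = 1$ is killed by $\b(1) = 0$), gives $|\Jcal^{-}(g)| \ll_{N} y(g)^{-N}$.

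The Whittaker term $\Jcal^{W}$ is handled by pointwise decay rather than contour deformation. From \eqref{archLocWhitt} the archimedean factors involve $K_{z/2}(2\pi |t|_{v})$, which decays like $|t|_{v}^{-1/2}e^{-2\pi |t|_{v}}$ as $|t|_{v} \to \infty$, with only polynomial growth in $|\Im z|$ absorbed by $\b$. For $g \in \fS^{1}$ with $y(g)$ large, each term $W_{\psi}(z;\diag(a,1)g)$ decays exponentially in a positive power of $y(g)$, and the $a$-sum converges to a quantity rapidly decreasing in $y(g)$ uniformly in $z \in L_{\sigma}$; absolute convergence lets me interchange sum and integral to obtain $|\Jcal^{W}(g)| \ll_{N} y(g)^{-N}$ for every $N > 0$. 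The main technical obstacle will be verifying the vertical-growth bounds for $\Lambda_{F}$, the archimedean Whittaker functions, and the Eisenstein series with enough uniformity to rigorously justify both the contour shifts for $\Jcal^{\pm}$ and the sum-integral interchange in $\Jcal^{W}$; these depend on Stirling-type estimates for the $\Gamma$-factors and asymptotic expansions of $K_{\nu}(x)$ in both $x$ and $\nu$.
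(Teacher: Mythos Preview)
Your proof is correct and follows the same overall structure as the paper: split $\cE_\b^*$ into the two constant-term pieces and the non-constant (Whittaker) piece, then handle each separately. Your treatment of convergence, contour-independence, and the contour-shifting argument for $\Jcal^\pm$ is essentially identical to the paper's handling of its $I_\pm$.

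The one point of divergence is the Whittaker term. You propose to estimate $\Jcal^W$ directly from the explicit local Whittaker formulas \eqref{nonarchLocWhitt} and \eqref{archLocWhitt}, exploiting the exponential decay of $K_{z/2}$ at infinity and summing over $a\in F^\times$. This is a valid and more elementary route, but it requires you to work out uniform bounds on $K_{z/2}(2\pi|t|_v)$ in both variables and to control the lattice sum over $F^\times$ carefully (the non-archimedean support condition together with the product formula is what makes this converge, and the summation is not term-by-term exponential in $y(g)$ but only after regrouping). The paper sidesteps all of this by invoking \cite[Lemma~I.2.10]{MW}, a general principle that the non-constant term of an automorphic form of moderate growth is automatically of rapid decay on a Siegel set; combined with Lemma~\ref{Eisunifest} this gives $|E_{\rm NC}^*(z;g)|\ll_n y(g)^{-n}$ in one stroke, with no explicit Whittaker analysis. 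Your approach buys self-containment; the paper's buys brevity and avoids precisely the uniformity issues you correctly flag as the main technical obstacle.
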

\begin{proof} Due to $\b(1)=0$, the function $\b(z)E^{*}(z;g)$ is holomorphic on $\Re(z)>0$. Thus the first two assertions follow from Lemma~\ref{Eisunifest}. From Lemma~\ref{Eisunifest}, by means of \cite[Lemma I.2.10]{MW}, we can deduce the following estimate for the non-constant term of the Eisenstein series $E_{\rm NC}^*(z;g)=E^*(z;g)-\{\Lambda_F(-z)y(g)^{\frac{z+1}{2}}+\Lambda_F(z)y(g)^{\frac{-z+1}{2}}\}$:
	\begin{align}
		|E_{\rm NC}(z;g)|\ll_{n} y(g)^{-n}, \quad z\in \Tcal_\delta,\, g\in \fS^1,
		\label{SmoothEisEst2}
	\end{align}
	for arbitrary $n>0$. To argue, we write $\Ecal_\b^{*}(g)$ as a sum of the following three terms: 
	{\allowdisplaybreaks\begin{align*}
			I_{+}(g)&=\tint_{L_{3/2}}\b(z)\Lambda_{F}(-z)y(g)^{\frac{z+1}{2}}\d z, \quad
			I_{-}(g)=\tint_{L_{3/2}}\b(z)\Lambda_{F}(z)y(g)^{\frac{-z+1}{2}}\d z, \\
			I_{\rm NC}(g)& =\tint_{L_{3/2}}\b(z)\Lambda_F(-z)E_{\rm NC}(z;g)\,\d z.
	\end{align*}}By \eqref{SmoothEisEst2}, the integral $I_{\rm NC}(g)$ has the majorant $y(g)^{-N}$ on $\fS^1$ with an arbitrary large $N$. Due to $\b(0)=\b(\pm 1)=0$, the integrands of $I_{\pm}(g)$ are holomorphic on $\C$. Thus by shifting the contour, the integrals $I_{\pm}(g)$ are also shown to be bounded by $y(g)^{-N}$ on $\fS^1$. 
\end{proof}


\subsection{The smoothed convolution}
In this section, we fix $\bfs=(s_v)_{v \in S}\in \fX_{S}$ such that $\Re(s_v)>1$ for all $v\in S$, and consider 
\begin{align}
	\II(\bfs, \b)=\int_{Z_\A G_F\bsl G_\A} {\bf\Phi}^{l}(\fn|\bfs;g,g)\cE_\b^{*}(g)\,\d g,\quad \b\in \cB_1. 
	\label{IIb}
\end{align}
From Proposition~\ref{SmoothEis} and Lemma~\ref{CONVERGE}, the integral converges absolutely. 

Let $\hat I_{\rm{cusp}}(\bfs, z)$ be a meromorphic function on $\C$ defined as
{\allowdisplaybreaks \begin{align}
		\hat I_{\rm cusp}(\bfs, z) 
		=&C(l,\fn)\sum_{\pi \in \Pi_{\rm{cus}}(l,\fn)}
		\frac{\PP_{E^*(z)}(l,\fn;\pi)}{\prod_{v\in S} \{(q_v^{(1+\nu_v(\pi))/2}+q_v^{(1-\nu_v(\pi))/2})-(q_v^{(1+s_v)/2}+q_v^{(1-s_v)/2})\}},
		\label{Icusp0sz}
\end{align}}where $C(l,\fn)$ is the constant \eqref{SPEC-const} and 
\begin{align}
	\PP_{E^*(z)}(l,\fn;\pi)=\sum_{\varphi\in \cB_\pi(l,\fn)}\langle E^{*}(z;-)|\varphi\,\bar\varphi\rangle_{L^2} 
	\label{AverageInnprod}
\end{align}
with $\cB_\pi(l,\fn)$ being an orthonormal basis of $V_\pi[\tau_l]^{\bK_0(\fn)}$. Let $I\subset \R$ be an open interval. A meromorphic function $f(z)$ on the vertical strip $\Re(z)\in I$ which is holomorphic away from possible poles on the real axis is said to be vertically of moderate growth on the strip $\Re(z)\in I$ if for any $[\s_1,\s_2]\subset I$ there exists $N_1>0$ such that $|f(z)|\ll (1+|\Im (z)|)^{N_1}$ on any $\Tcal_\delta=\{z\in \C|\Re(z)\in [\s_1,\s_2],\,|\Im (z)|\geq \delta\}$. For example, Lemma~\ref{Eisunifest} shows that the functions $z\mapsto \langle E^*(z)|\varphi\,\bar\varphi\rangle_{L^2}$ with $\varphi\in \Bcal_\pi(l,\fn)$ are vertically of moderate growth on $\C$. Since $\Pi_{\rm{cus}}(l,\fn)$ and $\Bcal_\pi(l,\fn)$ are finite sets, we see that the function $\hat I_{\rm{cusp}}(\bfs, z)$ is also vertically of moderate growth on $\C$ and is holomorphic away from the possible simple poles at $z=0, \pm 1$. By the spectral expansion given in Proposition~\ref{SPECTRALEXP} (2), we have 
\begin{align}
	\II(\bfs, \b)=&\int_{L_\s}\b(z) \hat I_{\rm cusp}(\bfs, z)\,dz\quad (\s\in\R)
	\label{SpectralsideThm}
\end{align}
for all $\b\in \cB_1$. 

Let us calculate \eqref{AverageInnprod} when $\fn$ is square-free. For $\pi\in \Pi_{\rm cus}(l,\fn)$, the conductor $\ff_{\pi}$ of $\pi$ divides the ideal $\fn$. Recall the basis $\cB_{\pi}(l,\fn)=\{\|\varphi_{\pi, \rho}\|_{L^2}^{-1}\, \varphi_{\pi,\rho}\}_{\rho\in \Lambda_{\pi}(\fn)}$ constructed in \cite{Sugiyama1} (see also \cite{Tsuzuki2015}), where the index set $\Lambda_{\pi}(\gn)$ consists of all the mappings $\rho : \Sigma_F \rightarrow \{0,1\}$ such that $\rho(v)=0$ for all $v \in \Sigma_{F}-S(\gn\ff_{\pi}^{-1})$ and such that 
\begin{align*}
	\int_{F\bsl \AA}\varphi_{\pi,\rho}([\begin{smallmatrix}
		1&x\\0&1
	\end{smallmatrix}]g)\psi(-x)dx=\prod_{v \in \Sigma_{F}}\phi_{\rho(v),v}(g_v),
\end{align*}
where $\phi_{j,v}$ are $\psi_v$-Whittaker functions; for $v\in \Sigma_\infty$, $\phi_{0,v}$ is given as 
$$
\phi_{0, v}(\begin{smallmatrix}
t&0\\0&1
\end{smallmatrix})=\delta(t>0)\,2|t|_v^{l_v/2}\,e^{-2\pi t},\qquad t \in F_v^{\times},
$$
and for $v\in \Sigma_\fin$, $\phi_{0,v}$ and $\phi_{1,v}$ will be recalled in the proof of Lemma~\ref{Rankin integral of new form}. By the unfolding procedure, the Rankin-Selberg integral
$$\langle E(2s-1;-)| \varphi_{\pi,\rho}\,\overline{\varphi_{\pi,\rho}}\rangle_{L^2}=\int_{Z_{\AA}G_F\bsl G_{\AA}}E(2s-1;g) \varphi_{\pi,\rho}(g)\overline{\varphi_{\pi,\rho}(g)}dg$$
for the cusp forms $\varphi_{\pi,\rho}$ is shown to be decomposed into the product of $Z_v(s,\phi_{\rho(v),v},\phi_{\rho(v),v})$ over all $v \in \Sigma_F$, where
\begin{align}Z_v(s,\phi,\phi')=
	\int_{F_v^{\times}}\int_{\bfK_v}\phi([\begin{smallmatrix}
		t & 0 \\ 0 & 1
	\end{smallmatrix}]k)\overline{\phi'([\begin{smallmatrix}
			t & 0 \\ 0 & 1
		\end{smallmatrix}]k)}|t|_v^{s-1}dkd^{\times}t, \quad \phi, \phi' \in {\Wcal}(\pi_v,\psi_v)
	\label{LocalZeta}
\end{align}
is the local zeta integral. Recall the quantity $Q(\pi_v)$ defined by \eqref{Qpi}. For computations over a finite place $v$, we use the decomposition
\begin{align}
	\bK_v=\bK_0(\fp_v)\cup \bigcup_{\xi \in \cO_v/\fp_v}\left[\begin{smallmatrix} 1 & \xi \\ 0  & 1 \end{smallmatrix}\right] w_0 \bK_0(\fp_v) \quad \text{with $w_0=\left[\begin{smallmatrix}0 & -1 \\ 1 & 0 \end{smallmatrix}\right]$.}
	\label{KBruhat}
\end{align}

\begin{lem}\label{Rankin integral of new form}
	For any $v \in \Sigma_\infty$, we have
	$$Z(s,\phi_{0, v},\phi_{0, v}) = 2^{1-l_v}\frac{\Gamma_\RR(s)}{\Gamma_\RR(2s)}L(s, \pi_v;\Ad).$$
	For $v\in \Sigma_\fin$ with $c(\pi_v)=0$, we have
	{\allowdisplaybreaks\begin{align}
			Z(s, \phi_{0, v}, \phi_{0,v})& = q_v^{d_v(s-3/2)}\frac{\zeta_{F,v}(s)}{\zeta_{F,v}(2s)}L(s,\pi_v;\Ad), 
			\label{RINF-f1}
			\\
			Z(s, \phi_{1,v},\phi_{1,v})&= \{\frac{q_v^{1-s}+q_v^s}{1+q_v}-Q(\pi_v)^2\}Z(s,\phi_{0,v},\phi_{0,v}),
			\label{RINF-f2}
	\end{align}} where $Q(\pi_v)$ is the number defined in \eqref{Qpi}.
	For $v\in \Sigma_\fin$ with $c(\pi_v)=1$, we have 
	\begin{align}
		Z(s, \phi_{0,v}, \phi_{0,v}) = \vol(\bfK_0(\gp_v))q_v^s \times q_v^{d_v(s-3/2)} \frac{\zeta_{F,v}(s)}{\zeta_{F,v}(2s)}L(s,\pi_v; \Ad).
	\end{align}
\end{lem}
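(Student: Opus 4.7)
The plan is to handle the four formulas by a common template: first, exploit the invariance properties of the Whittaker vectors $\phi_{j,v}$ to reduce the $\bK_v$-integral in \eqref{LocalZeta} to (a sum of) torus integrals; second, substitute the explicit formula for the restriction of $\phi_{j,v}$ to the diagonal, obtaining a Mellin transform in $t$; third, match the resulting closed form to the local $L$-factor by a standard gamma-function or Euler-factor identity.

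For $v \in \Sigma_\infty$, the discrete series matrix coefficient extends from $\SO(2)$ to $\bK_v=\mathrm{O}(2)$ with trivial action on the quotient $\mathrm{O}(2)/\SO(2)$, so $|\phi_{0,v}|^2$ is right $\bK_v$-invariant; since $\vol(\bK_v)=1$, the $\bK_v$-integral disappears and the computation reduces to a standard gamma integral of the form $4(4\pi)^{-(l_v+s-1)}\Gamma(l_v+s-1)$. Comparison with the explicit gamma factors appearing in $L(s,\pi_v;\Ad)=\Gamma_\RR(s+1)\Gamma_\CC(s+l_v-1)$ via the Legendre duplication formula applied to $\Gamma_\RR(2s)$ yields the claimed identity. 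For $v \in \Sigma_\fin$ with $c(\pi_v)=0$, the spherical newform $\phi_{0,v}$ is $\bK_v$-fixed and the same collapse applies. Macdonald's formula expresses $\phi_{0,v}(\diag(\varpi_v^n,1))$ in terms of the Satake parameters $a_v^{\pm 1}$ (with the appropriate $d_v$-shift in the support due to the additive normalization of $\psi_v$), and summing the resulting geometric series produces the factorization $L(s,\pi_v;\Ad)\cdot \zeta_{F,v}(s)/\zeta_{F,v}(2s)$ up to the claimed power of $q_v^{d_v}$.

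The oldform formula \eqref{RINF-f2} requires more care. I would use the basis of $V_{\pi_v}^{\bK_0(\gp_v)}$ from \cite{Sugiyama1}, in which $\phi_{1,v}$ is expressed as a linear combination of $\phi_{0,v}$ and its right translate by $\diag(\varpi_v,1)$ chosen to be orthogonal to $\phi_{0,v}$ at the $\bK_0(\gp_v)$-level; the orthogonalization coefficient is controlled by $Q(\pi_v)$. Splitting the $\bK_v$-integral via the Bruhat decomposition \eqref{KBruhat} into $\bK_0(\gp_v)$ and the $w_0$-cell, each piece reduces to a torus integral of $\phi_{0,v}$-type by a translation argument, and assembling the Bruhat weights together with the cross-term of the orthogonalization produces both the factor $(q_v^{1-s}+q_v^s)/(1+q_v)$ and the subtractive correction $-Q(\pi_v)^2$. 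For $c(\pi_v)=1$, the newform $\phi_{0,v}$ is fixed only by $\bK_0(\gp_v)$, so the Bruhat splitting \eqref{KBruhat} is again essential: on $\bK_0(\gp_v)$ one recovers the volume factor times a torus integral (with the special-representation local $L$-factor $(1-q_v^{-s-1})^{-1}$), and on the $w_0$-cell the Atkin--Lehner symmetry of the conductor-$\gp_v$ newform converts the integrand back into a shifted diagonal integrand, producing the stated overall factor $q_v^s$. The principal obstacle is the third case: carefully verifying the coefficients in the oldform decomposition and aligning the normalization conventions of \cite{Sugiyama1} and \cite{Tsuzuki2015} with those of the present paper; the other three formulas are essentially direct applications of Macdonald's formula and standard gamma identities.
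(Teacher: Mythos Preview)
Your proposal is correct and matches the paper's approach: the paper defers the archimedean formula, \eqref{RINF-f1}, and the $c(\pi_v)=1$ formula to \cite[Lemma~6.4]{SugiyamaTsuzuki}, \cite[(2.30)]{Tsuzuki2015}, and \cite[Lemma~2.14]{Tsuzuki2015} respectively, and spells out only \eqref{RINF-f2}, where it expands $Z(s,\phi_{1,v},\phi_{1,v})$ bilinearly via $\phi_{1,v}=\pi_v(\diag(\varpi_v^{-1},1))\phi_{0,v}-Q(\pi_v)\phi_{0,v}$ and evaluates each of the four resulting terms using the Bruhat decomposition \eqref{KBruhat}, exactly as you outline. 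One minor slip: at an archimedean place $|\phi_{0,v}|^2$ is not literally right $\bK_v$-invariant (the reflection in $\mathrm{O}(2)\setminus\SO(2)$ sends the weight-$l_v$ line to the weight-$(-l_v)$ line), but after splitting $\bK_v$ into its two $\SO(2)$-cosets the two torus integrals agree via $t\mapsto -t$, so your reduction and the value $4(4\pi)^{-(l_v+s-1)}\Gamma(l_v+s-1)$ are both correct.
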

\begin{proof} If $v\in \Sigma_\infty$, the integral \eqref{LocalZeta} for $\phi_{0,v}$ is computed in the proof of \cite[Lemma 6.4]{SugiyamaTsuzuki}. (Note that $\phi_{0,v} (v \in \Sigma_{\infty})$ in \cite{SugiyamaTsuzuki} coincides with $\pi_v(\begin{smallmatrix}-1&0\\0&1\end{smallmatrix})\phi_{0,v}$ according to our notation).
	In the rest of the proof, let $v\in \Sigma_\fin$. Suppose $c(\pi_v)=0$. Then, $\pi_v$ is a unitarizable spherical representation of $\PGL(2,F_v)$. Thus $V_{\pi_v}^{\bK_v}=\C\phi_{0,v}$ and $V_{\pi_v}^{\bK_0(\fp_v)}=\C\phi_{0,v}+\C\phi_{1,v}$ with $
	\phi_{1,v}=\pi_v\left(\left[\begin{smallmatrix} \varpi_v^{-1} & 0 \\ 0 & 1 \end{smallmatrix} \right]\right)\phi_{0,v}-Q(\pi_v)\,\phi_{0,v}$. By \cite[(2.30)]{Tsuzuki2015}, we have the formula \eqref{RINF-f1}. From the formula of $\phi_{1,v}$, 
	\allowdisplaybreaks{\begin{align*}
			Z(s,\phi_{1,v}, \phi_{1,v}) = & Z(s,\pi_v(\left[\begin{smallmatrix}
				\varpi_v^{-1}&0\\0&1
			\end{smallmatrix}\right])\phi_{0,v}, \pi_v(\left[\begin{smallmatrix}
				\varpi_v^{-1}&0\\0&1
			\end{smallmatrix}\right])\phi_{0,v})
			-Q(\pi_v)Z(s, \phi_{0,v}, \pi_v(\left[\begin{smallmatrix}
				\varpi_v^{-1}&0\\0&1
			\end{smallmatrix}\right])\phi_{0,v})\\
			&-Q(\pi_v)Z(s,\pi_v(\left[\begin{smallmatrix}
				\varpi_v^{-1}&0\\0&1
			\end{smallmatrix}\right])\phi_{0,v}, \phi_{0,v})
			+Q(\pi_v)^2 Z(s, \phi_{0,v}, \phi_{0,v}).
		\end{align*}
	}As for the first term, by \eqref{KBruhat}, 
	\allowdisplaybreaks{\begin{align*}
			&Z(s,\pi_v(\left[\begin{smallmatrix}
				\varpi_v^{-1}&0\\0&1
			\end{smallmatrix}\right])\phi_{0,v}, \pi_v(\left[\begin{smallmatrix}
				\varpi_v^{-1}&0\\0&1
			\end{smallmatrix}\right])\phi_{0,v})\\
			=&\tint_{F_v^{\times}}\tint_{\bfK_{0}(\gp_v)}\phi_{0,v}([\begin{smallmatrix}
				t & 0 \\ 0 & 1
			\end{smallmatrix}]k[\begin{smallmatrix}
				\varpi_v^{-1}&0\\0&1
			\end{smallmatrix}])\overline{\phi_{0,v}([\begin{smallmatrix}
					t & 0 \\ 0 & 1
				\end{smallmatrix}]k[\begin{smallmatrix}
					\varpi_v^{-1}&0\\0&1
				\end{smallmatrix}])}|t|_{v}^{s-1}d^{\times}tdk\\
			& +\sum_{\xi\in\go_v/\gp_v}\tint_{F_v^{\times}}\tint_{\bfK_{0}(\gp_v)}\phi_{0,v}([\begin{smallmatrix}
				t & 0 \\ 0 & 1
			\end{smallmatrix}][\begin{smallmatrix}
				1&\xi\\0&1
			\end{smallmatrix}]w_0k
			[\begin{smallmatrix}
				\varpi_v^{-1}&0\\0&1
			\end{smallmatrix}])\overline{\phi_{0,v}([\begin{smallmatrix}
					t & 0 \\ 0 & 1
				\end{smallmatrix}][\begin{smallmatrix}
					1&\xi\\0&1
				\end{smallmatrix}]w_0k[\begin{smallmatrix}
					\varpi_v^{-1}&0\\0&1
				\end{smallmatrix}])}|t|_{v}^{s-1}d^{\times}tdk.
	\end{align*}}Since $\Ad(\left[\begin{smallmatrix} \varpi_v & 0 \\ 0 & 1 \end{smallmatrix}\right])\bK_0(\fp_v)\subset \bK_v$, the first integral of the right-hand side is seen to be equal to $\vol(\bfK_{0}(\gp_v)) q_{v}^{1-s}Z(s,\phi_{0,v},\phi_{0,v})$. Similarly, by the equation $\phi_{0,v}([\begin{smallmatrix}t & 0 \\ 0 & 1 \end{smallmatrix}][\begin{smallmatrix}
	1&\xi\\0&1 \end{smallmatrix}] g)=\psi_v(t\xi) \phi_{0,v}(
	[\begin{smallmatrix}t & 0 \\ 0 & 1\end{smallmatrix}]g)$ and by a simple variable change, each of $\xi$-terms is computed to be  $\vol(\bfK_0(\gp_v))q_v^{s-1}Z(s,\phi_{0,v},\phi_{0,v}).$ Thus, we have
	\begin{align*}
		&Z(s,\pi_v(\left[\begin{smallmatrix}
			\varpi_v^{-1}&0\\0&1
		\end{smallmatrix}\right])\phi_{0,v}, \pi_v(\left[\begin{smallmatrix}
			\varpi_v^{-1}&0\\0&1
		\end{smallmatrix}\right])\phi_{0,v}) = (1+q_v)^{-1}(q_v^{1-s}+q_{v}^s)Z(s,\phi_{0,v},\phi_{0,v}).
	\end{align*}
	Similarly, by the relation $\phi(\left[\begin{smallmatrix}
	t&0\\0&1
	\end{smallmatrix}\right])=\overline{\phi(\left[\begin{smallmatrix}
		t&0\\0&1
		\end{smallmatrix}\right])}$ which follows from the unitarity of $\pi_v$ and by \eqref{nonarchLocWhitt}, the integral $Z(s,\pi_v(\left[\begin{smallmatrix}\varpi_v^{-1}&0\\0&1
	\end{smallmatrix}\right])\phi_{0,v}, \phi_{0,v})$ is computed as 
	$$\vol(\bfK_0(\gp_v))(q_v^{1-s}+q_v)\times 
	q_v^{-1/2}(\a_v+\a_v^{-1})q_v^{d_v(s-3/2)}L(s,\pi_v;\Ad).
	$$
	Hence, we obtain
	$$
	Z(s,\pi_v(\begin{smallmatrix}
	\varpi_v^{-1}&0\\0&1
	\end{smallmatrix})\phi_{0,v}, \phi_{0,v}) = (1+q_v)^{-1}q_v^{1/2}(\a_v+\a_v^{-1})Z(s, \phi_{0,v},\phi_{0,v}).
	$$
	In a similar fashion, the equality $Z(s,\phi_{0,v}, \pi_v(\left[\begin{smallmatrix}
	\varpi_v^{-1}&0\\0&1
	\end{smallmatrix}\right])\phi_{0,v}) = Z(s,\pi_v(\left[\begin{smallmatrix}
	\varpi_v^{-1}&0\\0&1
	\end{smallmatrix}\right])\phi_{0,v}, \phi_{0,v})$ holds. All in all, we have the formula \eqref{RINF-f2}.
	
	For the last formula (i.e., $c(\pi_v)=1$), we refer to the proof of \cite[Lemma 2.14]{Tsuzuki2015}.
\end{proof} 
By using Lemma \ref{Rankin integral of new form} with \cite[Lemmas 2.4 and 2.14]{Tsuzuki2015} and \cite[Lemma 6.4]{SugiyamaTsuzuki} to compute the summands of \eqref{AverageInnprod}, we obtain 
{\allowdisplaybreaks 
	\begin{align*}& 
		\PP_{E^*(z)}(l,\gn;\pi)= \frac{D_{F}^{z-1/2}}{2\nr(\gf_{\pi})^{1/2-z/2}}\{\prod_{v \in S(\gn\ff_\pi^{-1})}
		\biggl(1+\frac{Q(I_v(|\,|_v^{z/2}))-Q(\pi_v)^2}{1-Q(\pi_v)^2}\biggr)\}
		\,\frac{\zeta_F\left(\tfrac{z+1}{2}\right)L\left(\tfrac{z+1}{2}, \pi; \Ad\right)}{L(1,\pi;\Ad)}.
\end{align*}}

\section{An overview of the geometric side} \label{overview of geom}

\subsection{Basic assumption} \label{BasicAssumption}
From this section on, until otherwise stated, we let $\fn$ be a non-zero ideal of $\cO$, $l=(l_v)_{v\in \Sigma_\infty}$ an element of $(2\N)^{\Sigma_\infty}$, and $S$ a finite set of $\Sigma_\fin$. We keep the following assumptions on $(\fn,S,l)$:
\begin{itemize}
	\item[(i)] The ideal $\fn\subset \cO$ is square-free.
	\item[(ii)] The two sets $S$, $S(\fn)$ are mutually disjoint.
	\item[(iii)] The weight $l=(l_v)_{v\in \Sigma_\infty}$ is large in the sense that ${\underline l}:=\min_{v\in \Sigma_\infty}l_v\geq 4$. 
\end{itemize}
Let $\Sigma_{\rm{dyadic}}=\{v\in \Sigma_\fin|\,|2|_v<1\,\}$ be the set of all the dyadic places in $\Sigma_\fin$. After \S~\ref{EFLOInt}, we further suppose 
\begin{itemize}
	\item[(iv)] $S\cup S(\fn)$ is disjoint from $\Sigma_{\rm dyadic}$. 
	\item[(v)] The place $2$ of $\Q$ splits completely in $F/\Q$. 
\end{itemize}
In this paper, $l$ and $S$ are fixed. Until \S \ref{MVT}, the ideal $\fn$ is also fixed. To simplify notation, $\Phi^{l}(\fn|\bfs,g)$ and ${\bf \Phi}^{l}(\fn|\bfs,g,h)$ are abbreviated to $\Phi(\bfs;g)$ and ${\bf \Phi}(\bfs;g,h)$, respectively. 

\subsection{An overview}
From the classification of conjugacy classes of $Z_F\bsl G_F$, we have
\begin{align}
	{\bf \Phi}(\bfs;g,g)=J_{\rm{id}}(\bfs; g)+J_{\rm{ell}}(\bfs; g)+J_{\rm{hyp}}(\bfs; g)+J_{\rm{unip}}(\bfs; g),
	\label{kernelftndec}
\end{align}
where the four terms on the right-hand side are defined as follows (\cite{GelbartJacquet}, \cite{JacquetZagier}). 
\begin{align}
	J_{\rm{id}}(\bfs; g)&={\Phi}(\bfs;1_2) \quad (\text{identity term}),\\
	J_{\rm{unip}}(\bfs; g)&=\sum_{\xi \in Z_F N_F\bsl G_F} {\Phi}(\bfs;g^{-1}\xi^{-1} \left[\begin{smallmatrix} 1 & 1 \\ 0 & 1 \end{smallmatrix}\right]\xi g) \quad (\text{the unipotent term}), 
	\label{Junip-g}
	\\
	J_{\rm{hyp}}(\bfs; g)&=\frac{1}{2}\sum_{\xi\in H_F\bsl G_F}\sum_{a\in F^{\times}-\{1\}}{\Phi}(\bfs;g^{-1}\xi^{-1} \left[\begin{smallmatrix} a & 0 \\ 0 & 1 \end{smallmatrix}\right]\xi g) \quad (\text{the $F$-hyperbolic term}), 
	\label{Jhyp-g}
	\\
	J_{\rm{ell}}(\bfs; g)&=\frac{1}{2}\sum_{E\in \fE}\sum_{\xi \in T_E \bsl G_{F}}\sum_{\gamma \in Z_F\bsl (T_E-Z_F)
	}{\Phi}(\bfs;g^{-1}\xi^{-1}\gamma \xi g) \quad (\text{the $F$-elliptic term}),
	\label{Jell-g}
\end{align}
where $\fE$ is the set of all quadratic division $F$-subalgebra $E \subset\Mat_2(F)$ and $T_E=E^\times$ viewed as an $F$-elliptic torus of $\GL(2)$. Then, from Lemma~\ref{CONVERGE} and Proposition~\ref{SmoothEis}, for each type of conjugacy classes $\natural \in \{\rm id, unip, hyp, ell\}$, we know that the integral 
\begin{align}
	\JJ_{\natural}(\bfs, \b)=\int_{Z_\A G_F\bsl G_\A}{\Ecal}_\b^{*}(g)\,J_{\natural}(\bfs; g)\,\d g
	\label{JJnatural-b}
\end{align}
is absolutely convergent for $\b\in \cB_1$. In the succeeding three sections, by computing this integral quite explicitly, we shall show the following:

\begin{thm}\label{hatJ*}
	For any $\bfs\in \fX_S$ lying on some domain $\min_{v\in S}\Re(s_v)>\s_0$, there exists a meromorphic function $\hat J_{\natural}(\bfs; z)$ in $z$ on a vertical strip $\Re(z) \in (\s_1,\s_2)$ containing $0<\Re(z)<1$, which is holomorphic away from $z=\pm 1,0$, is vertically of moderate growth, and satisfies  
	\begin{align}
		\JJ_\natural(\bfs, \b)=\int_{L_\s}\hat J_{\natural}(\bfs, z)\,\b(z)\,\d z, \quad \b\in \cB_1, \quad \s\in (\s_1,\s_2).
		\label{hatJ*-f1}
	\end{align}
\end{thm}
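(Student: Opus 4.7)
The plan is to substitute the contour representation of $\Ecal_\b^{*}$ from Proposition~\ref{SmoothEis} into the definition \eqref{JJnatural-b}, swap the contour integral over $L_\s$ with the spatial integral over $Z_\A G_F\bsl G_\A$, and read off
$$\hat J_\natural(\bfs,z)=\int^{*}_{Z_\A G_F\bsl G_\A}E^{*}(z;g)\,J_\natural(\bfs;g)\,\d g$$
as the resulting $z$-integrand, where the asterisk indicates that the pairing is to be interpreted through an appropriate unfolding for each conjugacy type. The decay bounds \eqref{MCDSest} and \eqref{GreenFtnEst} on $\Phi$, the vertical moderate-growth estimate for $E^{*}$ from Lemma~\ref{Eisunifest}, and the rapid decrease of $\Ecal_\b^{*}$ on Siegel domains from Proposition~\ref{SmoothEis} furnish the analytic tools to justify the manipulations.

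I would then treat the four types separately. The $F$-hyperbolic term \eqref{Jhyp-g} is unfolded against $H_F\bsl G_F$, converting the spatial integral into one over $Z_\A H_F\bsl G_\A$; for $\Re(s_v)$ sufficiently large, the unfolded double integral is absolutely convergent by \eqref{GreenFtnEst} and Lemma~\ref{Eisunifest}, so Fubini delivers $\hat J_{\rm hyp}(\bfs,z)$. The $F$-elliptic term \eqref{Jell-g} is unfolded against $T_E\bsl G_F$, reducing the spatial integral to a sum of Eisenstein periods along elliptic tori, which Hecke's formula (to be recalled in \S 7) evaluates in terms of $L(\cdot,\varepsilon_\Delta)$. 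The unipotent term \eqref{Junip-g} is unfolded against $Z_F N_F\bsl G_F$ and combined with the Whittaker--Fourier expansion \eqref{EisFexp} of $E^{*}$. For the identity term, $J_{\rm id}$ is constant in $g$, so $\JJ_{\rm id}(\bfs,\b)$ is simply $\Phi(\bfs;1_2)\int_{Z_\A G_F\bsl G_\A}\Ecal_\b^{*}(g)\,\d g$, which by \eqref{EisFexp} and the zeros of $\b$ at $z=\pm 1,0$ rearranges into a contour integral of the required form.

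The main obstacle will be absolute convergence of the unfolded double integrals for the unipotent and hyperbolic terms. Because $\Phi$ is not compactly supported at archimedean places---the matrix coefficient $\Phi_v^{l_v}$ only decays polynomially by \eqref{MCDSest}---the Poisson-summation trick of \cite{JacquetZagier} is unavailable; as advertised in the introduction, I would instead use a counter-shifting argument in the spirit of \cite{Shintani}, recasting the offending sums as contour integrals and moving the contour to a region of absolute convergence while tracking the residues. The vanishing conditions $\b(0)=\b(\pm 1)=\b'(\pm 1)=0$ defining $\cB_1$ are used precisely to move $L_\s$ across the potential simple or double poles of $\hat J_\natural(\bfs,z)$ at $z=0,\pm 1$---which originate from $\Lambda_F$-factors and from the poles of $E^{*}(z;g)$---without picking up residue contributions, so that \eqref{hatJ*-f1} holds for every $\s$ in the analyticity strip. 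The vertical moderate growth of $\hat J_\natural(\bfs,z)$ then follows from Lemma~\ref{Eisunifest} together with corresponding estimates on the orbital integrals; the detailed term-by-term analysis is deferred to Sections 5--7 and rests ultimately on the local orbital-integral computations of Section 10.
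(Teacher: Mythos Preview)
Your outline is correct and mirrors the paper's strategy: the theorem is proved across \S\S5--7 exactly by unfolding each $J_\natural$ against the relevant stabilizer, computing the resulting Eisenstein period (constant term for the unipotent case, Hecke's split-torus period $P_\b$ for the hyperbolic case, the elliptic-torus period for the elliptic case), and then justifying the swap with the contour integral via the local orbital-integral estimates and contour-shifting. Two minor refinements: for the unipotent term only the constant term of $E^*$ enters after the $N_F\bsl N_\A$-integration, not the full Whittaker expansion; and for the identity term the paper shows directly that $\JJ_{\rm id}(\bfs,\b)=0$, so one simply takes $\hat J_{\rm id}\equiv 0$.
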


Fix $\bfs\in \fX_S$ with $\min_{v\in S}\Re(s_v)>\s_0$. From \eqref{SpectralsideThm} and \eqref{kernelftndec},
$$
\int_{L_\s}\hat I_{\rm{cusp}}(\bfs,z)\b(z)\,\d z=\int_{L_\s}\{\hat J_{\rm id}(\bfs,z)+\hat J_{\rm unip}(\bfs,z)+\hat J_{\rm hyp}(\bfs,z)+\hat J_{\rm ell}(\bfs,z)\}\b(z)\d z
$$
for all $\b \in \cB_1$ and $\s \in (\s_1,\s_2)$. By applying Lemma~\ref{Fequal0} below, noting \eqref{SpectralsideThm} and \eqref{hatJ*}, we obtain the trace formula in the form $\hat I_{\rm cusp}(\bfs,z)=\sum_{\natural}\hat J_\natural(\bfs,z)$ as a meromorphic function on $\Re(z) \in (\s_1,\s_2)$.

\begin{lem} \label{Fequal0}
	Let $F(z)$ be a meromorphic function on a strip $\Re(z)\in (\s_1,\s_2)$, which is vertically of moderate growth and is holomorphic away from possible simple poles at $z=0,\pm 1$. Assume that there exists $\s \in (\s_1,\s_2)$, $\s\not=0,\pm 1$ such that $\int_{L_\s}F(z)\b(z)dz=0$ for all $\b \in \Bcal_1$. Then we have $F(z)=0$ identically on the strip $\Re(z)\in (\s_1,\s_2)$.
\end{lem}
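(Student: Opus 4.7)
The plan is to reduce the problem to Fourier (or bilateral Laplace) uniqueness after stripping the possible poles of $F$. Set $H(z):=z(z^{2}-1)^{2}\,F(z)$. Since $F$ has at worst simple poles at $z=0,\pm 1$, the factors $z$ and $(z\mp 1)^{2}$ kill these poles and $H$ is holomorphic on the whole strip $\Re(z)\in (\s_1,\s_2)$. As $F$ is vertically of moderate growth and $z(z^{2}-1)^{2}$ is a polynomial, $H$ is also vertically of moderate growth; that is, $|H(\s+it)|\ll (1+|t|)^{N_1}$ for some $N_1>0$.

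The key point is a concrete supply of test functions in $\Bcal_1$. For any $T>0$ and $\l\in\R$ define
$$
\b_{T,\l}(z)=z(z-1)^{2}(z+1)^{2}\,e^{Tz^{2}+i\l z}.
$$
This function is entire; the factor $z(z-1)^{2}(z+1)^{2}$ ensures $\b_{T,\l}(0)=\b_{T,\l}(\pm 1)=\b_{T,\l}'(\pm 1)=0$; and on any vertical strip $\Re(z)\in [\s_1',\s_2']$ one has $|e^{Tz^{2}+i\l z}|\leq e^{T(\s_2')^{2}-Tt^{2}+|\l|\cdot|t|}$, which decays faster than any polynomial in $t=\Im(z)$. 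Hence $\b_{T,\l}\in\Bcal_1$.

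Apply the hypothesis with $\b=\b_{T,\l}$. Parametrising the contour as $z=\s+it$ (using $\s\neq 0,\pm 1$ so no poles of $F$ meet $L_\s$) gives
$$
0=\int_{L_\s}F(z)\b_{T,\l}(z)\,dz=i\int_{-\infty}^{\infty} f_{T}(t)\,e^{-\l t}\,dt\cdot e^{i\l\s},\qquad f_{T}(t):=H(\s+it)\,e^{T(\s+it)^{2}}.
$$
For each fixed $T>0$, the bound $|f_{T}(t)|\ll (1+|t|)^{N_1}\,e^{T\s^{2}-Tt^{2}}$ shows that $f_T$ has Gaussian decay, so its bilateral Laplace transform $\l\mapsto \int f_T(t)e^{-\l t}\,dt$ is entire on $\CC$. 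Vanishing on $\R$ forces it to vanish identically; specialising $\l=i\xi$ shows that the Fourier transform of $f_T$ is zero, hence $f_T\equiv 0$ on $\R$ by Fourier uniqueness. Dividing by the nowhere-vanishing $e^{T(\s+it)^{2}}$ gives $H(\s+it)=0$ for all $t\in\R$; since $H$ is holomorphic on the strip and vanishes on the line $L_\s$, the identity principle yields $H\equiv 0$ throughout the strip, and therefore $F\equiv 0$.

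The only delicate point is exhibiting enough test functions in $\Bcal_1$ with super-polynomial decay on vertical lines. This is the role of $e^{Tz^{2}}$: it is entire of order $2$, decays like a Gaussian on every vertical strip (because $\Re(z^{2})=\Re(z)^{2}-\Im(z)^{2}$), and twisting by $e^{i\l z}$ produces a one-parameter family rich enough for Fourier uniqueness. Everything else is routine.
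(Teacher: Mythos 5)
Your proof is correct and follows essentially the same route as the paper: multiply by $z(z^2-1)^2$ to remove the possible poles, exploit the Gaussian factor $e^{Tz^2}$ to build admissible test functions in $\Bcal_1$, deduce vanishing of the resulting function on the line $L_\s$, and finish with the identity theorem. The only difference is the completeness step — you twist by $e^{i\l z}$ and invoke injectivity of the Fourier/Laplace transform, whereas the paper tests against $z(z^2-1)^2e^{z^2}P(z)$ for polynomials $P$ and uses $L^2$-density of Hermite-type functions — and both are equally valid.
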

\begin{proof}  Set $F_1(z)=z(z^2-1)^2F(z)$. The function $A(t)=F_1(\s+it)e^{\s^2+2i\s t-t^2/2}$ belongs to $L^2(\R)$ due to the assumption that $F(z)$ is vertically of moderate growth and to the presence of the factor $e^{-t^2/2}$. For $P(z)\in \C[z]$, by assumption, we see
	$$\tint_{-\infty}^{\infty}A(t) P(\s+it)e^{-t^2/2}dt = \tint_{L_\s}F(z)\times z(z^2-1)^2e^{z^2}P(z)\,dz=0.$$
	Thus $A=0$ in $L^2(\R)$. Since $A$ is continuous, we have the pointwise equality $F_1(\s+it)=0$ for all $t\in \R$. By the holomorphicity, $F_1(z)=0$ identically. \end{proof}

\section{The singular terms}\label{singular terms}
\subsection{The identity term} 
We shall see $\JJ_{\rm{id}}(\bfs, \b)=0$. Let $\s>1$. Since $\sum_{\gamma\in B_F\bsl G_F}y(\gamma g)^{\frac{\s+1}{2}}$ is convergent, 
{\allowdisplaybreaks\begin{align*}
		\tint_{Z_\A G_F\bsl G_\A} \cE_\b^{*}(g)\,\d g
		&=\tint_{Z_\A G_F\bsl G_\A} \sum_{\gamma \in B_F\bsl G_F} \left( 
		\tint_{L_\s} y(\gamma g)^{\frac{z+1}{2}}\,\Lambda_F(z+1)\,\b(z)\,\d z\right)\,\d g
		\\
		&=\tint_{Z_\A B_F\bsl G_\A} \left(\int_{L_\s} y(g)^{\frac{z+1}{2}}\,\Lambda_F(z+1)\,\b(z)\,\d z\right)\,\d g
		=S_0+S_\infty,
\end{align*}}where $S_0$ and $S_\infty$ denote the integrals over the subdomains $y(g)<1$ and $y(g)\geq 1$, respectively. By writing $g\in Z_\A\bsl G_\A$ as
$\left[\begin{smallmatrix} 1 & x \\ 0 & 1 \end{smallmatrix} \right]\left[\begin{smallmatrix} \underline{t}\,u & 0 \\ 0 & 1 \end{smallmatrix}\right] k$ with $x\in \A,\,t>0,\,u\in \A^1, \,k \in \bK$, and by changing the order of integrals, we have
\begin{align*}
	S_0&=\tint_{L_\s} \left(\int_{F\bsl \A}\d x\tint_{F^\times \bsl \A^1}\d^{1}u \tint_{0}^{1} t^{\frac{z-1}{2}}\d^\times t\right)\Lambda_F(z+1)\b(z)\,\d z
	=\vol(F^\times\bsl \A^1)\,\tint_{L_\s}\tfrac{2\Lambda_F(z+1)\b(z)}{z-1}\,\d z.
\end{align*}
As for $S_\infty$, shifting the contour $L_\s$ to $L_{-\s}$ and then in the same way as above, we have
\begin{align*}
	S_\infty&=\tint_{\substack{Z_\A B_F \bsl G_\A \\ y(g)>1}} 
	\left(\tint_{L_{-\s}}y(g)^{\frac{z-1}{2}}\,\Lambda_F(z+1)\b(z)\,\d z\right)\,\d g=\vol(F^\times \bsl \A^1)\tint_{L_{-\s}}\tfrac{-2\Lambda_F(z+1)\b(z)}{z-1}\,\d z.\end{align*}
Thus, by the residue theorem, we see that $\vol(F^\times \bsl \A^1)^{-1}(S_0+S_\infty)$ equals ${\rm{Res}}_{z=1}\tfrac{2\Lambda_F(z+1)\b(z)}{z-1}=2D_F^{1/2}\zeta_F(2)\b(1)=0$. By $\JJ_{\rm{id}}(\bfs,\b)=\Phi(\bfs;1_2)(S_0+S_\infty)$, we obtain $\JJ_{\rm{id}}(\bfs,\b)=0$. If we set $\hat J_{\rm{id}}(\bfs, z)=0$, then this shows that Theorem~\ref{hatJ*} is valid for the identity term. 

\subsection{The unipotent term}
By noting $\vol(N_F \bsl N_\AA)=1$, from \eqref{Junip-g} and \eqref{JJnatural-b}, we have
\allowdisplaybreaks{\begin{align*}
		\JJ_{\rm unip}(\bfs, \b) = \int_{Z_\AA N_\AA \bsl G_\AA}\Phi(\bfs;g^{-1} [\begin{smallmatrix}
			1 & 1 \\ 0 & 1
		\end{smallmatrix}] g)\Ecal_{\b, \circ}^{*}(g)dg.
	\end{align*}
}Here $\Ecal_{\b, \circ}^{*}(g)=\int_{N_F\bsl N_\AA}\Ecal_{\b}^{*}([\begin{smallmatrix}
1 & x \\ 0 & 1
\end{smallmatrix}]g)dx$ is the constant term which is computed as 
$$\Ecal_{\b,\circ}^{*}(g)=\int_{L_\s}\b(z)\{\Lambda_{F}(-z)y(g)^{(1+z)/2}+\Lambda_F(z)y(g)^{(1-z)/2}\}dz.$$
By substituting this to the formula of $\JJ_{\rm{unip}}(\bfs, \b)$ and by exchanging the order of integrals formally, we encounter the integral 
\begin{align}
	U(\bfs;w) = \int_{Z_\AA N_\AA \bsl G_\AA}\Phi(\bfs; g^{-1} [\begin{smallmatrix}
		1 & 1 \\ 0 & 1
	\end{smallmatrix}] g)y(g)^{w}dg.
	\label{UlSgn}
\end{align}
In order to analyze this, we consider the local integrals
$$U_v(w)= \int_{Z_vN_v\bsl G_v}\Phi_v(g^{-1} [\begin{smallmatrix}
1 & 1 \\ 0 & 1
\end{smallmatrix}] g)y(g)^{w}dg=\int_{F_v^{\times}}\int_{\bfK_v}\Phi_v(k^{-1}[\begin{smallmatrix}
1 & t \\ 0 & 1
\end{smallmatrix}]k)|t|_v^{1-w}d^{\times}tdk$$
for any $v \in \Sigma_F$, where $\Phi_v(g_v)$ denotes the $v$-th factor of $\Phi(\bfs;g)$.

\begin{lem} \label{UnipLocal-L1}
	\begin{itemize}
		\item[(i)] For any $v \in \Sigma_{\infty}$ and $w \in \CC$ such that $1-l_v<\Re(w)<1$,
		$$U_v(w)= \Gamma_\RR(1-w)\times2^{2-2w}\pi^{1-w/2}
		{\Gamma(l_v+w-1)}{\Gamma(w/2)^{-1}\Gamma(l_v)^{-1}}.$$
		\item[(ii)] For any $v \in S$ and $(s, w)\in \CC^2$ such that $\Re(s)>1$ and $-\Re(s)<\Re(w)<1$, we have
		$$U_v(w) = q_v^{-d_v/2}\times(-q_v^{-(s+1)/2})(1- q_v^{w-1})^{-1}(1-q_v^{-s-w})^{-1}.$$
		\item[(iii)] For any $v \in S(\gn)$ and $w \in \CC$ such that $\Re(w)<1$, we have
		$$U_v(w)=q_v^{-d_v/2}(1+q_v^{w})(1+q_v)^{-1}(1-q_v^{w-1})^{-1}.$$
		\item[(iv)] For any $v \in \Sigma_{\fin}-(S\cup S(\gn))$ and $w\in\CC$ such that $\Re(w)<1$, we have
		$$U_v(w) = {q_v^{-d_v/2}}({1-q_v^{w-1}})^{-1}.$$
	\end{itemize}
\end{lem}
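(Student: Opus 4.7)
The plan is to start from the second form of $U_v(w)$ written in the statement, obtained by parametrizing $Z_v N_v \backslash G_v$ by $\diag(t,1)\,k$ with $t\in F_v^{\times}$ and $k\in \bK_v$ and transferring the unipotent conjugation inside the integrand. Writing $u(t)=\left[\begin{smallmatrix}1&t\\0&1\end{smallmatrix}\right]$, in each of the four cases the $k$-integral will be evaluated using the symmetry properties of $\Phi_v$ and the $t$-integral will reduce to a Mellin-type integral over $F_v^{\times}$.

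For (iv), where $\Phi_v={\rm ch}_{Z_v\bK_v}$, conjugation by any $k\in\bK_v$ preserves $Z_v\bK_v$, so $k^{-1}u(t)k\in Z_v\bK_v$ is equivalent to $t\in\cO_v$; the $k$-integral equals $\vol(\bK_v)=1$ and the remaining geometric series gives $q_v^{-d_v/2}/(1-q_v^{w-1})$. Case (iii) is similar using the decomposition \eqref{KBruhat}: on $\bK_0(\gp_v)$ the condition becomes $t\in\cO_v$, while on each coset $\left[\begin{smallmatrix}1&\xi\\0&1\end{smallmatrix}\right]w_0\bK_0(\gp_v)$ the upper-triangular factor commutes past $u(t)$ and the identity $w_0^{-1}u(t)w_0=\left[\begin{smallmatrix}1&0\\-t&1\end{smallmatrix}\right]$ converts the condition into $t\in\gp_v$; weighting by the volumes $1/(1+q_v)$ and $q_v/(1+q_v)$ and collecting the two geometric sums yields the stated formula. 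For case (ii), condition (a) of \S~\ref{Greenftn} makes $\Phi_v(s;\cdot)$ bi-$\bK_v$-invariant, so the $k$-integral is trivial and the explicit formula \eqref{nonarchGreenftn} gives $\Phi_v(s;u(t))=(q_v^{-(s+1)/2}-q_v^{(s+1)/2})^{-1}\max(1,|t|_v)^{-(s+1)}$; splitting the $t$-integral at $|t|_v=1$ produces two geometric sums (whose convergence pins down the range $-\Re(s)<\Re(w)<1$), and an elementary telescoping collapses them into the claimed single product.

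For the archimedean case (i), the explicit formula \eqref{DSMexplicitformula} combined with the left-right $\SO(2)$-equivariance of $\Phi^{l_v}$ by $\tau_{l_v}$ shows that $\Phi^{l_v}(r_\theta^{-1}u(t)r_\theta)=\Phi^{l_v}(u(t))=4^{l_v/2}(2-it)^{-l_v}$ for $r_\theta\in\SO(2)$, while on the other component of $\bK_v=\operatorname{O}(2)$, represented by $m=\diag(1,-1)$, the relation $m^{-1}u(t)m=u(-t)$ gives $4^{l_v/2}(2+it)^{-l_v}$. Averaging over $\bK_v$ and using evenness in $t$ leaves
\begin{align*}
U_v(w)=4^{l_v/2}\int_{0}^{\infty}\{(2-it)^{-l_v}+(2+it)^{-l_v}\}\,t^{-w}\,dt,
\end{align*}
convergent for $1-l_v<\Re(w)<1$. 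I would then evaluate each term by inserting the Gamma integral representation $(2\pm it)^{-l_v}=\Gamma(l_v)^{-1}\int_0^{\infty}y^{l_v-1}e^{-(2\pm it)y}\,dy$, swapping integrals, and rotating the $t$-contour to produce $e^{\mp i\pi(w-1)/2}\Gamma(1-w)y^{w-1}$, after which the $y$-integral reduces to $2^{1-l_v-w}\Gamma(l_v+w-1)$. Adding the two contributions yields a factor $2\cos(\pi(w-1)/2)=2\sin(\pi w/2)$, which I convert to the target shape $\Gamma_{\RR}(1-w)\Gamma(l_v+w-1)/\Gamma(w/2)$ using the Euler reflection $\Gamma(w/2)\Gamma(1-w/2)=\pi/\sin(\pi w/2)$ together with the Legendre duplication $\Gamma((1-w)/2)\Gamma(1-w/2)=2^w\sqrt{\pi}\,\Gamma(1-w)$. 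The archimedean Mellin evaluation---tracking the phase factors $(\pm i)^{w-1}$ and repackaging the resulting trigonometric-gamma expression---is the one nontrivial step; the non-archimedean cases are essentially careful bookkeeping of finite geometric series.
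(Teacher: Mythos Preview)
Your proof is correct and follows essentially the same approach as the paper's. The only superficial differences are that in (i) you evaluate the Mellin integral via the Gamma representation and contour rotation whereas the paper cites the tabulated Beta integral \cite[3.194,3]{Gradshteyn} directly, and in (iii) you use the coset decomposition \eqref{KBruhat} while the paper phrases the same case split in terms of the $(2,1)$-entry of $k$; both routes lead to identical computations.
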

\begin{proof} 
	(i) By \eqref{DSMexplicitformula} and by using \cite[3.194, 3]{Gradshteyn}, we obtain
	\begin{align*}
		U_v(w) = & \tint_{F_v^{\times}}\left({1-it/2}\right)^{-l_v}|t|^{-w}dt
		= \tint_{0}^{\infty}\left({1-it/2}\right)^{-l_v} t^{-w}dt +\int_{0}^{\infty}\left({1+it/2}\right)^{-l_v} t^{-w}dt \\
		= & (-i/2)^{w-1}B(-w+1,l_v+w-1) + (i/2)^{w-1}B(-w+1,l_v+w-1) \\
		= & 2^{2-w}\sin\left(\tfrac{\pi}{2}w\right)B(-w+1,l_v+w-1),
	\end{align*}
	where $B(x,y)=\frac{\Gamma(x)\Gamma(y)}{\Gamma(x+y)}$ is the beta function. By using the formulas $\sin(\pi w/2)=\pi\Gamma(w/2)^{-1}\Gamma(1-w/2)^{-1}$ and $\Gamma(1-w)=2^{-w}\pi^{-1/2}\Gamma(1/2-w/2)\Gamma(1-w/2)$, we are done. 
	
	(ii) From \eqref{nonarchGreenftn}, $U_v(w) = ({q_v^{-(s+1)/2}-q_v^{(s+1)/2}})^{-1} \tint_{F_v^{\times}}\max(1,|t|_v)^{-(s+1)}|t|_v^{1-w} d^{\times}t$. We compute the integral by dividing the integral domain to $|t|_v>1$ and $|t|_v\leq 1$ to obtain the desired formula.
	
	(iii) Let $t\in F_v^{\times}$ and $k=[\begin{smallmatrix}
	k_{11}&k_{12}\\k_{21}&k_{22}
	\end{smallmatrix}] \in \bfK_v$.
	Then, $k^{-1} [\begin{smallmatrix}
	1 & t \\ 0 & 1
	\end{smallmatrix}] k \in Z_v\bfK_0(\gp_v)$ if and only if $t \in \go_v^{\times}, k_{21}\in \gp_v$ or $t \in \gp_{v}-\{0\}, k_{21} \in \go_v$.
	Thus, $U_v(w)$ is computed as the sum of the integral $\int_{t \in \go_v^{\times}}\int_{\bfK_{0}(\gp_v)} \d k\, |t|_v^{1-w}d^{\times}t=q_v^{-d_v/2}\vol(\bfK_0(\gp_v))$ and the integral $\int_{t \in \gp_v-\{0\}}\int_{\bfK_v} |t|_v^{1-w} d^{\times}tdk=q_v^{-d_v/2}\frac{q_v^{w-1}}{1-q_v^{w-1}}$. 
	
	(iv) Since $U_v(w)=\int_{\go_v-\{0\}}|t|_v^{1-w}d^{\times}t$, we have the formula immediately.
\end{proof}
Suppose that $\min_{v\in S}\Re(s_v)>1$, $-\min_{v\in S}\Re(s_v)<\Re(w)<0$ and 
$1-{\underline l}<\Re(w)<0$ are satisfied. Then from Lemma~\ref{UnipLocal-L1} we see that the integral \eqref{UlSgn} converges absolutely and have the formula
\begin{align*}
	U(\bfs;w)= & D_F^{-1/2}\zeta_F(1-w)
	\prod_{v \in \Sigma_{\infty}}2^{2-2w}\pi^{1-w/2}\frac{\Gamma(l_v+w-1)}{\Gamma(w/2)\Gamma(l_v)}\,
	\prod_{v \in S}
	\frac{-q_v^{-(s_v+1)/2}}{1-q_v^{-s_v-w}}
	\prod_{v \in S(\gn)}\frac{1+q_v^{w}}{1+q_v},
\end{align*}
which gives a meromorphic continuation of $w\mapsto U(\bfs;w)$ to $\CC$ for a fixed $\bfs$. By changing the order of integrals, we obtain
\begin{align*}
	\JJ_{\rm unip}(\bfs, \b) = & \int_{Z_\AA N_\AA\bsl G_\AA} \Phi^l(\bfs|\gn ;g^{-1}[\begin{smallmatrix}
		1 & 1 \\ 0 & 1
	\end{smallmatrix}]g) \Ecal_{\b,\circ}^{*}(g)dg \\
	=& \int_{L_{-\s}}\b(z)\Lambda_F(-z)U_{l,S,\gn}(\bfs;(1+z)/2)dz
	+\int_{L_{\s}}\b(z)\Lambda_F(z)U_{l,S,\gn}(\bfs;(1-z)/2)dz \\
	=& \int_{L_\s}\b(z)\hat J_{\rm unip}(\bfs, z)dz,
\end{align*}
where we set 
$$\hat J_{\rm unip}(\bfs,z)=D_{F}^{\frac{z}{4}}\,\zeta_{F}\left(\tfrac{1+z}{2}\right)\{\hat J_{\rm unip}^{0}(\bfs,z)+\hat J_{\rm unip}^{0}(\bfs,-z)\}
$$ 
with 
\begin{align}
	\hat J_{\rm unip}^{0}(\bfs, z)=D_{F}^{\frac{z-2}{4}}\Lambda_F(-z) 
	\prod_{v \in \Sigma_{\infty}}2^{1-z}\pi^{\frac{3-z}{4}}
	\frac{\Gamma\left(l_v+\frac{z-1}{2}\right)}{\Gamma\left(\tfrac{z+1}{4}\right)\Gamma(l_v)}\,
	\prod_{v \in S}
	\frac{-q_v^{-\frac{s_v+1}{2}}}{1-q_v^{-s_v-\frac{z+1}{2}}}
	\prod_{v \in S(\gn)}\frac{1+q_v^{\frac{z+1}{2}}}{1+q_v}.
	\label{Unipotent0Term}
\end{align}
Then we obtain Theorem~\ref{hatJ*} for the unipotent term with $\s_0=-\s_1=\s_2=2{\underline l}-1$.

\section{The $F$-hyperbolic term} \label{The $F$-hyperbolic term}

In this section, we study $\JJ_{\rm hyp}(\bfs, \b)$ to show Theorem~\ref{hatJ*} for the $F$-hyperbolic term.  
\subsection{Spherical functions}
First, we recall the explicit formula of $\bK_v$-invariant spherical functions on $H_v\bsl G_v$. Let $(w,z)\in \C^{2}$ and $v\in \Sigma_F$. By the Iwasawa decomposition $G_v=H_v N_v\bK_v$, we have a well-defined smooth function $\varphi_v^{(w,z)}:G_v\rightarrow \C$ such that 
\begin{align}
	&\varphi_v^{(w,z)}\left(\left[\begin{smallmatrix} t_1 & 0 \\ 0 & t_2\end{smallmatrix}\right]\,\left[\begin{smallmatrix} 1 & x \\ 0 & 1 \end{smallmatrix}\right]\,k\right)=|t_1/t_2|_v^{-w}\,\{A_v(w,z) \,h_v^{(w,z)}(x) +A_v(w,-z)\,h_v^{(w,-z)}(x)\}, \label{Hsphericalfnt} \\
	& \qquad (t_1,\,t_2\in F_v^{\times},\,x\in F_v,\,k\in \bK_v),\notag
\end{align}
where 
\begin{align*}
	h_v^{(w,z)}(x)&=
	\begin{cases} \max(1,|x|_v)^{-(z+2w+1)/2}, \quad &(v\in \Sigma_\fin), \\
		(1+x^2)^{-(z+2w+1)/4}{}_2F_1\left(\tfrac{z+2w+1}{4},\tfrac{z-2w+1}{4};\tfrac{z+2}{2};\tfrac{1}{x^2+1}\right), \quad &(v\in \Sigma_\infty),
	\end{cases}\\
	A_v(w,z)&={\zeta_{F_v}(1)\,\zeta_{F_v}(-z)}{\zeta_{F_v}((-z+2w+1)/2)^{-1}\,\zeta_{F_v}((-z-2w+1)/2)^{-1}}.
\end{align*}
The simple poles of the factor $\zeta_{F_v}(-z)$ of $A_v(w,z)$ may cause singularities of $\varphi_v^{(w,z)}$, but they are removable due to the obvious functional equation $\varphi_v^{(w,z)}(g)=\varphi_{v}^{(w,-z)}(g)$. For any $v \in \Sigma_\fin$, we collect several easily proved formulas for later use: 
\begin{align}
	&A_v(0,z)+A_v(0,-z)=1, \quad
	{\zeta_{F_v}(-z)}{\zeta_{F_v}\left(\tfrac{-z+1}{2}\right)^{-1}}
	+{\zeta_{F_v}(z)}{\zeta_{F_v}\left(\tfrac{z+1}{2}\right)^{-1}}=1, 
	\label{FFz1}
	\\
	&A_v(0,z)\,q_v^{\frac{z}{2}}{\zeta_{F_v}\left(\tfrac{1-z}{2}\right)}{\zeta_{F_v}\left(\tfrac{1+z}{2}\right)^{-1}}+A_v(0,-z)
	q_v^{\frac{-z}{2}}{\zeta_{F_v}\left(\tfrac{1+z}{2}\right)}{\zeta_{F_v}\left(\tfrac{1-z}{2}\right)^{-1}}
	=0. 
	\label{FAQ1}
\end{align}

\begin{lem} \label{FhypL1}
	The function $\varphi_v^{(w,z)}$ is the unique complex-valued smooth function on $G_v$ such that $\varphi_v^{(w,z)}(1_2)=1$ which satisfies the following conditions:  
	{\allowdisplaybreaks
		\begin{align}
			\varphi_v^{(w,z)}\left(\left[\begin{smallmatrix} t_1 & 0 \\ 0 & t_2 \end{smallmatrix}\right]gk\right)&=|t_1/t_2|_v^{-w}\varphi_v^{(w,z)} (g), \quad t_1,\,t_2 \in F_v^{\times},\,g\in G_v,\,k\in \bK_v 
			\label{Fhyp-1}, \\
			R(\bT_v)\varphi^{(w,z)}_v&=q_v^{1/2}(q_v^{z/2}+q_v^{-z/2})\,\varphi_v^{(w,z)} \quad\text{if $v\in \Sigma_\fin$}, \label{Fhyp-4}\\
			R(\Omega_v)\varphi^{(w,z)}_v& =2^{-1}(z^2-1)\,\varphi_v^{(w,z)}\quad \text{if $v\in \Sigma_\infty$}, \label{Fhyp-2} 
	\end{align}}
	where $\Omega_v$ for $v \in \Sigma_{\infty}$ is the Casimir operator of $G_v$.
\end{lem}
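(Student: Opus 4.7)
The plan is to verify that \eqref{Hsphericalfnt} satisfies all stated conditions and then establish uniqueness via standard ODE and difference-equation techniques.

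For well-definedness: the Iwasawa decomposition $G_v=H_vN_v\bK_v$ is unique up to the right action of $(H_v\cap\bK_v)(N_v\cap\bK_v)$, which on the $N_v$-coordinate becomes the substitution $x\mapsto xu_2/u_1+x'$ for $\diag(u_1,u_2)\in H_v\cap\bK_v$ and $x'\in N_v\cap\bK_v$ (this uses the identity $n(x)\diag(u_1,u_2)=\diag(u_1,u_2)n(xu_2/u_1)$). A short check confirms that $A_v(w,z)h_v^{(w,z)}(x)+A_v(w,-z)h_v^{(w,-z)}(x)$ is invariant under this action. Left $H_v$-equivariance \eqref{Fhyp-1} is built into the definition, and the normalization $\varphi_v^{(w,z)}(1_2)=1$ follows from the values at $x=0$.

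For the eigenequations: in the archimedean case, I would express $R(\Omega_v)$ in Iwasawa coordinates; acting on $|t_1/t_2|^{-w}f(x)$ it produces a second-order Fuchsian operator in $x$, which after the substitution $u=1/(1+x^2)$ becomes the Gauss hypergeometric equation with parameters $(\tfrac{z+2w+1}{4},\tfrac{z-2w+1}{4};\tfrac{z+2}{2})$; the two functions $h_v^{(w,\pm z)}$ are then identified as a fundamental pair of solutions sharing the common eigenvalue $(z^2-1)/2$. In the non-archimedean case, the standard coset decomposition $\bK_v\diag(\varpi_v,1)\bK_v=\bigsqcup_{j\in\cO_v/\gp_v}n(j)\diag(\varpi_v,1)\bK_v\sqcup\diag(1,\varpi_v)\bK_v$ gives the explicit formula $R(\bT_v)\psi(n(x))=q_v^w\sum_j f((x+j)/\varpi_v)+q_v^{-w}f(x\varpi_v)$; substituting the power functions $\max(1,|x|_v)^{-(\pm z+2w+1)/2}$ and splitting cases on $\ord_v(x)$ yields the eigenvalue $q_v^{1/2}(q_v^{z/2}+q_v^{-z/2})$.

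For uniqueness: let $\psi$ be any function satisfying the conditions with $\psi(1_2)=1$. By \eqref{Fhyp-1} and right $\bK_v$-invariance, $\psi$ is determined by $f(x):=\psi(n(x))$. Using $\diag(1,-1)\in H_v\cap\bK_v$ and the identity $\diag(1,-1)n(x)\diag(1,-1)=n(-x)$, the two equivariances force $f(-x)=f(x)$. In the archimedean case, \eqref{Fhyp-2} becomes a second-order Fuchsian ODE whose unique solution with $f(0)=1$ and $f'(0)=0$ (the latter from evenness) is $\varphi_v^{(w,z)}|_{N_v}$. In the non-archimedean case, invariance under $x\mapsto xu+x'$ for $u\in\cO_v^\times, x'\in\cO_v$ forces $f$ to depend only on $m:=\max(0,-\ord_v(x))$; writing $F(m):=f(x)$ for $|x|_v=q_v^m$, the Hecke eigenequation \eqref{Fhyp-4} at $x=0$ determines $F(1)$ in terms of $F(0)=1$, and for $m\geq 1$ becomes a three-term recurrence uniquely determining all higher $F(m)$. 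The most delicate point is the explicit Casimir calculation in the archimedean case, which requires careful computation in $HN$-coordinates adapted to the $H$-equivariance rather than the more standard $NA$-slicing; once done, the two fundamental solutions are unambiguously identified by their $|x|\to\infty$ asymptotics.
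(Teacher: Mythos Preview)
Your proof is correct and, in the non-archimedean case, essentially identical to the paper's: both reduce to a three-term recurrence for $F(m)=\psi(n(\varpi_v^{-m}))$ determined by $F(0)=1$ together with the single relation at $m=0$ fixing $F(1)$.

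In the archimedean case the strategies coincide in outline (second-order ODE plus the two initial conditions $f(0)=1$, $f'(0)=0$, the latter forced by the $\diag(1,-1)$-symmetry) but differ in the choice of transversal coordinate.  The paper parametrizes $H_v\backslash G_v/\bK_v$ by $r$ via $\left[\begin{smallmatrix}\ch r & \sh r\\ \sh r & \ch r\end{smallmatrix}\right]$, quotes the radial form of the Casimir from \cite[Proposition~4.3]{Hirano} to obtain the ODE in $r$, writes down its unique even $C^\infty$ solution as a single ${}_2F_1$ in $\th^2 2r$, and then invokes the Gauss connection formula and \cite[Lemma~3.1]{Tsuzuki2015} to convert this into the two-term expression $A_v(w,z)h_v^{(w,z)}+A_v(w,-z)h_v^{(w,-z)}$ in the $x$-variable.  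Your route stays in the Iwasawa $N$-coordinate $x$ throughout; this avoids the connection-formula step and the two literature citations, at the cost of carrying out the Casimir computation in $HN$-coordinates by hand (as you note, the most delicate point).  The two coordinates are related by $1+x^2=\ch^2 2r$, so the substitution $u=1/(1+x^2)$ you describe is exactly what turns the paper's equation into the hypergeometric form you identify.  Each approach buys something: the paper's is shorter on the page because the hard calculation is outsourced; yours is more self-contained and matches the definition \eqref{Hsphericalfnt} directly without a change of basis.
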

\begin{proof}
	Let $v\in \Sigma_\fin$. In the same way as \cite[Lemma 5.2]{Tsuzuki2015}, for $a(m)=\varphi_v^{(w,z)}\left(\left[\begin{smallmatrix} 1 & \varpi^{-m}_v \\ 0 & 1 \end{smallmatrix} \right]\right)$ $(m\in \N_0$), we deduce a recurrence relation{\allowdisplaybreaks 
		\begin{align*}
			&q_v^{1+w}a(m+1)+q_v^{-w}a(m-1)=q_v^{1/2}(q_v^{z/2}+q_v^{-z/2})a(m), \quad (m>0),\\
			&(q_v-1)q_v^{w}a(1)+(q_v^{w}+q_v^{-w})a(0)=q_v^{1/2}(q_v^{z/2}+q_v^{-z/2})a(0),
	\end{align*}}and $a(0)=1$ from \eqref{Fhyp-4} and $\varphi^{(w,z)}(1_2)=1$, which can be solved uniquely by $a(m)=A_v(w,z) \,h_v^{(w,z)}(\varpi_v^{-m}) +A_v(w,-z)\,h_v^{(w,-z)}(\varpi_v^{-m})$. By the Iwasawa decomposition and \eqref{Fhyp-1}, we are done. Let $v\in \Sigma_\infty$. From \cite[Proposition 4.3]{Hirano} for $f(r)=\varphi_v^{(w,z)}\left(\left[\begin{smallmatrix} \ch r & \sh r \\ \sh r & \ch r \end{smallmatrix} \right]\right)$, we have the differential equation
	$$
	\left(\tfrac{1}{2}\tfrac{\d^2}{\d r^2}+\tfrac{\sh 2r}{\ch 2r}\tfrac{\d}{\d r}+\tfrac{2w^2}{\ch^2 2r}\right)f(r)=\tfrac{z^2-1}{2}f(r)
	$$ 
	and $f(0)=1$ from \eqref{Fhyp-2} and $\varphi^{(w,z)}(1_2)=1$, which has the unique $C^\infty$ solution 
	$$f(r)=(\ch ^2 2r)^{-(z+1)/4}{}_2F_1\left(\tfrac{z+2w+1}{4}, \tfrac{z-2w+1}{4};\tfrac{1}{2};\tfrac{\sh^2 2r}{\ch^2 2r}\right), \quad r\in \R. 
	$$
	By the Gauss connection formula (on the last line of \cite[p.47]{MOS}), it turns out that $f(r)=(1+x^2)^{w/2}\{A_v(w,z)\,h_v^{(w,z)}(x)+A_v(w,-z)\,h_v^{(w,-z)}(x)\}$ with $1+x^2=\ch^2 2r$. By \cite[Lemma 3.1]{Tsuzuki2015}, we are done.  
\end{proof}

\subsection{Unfolding and contour shifting} \label{UnConSh}
From Lemma \ref{CONVERGE} and Proposition \ref{SmoothEis}, 
\begin{align*}
	&\int_{Z_\A G_F\bsl G_\A} \d g\,\sum_{\xi\in H_F \bsl G_F}\sum_{a\in F^\times-\{1\}} \left|\Phi\left(\bfs;g^{-1}\xi^{-1}\left[\begin{smallmatrix} a & 0 \\ 0 & 1 \end{smallmatrix} \right]\xi g\right)\,\cE^{*}_{\b}(g)\right| \\
	\leq &\int_{Z_\A G_F\bsl G_\A} \d g\,\sum_{\gamma \in Z_F\bsl G_F} \left|\Phi\left(\bfs; g^{-1}\gamma g\right)\,\cE^{*}_{\b}(g)\right|<+\infty,
\end{align*}
which justifies the applications of Fubini's theorem in the following computation:
{\allowdisplaybreaks
	\begin{align}
		\JJ_{{\rm{hyp}}}(\bfs, \b)&=\tfrac{1}{2}\int_{Z_\A H_F\bsl G_\A} \d g\,\sum_{a\in F^\times-\{1\}}\Phi\left(\bfs;g^{-1}\left[\begin{smallmatrix} a & 0 \\ 0 & 1 \end{smallmatrix} \right] g\right)\,\cE^{*}_{\b}(g)
		\notag
		\\
		&=\tfrac{1}{2}\sum_{a\in F^\times-\{1\}} \int_{\A}\d x \int_{\bK} \d k\,
		\Phi\left(\bfs;k^{-1}\left[\begin{smallmatrix} a & (a-1)x \\ 0 & 1 \end{smallmatrix} \right] k\right)\, 
		P_\b\left(0;\left[\begin{smallmatrix} 1 & x \\ 0 & 1 \end{smallmatrix} \right]\right),
		\label{Firststep-1}
	\end{align}
}where for $w\in \C$ and $g\in G_\A$, we set 
\begin{align}
	P_\b(w;g)=\int_{F^\times\bsl \A^\times}\cE_\b^{*}
	\left(\left[\begin{smallmatrix} t & 0 \\ 0 & 1 \end{smallmatrix} \right]g
	\right)\,|t|_\AA^{w}\,\d^\times t, 
	\label{RegPerEis}
\end{align}
which is shown to be absolutely convergent by Proposition~\ref{SmoothEis} for all $w\in \C$. By \eqref{EisFexp}, the value $\cE_\b^{*}\left(\left[\begin{smallmatrix} t & 0 \\ 0 & 1 \end{smallmatrix}\right] \left[\begin{smallmatrix} 1 & x \\ 0 & 1 \end{smallmatrix}\right] \right)$ $(t\in \A^\times,\,x\in \A)$ is expressed as the sum of the following three contour integrals: 
\begin{align}
	C_{\b}^{\pm}(t)&=\tint_{L_{\s}}\b(z)\Lambda_F(\mp z)|t|_\A^{(\pm z+1)/2}\,\d z,  \label{SmEisConst}
	\\ 
	\cW_{\b}(t,x)&=\tint_{L_{\s}}\b(z)
	\Lambda_F(-z)\,\sum_{a\in F^\times} 
	W_{\psi}\left(z; \left[\begin{smallmatrix} at & 0 \\ 0 & 1 \end{smallmatrix} \right]\left[\begin{smallmatrix} 1 & x \\ 0 & 1 \end{smallmatrix}\right] \right)\,\d z, 
	\label{SmEisNC}
\end{align}
where $1<\s$. For any $N_1>0$, by shifting the contour $L_\s$ to $L_{\s_1}$ ($\s_1=-2N_1-1$), we have the inequality
\begin{align*}
	|C_{\b}^{+}(t)|\leq \tint_{L_{\s_1}}|\b(z)\Lambda_F(-z)|\,|\d z| \times |t|_\A^{-N_1}, 
\end{align*} 
which yields the bound $C_{\b}^{+}(t)\ll_{N_1} |t|_\A^{-N_1}$ for $t\in \A^\times$. Combining this with the bound $C_{\b}^{+}(t)\ll_{\e} |t|_\A^{N_1}$, which is immediate from \eqref{SmEisConst} with $\s=2N_1-1$, we have 
$$
C_{\b}^{+}(t)\ll_{N_1,\e}\min(|t|_\A^{-N_1},|t|_\A^{N_1}), \quad t\in \A^\times.  
$$
From this, the integral 
$$
\fC^{+}_\b(w):=\tint_{F^\times\bsl  \A^\times} C_{\b}^{+}(t)\,|t|_\A^{w}\,\d^\times t =\vol(F^\times\bsl \A^1)\,\tint_{0}^{\infty} C_{\b}^{+}(\underline{t})\,t^{w}\,\d^\times t $$
is seen to be absolutely convergent for all $w\in \C$. In the same way as \cite[Lemma 7.6]{Tsuzuki2015}, using the residue theorem for $\s, \s_1 \in \RR$ such that $\s_1<-2\Re(w)-1<\s$,
we have
{\allowdisplaybreaks
	\begin{align*}
		\tint_{0}^{\infty} C_{\b}^{+}(\underline{t})\,t^{w}\,\d^\times t
		&=\tint_{0}^{1} \d^\times t\, \tint_{L_{\s}}\b(z)\Lambda_F(-z)\,t^{w+(z+1)/2}\,\d z
		+\tint_{1}^{\infty} \d^\times t\, \tint_{L_{\s_1}}\b(z)\Lambda_F(-z)t^{w+(z+1)/2}\,\d z
		\notag
		\\
		&=\tint_{L_{\s}}\b(z)\tfrac{\Lambda_F(-z)\d z}{w+(z+1)/2}+\tint_{L_{\s_1}}
		\b(z)\tfrac{-\Lambda_F(-z)\d z}{w+(z+1)/2} 	\\
		& =2\pi i {\rm{Res}}_{z=-2w-1}\left(\b(z)\Lambda_F(-z)\tfrac{1}{w+(z+1)/2}\right) \\
		& = 4\pi i \b(-2w-1)\Lambda_F(1+2w). 
		\notag
\end{align*}
}In a similar manner as above, we have the estimation
$$
C_\b^{-}(t)\ll_{N_1, \e} \min(|t|_\A^{-N_1},|t|_\A^{N_1}), \quad t\in \A^\times
$$
for any $N_1>0$. When we shift the contour to the left to obtain the majorant $|t|_\A^{N_1}$, we note that the singularity at $z=0,1$ of $\Lambda_F(z)$ is canceled with the zeros of $\beta(z)$. Hence the integral 
$$
\fC^{-}_\b(w):=\tint_{F^\times\bsl  \A^\times} C_{\b}^{-}(t)\,|t|_\A^{w}\,\d^\times t
$$
is absolutely convergent for all $w\in \C$, and is evaluated
as $-4\pi i\,\vol(F^\times\bsl\AA^1) \b(2w+1) \Lambda_F(1+2w).$
Consequently, $\fC_{\b}^{-}(w)+\fC_{\b}^{+}(w) = 4\pi i\, \vol(F^\times\bsl\AA^1)(\b(-2w-1) -\b(2w+1))\Lambda_F(1+2w)$. The absolute convergence of the integral
\begin{align*}
	\fW_\b(w;x)=\tint_{F^\times \bsl \A^\times} \cW_\b(t,x)\,|t|_\A^{w}\,\d^\times t\end{align*}
for $\Re(w)>(\Re(z)+1)/2$ is confirmed by the inequality 
\begin{align*}
	\tint_{F^\times \bsl \A^\times} |\cW_\b(t,x)|\,|t|_\A^{\Re(w)}\,\d^\times t
	\leq \tint_{L_\s} \left|\b(z)\Lambda_F(-z)\right|\ \left( \int_{\A^\times}|W_\psi \left(z; \left[\begin{smallmatrix} t & 0 \\ 0 & 1 \end{smallmatrix} \right]\left[\begin{smallmatrix} 1 & x \\ 0 & 1 \end{smallmatrix}\right] \right)|\,|t|_\A^{\Re(w)}\d^\times t\right)\,|\d z| 
\end{align*}
combined with the bound 
\begin{align*}
	\tint_{\A^\times}|W_\psi\left(z; \left[\begin{smallmatrix} t & 0 \\ 0 & 1 \end{smallmatrix} \right]\left[\begin{smallmatrix} 1 & x \\ 0 & 1 \end{smallmatrix}\right] \right) ||t|_\A^{\Re(w)}\,\d^\times t=O(1), \quad z\in L_{\s},\,
	\Re(w)>(\s+1)/2, 
\end{align*}
which follows from \eqref{nonarchLocWhitt} and \eqref{archLocWhitt}. By Lemma~\ref{FhypL1}, on the region $\Re(w)>(|\Re(z)|+1)/2$, for all $g=(g_v)\in G_\A$, we have
{\allowdisplaybreaks\begin{align*}
		\int_{\A^\times}W_\psi\left(z; \left[\begin{smallmatrix} t & 0 \\ 0 & 1 \end{smallmatrix}\right] g \right)\,|t|_\A^{w}\,\d^\times t=D_F^{-z/2+w-1/2}\frac{\zeta_F\left(w+\frac{z+1}{2}\right)\zeta_F\left(w+\frac{-z+1}{2}\right)}{\zeta_F(z+1)}\,\prod_{v\in \Sigma_F}\varphi_v^{(w,z)}\left(g_v\right).
\end{align*}}

\begin{lem} \label{FhypL4}
	For all $w \in \CC$ such that $0 \le \Re(w) \le 1$ and $x=(x_v)\in \A$, we have 
	\begin{align*}
		P_\b\left(w;\left[\begin{smallmatrix} 1 & x \\ 0 & 1 \end{smallmatrix}\right] \right)
		= & \int_{L_{\s'}} \b(z)\,D_F^{w}\, \zeta_F\left(w+\tfrac{z+1}{2}\right)\zeta_F\left(w+\tfrac{-z+1}{2}\right)\,\prod_{v\in \Sigma_F}\varphi_v^{(w,z)}\left(\left[\begin{smallmatrix} 1 & x_v \\ 0 & 1 \end{smallmatrix}\right] \right)\,\d z\\
		&+ 4\pi i \,\Res_{s=1}\zeta_F(s)\,\b(2w-1)\zeta_F(2w)\prod_{v \in \Sigma_F}\varphi_v^{(w,2w-1)}(\left(\left[\begin{smallmatrix} 1 & x_v \\ 0 & 1 \end{smallmatrix}\right] \right))\\
		& + 4\pi i\, \vol(F^\times\bsl\AA^1)\Lambda_F(1+2w)(\b(-2w-1) -\b(2w+1)), \quad
		(\s'>1).
	\end{align*}
\end{lem}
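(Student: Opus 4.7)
I substitute the Fourier expansion \eqref{EisFexp} of $E^{*}(z;g)$ into $\cE_\b^{*}$ and then into the definition \eqref{RegPerEis} of $P_\b$. This decomposes
\[
P_\b\!\left(w;\left[\begin{smallmatrix}1&x\\0&1\end{smallmatrix}\right]\right)=\fC^{+}_\b(w)+\fC^{-}_\b(w)+\fW_\b(w;x),
\]
where $\fC^{\pm}_\b(w)$ and $\fW_\b(w;x)$ come from the constant-term pieces $C_\b^{\pm}(t)$ and from the Whittaker piece $\cW_\b(t,x)$, respectively. The sum $\fC^{+}_\b(w)+\fC^{-}_\b(w)$ has already been evaluated in the paragraphs preceding the lemma and produces precisely the third line of the claimed formula, so the task reduces to analyzing $\fW_\b(w;x)$.

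For $\fW_\b(w;x)$, I first shift the defining contour $L_\s$ (with $\s>1$) of $\cW_\b(t,x)$ down to $L_{\s_0}$ for some $\s_0<2\Re(w)-1$. This shift is free of residues: the candidate poles of $\Lambda_F(-z)$ at $z=0,-1$ are cancelled by the factor $\zeta_F(z+1)^{-1}$ hidden in $W_\psi$ together with the vanishing $\b(0)=\b(\pm 1)=0$, so the integrand is entire in $z$. Once the contour lies in the absolute convergence region of the Mellin transform against $|t|_\AA^{w}$, Fubini legitimizes swapping the $z$- and $t$-integrations. Unfolding $\sum_{a\in F^\times}\int_{F^\times\bsl\AA^\times}=\int_{\AA^\times}$ and applying the Mellin formula for global Whittaker functions displayed just above the lemma, the inner $t$-integral reduces to $D_F^{-z/2+w-1/2}\zeta_F(z+1)^{-1}\zeta_F(w+\tfrac{z+1}{2})\zeta_F(w+\tfrac{-z+1}{2})\prod_v\varphi_v^{(w,z)}(g_v)$. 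The coefficient $\Lambda_F(-z)D_F^{-z/2+w-1/2}/\zeta_F(z+1)$ then collapses to $D_F^{w}$ via the functional equation $\Lambda_F(-z)=\Lambda_F(z+1)$, exhibiting precisely the integrand appearing inside the $L_{\s'}$-integral of the lemma.

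Finally, I shift the contour from $L_{\s_0}$ back up to $L_{\s'}$ with $\s'>1$, chosen so that the only pole crossed is $z=2w-1$, coming from the $s=1$ pole of $\zeta_F(w+\tfrac{-z+1}{2})$. A direct calculation with the Jacobian $du/dz=-\tfrac{1}{2}$ for $u=w+\tfrac{-z+1}{2}$ gives $\Res_{z=2w-1}\zeta_F(w+\tfrac{-z+1}{2})=-2\,\Res_{s=1}\zeta_F(s)$, and the resulting $2\pi i$-residue, together with the other evaluated factors $D_F^{w}\b(2w-1)\zeta_F(2w)\prod_v\varphi_v^{(w,2w-1)}$, accounts for the middle term of the lemma. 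Summing the three pieces yields the stated identity.

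\textbf{Main obstacle.} The delicate step is the careful bookkeeping of these contour manipulations and interchanges of integration. The vanishing orders $\b(0)=\b(\pm 1)=\b'(\pm 1)=0$ are essential for keeping the integrand entire at all candidate singular points of $\Lambda_F(-z)$ and of the Eisenstein factors; the rapid vertical decay of $\b\in\cB_1$ is needed to annihilate the horizontal segments of each rectangular contour as $T\to\infty$; and $\s'$ must be constrained to a sub-interval of $(1,\infty)$ in which no further pole (such as $z=2w+1$ from the $s=0$ pole of $\zeta_F$) intervenes between $L_{\s_0}$ and $L_{\s'}$, so that only the residue at $z=2w-1$ enters the formula.
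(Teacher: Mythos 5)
Your route is structurally the same as the paper's (decompose $P_\b$ via the Fourier expansion, quote the evaluation of $\fC^+_\b(w)+\fC^-_\b(w)$, unfold the Whittaker piece by the Mellin formula, shift to $L_{\s'}$ and pick up the residue at $z=2w-1$), but there is a genuine gap in how you justify the unfolding. The Mellin formula for $W_\psi$ that you invoke, and the Fubini interchange of the $z$- and $t$-integrations, need absolute convergence of $\int_{\A^\times}|W_\psi(z;\cdot)|\,|t|_\A^{\Re(w)}\,\d^\times t$, and this holds only on the two-sided region $\Re(w)>(|\Re(z)|+1)/2$: the local transforms multiply out to $\zeta_{F_v}\left(w+\tfrac{z+1}{2}\right)\zeta_{F_v}\left(w+\tfrac{-z+1}{2}\right)\zeta_{F_v}(z+1)^{-1}$, and the product over all places diverges unless both $\Re\left(w+\tfrac{1+z}{2}\right)>1$ and $\Re\left(w+\tfrac{1-z}{2}\right)>1$. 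So you need $|\s_0|<2\Re(w)-1$, not merely $\s_0<2\Re(w)-1$: pushing the contour far to the left makes the Euler product corresponding to $\zeta_F\left(w+\tfrac{z+1}{2}\right)$ diverge. In particular, for $0\le\Re(w)\le 1/2$ — half of the range you claim — there is no admissible contour at all, and your direct computation cannot be carried out there.

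The paper avoids this by first proving the identity for $\Re(w)$ large, namely $\Re(w)>\tfrac{\s+1}{2}$ with the original contour $L_\s$, $\s>1$ (there the unfolding is legitimate with no preliminary shift), then moving $L_\s$ rightward across the single pole $z=2w-1$ to $L_{\s'}$ with $\tfrac{\s'-1}{2}<\Re(w)<\tfrac{\s'+1}{2}$, and finally extending in $w$ by holomorphy: $P_\b(w;\cdot)$ is entire in $w$, the $L_{\s'}$-integral is holomorphic in $w$ as long as the poles $z=2w\pm1$, $z=-2w\pm 1$ stay off the contour, and the residue term and the constant-term contributions are entire thanks to $\b(0)=\b(\pm1)=\b'(\pm1)=0$. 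This continuation in $w$ is exactly the step missing from your write-up; without it your argument covers at best $1/2<\Re(w)\le 1$, and even there only after correcting the constraint on $\s_0$ to the two-sided one. The rest of your sketch is sound: the residue-free shift inside $\cW_\b(t,x)$ (the integrand $\b(z)\Lambda_F(-z)\sum_a W_\psi$ is indeed entire), the residue computation $\Res_{z=2w-1}\zeta_F\left(w+\tfrac{-z+1}{2}\right)=-2\Res_{s=1}\zeta_F(s)$, the identification of the third line with $\fC^+_\b(w)+\fC^-_\b(w)$, and your caveat that $\s'$ must stay below $2\Re(w)+1$, which matches the constraint $\Re(w)<\tfrac{\s'+1}{2}$ implicit in the paper's proof.
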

\begin{proof}
	Let $F(z,w)$ denote the integrand of the integral on the right-hand side.
	Suppose $\s>1$ and $\Re(w)>\frac{\s+1}{2}$.
	By taking $\s'>1$ so that $\s'>\s$ and $\frac{\s'-1}{2}<\Re(w)<\frac{\s'+1}{2}$,
	Cauchy's integral theorem yields
	\begin{align*}
		\int_{L_\s} F(z,w)dz=\int_{L_{\s'}}F(z,w) \d z -2\pi i \Res_{z=2w-1}F(z,w).
	\end{align*}
	Hence, we have the expression of $P_\b\left(w;\left[\begin{smallmatrix} 1 & x \\ 0 & 1 \end{smallmatrix}\right] \right)$ in the assertion
	for $\frac{\s'-1}{2}<\Re(w)<\frac{\s'+1}{2}$.
	We remark that the first term $\int_{L_{\s'}}F(z,w) \d z$ is holomorphic on $\frac{\s'-1}{2}<\Re(w)<\frac{\s'+1}{2}$
	by $\Re(w+\frac{z+1}{2})>1$ and $0<\Re(z+\frac{-z+1}{2})<1$, and that
	the second and the third terms are entire due to $\b(0)=\b(\pm1)=\b'(\pm 1)=0$.
\end{proof}

By shifting the contour $L_{\s'}$ to $L_\s$ ($-1<\s<1$) after substituting $z=0$,
from Lemma~\ref{FhypL4}, we have the expression
\begin{align}
	P_\b\left(0;\left[\begin{smallmatrix} 1 & x \\ 0 & 1 \end{smallmatrix}\right] \right)
	&=\int_{L_{\s}} \b(z)\,\zeta_F\left(\tfrac{z+1}{2}\right)\zeta_F\left(\tfrac{-z+1}{2}\right)\,\prod_{v\in \Sigma_F}\varphi_v^{(0,z)}\left(\left[\begin{smallmatrix} 1 & x_v \\ 0 & 1 \end{smallmatrix}\right] \right)\,\d z, \quad (-1<\s<1).
	\label{FhypL5}
\end{align}

\subsection{The orbital integrals} \label{HypSecondStep}
By substituting the expression \eqref{FhypL5} for the last formula of \eqref{Firststep-1} and formally changing the order of the integral and the summation, the integral
\begin{align}
	\fF^{(z)}(a)=\int_{\A} \left(\int_{\bK}\Phi\left(\bfs,k^{-1}\left[\begin{smallmatrix} a & (a-1)x \\ 0 & 1 \end{smallmatrix} \right] k\right)\,\d k \right)\,\prod_{v\in \Sigma_F}\varphi_v^{(0,z)} \left(\left[\begin{smallmatrix} 1 & x_v \\ 0 & 1 \end{smallmatrix}\right] \right)\,\d x, \quad a\in F-\{0,1\}
	\label{fFzaAdele}
\end{align}
emerges naturally.
In this subsection, we prove the following proposition, which is sufficient to legitimatize the formal computation explained above.  
\begin{prop} \label{FHypP1}
	Suppose $\min_{v \in S} \Re(s_v)>1$.
	\begin{itemize}
		\item[(1)] Let $a\in F-\{0,1\}$. The integral \eqref{fFzaAdele} converges absolutely for $\Re(z)<2{\underline l}-1$. On that region, we have the product formula $\fF^{(z)}(a)=\prod_{v\in \Sigma_F}\fF_v^{(z)}(a)$ with 
		$$
		\fF_v^{(z)}(a)=\int_{F_v} \left(\int_{\bK_v}\Phi_v\left(k_v^{-1}\left[\begin{smallmatrix} a & (a-1)x_v \\ 0 & 1 \end{smallmatrix} \right] k_v\right)\,\d k_v \right)\,\varphi_v^{(0,z)} \left(\left[\begin{smallmatrix} 1 & x_v \\ 0 & 1 \end{smallmatrix}\right] \right)\,\d x_v,  \quad v\in \Sigma_F,
		$$
		where $\Phi_v(g_v)$ is the $v$-th factor of $\Phi(\bfs;g)$. We have $\fF^{(z)}(a)=0$ unless $a \in F_\infty^{+}$, where $F_\infty^{+}=\{a_\infty\in F_\infty|\,a_v>0\,(v\in \Sigma_\infty)\,\}$. 
		\item[(2)] Set 
		$$
		\s_0=\min(\{2l_v-1|\,v\in \Sigma_\infty\}\cup \{2\Re(s_v)-1|\,v\in S\,\}).
		$$
		For any $\s\in (0,\s_0)$ and $\e\in (0,1)$, there exists a positive number $C_\e$ independent of $z$ and $\bfs$ such that 
		\begin{align*}
			|\fF^{(z)}(a)|\leq C_\e\,\{\prod_{v\in \Sigma_\fin}\phi_{v}(a)\}\,\{\prod_{v\in \Sigma_\infty}(1+|a|_v)^{-\frac{l_v}{2}+\frac{\s+1}{2}}\}
		\end{align*}
		for all $a\in F_\infty^{+}\cap(F^\times-\{1\})$ and $|\Re(z)|\leq \s$, where
		\begin{align*}
			\phi_v(a)=
			\begin{cases}
				\delta(a\in \cO_v^\times), \quad &(v\in \Sigma_\fin-(S\cup S(\fn))), \\ 
				{4}{(1+q_v)^{-1}}\delta(a\in \cO_v^\times)(1+q_v^{1/2})^{\delta(a\in 1+\fp_v)}\,\{{\rm{ord}}_v(a-1)+1\}, \quad &(v\in S(\fn)), \\ 
				\{\max(0,{\rm{ord}}_v(a-1))+1\}\,\max(1,|a-1|_v)^{\frac{|\Re(z)|-\Re(s_v)}{2}+\e}, \quad&(v\in S).
			\end{cases}
		\end{align*}
	\end{itemize}
\end{prop}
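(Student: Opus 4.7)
The plan is to factorize $\fF^{(z)}(a) = \prod_v \fF_v^{(z)}(a)$ by Fubini's theorem, once pointwise bounds on the local factors are in hand; those bounds will simultaneously yield part~(2). The archimedean vanishing at places $v$ where $a^{(v)} \le 0$ is immediate from the explicit formula \eqref{DSMexplicitformula}: the integrand contains the factor $\Phi_v^{l_v}(k_v^{-1}\left[\begin{smallmatrix} a & (a-1)x_v \\ 0 & 1 \end{smallmatrix}\right] k_v)$, whose argument has determinant $a$, so the factor vanishes identically in $k_v$ and $x_v$ when $a < 0$.

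At finite places $v \in \Sigma_\fin - (S \cup S(\fn))$, the local integrand $\Phi_{\fn,v} = {\rm ch}_{Z_v \bK_v}$ collapses the computation to an elementary one: the factor vanishes unless $a \in \cO_v^\times$, and equals $q_v^{-d_v/2}$ when $a - 1 \in \cO_v$. At $v \in S(\fn)$, the decomposition \eqref{KBruhat} of $\bK_v$ into cosets modulo $\bK_0(\fn\cO_v)$ partitions the $k$-integral into two pieces, and the level-congruence condition on the conjugated matrix forces $a \in \cO_v^\times$ and restricts the $x_v$-support by congruences depending on $\ord_v(x_v)$; summing a geometric series produces the $\{\ord_v(a-1)+1\}$ factor together with the refinement when $a \in 1 + \fp_v$. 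At $v \in S$, the bound \eqref{GreenFtnEst} dominates $\Phi_v(s_v; \cdot)$ by $\|\cdot\|_v^{-(\Re(s_v)+1)/2}$, and the explicit form \eqref{Hsphericalfnt} of $\varphi_v^{(0,z)}$ on $N_v$ lets one compute the resulting $x_v$-integral directly, producing the factor $\max(1,|a-1|_v)^{(|\Re(z)|-\Re(s_v))/2 + \e}$.

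The core of the argument is the archimedean estimate. A key simplification is that the quantity $\Phi_v^{l_v}(k_v^{-1}\left[\begin{smallmatrix} a & (a-1)x_v \\ 0 & 1 \end{smallmatrix}\right] k_v)$ is actually independent of $k_v \in \bK_v^{0}$, since by \eqref{DSMexplicitformula} it depends on the matrix only through its determinant and the off-diagonal combination $b - c$, both of which are invariant under $\SO(2)$-conjugation. Consequently one obtains the majorant $|\fF_v^{(z)}(a)| \ll (4|a|_v)^{l_v/2}\int_{F_v}((a+1)^2 + (a-1)^2 x_v^2)^{-l_v/2}\,|\varphi_v^{(0,z)}(\left[\begin{smallmatrix} 1 & x_v \\ 0 & 1 \end{smallmatrix}\right])|\, dx_v$. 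A change of variable $u = (a-1)x_v/(a+1)$ and the uniform bound on the hypergeometric factor in \eqref{Hsphericalfnt} (via the Gauss connection formula for ${}_2F_1$) produce a $u$-integral that converges absolutely exactly when $|\Re(z)| < 2l_v - 1$ and is bounded uniformly on $|\Re(z)| \le \s$; the extracted powers of $(a+1)$ combine to give the bound $(1+|a|_v)^{-l_v/2 + (\s+1)/2}$.

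The main obstacle I anticipate is this archimedean step: one must keep the bound uniform in $z$ on the vertical strip $|\Re(z)| \le \s$ and arrange the change of variable so that the final estimate depends only on $(1+|a|_v)$, without any spurious $|a-1|$-dependence that would blow up as $a \to 1$. Once the local bounds are established, all but finitely many factors equal $q_v^{-d_v/2}$ (with $\prod_v q_v^{-d_v/2} = D_F^{-1/2}$), the global product converges absolutely, Fubini's theorem applies, and both~(1) and~(2) follow at once.
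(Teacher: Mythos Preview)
Your approach correctly identifies the overall structure (factorize, bound each local factor, assemble), and your archimedean observation about $\SO(2)$-invariance of the conjugated matrix is right. However, there is a genuine gap at the core: the $|a-1|$-blow-up you flag at the end is not an artifact removable by a change of variable but a genuine feature of \emph{every} local factor $\fF_v^{(z)}(a)$, and your argument does not explain how to handle it.

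Concretely: at an archimedean place, after your substitution $u=(a-1)x_v/(a+1)$ the Jacobian contributes $|a+1|/|a-1|$ while $\varphi_v^{(0,z)}$ is now evaluated at $x_v=\tfrac{(a+1)u}{a-1}$; its large-$x$ behavior $\varphi_v^{(0,z)}\bigl(\left[\begin{smallmatrix}1&x\\0&1\end{smallmatrix}\right]\bigr)\asymp (1+x^2)^{(|\Re z|-1)/4}$ forces the $u$-integral itself to depend on $(a+1)/(a-1)$, and the combination with the Jacobian yields exactly a factor $|a-1|_v^{-(|\Re z|+1)/2}$ (this is Corollary~\ref{FHypL11}). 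The same exponent appears at every finite place: your claim that $\fF_v^{(z)}(a)=q_v^{-d_v/2}$ for $v\notin S\cup S(\fn)$ whenever $a\in\cO_v^\times$ is only correct when also $a-1\in\cO_v^\times$; if $a\in 1+\fp_v$ the $x$-support is $(a-1)^{-1}\cO_v$ and one picks up $|a-1|_v^{-(|\Re z|+1)/2}$ times a logarithmic factor (Lemma~\ref{LocalHyp-almostall-esti}). Analogous factors appear for $v\in S(\fn)$ and $v\in S$ (Lemma~\ref{Sfn-Est}, Corollary~\ref{S-Est}).

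The missing step is the one the paper actually uses: accept the factor $|a-1|_v^{-(|\Re z|+1)/2}$ at \emph{every} place $v\in\Sigma_F$, multiply them together, and invoke the product formula $\prod_{v\in\Sigma_F}|a-1|_v=1$ for $a\in F^\times\setminus\{1\}$ to annihilate them simultaneously. What survives after this global cancellation is precisely the product $\prod_v\phi_v(a)\cdot\prod_{v\in\Sigma_\infty}(1+|a|_v)^{-l_v/2+(\s+1)/2}$ stated in part~(2). Without this product-formula trick there is no way to obtain a bound uniform in $a$ near $1$, because each local integral individually diverges there.
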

For the proof, we use the following explicit formulas of the local integrals $\fF_v^{(z)}(a)$,
which are proved later in \S \ref{Localorbitalintegrals}.

\begin{thm}
	\label{explicit hyp}
	Let $v \in \Sigma_F $ and $a\in F_v^{\times}-\{1\}$.
	\begin{enumerate}
		\item[(1)] Suppose $v \in \Sigma_\infty$. Then,
		on the region $|\Re(z)|<2l_v-1$, we have
		\begin{align*}
			\fF_v^{(z)}(a)=&
			\frac{4\pi}{\Gamma(l_v)} 
			\,
			\frac{\Gamma\left(l_v+\tfrac{z-1}{2}\right)
				\Gamma\left(l_v+\tfrac{-z-1}{2}\right)}
			{\Gamma_\R\left(\tfrac{1+z}{2}\right)\Gamma_\R\left(\tfrac{1-z}{2}\right)}
			\,\delta(a>0)\, |a|_v^{1/2}|a-1|_v^{-1}\fP_{\frac{z-1}{2}}^{1-l_v}\left(\l_v(a)^{-1/2}\right)\end{align*}
		with $\l_v(a)=|(a-1)/(a+1)|_v^2$.
		Here $\fP_{\nu}^{\mu}(x)$ is the Legendre function of the 1st kind which is defined for points $x\in \C$ outside the interval $(-\infty,+1]$ of the real axis (\cite[\S 4.1]{MOS}).
		
		\item[(2)] Let $v\in \Sigma_\fin-(S\cup S(\fn))$. Then, we have
		\begin{align*}
			\fF_v^{(z)}(a)=\delta(a\in \cO_v^\times)\, q_v^{-d_v/2}\,\Ocal_{v,0}^{1,(z)}((a-1)^{-2}a).
		\end{align*}
		In particular, $\fF_v^{(z)}(a)=q_v^{-d_v/2}$ if $|a|_v=|a-1|_v=1$. 
		
		\item[(3)] Suppose $v \in S(\gn)$. Then, we have
		\begin{align*}
			\fF_v^{(z)}(a)&=\delta(a\in \cO_v^\times)\,
			q_v^{-d_v/2}\,\Ocal_{v,1}^{1,(z)}((a-1)^{-2}a).
		\end{align*}
		
		\item[(4)] Suppose $v \in S$. Then,
		for $2\Re(s_v)+1>|\Re(z)|$ we have
		\begin{align*}
			\fF_v^{(z)}(s_v;a)=&q_v^{-\frac{d_v}{2}} \Scal_{v}^{1,(z)}((a-1)^{-2}a).
		\end{align*}
	\end{enumerate}
\end{thm}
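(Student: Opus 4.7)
The plan is to compute each local integral $\fF_v^{(z)}(a)$ by first carrying out the $\bfK_v$-integral, using the transformation properties of $\Phi_v$ encoded in the definition of the kernel (matrix coefficient at archimedean places, Green function on $S$, characteristic function elsewhere), and then evaluating the remaining $F_v$-integral against the explicit formula for $\varphi_v^{(0,z)}$ given in Lemma~\ref{FhypL1}. A unifying feature is the identity $\left[\begin{smallmatrix} a & (a-1)x \\ 0 & 1 \end{smallmatrix}\right]=\left[\begin{smallmatrix} 1 & -x \\ 0 & 1\end{smallmatrix}\right]\left[\begin{smallmatrix} a & 0 \\ 0 & 1 \end{smallmatrix}\right]\left[\begin{smallmatrix} 1 & x \\ 0 & 1 \end{smallmatrix}\right]$, which exhibits the inner matrix as the conjugate of a diagonal element; this makes visible the $H_v$-content on which the properties of $\Phi_v$ will act.

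For parts (2) and (3), $\Phi_v$ is (a multiple of) the characteristic function of $Z_v\bfK_v$, respectively $Z_v\bfK_0(\gp_v)$. Using the Cartan/Bruhat-type decomposition~\eqref{KBruhat} (for $S(\gn)$) and tracking when the conjugate $k^{-1}\left[\begin{smallmatrix} a & (a-1)x \\ 0 & 1\end{smallmatrix}\right]k$ lands in the prescribed support, the $\bfK_v$-integral reduces to an indicator function in $(a,x)$ forcing $a\in\cO_v^\times$ and $x$ in an explicit $\gp_v$-adic annulus depending on $\mathrm{ord}_v(a-1)$. The remaining integral over $F_v$ of $\varphi_v^{(0,z)}$ in the form \eqref{Hsphericalfnt} splits into geometric sums in $q_v^{\pm z/2}$, which after telescoping collapse exactly to $\Ocal_{v,\epsilon}^{1,(z)}((a-1)^{-2}a)$ with $\epsilon=0$ or $1$; the simplifications \eqref{FFz1} and the bookkeeping of $|(a-1)^{-2}a|_v$ are what match the precise shape of $\Ocal_{v,\epsilon}^{1,(z)}$.

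For part (4), $\Phi_v=\Phi_v(s_v;-)$ is the Green function from \S\ref{Greenftn} with the closed form \eqref{nonarchGreenftn}. The first inner $\bfK_v$-integral is trivial because of bi-$\bfK_v$-invariance, and writing $\max(|a|_v,|(a-1)x|_v,1)$ in the formula \eqref{nonarchGreenftn} gives a piecewise-constant factor in $\mathrm{ord}_v(x)$. Integrating this against the two branches of $\varphi_v^{(0,z)}$ and rearranging produces two zeta factors of the form $\zeta_{F_v}(s_v+(\pm z+1)/2)$, leading (after minor manipulation based on \eqref{FFz1} and the relation $\zeta_{F_v}(s)^{-1}=1-q_v^{-s}$) to the two-case formula \eqref{Scaldeltazsa-f1} for $\Scal_v^{1,(z)}$, depending on whether $|(a-1)^{-2}a|_v\le1$ or $>1$.

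The genuinely technical step is part (1). Here the matrix coefficient $\Phi^{l_v}$ is $\SO(2)$-conjugation invariant (since its minimal $\bfK_v$-type is one-dimensional), so the $\bfK_v$-integral merely contributes a volume constant and one is left with
\[
\fF_v^{(z)}(a)=\vol(\bfK_v)\,\int_{F_v}\Phi^{l_v}\!\bigl(\left[\begin{smallmatrix} a & (a-1)x \\ 0 & 1\end{smallmatrix}\right]\bigr)\,\varphi_v^{(0,z)}\!\bigl(\left[\begin{smallmatrix} 1 & x \\ 0 & 1\end{smallmatrix}\right]\bigr)\,\d x.
\]
Using \eqref{DSMexplicitformula} one has $\Phi^{l_v}\bigl(\left[\begin{smallmatrix} a & (a-1)x \\ 0 & 1\end{smallmatrix}\right]\bigr)=\delta(a>0)(4a)^{l_v/2}\{(a+1)-i(a-1)x\}^{-l_v}$, and by Lemma~\ref{FhypL1} the spherical function is a sum of two $_2F_1$ expressions in $1/(x^2+1)$ of weight $(z\pm1)/4$. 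After substituting $x=|(a+1)/(a-1)|_v\,t$ to normalize, the integral takes the shape of a Mellin--Barnes / Euler-type integral for a $_3F_2$, which one recognizes as the standard Euler integral representation of ${}_2F_1(\alpha,\beta;\gamma;1/(\lambda_v(a)^{-1/2}+1))$; invoking the hypergeometric representation of the Legendre function $\fP_{(z-1)/2}^{1-l_v}$ (cf.\ \cite[\S 4.1]{MOS}) identifies the answer. The main obstacle is precisely bookkeeping the $\Gamma$-factors produced by the beta integral and those in the formula of $A_v(0,\pm z)$ so as to match the prefactor in the claimed identity; once the contour/convergence condition $|\Re z|<2l_v-1$ is used to justify the beta integral and the duplication formula is applied to combine $\Gamma_\R$ factors, the result follows.
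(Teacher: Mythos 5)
Your outlines for (2), (3) and (4) are essentially the paper's own arguments (\S\ref{The proof of Theorem explicit hyp (2)}--\S\ref{The proof of Theorem explicit hyp (4)}): trivial (respectively \eqref{KBruhat}-based) $\bfK_v$-integration, support analysis forcing $a\in\cO_v^\times$ and confining $x$ to an explicit region, then elementary geometric-series evaluation of the $x$-integral against \eqref{Hsphericalfnt}, with \eqref{FFz1} and \eqref{FAQ1} doing the final matching with $\Ocal_{v,\e}^{1,(z)}$ and $\Scal_v^{1,(z)}$; those parts are fine.

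The archimedean case (1), however, contains a genuine gap. After inserting \eqref{DSMexplicitformula} and Lemma~\ref{FhypL1}, the integral to evaluate is $\int_\R\{(a+1)-i(a-1)x\}^{-l_v}(1+x^2)^{-(z+1)/4}\,{}_2F_1\bigl(\tfrac{z+1}{4},\tfrac{z+1}{4};\tfrac{z}{2}+1;\tfrac{1}{1+x^2}\bigr)\,dx$ together with its $z\mapsto -z$ companion, weighted by $A_v(0,\pm z)$. Rescaling $x$ by $|(a+1)/(a-1)|$ does not turn this into a beta/Euler kernel: the complex linear factor and the hypergeometric argument $1/(1+x^2)$ scale incompatibly, so there is no ``standard Euler integral representation'' of a single ${}_2F_1$ to recognize. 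The paper's proof has to work considerably harder (term-by-term Fourier--Bessel integration, parabolic cylinder and Tricomi functions, a Laplace-transform identity leading to the one-variable integral \eqref{FHypL8-f0}, then the Gauss connection formula). More importantly, your sketch misses the structural reason a Legendre function of the \emph{first} kind appears at all: each one-sided term $J_v^{(\pm z)}(a)$ is, up to elementary factors, a Legendre-$\fQ$-type object (cf.\ the function $\Gcal_l^{(z)}$ in \eqref{Gcalftn}), and $\fP_{(z-1)/2}^{1-l_v}$ only emerges from the symmetrized combination $A_v(0,z)J_v^{(z)}+A_v(0,-z)J_v^{(-z)}$ via the connection identity $\fQ^{l_v-1}_{(z-1)/2}-\fQ^{l_v-1}_{(-z-1)/2}=\sin\bigl(\tfrac{\pi z}{2}\bigr)\Gamma\bigl(l_v+\tfrac{z-1}{2}\bigr)\Gamma\bigl(l_v+\tfrac{-z-1}{2}\bigr)\fP^{1-l_v}_{(z-1)/2}$ together with reflection/duplication formulas absorbing $A_v(0,\pm z)$. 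A route that identifies each $\pm z$ term separately with a $\fP$-function cannot be completed; you must either symmetrize first (e.g.\ recognize the full $\varphi_v^{(0,z)}$ as a Legendre function before integrating) or follow a chain of special-function identities as the paper does. Finally, the computation is legitimate only for $\Re(z)>1$ at first (to justify the interchanges of sum and integral), and the stated range $|\Re(z)|<2l_v-1$ is obtained by analytic continuation, not merely by a convergence condition ``justifying the beta integral''.
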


\begin{lem}\label{LocalHyp-almostall-esti}
	We have the inequality
	$$|\fF_v^{(z)}(a)|\leq 3\,|{\rm{ord}}_v(a-1)|\,|a-1|_v^{-\frac{|\Re(z)|+1}{2}}
	$$
	for all $a\in 1+\fp_v$ and $z\in \C$. 
\end{lem}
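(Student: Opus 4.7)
The plan is to invoke Theorem~\ref{explicit hyp}(2) --- which gives $\fF_v^{(z)}(a) = q_v^{-d_v/2}\,\Ocal_{v,0}^{1,(z)}((a-1)^{-2}a)$ for the generic case $v \in \Sigma_\fin-(S\cup S(\fn))$ to which this lemma pertains --- and then to reorganise $\Ocal_{v,0}^{1,(z)}$ as a Laurent polynomial in $u := q_v^{z/2}$. Set $k := \ord_v(a-1)\ge 1$, so $|(a-1)^{-2}a|_v=q_v^{2k}$ and $|(a-1)^{-2}a|_v^{(\pm z+1)/4}=q_v^{k/2}\,u^{\mp k}$. Since $\varepsilon_1$ is trivial, the $L$-factors in the definition of $\Ocal_{v,0}^{1,(z)}$ collapse to $\zeta_{F_v}(s)=(1-q_v^{-s})^{-1}$, so each of the two summands becomes a rational function of $u$ multiplied by $q_v^{k/2}$.

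The technical heart of the argument is the identity
$$\Ocal_{v,0}^{1,(z)}((a-1)^{-2}a)=q_v^{k/2}\Bigl[\sum_{j=0}^{k}u^{2j-k}\;-\;q_v^{-1/2}\sum_{j=0}^{k-1}u^{2j-k+1}\Bigr],$$
which I would derive by pulling the two summands of $\Ocal_{v,0}^{1,(z)}$ over the common denominator $1-u^2$ (using $(1-u^{-2})^{-1}=-u^2(1-u^2)^{-1}$) and then applying the geometric-series identity $(1-u^{2m})/(1-u^2)=\sum_{j=0}^{m-1}u^{2j}$ to simplify the resulting numerator. This also makes manifest the removability of the apparent singularities at $u^2=1$.

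With the Laurent-polynomial form in hand, the estimation is immediate: every monomial on the right-hand side has exponent of absolute value at most $k$, hence $|u^{m}|\le q_v^{k|\Re(z)|/2}$ regardless of the sign of $\Re(z)$. The triangle inequality, together with $q_v^{-1/2}\le 1$, yields
$$|\Ocal_{v,0}^{1,(z)}((a-1)^{-2}a)|\;\le\;(2k+1)\,q_v^{k(|\Re(z)|+1)/2}\;\le\;3k\,|a-1|_v^{-(|\Re(z)|+1)/2},$$
where the final step uses $2k+1\le 3k$ for $k\ge 1$. Multiplying by $q_v^{-d_v/2}\le 1$ completes the proof.

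The only non-routine step is the polynomial simplification displayed above; after that, everything reduces to a one-line triangle inequality, so this algebraic identity is where I expect the main (modest) obstacle to lie.
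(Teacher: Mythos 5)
Your proof is correct and follows essentially the same route as the paper: the paper also specializes Theorem~\ref{explicit hyp}~(2), sets $t=q_v^{z/2}$, and rewrites the resulting expression as $t^{\a}\{1+g(t)t^{-2\a}(1-tq_v^{-1/2})\}$ with $g(t)=\sum_{j=0}^{\a-1}t^{2j}$, which is exactly your Laurent-polynomial identity in factored form. The only (inessential) difference is the final estimate: you bound the $2k+1$ monomials termwise using $|u^m|\le q_v^{k|\Re(z)|/2}$, while the paper bounds $|f(t)|/|t|^{\a}$ for $|t|\ge 1$ and then invokes the symmetry $f(t)=f(t^{-1})$; both give the constant $3\,|\ord_v(a-1)|$.
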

\begin{proof}We suppose $a\in 1+\gp_v$ and set $t=q_v^{z/2}$ and $\a=\ord_v(a-1)\in \N$. Then, 
	by Theorem \ref{explicit hyp} (2), $\fF_v^{(z)}(a)$ is the product of $q_v^{-d_v/2}|a-1|_v^{-1/2}$ and
	\begin{align*}
		f(t)&:=\tfrac{1-q^{-1/2}t}{1-t^{2}}t^{-\a}+\tfrac{1-q_v^{-1/2}t^{-1}}{1-t^{-2}}t^{\a} =t^{\a}\left\{1+{g(t)}\,t^{-2\a}(1-tq_v^{-1/2})\right\}
	\end{align*}
	with $g(t)=\sum_{j=0}^{\a-1}t^{2j}$. From the second expression, $f(t)$ is seen to be holomorphic away from $t=0$. Let $|t|\geq 1$; then $|g(t)||t|^{-2\a}\leq \a|t|^{-2}$ and 
	\begin{align*}
		{|f(t)|}{|t|^{-\a}} &\leq 1+g(t)\,|t|^{-2\a}(1+|t|q_v^{-1/2})
		\leq 1+\a(|t|^{-2}+q_v^{-1/2}|t|^{-1})\leq 3\a.  
	\end{align*} 
	Thus $|f(t)|\leq 3\a\,|t|^{\a}$ for $|t|\geq 1$. From this we have $|f(t)|\leq 3\a\,|t|^{-\a}$ by the obvious functional equation $f(t^{-1})=f(t)$.
\end{proof}

\begin{cor} \label{FHypL7.5}
	Let $a \in F^\times-\{1\}$. Given $\e>0$, we have a constant $C_\e>0$ such that  
	\begin{align*}
		\prod_{v\in \Sigma_\fin -(S\cup S(\fn))} |\fF_v^{(z)}(a)|
		\leq \delta(a\in \cO(S\cup S(\fn))^\times )\,C_\e\,\prod_{v\in \Sigma_\fin-(S\cup S(\fn))}|a-1|_v^{-\frac{|\Re(z)|+1}{2}-\e}
	\end{align*}
	for $a\in F^\times -\{1\}$ and $z\in \C$, where $\cO(S\cup S(\fn))$ is the ring of $S\cup S(\fn)$-integers in $F$. 
\end{cor}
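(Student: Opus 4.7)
The core input is Theorem~\ref{explicit hyp}(2), which gives the closed form
$$\fF_v^{(z)}(a) = \delta(a\in \cO_v^\times)\,q_v^{-d_v/2}\,\Ocal_{v,0}^{1,(z)}((a-1)^{-2}a)$$
for $v\in \Sigma_\fin-(S\cup S(\fn))$, together with the specialization $\fF_v^{(z)}(a)=q_v^{-d_v/2}$ when $|a|_v=|a-1|_v=1$. The vanishing factor $\delta(a\in \cO_v^\times)$ appears at every $v\in \Sigma_\fin-(S\cup S(\fn))$, so the product vanishes unless $a\in \cO(S\cup S(\fn))^\times$, producing the characteristic function on the right-hand side.

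Assuming $a\in \cO(S\cup S(\fn))^\times$, I would split the product over $v\in \Sigma_\fin-(S\cup S(\fn))$ according to whether $v\mid (a-1)$ or not. The cofinite collection with $|a-1|_v=1$ contributes only the factor $\prod_v q_v^{-d_v/2}$, which is a constant depending only on $F$. For the finitely many places $v$ with $e_v:=\ord_v(a-1)\ge 1$, Lemma~\ref{LocalHyp-almostall-esti} yields
$$|\fF_v^{(z)}(a)|\le 3 e_v\,|a-1|_v^{-(|\Re(z)|+1)/2}.$$
Multiplying these upper bounds gives $\prod_{v\mid (a-1)} |a-1|_v^{-(|\Re(z)|+1)/2}$ already present in the target inequality, together with a leftover combinatorial factor $\prod_{v\mid(a-1)} 3e_v$ that must be absorbed into $C_\e \prod_{v\mid(a-1)} |a-1|_v^{-\e}$.

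The crux is therefore an elementary divisor-type estimate: for any $\e>0$ there exists $C_\e>0$ such that
$$\prod_{v\mid (a-1)} 3e_v\le C_\e\,\prod_{v\mid(a-1)} q_v^{\e e_v}=C_\e\,\prod_{v\mid(a-1)}|a-1|_v^{-\e}.$$
To prove it I would fix $Q_\e\ge 3$ with $\log 3+\log n\le \e n\log Q_\e$ for every $n\ge 1$ (e.g.\ $Q_\e=\exp(2\log 3/\e)$), and split $\Sigma_\fin$ into the (cofinite) set of places with $q_v\ge Q_\e$, where $3e_v\le q_v^{\e e_v}$ holds termwise, and the finite set with $q_v<Q_\e$. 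On the latter set, for each such $v$ the function $e\mapsto 3e\,q_v^{-\e e}$ is bounded on $e\ge 1$ by a constant depending only on $\e$, so the product over that finite set is absorbed into $C_\e$. Combining this bound with the constant $\prod_v q_v^{-d_v/2}$ from the cofinite unramified factors, and allowing $C_\e$ to absorb finitely many fixed data, yields the claimed inequality. The one genuine obstacle is the divisor-type bound above, which is standard; the remainder is assembly from Theorem~\ref{explicit hyp}(2) and Lemma~\ref{LocalHyp-almostall-esti}.
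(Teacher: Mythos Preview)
Your proposal is correct and follows essentially the same route as the paper: both reduce to Lemma~\ref{LocalHyp-almostall-esti} at the finitely many places dividing $a-1$ and then absorb the leftover factor $\prod_{v\mid(a-1)}3e_v$ into $C_\e\prod_v|a-1|_v^{-\e}$ by separating small and large residue characteristics. One minor imprecision: the product $\prod_{|a-1|_v=1}q_v^{-d_v/2}$ is not literally a constant independent of $a$ (which ramified places fall in this set varies with $a$), but since each factor is $\le 1$ the product is trivially bounded by $1$, which is all you need.
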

\begin{proof}
	Choose $A_\e>1$ such that $3x\leq A_\e\,2^{\e x}$ for all $x\ge 1$. We fix $q(\e)>1$ such that $A_\e\,2^{\e}\leq q^{\e}$ for all $q>q(\e)$, and set $P(\e)=\{v\in \Sigma_\fin |\,q_v\leq q(\e)\}$. Then $3|{\rm{ord}}_v(a-1)|\leq A_\e\,|a-1|_v^{-\e}$ for all $a\in 1+\fp_v$ with $v\in P(\e)$, and $3|{\rm{ord}}_v(a-1)|\leq |a-1|_v^{-\e}$ for all $a\in 1+\fp_v$ with $v\in \Sigma_\fin -P(\epsilon)$. Setting $C_\e=A_\e^{\# P(\epsilon)}$, we are done by Lemma~\ref{LocalHyp-almostall-esti}. \end{proof}

\begin{lem} \label{Sfn-Est}
	We have the estimate
	\begin{align*}
		|\fF_v^{(z)}(a)|\leq 4(1+q_v)^{-1}\,\delta(a\in \cO_v^\times)\,(1+q_v^{1/2})^{\delta(a\in 1+\fp_v)}\,({\rm{ord}}_v(a-1)+1)\,|a-1|_v^{-\frac{|\Re(z)|+1}{2}}\end{align*}
	for $v\in S(\fn)$, $a\in F_v^\times-\{1\}$ and $z\in \C$. 
\end{lem}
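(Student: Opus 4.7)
The plan is to start from the explicit local formula in Theorem~\ref{explicit hyp}(3), which asserts
$$\fF_v^{(z)}(a)=\delta(a\in\cO_v^\times)\,q_v^{-d_v/2}\,\Ocal_{v,1}^{1,(z)}((a-1)^{-2}a),$$
and then reduce the desired inequality to an elementary estimate for an explicit Laurent polynomial in $t:=q_v^{z/2}$. Since $\delta=1$ is a square, $\varepsilon_{1}$ is the trivial character and the $L$-factors in the denominators of $\Ocal_{v,1}^{1,(z)}$ collapse to $\zeta_{F_v}$-factors. Setting $\alpha:=\ord_v(a-1)\ge 0$ (so $|(a-1)^{-2}a|_v=q_v^{2\alpha}$), a direct substitution into the definition of $\Ocal_{v,1}^{\delta,(z)}$ yields
$$(1+q_v)\,\Ocal_{v,1}^{1,(z)}((a-1)^{-2}a)=q_v^{\alpha/2}\bigl[A(t)\,t^{-\alpha}+A(t^{-1})\,t^{\alpha}\bigr],\qquad A(t):=\frac{(1-q_v^{-1/2}t)(1+q_v^{1/2}t)}{1-t^{2}}.$$

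The second step is to clear the apparent simple poles of $A(t)$ at $t=\pm 1$. A short manipulation (putting both terms over the common denominator $1-t^{2}$ and expanding the numerator, exactly in the spirit of the argument in Lemma~\ref{LocalHyp-almostall-esti}) produces the Laurent-polynomial identity
$$A(t)\,t^{-\alpha}+A(t^{-1})\,t^{\alpha}=(t^{\alpha}+t^{-\alpha})+(q_v^{1/2}-q_v^{-1/2})\sum_{j=0}^{\alpha-1}t^{2j+1-\alpha}.$$
When $\alpha=0$ the sum is empty and the right-hand side equals $2$, which reproduces the split-case value $2/(1+q_v)$ of \eqref{Ovanish}, providing a useful consistency check.

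The third step is the estimate. Because the right-hand side is invariant under $t\leftrightarrow t^{-1}$ (i.e.\ under $z\mapsto -z$), it suffices to treat $|t|\ge 1$. Then $|t^{\alpha}+t^{-\alpha}|\le 2|t|^{\alpha}$, and in the sum the exponents $2j+1-\alpha$ lie in $\{1-\alpha,3-\alpha,\dots,\alpha-1\}$, all $\le \alpha-1\le\alpha$, so the sum has absolute value $\le\alpha|t|^{\alpha}$. Using $|q_v^{1/2}-q_v^{-1/2}|\le 2(1+q_v^{1/2})$ and combining yields
$$\bigl|A(t)t^{-\alpha}+A(t^{-1})t^{\alpha}\bigr|\le 2(1+(1+q_v^{1/2})\alpha)|t|^{\alpha}\le 4(1+q_v^{1/2})(\alpha+1)|t|^{\alpha}.$$

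Finally, put everything together. Using $q_v^{-d_v/2}\le 1$ and $q_v^{\alpha/2}|t|^{\alpha}=q_v^{\alpha(\Re(z)+1)/2}=|a-1|_v^{-(|\Re(z)|+1)/2}$ (under the reduction $\Re(z)\ge 0$), one obtains the claimed bound, with the factor $(1+q_v^{1/2})^{\delta(a\in 1+\fp_v)}$ appearing naturally since the sum contributes only when $\alpha\ge 1$. There is no real obstacle here; the only subtle point is the algebraic identity clearing the poles at $t=\pm 1$, which converts the estimate into a one-line application of the triangle inequality.
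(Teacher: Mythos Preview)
Your proof is correct and follows essentially the same strategy as the paper: start from Theorem~\ref{explicit hyp}(3), rewrite $(1+q_v)\Ocal_{v,1}^{1,(z)}((a-1)^{-2}a)$ as $q_v^{\alpha/2}f(t)$ with $t=q_v^{z/2}$, show $f(t)$ is a Laurent polynomial invariant under $t\mapsto t^{-1}$, and bound it by the triangle inequality for $|t|\ge 1$.

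The only difference is in the algebraic step. The paper writes
\[
f(t)=t^{\alpha}\Bigl\{1+t^{-2}+t^{-1}(q_v^{1/2}-q_v^{-1/2})+t^{-2\alpha}g(t)(1-q_v^{-1/2}t)(1+q_v^{1/2}t)\Bigr\},\qquad g(t)=\sum_{j=0}^{\alpha-2}t^{2j},
\]
and then estimates each term. Your partial-fraction observation $A(t)=1+\dfrac{(q_v^{1/2}-q_v^{-1/2})t}{1-t^{2}}$ gives the cleaner symmetric identity
\[
f(t)=(t^{\alpha}+t^{-\alpha})+(q_v^{1/2}-q_v^{-1/2})\sum_{j=0}^{\alpha-1}t^{2j+1-\alpha},
\]
which makes the Laurent-polynomial structure and the estimate more transparent. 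Your version also handles the $\alpha=0$ case explicitly (recovering \eqref{Ovanish}), which the paper leaves implicit. Both routes yield the same bound.
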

\begin{proof}
	Let $v\in S(\fn)$, $a\in 1+\fp_v$. Set $t=q_v^{z/2}$, $\a={\rm{ord}}_v(a-1)\ge 1$ and\begin{align*}
		f(t)&=\tfrac{1-q_v^{-1/2}t}{1-t^2}(1+q_v^{1/2}t)t^{-\a}+\tfrac{1-q_v^{-1/2}t^{-1}}{1-t^{-2}}(1+q_v^{1/2}t^{-1})t^{\a}\\
		&=t^{\a}\{1+t^{-2}+t^{-1}(q_v^{1/2}-q_v^{-1/2})+t^{-2\a}g(t)(1-q_v^{-1/2}t)(1+q_v^{1/2}t)\}
	\end{align*}
	with $g(t)=\sum_{j=0}^{\a-2}t^{2j}$. 
	Then Theorem~\ref{explicit hyp} (3) shows that $\fF_v^{(z)}(a)$ is the product of $\frac{q_v^{-d_v/2}}{1+q_v}|a-1|_v^{-1/2}$ and $f(t)$. If $|t|\geq 1$, we have $|g(t)||t|^{-2\a}\leq |t|^{-4}\sum_{j=0}^{\a-2}|t|^{-2j}\leq \a|t|^{-4}$ and 
	\begin{align*}
		|f(t)|&\leq |t|^{\a}\{1+|t|^{-2}+|t|^{-1}(q_v^{1/2}+q_v^{-1/2})
		+|t|^{-2\a}|g(t)|(1+q_v^{-1/2}|t|)(1+q_v^{1/2}|t|)\}
		\\
		&\leq |t|^{\a}\{1+1+(1+q_v^{1/2})+2\a(1+1)(1+q_v^{1/2})\} 
		\leq 4(\a+1)(1+q_v^{1/2})\,|t|^{\a}. 
	\end{align*}
	Hence $|f(t)|\leq 4(\a+1)(1+q_v^{1/2})|t|^{\a}$ for $|t|\geq 1$. By the obvious functional equation $f(t)=f(t^{-1})$, we also have $|f(t)|\leq 4(\a+1)(1+q_v^{1/2})|t|^{-\a}$ for $|t|\leq 1$. Thus 
	$$
	|\fF_v^{(z)}(a)|\leq q_v^{-d_v/2}(1+q_v)^{-1}\,4(\,{\rm{ord}}_v(a-1)+1)(1+q_v^{1/2})|a-1|_v^{-\frac{|\Re(z)|+1}{2}}.
	$$
\end{proof}

\begin{lem}\label{lem of esti hyp-S} 
	Let $v \in S$. On the region $\Re(s_v)>1$ and $|\Re(z)|<2\Re(s_v)-1$,  
	\begin{align*}
		&|\fF_v^{(z)}(s_v; a)|\leq 16 q_v^{-\frac{\Re(s_v)+1}{2}} (1+{\rm{ord}}_v(a-1))|a-1|_v^{-\frac{|\Re(z)|+1}{2}} \quad \text{for $|a-1|_v<1$}, \\
		&|\fF_v^{(z)}(s_v; a)|\leq 16q_v^{-\frac{\Re(s_v)+1}{2}}|a-1|_v^{-(\Re(s_v)+1)}|a|_v^{\frac{\Re(s_v)+1}{2}} \quad \text{for $|a-1|_v\geq 1$}.
	\end{align*} 
\end{lem}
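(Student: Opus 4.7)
The plan is to reduce to Theorem~\ref{explicit hyp}~(4), which gives $\fF_v^{(z)}(s_v;a)=q_v^{-d_v/2}\Scal_v^{1,(z)}(s_v;(a-1)^{-2}a)$, and then to bound $\Scal_v^{1,(z)}$ directly from its two-branch definition \eqref{Scaldeltazsa-f1}, specialized to $\delta=1$ where $L_{F_v}(\cdot,\varepsilon_1)=\zeta_{F_v}(\cdot)$. Set $t=q_v^{z/2}$, $u=q_v^{s_v/2}$, $c=q_v^{-1/2}$ and $w=u^{-2}c$. Since $\Scal_v^{1,(z)}$ is manifestly invariant under $z\mapsto -z$, I may assume $\Re(z)\ge 0$, equivalently $|t|\ge 1$. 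For $|a-1|_v\ge 1$ the ultrametric inequality gives $|(a-1)^{-2}a|_v\le |a-1|_v^{-1}\le 1$, so the first branch applies; the expression simplifies to $-q_v^{-(s_v+1)/2}|a|_v^{(s_v+1)/2}|a-1|_v^{-(s_v+1)}\cdot(1-u^{-2}q_v^{-1})/[(1-wt)(1-wt^{-1})]$. The hypothesis $|\Re(z)|<2\Re(s_v)-1$ combined with $\Re(s_v)>1$ forces $|wt^{\pm 1}|\le q_v^{-1}\le 1/2$, so the rational factor is bounded by an absolute constant $\le 16$, which, combined with $q_v^{-d_v/2}\le 1$, yields the desired inequality.

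For $|a-1|_v<1$ set $\a=\ord_v(a-1)\ge 1$; then $|a|_v=1$ and $|(a-1)^{-2}a|_v=q_v^{2\a}>1$, so the second branch applies and the expression takes the form $-q_v^{-(s_v+1)/2}q_v^{\a/2}f(t)$ with
\begin{align*}
f(t)=\frac{1-ct}{(1-t^2)(1-wt^{-1})}t^{-\a}+\frac{1-ct^{-1}}{(1-t^{-2})(1-wt)}t^{\a}.
\end{align*}
Combining the two summands over a common denominator yields $f(t)=tN(t)/[(1-t^2)(t-w)(1-wt)]$ with $N(t)=(1-ct)(1-wt)t^{-\a}-(t-c)(t-w)t^{\a}$. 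A direct substitution gives $N(\pm 1)=0$, so $(1-t^2)\mid N(t)$, and the quotient is the explicit Laurent polynomial
\begin{align*}
M(t)=\sum_{j=0}^{\a}t^{2j-\a}-(c+w)\sum_{j=0}^{\a-1}t^{2j+1-\a}+cw\sum_{j=0}^{\a-2}t^{2j+2-\a}.
\end{align*}
For $|t|\ge 1$ every exponent appearing is at most $\a$, so $|M(t)|\le (\a+1)(1+|c|)(1+|w|)|t|^{\a}\le 4(\a+1)|t|^{\a}$. The bounds $|w|\le q_v^{-3/2}\le 1/2$ and (from the constraint on $\Re(z)$) $|wt|\le q_v^{-1}\le 1/2$ give $|t-w|\ge |t|/2$ and $|1-wt|\ge 1/2$, whence $|f(t)|\le 16(\a+1)|t|^{\a}$. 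The identity $q_v^{\a/2}|t|^{\a}=|a-1|_v^{-(|\Re(z)|+1)/2}$ together with $q_v^{-d_v/2}\le 1$ then produces the stated estimate.

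The key---and only non-routine---step is the divisibility $(1-t^2)\mid N(t)$: each summand of $f(t)$ has apparent poles on the circle $|t|=1$, and only their cancellation yields a bound uniform as $\Re(z)$ approaches the boundary $2\Re(s_v)-1$ of the admissible strip. Once the cancellation is made explicit via the formula for $M(t)$, everything else reduces to polynomial bookkeeping, exactly parallel to the arguments already carried out in Lemmas~\ref{LocalHyp-almostall-esti} and \ref{Sfn-Est}.
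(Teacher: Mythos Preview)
Your proof is correct and follows essentially the same route as the paper's: both invoke Theorem~\ref{explicit hyp}~(4), split according to $|a-1|_v$, and for the case $|a-1|_v<1$ bound the resulting rational function of $t=q_v^{z/2}$ after removing the apparent poles at $t=\pm 1$. The paper simply states the simplified form of $f(t)$ (with $g(t)=\sum_{j=0}^{\nu-2}t^{2j}$ already extracted) as the outcome of ``a computation'', whereas you make the cancellation $(1-t^2)\mid N(t)$ explicit and write down $M(t)$; this is more transparent but mathematically the same step, and the subsequent estimates are identical in spirit to those in Lemmas~\ref{LocalHyp-almostall-esti} and~\ref{Sfn-Est}, as you note.
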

\begin{proof}
	Put $s=s_v$. Suppose $|a-1|_v<1$ and let $\nu={\rm{ord}}_v(a-1)$ and $t=q_v^{z/2}$.
	From Theorem \ref{explicit hyp} (4), a computation shows the identity $\fF_v^{(z)}(a)=q_v^{-\frac{d_v+s+1}{2}}|a-1|_v^{-1/2}\,f(t)$ with 
	\begin{align*}
		f(t)&=\frac{t^{\nu}\{
			t^{-2\nu}g(t)(1-q_v^{-1/2}t(1+q_v^{-s})+t^2q_v^{-s-1})+1+t^{-2}-t^{-1}q_v^{-1/2}(1+q_v^{-s})\}}{(1-q_v^{-s-1/2}t)(1-q_v^{-s-1/2}t^{-1})},
	\end{align*}
	where $g(t)=\sum_{j=0}^{\nu-2}t^{2j}$. Suppose $|t|\geq 1$. Then $|g(t)|<|t|^{2\nu-2}\nu$. From $\Re(s)>1$ and $|\Re(z)|<2\Re(s)-1$, we have $|q_v^{-s}|< q_v^{-1}<1$ and $|q_v^{-s-1/2}t^{\pm 1}|<q_v^{-1}$. By these,
	\begin{align*}
		|f(t)|&\leq {|t|^\nu\bigl\{|t|^{-2}\nu(1+q_v^{-1/2}|t|(1+|q_v^{-s}|)+|t|^2|q_v^{-s}|)+1+|t|^{-2}+|t|^{-1}(1+|q_v^{-s}|)q_v^{-1/2}\bigr\}}{(1-q_v^{-1})^{-2}}
		\\
		&\leq {|t|^{\nu}(4\nu+4)}{(1-q_v^{-1})^{-2}}
		\leq {4|t|^\nu(\nu+1)}{(1/2)^{-2}} = 16|t|^{\nu}(\nu+1).
	\end{align*}
	Thus $|f(t)|\leq 16(\nu+1)|t|^{\nu}$ for $|t|\geq 1$. Since $f(t)=f(t^{-1})$, we then have $|f(t)|\leq 16(\nu+1)|t|^{-\nu}$ for $|t|\leq 1$. 
	
	Suppose $|a-1|_v\geq 1$ and set $t=q_v^{z/2}$. Then
	\begin{align*}
		|\fF^{(z)}(s;a)|&=q_v^{-\frac{d_v+\Re(s)+1}{2}} \frac{1+|q_v^{-s-1}|}{|(1-q_v^{-s-1/2}t)(1-q_v^{-s-1/2}t^{-1})|}\,|a-1|_v^{-\Re(s)-1}|a|_v^{\frac{\Re(s)+1}{2}}
		\\
		&\leq  q_v^{-\frac{\Re(s)+1}{2}}\frac{1+q_v^{-2}}{(1-q_v^{-1})^2}
		|a-1|_v^{-\Re(s)-1}|a|_v^{\frac{\Re(s)+1}{2}}
		\leq 5 q_v^{-\frac{\Re(s)+1}{2}} |a-1|_v^{-\Re(s)-1}|a|_v^{\frac{\Re(s)+1}{2}}.\end{align*}
\end{proof}

\begin{cor} \label{S-Est}
	On the region $|\Re(z)|<2\Re(s_v)-1$, $\Re(s_v)>1$, we have
	\begin{align*}
		|\fF^{(z)}_v(a)|\leq 16 \{\max(0,{\rm{ord}}_v(a-1))+1\}\,\max(1,|a-1|_v)^{\frac{|\Re(z)|-\Re(s_v)}{2}}\,|a-1|_v^{-\frac{|\Re(z)|+1}{2}}
	\end{align*}
	for all $a\in F_v^\times -\{1\}$. 
\end{cor}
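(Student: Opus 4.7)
The plan is to obtain Corollary~\ref{S-Est} as a direct case-by-case deduction from Lemma~\ref{lem of esti hyp-S}, showing that the claimed unified bound dominates each of the two estimates given there. The strategy is simply to verify, in each of the two cases $|a-1|_v<1$ and $|a-1|_v\geq 1$, that the right-hand side of the corollary is an upper bound for the right-hand side of the corresponding case in the lemma. No new analysis of $\fF_v^{(z)}$ itself is needed.

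First I would handle the case $|a-1|_v<1$, so that $\ord_v(a-1)\geq 1$ and hence $\max(0,\ord_v(a-1))=\ord_v(a-1)$, while $\max(1,|a-1|_v)=1$. The claimed bound therefore reduces to $16(\ord_v(a-1)+1)|a-1|_v^{-(|\Re(z)|+1)/2}$. Comparing with the first part of Lemma~\ref{lem of esti hyp-S}, it suffices to observe that $q_v^{-(\Re(s_v)+1)/2}\leq 1$, which holds under the hypothesis $\Re(s_v)>1$ (indeed $(\Re(s_v)+1)/2>1$).

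Next I would treat the case $|a-1|_v\geq 1$, where $\ord_v(a-1)\leq 0$ forces $\max(0,\ord_v(a-1))+1=1$, and $\max(1,|a-1|_v)=|a-1|_v$. The claimed bound then collapses to $16|a-1|_v^{-(\Re(s_v)+1)/2}$ after combining the two powers of $|a-1|_v$. Comparing with the second part of Lemma~\ref{lem of esti hyp-S}, what remains is to show
\[
q_v^{-(\Re(s_v)+1)/2}|a-1|_v^{-(\Re(s_v)+1)}|a|_v^{(\Re(s_v)+1)/2}\;\leq\; |a-1|_v^{-(\Re(s_v)+1)/2},
\]
equivalently $|a|_v\leq q_v|a-1|_v$. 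Here the ultrametric inequality applied to $a=(a-1)+1$ yields $|a|_v\leq \max(|a-1|_v,1)=|a-1|_v$, which is even stronger than required since $q_v\geq 1$ and $(\Re(s_v)+1)/2>0$.

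There is essentially no obstacle: the whole content is routine bookkeeping between the two regimes $|a-1|_v<1$ and $|a-1|_v\geq 1$ together with one application of the ultrametric inequality. The only place to be slightly careful is the direction of the inequality on exponents, which is controlled by the sign of $(\Re(s_v)+1)/2$, guaranteed positive by the hypothesis $\Re(s_v)>1$.
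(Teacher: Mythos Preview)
Your proof is correct and follows essentially the same approach as the paper: deduce the unified bound from the two cases of Lemma~\ref{lem of esti hyp-S} by routine bookkeeping. The only cosmetic difference is that for $|a-1|_v\geq 1$ the paper uses the ordinary triangle inequality $|a|_v\leq |a-1|_v+1\leq 2|a-1|_v$ (and then $q_v\geq 2$), whereas you use the ultrametric inequality to get the sharper $|a|_v\leq |a-1|_v$ directly; both yield the needed comparison.
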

\begin{proof}
	If $|a-1|_v\geq 1$, then $|a|_v\leq |a-1|_v+1\leq 2 |a-1|_v$. By this remark, we can deduce the inequality from those in Lemma \ref{lem of esti hyp-S}.
\end{proof}

\begin{lem}\label{FHypL9} 
	Let $v\in \Sigma_\infty$. 
	On the region $|\Re(z)|<2l_v-1$, the integral $\fF_v^{(z)}(a)$ converges absolutely
	and is a holomorphic function in $z$ for any fixed $a\in F-\{0,1\}$. We have
	\begin{align*}
		|\fF_v^{(z)}(a)|\ll \delta(a_v>0)\,|a|_v^{l_v/2} \,|a-1|_v^{-l_v}, \quad a\in F-\{0,1\}
	\end{align*}
	uniformly for $z$ lying in a compact set of $|\Re(z)|<2l_v-1$. 
\end{lem}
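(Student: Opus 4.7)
The plan is to deduce the claimed estimate from the explicit formula of Theorem~\ref{explicit hyp}(1), which reduces the problem to estimating a Legendre function uniformly in its parameters. Since \eqref{DSMexplicitformula} shows $\Phi^{l_v}(g)=0$ unless $\det g>0$, and $\det(k^{-1}[\begin{smallmatrix}a&*\\0&1\end{smallmatrix}]k)=a$, the integral $\fF_v^{(z)}(a)$ vanishes when $a<0$; assume $a>0$ from now on. Absolute convergence on the strip $|\Re(z)|<2l_v-1$ can be checked directly: an easy computation using the $\SO(2)$-invariance of $b-c\in\fsl_2$ together with \eqref{DSMexplicitformula} gives $|\Phi^{l_v}(k^{-1}[\begin{smallmatrix}a&(a-1)x\\0&1\end{smallmatrix}]k)|=(4a)^{l_v/2}\{(a+1)^2+(a-1)^2 x^2\}^{-l_v/2}$ for every $k\in\bK_v$, while \eqref{Hsphericalfnt} yields $\varphi_v^{(0,z)}([\begin{smallmatrix}1&x\\0&1\end{smallmatrix}])=O(|x|^{(|\Re(z)|-1)/2})$ as $|x|\to\infty$; the product decays like $|x|^{-l_v+(|\Re(z)|-1)/2}$, integrable precisely when $|\Re(z)|<2l_v-1$.

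Invoking Theorem~\ref{explicit hyp}(1), the Gamma prefactor
$$
\frac{\Gamma\bigl(l_v+\tfrac{z-1}{2}\bigr)\Gamma\bigl(l_v+\tfrac{-z-1}{2}\bigr)}{\Gamma_\R\bigl(\tfrac{1+z}{2}\bigr)\Gamma_\R\bigl(\tfrac{1-z}{2}\bigr)}
$$
is holomorphic and uniformly bounded on any compact subset of the strip, since the poles of its numerator Gamma factors all lie outside the strip and $1/\Gamma_\R$ is entire; combined with the fact that $\fP_{(z-1)/2}^{1-l_v}(x)$ is entire in $z$ for fixed $x>1$, this gives the asserted holomorphicity of $\fF_v^{(z)}(a)$ in $z$.

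For the bound, I would use the hypergeometric representation
$$
\fP_{(z-1)/2}^{1-l_v}(x)=\frac{1}{\Gamma(l_v)}\left(\frac{x+1}{x-1}\right)^{(1-l_v)/2}{}_2F_1\left(\tfrac{1-z}{2},\tfrac{z+1}{2};l_v;\tfrac{1-x}{2}\right),\quad x>1,
$$
and substitute $x=\l_v(a)^{-1/2}=(a+1)/|a-1|$, so that $(x+1)/(x-1)=\max(a,a^{-1})$ and $(1-x)/2=-|a-1|^{-1}$. This yields
$$
\fF_v^{(z)}(a)=C(z)\,a^{1/2}|a-1|^{-1}\max(a,a^{-1})^{(1-l_v)/2}\,{}_2F_1\bigl(\tfrac{1-z}{2},\tfrac{z+1}{2};l_v;-|a-1|^{-1}\bigr),
$$
with $C(z)$ uniformly bounded on compacta of the strip. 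The hypergeometric factor is then estimated in two regimes: when $|a-1|\ge 1$ the argument lies in $[-1,0)$ and the Gauss series gives a uniform bound, while when $|a-1|<1$ one applies the transformation of ${}_2F_1$ at $w=\infty$,
$$
{}_2F_1(\alpha,\beta;\gamma;w)=\frac{\Gamma(\gamma)\Gamma(\beta-\alpha)}{\Gamma(\beta)\Gamma(\gamma-\alpha)}(-w)^{-\alpha}\bigl[1+O(|w|^{-1})\bigr]+(\alpha\leftrightarrow\beta),
$$
with $\alpha=\tfrac{1-z}{2}$, $\beta=\tfrac{z+1}{2}$, $\gamma=l_v$, to obtain $|{}_2F_1(\tfrac{1-z}{2},\tfrac{z+1}{2};l_v;-|a-1|^{-1})|\ll|a-1|^{(1-|\Re(z)|)/2}$, again uniformly on compacta (the Gamma ratios staying bounded because their poles are outside the strip).

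It remains to verify case by case that $a^{l_v/2}|a-1|^{-l_v}$ dominates the resulting bounds on $\fF_v^{(z)}(a)$. For $a\ge 2$ the estimate becomes $\ll a^{-l_v/2}\asymp a^{l_v/2}|a-1|^{-l_v}$; for $0<a\le 1/2$ it becomes $\ll a^{l_v/2}\asymp a^{l_v/2}|a-1|^{-l_v}$; and for $1/2<a<2$ it becomes $\ll|a-1|^{-(1+|\Re(z)|)/2}\le|a-1|^{-l_v}$ thanks to $|\Re(z)|<2l_v-1$, while $a^{l_v/2}$ is bounded below on that range. The main obstacle is the book-keeping near $a=1$: the sharpness of the strip condition $|\Re(z)|<2l_v-1$ shows up precisely through the exponent produced by the hypergeometric transformation at $w=\infty$, so one must carefully verify that all the $z$-dependent Gamma ratios appearing in that transformation remain uniformly bounded on compacta of the strip.
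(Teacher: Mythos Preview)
Your approach is viable but differs substantially from the paper's, and there is one genuine gap.

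\textbf{Comparison.} The paper never invokes Theorem~\ref{explicit hyp}(1) for this lemma. It writes $\fF_v^{(z)}(a)=\delta(a>0)\,2^{l_v}a^{l_v/2}\bigl(I^+(z,a)+I^-(z,a)\bigr)$ with $I^{\pm}$ the integrals over $|x|\le 1$ and $|x|>1$, then uses only the crude inequalities $|(a+1)-i(a-1)x|\ge|a+1|$ on $[-1,1]$ and $|(a+1)-i(a-1)x|\ge|x|\,|a-1|$ on $|x|>1$ to get $|I^+|\ll|a+1|^{-l_v}$ and $|I^-|\ll|a-1|^{-l_v}$; since $|a+1|\ge|a-1|$ for $a>0$, the bound follows. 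Your route through the Legendre formula is what the paper uses later, in Corollary~\ref{FHypL11}, to obtain the \emph{sharper} bound near $a=1$; for the coarse estimate of Lemma~\ref{FHypL9} the direct splitting is much quicker and avoids all special-function machinery.

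\textbf{The gap.} Your final parenthetical---``the Gamma ratios staying bounded because their poles are outside the strip''---is false. In the connection formula at $w=\infty$ the coefficients involve $\Gamma(\beta-\alpha)=\Gamma(z)$ and $\Gamma(\alpha-\beta)=\Gamma(-z)$, whose poles sit at every integer and hence densely populate the strip $|\Re(z)|<2l_v-1$. At those points the two leading terms individually blow up and the expansion degenerates (a logarithmic term in $w$ appears). The repair is straightforward: since any compact set satisfies $|\Re(z)|\le\sigma<2l_v-1$, you may absorb an extra factor $|\log|a-1||$ (or $|a-1|^{-\epsilon}$) into the slack $l_v-(1+\sigma)/2>0$; alternatively, use the integral representation of $\fP_\nu^{1-l_v}$ quoted in the proof of Corollary~\ref{FHypL11}, which is uniform in $z$ without exceptional points. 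But the justification as written does not stand.

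\textbf{A minor slip.} For $0<a<1$ one has $(1-x)/2=-a/(1-a)$, not $-|a-1|^{-1}$. This does not damage your bounds, since $a/(1-a)\asymp 1/(1-a)$ as $a\to1^-$ and stays in a bounded interval for $a\le\tfrac12$.
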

\begin{proof} Recall $\Phi_v=\Phi_v^{l_v}$ is given by \eqref{DSMexplicitformula}. By the Cartan decomposition, we have 
	$$
	\Phi_v^{l_v}\left(k^{-1}\left[\begin{smallmatrix} a & (a-1)x \\ 0 & 1 \end{smallmatrix}\right]k\right)=\delta(a>0)\,2^{l_v}a^{l_v/2}\,\{(a+1)-i(a-1)x\}^{-l_v}
	$$
	for any $k\in \bK_v$, $a\in \R^\times$ and $x\in \R$. From now on, we suppose $a>0$ and $a\neq1$. We have $\fF_v^{(z)}(a)=\delta(a>0) 2^{l_v}a^{l_v/2}(I^{+}(z,a)+I^{-}(z,a))$, where
	\begin{align*}
		I^{\varepsilon}(z,a)=\tint_{J^\varepsilon}\{(a+1)-i(a-1)x\}^{-l_v}
		\varphi_v^{(0,z)}\left(\left[\begin{smallmatrix} 1 & x\\ 0 & 1 \end{smallmatrix}\right]\right)
		\,\d x
	\end{align*}
	with $J^+=[-1,1]$, $J^-=\R-J^+$. We examine the convergence of these integrals separately. By the obvious estimate $|(a+1)-i(a-1)x|=\{(a+1)^2+x^2(a-1)^2\}^{1/2}\geq |a+1|$, 
	\begin{align*}
		|I^{+}(z,a)|&\leq \tint_{-1}^{1}|(a+1)-i(a-1)x|^{-l_v}|
		\varphi_v^{(0,z)}\left(\left[\begin{smallmatrix} 1 & x\\ 0 & 1 \end{smallmatrix}\right]\right)|\,\d x
		\leq |a+1|^{-l_v} \tint_{-1}^{1}|\varphi_v^{(0,z)}\left(\left[\begin{smallmatrix} 1 & x\\ 0 & 1 \end{smallmatrix}\right]\right)|\,\d x
	\end{align*}
	Since the integral is convergent for all $z\in \C$, we have the estimate $|I^{+}(z,a)|\ll |a+1|^{-l_v}$ compact uniformly in $z$. In the same way, by the estimate $|(a+1)-i(a-1)x|\geq |x||a-1|$, 
	\begin{align*}
		|I^{-}(z,a)|\leq |a-1|^{-l_v} \tint_{|x|>1} |x|^{-l_v}|\varphi_v^{(0,z)}\left(\left[\begin{smallmatrix} 1 & x\\ 0 & 1 \end{smallmatrix}\right]\right)|\,\d x.
	\end{align*}
	From the defining formula \eqref{Hsphericalfnt}, we easily have the estimate $\varphi_v^{(0,z)}\left(\left[\begin{smallmatrix} 1 & x\\ 0 & 1 \end{smallmatrix}\right]\right)=O((1+x^2)^{(|\Re(z)|-1)/4})$ as $x \rightarrow \infty$. From this the integral in the majorant of $|I^{-}(z,a)|$ is seen to be convergent for $|\Re(z)|<2l_v-1$. 
\end{proof}

\begin{cor} \label{FHypL11} 
	Let $v\in \Sigma_\infty$ and $0<\s<2l_v-1$. We have the estimate
	$$
	|\fF_v^{(z)}(a)|\ll \delta(a_v>0)\,|a-1|_v^{-\frac{|\Re(z)|+1}{2}}\,(1+|a|_v)^{-\frac{l_v}{2}+\frac{|\Re(z)|+1}{2}}, \quad a\in F-\{0,1\}
	$$
	uniformly for $z$ such that $|\Re(z)|\leq \s$.  
\end{cor}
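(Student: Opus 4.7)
To prove Corollary~\ref{FHypL11}, my plan is to refine Lemma~\ref{FHypL9} by returning to the integral representation of $\fF_v^{(z)}(a)$ that appeared in its proof, and to track the $a$-dependence sharply through a single change of variable. Recall that for $a>0$ one has
\begin{align*}
\fF_v^{(z)}(a) = 2^{l_v}a^{l_v/2}\int_\R \{(a+1)-i(a-1)x\}^{-l_v}\,\varphi_v^{(0,z)}\!\left(\left[\begin{smallmatrix}1 & x \\ 0 & 1\end{smallmatrix}\right]\right)\,\d x.
\end{align*}
First I will establish that on the compact region $|\Re(z)|\leq\s$, the $H$-$\bK_v$-spherical function obeys the uniform estimate
\begin{align*}
\left|\varphi_v^{(0,z)}\!\left(\left[\begin{smallmatrix}1 & x \\ 0 & 1\end{smallmatrix}\right]\right)\right| \ll (1+x^2)^{(|\Re(z)|-1)/4},
\end{align*}
which follows from the explicit formula \eqref{Hsphericalfnt}: the relevant Gauss hypergeometric series ${}_2F_1\bigl(\tfrac{z+1}{4},\tfrac{z+1}{4};\tfrac{z+2}{2};t\bigr)$ (and its reflection $z\mapsto -z$) has $\gamma-\alpha-\beta=1/2>0$, so it converges absolutely at $t=1$ and is bounded uniformly in $t\in[0,1]$ and in $z$ on our compact set.

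Next I plan to substitute $x = (a+1)u/|a-1|$, chosen so that $(a+1)^2+(a-1)^2x^2 = (a+1)^2(1+u^2)$ and $1+x^2 = ((a-1)^2+(a+1)^2u^2)/(a-1)^2$. Combined with the trivial bound $|(a+1)-i(a-1)x|\geq ((a+1)^2+(a-1)^2x^2)^{1/2}$, this cleanly separates the $a$-dependent prefactors and leads to
\begin{align*}
|\fF_v^{(z)}(a)| \ll \delta(a>0)\,a^{l_v/2}\,(a+1)^{1-l_v}\,|a-1|^{-(|\Re(z)|+1)/2}\,\Psi(a;z),
\end{align*}
where
\begin{align*}
\Psi(a;z) = \int_\R (1+u^2)^{-l_v/2}\bigl((a-1)^2+(a+1)^2u^2\bigr)^{(|\Re(z)|-1)/4}\,\d u.
\end{align*}

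The remaining task is to show $\Psi(a;z) \ll (a+1)^{(|\Re(z)|-1)/2}$ uniformly in $a>0$ and $|\Re(z)|\leq\s$, which I will handle by splitting on the sign of $(|\Re(z)|-1)/4$. If $|\Re(z)|\geq 1$, the upper bound $(a-1)^2+(a+1)^2u^2\leq (a+1)^2(1+u^2)$ pulls $(a+1)^{(|\Re(z)|-1)/2}$ out of the integral and leaves the convergent integral $\int(1+u^2)^{-l_v/2+(|\Re(z)|-1)/4}\,\d u$ (finite because $\s<2l_v-1$). If $|\Re(z)|<1$, the exponent $(|\Re(z)|-1)/4$ is negative, so the reverse inequality $(a-1)^2+(a+1)^2u^2\geq (a+1)^2u^2$ now gives the desired bound with the leftover integral $\int(1+u^2)^{-l_v/2}|u|^{(|\Re(z)|-1)/2}\,\d u$, convergent both at $u=0$ (since $(|\Re(z)|-1)/2>-1$) and at infinity (again by $\s<2l_v-1$). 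Finally, the elementary inequality $a^{l_v/2}\leq(1+a)^{l_v/2}$ for $a>0$ converts the prefactor $a^{l_v/2}(a+1)^{1-l_v+(|\Re(z)|-1)/2}$ into $(1+|a|)^{-l_v/2+(|\Re(z)|+1)/2}$, yielding the claimed bound. The only delicate point is the uniform control of $\varphi_v^{(0,z)}$ on compact $z$-sets; I do not foresee any substantial obstacle beyond this.
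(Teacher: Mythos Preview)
Your approach is genuinely different from the paper's and in some ways cleaner. The paper splits into two regimes: for $a$ bounded away from $1$ it quotes the crude bound of Lemma~\ref{FHypL9}, while for $a$ near $1$ it invokes the Legendre-function formula of Theorem~\ref{explicit hyp}(1) together with an integral representation of $\fP_\nu^\mu$. Your substitution $x=(a+1)u/|a-1|$ handles all $a>0$ in one stroke and avoids any appeal to Theorem~\ref{explicit hyp}(1); the case split on the sign of $(|\Re(z)|-1)/4$ is neat.

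There is, however, a gap in your uniform bound on $\varphi_v^{(0,z)}$. Your justification addresses only the hypergeometric factors $h_v^{(0,\pm z)}$ (where indeed $c-a-b=\tfrac12>0$), but not the coefficients $A_v(0,\pm z)$ in \eqref{Hsphericalfnt}, which carry simple poles at the even integers $z$ coming from $\Gamma_\R(\mp z)$. These poles cancel in the sum by the evenness $\varphi_v^{(0,z)}=\varphi_v^{(0,-z)}$, but the cancellation produces a logarithm. Concretely, the proof of Lemma~\ref{FhypL1} gives the closed form $\varphi_v^{(0,z)}\bigl(\left[\begin{smallmatrix}1&x\\0&1\end{smallmatrix}\right]\bigr)=(1+x^2)^{-(z+1)/4}\,{}_2F_1\bigl(\tfrac{z+1}{4},\tfrac{z+1}{4};\tfrac12;\tfrac{x^2}{1+x^2}\bigr)$, in which $c-a-b=-z/2$; at $z=0$ this hypergeometric acquires a $\log(1-t)$ singularity at $t=1$, so $\varphi_v^{(0,0)}\bigl(\left[\begin{smallmatrix}1&x\\0&1\end{smallmatrix}\right]\bigr)\sim \tfrac{\sqrt\pi}{\Gamma(1/4)^2}(1+x^2)^{-1/4}\log(1+x^2)$ as $|x|\to\infty$, not $O((1+x^2)^{-1/4})$. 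After your substitution the factor $\log(1+x^2)$ contributes an extra $|\log|a-1||$ near $a=1$, so near even integer $z$ your final bound is weaker than the stated Corollary by that logarithm. The repair is easy---either revert to the paper's explicit-formula argument for $a$ in a fixed neighborhood of $1$, or absorb the log into an $\e$-loss in the exponent and interpolate---but the one-shot argument as written does not quite close at those exceptional $z$.
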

\begin{proof} 
	Let $U$ be a neighborhood of $1$ in $\R_{+}$ such that $|a-1|_v<1$ for all $a \in U$. From Lemma~\ref{FHypL9}, we have the estimate $|\fF^{(z)}(a)|\ll \delta(a_v>0)\,(1+|a|_v)^{-l_v/2}$ on $a\in \R-U$ uniformly in $|\Re(z)|\leq \s$. To have a bound on $U$, we use Theorem~\ref{explicit hyp} (1). By  Stirling's formula, $\frac{\Gamma\left(l_v+\tfrac{z-1}{2} \right) \Gamma\left(l_v+\tfrac{-z-1}{2} \right)}{\Gamma_\R\left(\tfrac{1+z}{2}\right)\Gamma_\R\left(\tfrac{1-z}{2}\right)}$ is vertically bounded on $|\Re(z)|\leq \s$. By the formula \cite[line 12, p.184]{MOS},
	\begin{align*}
		\fP_{\frac{z-1}{2}}^{1-l_v}(\l_v(a)^{-1/2})
		&=\tfrac{1}{\sqrt{\pi}\Gamma(l_v-1/2)}|a|_v^{\frac{l_v-1}{2}}|a-1|_v^{\frac{-z+1}{2}}\,\tint_{0}^{\pi}\{2\sqrt{a_v}\cos \theta+a_v+1\}^{\frac{z+1}{2}-l_v}(\sin^2\theta)^{l_v-1}\,\d\theta.
	\end{align*}
	For varying $a\in U$ and $|\Re(z)|\leq \s$, the last integral is bounded by a constant. Thus, $|\fP_{\frac{z-1}{2}}^{1-l_v}(\l_v(a)^{-1/2})|=O(|a-1|_v^{\frac{-|\Re(z)|+1}{2}})$ for $a\in U$ and $|\Re(z)|\leq \s$. 
\end{proof}

Now, Proposition~\ref{FHypP1} follows from Corollaries~\ref{FHypL7.5}, \ref{FHypL11}
and \ref{S-Est}
and Lemma \ref{Sfn-Est}. We use the product formula $\prod_{v\in \Sigma_F}|a-1|_v=1$ for $a\in F^\times-\{1\}$ to eliminate the factors $|a-1|_v^{-\frac{|\Re(z)|+1}{2}}$ $(v\in \Sigma_F)$ in the majorant.

\subsection{Proof of absolute convergence} \label{PAbsConv}

\begin{prop} \label{FHypP2}
	Suppose $\Re(s_v)>2$ for all $v\in S$. Let $0<\s<l_v-3$. The series $\sum_{a\in F^\times-\{1\}}\fF^{(z)}(a)
	$ converges absolutely and uniformly for $|\Re(z)|\leq \s$ and locally uniformly in $\bfs$ defining a holomorphic function of $z$ on the region 
	$$|\Re(z)|<\min\,(\{l_v-3|v\in \Sigma_\infty\}\cup \{\Re(s_v)-2|\,v\in S\,\}).
	$$
\end{prop}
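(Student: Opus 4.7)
The proof combines the local bounds of Proposition~\ref{FHypP1}~(2) with Dirichlet's $S$-unit theorem, reducing the series to an exponential sum over a lattice of rank $|\Sigma_\infty|+|S|-1$.

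\emph{Support reduction and local bounds.} By Proposition~\ref{FHypP1}~(1), $\fF^{(z)}(a)=0$ unless $a\in F_\infty^+$, and the indicator $\delta(a\in \cO_v^\times)$ factor present in $\phi_v$ for each $v\in \Sigma_\fin-S$ restricts the summation to $U:=\cO(S)_+^\times-\{1\}$, the totally positive $S$-units different from $1$. For $v\in S$ and $a\in U$, the identity $\max(1,|a-1|_v)=\max(1,|a|_v)$ (valid for $a\in \cO_v\cap F_v^\times$, $a\neq 1$) rewrites the $v$-th factor of Proposition~\ref{FHypP1}~(2) as $\max(1,|a|_v)^{-\beta_v}$ times a log factor, with $\beta_v=(\Re(s_v)-\s)/2-\e$. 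The logarithmic factors $\ord_v(a-1)+1$ appearing in $\phi_v$ for $v\in S\cup S(\fn)$ are absorbed via the product formula $\prod_{w\in \Sigma_F}|a-1|_w=1$: smallness of $|a-1|_v$ at a finite place forces largeness of $|a-1|_w\asymp |a|_w$ at some archimedean $w$, and the log factor is absorbed into an arbitrarily small $\e$-loss in the archimedean decay exponent $\alpha_v=(l_v-\s-1)/2$. The resulting uniform bound is
\begin{align*}
|\fF^{(z)}(a)|\leq C_\e\,\prod_{v\in \Sigma_\infty}(1+|a|_v)^{-\alpha_v+\e}\prod_{v\in S}\max(1,|a|_v)^{-\beta_v},
\end{align*}
valid for $|\Re(z)|\leq \s$ and $\bfs$ in a compact subset of the stated region.

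\emph{Lattice sum.} By Dirichlet's $S$-unit theorem, $\cO(S)_+^\times$ embeds via $a\mapsto (\log|a|_v)_{v\in \Sigma_\infty\cup S}$ as a lattice $L$ of rank $r=|\Sigma_\infty|+|S|-1$ inside the hyperplane $H:=\{t:\sum_v t_v=0\}$. Setting $t_v(a)=\log|a|_v$, the bound above reads $|\fF^{(z)}(a)|\leq C_\e\,\exp\bigl(-\sum_v \gamma_v\,t_v^+(a)\bigr)$ with $\gamma_v=\alpha_v-\e$ for $v\in \Sigma_\infty$ and $\gamma_v=\beta_v$ for $v\in S$; the hypotheses $\s<l_v-3$ and $\Re(s_v)>\s+2$ ensure $\gamma_v>0$ for all $v$. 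Using $\sum_v t_v=0$ on $H$ to rewrite $\sum_v t_v^+=\tfrac{1}{2}\|t\|_1$, the majorant reduces to $\sum_{t\in L-\{0\}}\exp(-c\|t\|_1)$ with $c=\tfrac{1}{2}\min_v \gamma_v>0$, which converges by the standard estimate on exponential sums over lattices. Uniformity of the convergence in $|\Re(z)|\leq \s$ and locally in $\bfs$ is inherited directly from the uniformity of the local bounds, and the holomorphicity of the sum on the stated strip follows from Weierstrass' theorem since each $\fF^{(z)}(a)$ is holomorphic in $z$ there by Theorem~\ref{explicit hyp}.

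\emph{Main obstacle.} The principal technical point is the uniform treatment of the logarithmic factors $\ord_v(a-1)+1$ coming from $\phi_v$ at $v\in S\cup S(\fn)$; since these are not governed by the lattice coordinates $(|a|_v)_{v\in \Sigma_\infty\cup S}$ alone, the product formula must be invoked to convert them into a bounded $\e$-loss in the archimedean exponents before the $S$-unit lattice sum argument can be applied cleanly.
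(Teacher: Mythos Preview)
Your proof is correct but takes a genuinely different route from the paper's. The paper does \emph{not} use the $S$-unit theorem or any multiplicative lattice structure. Instead, it chooses a finite set $S_0\supset S\cup S(\fn)$ so that $\phi_v={\rm ch}_{\cO_v^\times}$ for $v\notin S_0$, notes that the support of the majorant then lies in the \emph{additive} lattice $\cO(S_0)\subset F_{S_0,\infty}=\prod_{v\in S_0\cup\Sigma_\infty}F_v$, and bounds $\sum_{a\in\cO(S_0)-\{0,1\}}f(a)$ by the integral $\int_{F_{S_0,\infty}}f(x)\,\d x$ via a compact-neighborhood translation argument; convergence of the integral is what forces $l_v>\s+3$ (integrability of $(1+|x|)^{-l_v/2+(\s+1)/2}$ over $\R$) and $\Re(s_v)>\s+2$.

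Your approach instead exploits the exact multiplicative support $\cO(S)_+^\times$: the log map sends it to a full-rank lattice in the trace-zero hyperplane, and after absorbing the $\ord_v(a-1)$ factors into an $\e$-loss you obtain pure exponential decay $e^{-c\|t\|_1}$ on that lattice. This is conceptually cleaner and in fact needs only $\alpha_v,\beta_v>0$, so it would work under the weaker hypotheses $l_v>\s+1$ and $\Re(s_v)>\s$; the paper's additive-integral method genuinely requires the extra margin $l_v>\s+3$, $\Re(s_v)>\s+2$ stated in the proposition. On the other hand, the paper's argument avoids the product-formula bookkeeping for the log factors: those factors sit harmlessly inside the integrand and are handled by the observation that $\sum_n (n+1)q_v^{-n}<\infty$. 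Two small infelicities in your write-up (the identity $\max(1,|a-1|_v)=\max(1,|a|_v)$ holds for all $a\in F_v^\times$, not just $a\in\cO_v$---which is what you actually need at $v\in S$; and $|a-1|_w\le 1+|a|_w$ is the correct archimedean inequality, not $\asymp|a|_w$) do not affect the validity of the argument.
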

\begin{proof}
	Let $S_0\subset \Sigma_\fin$ be such that $\Phi_v = {\rm ch}_{Z_v\bK_v}$ and $d_v=0$ for all $v\in \Sigma_\fin-S_0$. Then we may take $\phi_v={\rm ch}_{\cO_v^\times}$ in Proposition~\ref{FHypP1}, by which the proof is reduced to showing the convergence of 
	$\sum_{a\in F^\times-\{1\}}f(a)$ with $f$ being a function on the adeles defined as 
	\begin{align*}
		f(a)=\{\prod_{v\in \Sigma_\fin}\phi_{v}(a_v)\}\,\,\{\prod_{v\in \Sigma_\infty}(1+|a_v|_v)^{-l_v/2+(\s+1)/2}\},\quad  a\in \A-\{0,1\}. 
	\end{align*}
	Let $\cO(S_0)$ be the ring of $S_0$-integers in $F$, i.e., $\cO(S_0)=F\cap \prod_{v\in \Sigma_\fin-S_0}\cO_v\times \prod_{v\in S_0\cup\Sigma_\infty}F_v$. 
	Set $f_v(x_v)=\phi_v(x_v)$ for $v\in S_0$ and $f_v(x_v)=(1+|x_v|_v)^{-l_v/2+(\s+1)/2}$ if $v\in \Sigma_\infty$. Let $\Ncal_v\subset \varpi_v\cO_v$ be a compact neighborhood of $0$ in $F_v$ such that $\phi_v$ is constant on cosets $x+\Ncal_v$ if $v\in S_0$. Then $f_v(x+y)=f_v(x)\,(x\in F_v,\,y\in \Ncal_v)$ if $v\in S_0$ and $f_v(x)\ll f_v(x+y)\,(x\in F_v,\,y\in \Ncal_v)$ if $v\in \Sigma_\infty$. Hence if we set $\Ncal_{S_0,\infty}=\prod_{v\in S_0\cup \Sigma_\infty}\Ncal_v$, then $f(x)\ll f(x+y)\,(x\in F_{S_0,\infty},\,y\in \Ncal_{S_0,\infty})$. Since $\cO(S_0)$ is a discrete subgroup of $F_{S_0,\infty}=\prod_{v\in S_0\cup\Sigma_\infty}F_v$, by choosing $\Ncal_{S_0,\infty}$ small enough, we have
	\begin{align*}
		\sum_{a\in F^\times-\{1\}}f(a)
		\ll \sum_{a\in \cO(S_0)-\{0,1\}}\int_{\Ncal_{S_0,\infty}}f_{}(a+y)\,\d y 
		\leq \int_{F_{S_0,\infty}}f_{}(x)\,\d x.
	\end{align*}
	Since $l_v\geq 4$ for all $v\in \Sigma_\infty$, the last integral is easily seen to be convergent.  
\end{proof}

We have
\begin{align*}
	\JJ_{\rm{hyp}}(\bfs, \b)= & \tfrac{1}{2}\sum_{a\in F^\times-\{1\}} 
	\int_{\A}\d x \int_{\bK}\d k \, \Phi\left(\bfs;k^{-1}\left[\begin{smallmatrix} a & (a-1)x \\ 0 & 1 \end{smallmatrix} \right] k \right)\\
	&\times \left\{\int_{L_\s} \b(z)\,\zeta_F\left(\tfrac{z+1}{2}\right)\zeta_F\left(\tfrac{-z+1}{2}\right)\,\prod_{v\in \Sigma_F}\varphi_v^{(0,z)}\left(\left[\begin{smallmatrix} 1 & x_v \\ 0 & 1 \end{smallmatrix}\right] \right)\,\d z\right\}
\end{align*}
with $-1<\s<1$. By the estimate of Proposition~\ref{FHypP2}, we can change the order of the integrals to have the formula 
\begin{align*}
	\JJ_{\rm{hyp}}(\bfs, \b)&=\tfrac{1}{2}\,
	\int_{L_\s} \b(z)\,\zeta_F\left(\tfrac{z+1}{2}\right)\zeta_F\left(\tfrac{-z+1}{2}\right)\,\{\sum_{a\in F^\times-\{1\}}\fF^{(z)}(a)\} \,\d z.
\end{align*}
Define
\begin{align}
	\hat J_{\rm hyp}(\bfs,z)=\tfrac{1}{2}\,\zeta_F\left(\tfrac{z+1}{2}\right)\zeta_F\left(\tfrac{-z+1}{2}\right)\,\{\sum_{a\in F^\times-\{1\}}\fF^{(z)}(a)\}.
	\label{Jhypbfsz}
\end{align}
Then we obtain a precise form of Theorem~\ref{hatJ*} for the $F$-hyperbolic term.

\begin{prop}\label{Conclusion of hyp}
	Suppose $\Re(s_v)>2$ for all $v\in S$. 
	The function $\hat J_{\rm hyp}(\bfs,z)$ is holomorphic away from $z=\pm 1$ and vertically of moderate growth on $|\Re(z)|<\min\,(\{l_v-3|\,v\in \Sigma_\infty\}\cup \{\Re(s_v)-2|\,v\in S\,\})$. For any contour $L_\s$ $(|\s|<1)$, we have the formula \eqref{hatJ*-f1} with $\natural = {\rm hyp}$. 
\end{prop}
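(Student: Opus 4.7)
The plan is to read the three conclusions directly off the definition \eqref{Jhypbfsz} using Propositions~\ref{FHypP1} and \ref{FHypP2} together with the unfolding computation in \S\ref{UnConSh}. The three items to verify are: (a) holomorphy of $\hat J_{\rm hyp}(\bfs,z)$ away from $z=\pm 1$ on the strip $|\Re(z)|<\min(\{l_v-3\,|\,v\in\Sigma_\infty\}\cup\{\Re(s_v)-2\,|\,v\in S\})$; (b) vertical moderate growth of $\hat J_{\rm hyp}(\bfs,z)$ on that strip; and (c) the integral identity \eqref{hatJ*-f1} with $\natural=\mathrm{hyp}$ for every $\s$ with $|\s|<1$.

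For (a), I would invoke Proposition~\ref{FHypP2} to conclude that $z\mapsto\sum_{a\in F^\times-\{1\}}\fF^{(z)}(a)$ is holomorphic on the strip (absolute and locally uniform convergence of holomorphic summands). The remaining factor $\zeta_F(\tfrac{z+1}{2})\zeta_F(\tfrac{-z+1}{2})$ is meromorphic with simple poles only at $z=1$ and $z=-1$, which yields (a). For (b) I would observe that the pointwise estimate of Proposition~\ref{FHypP1}(2) depends on $z$ only through $|\Re(z)|$, so the convergence argument in the proof of Proposition~\ref{FHypP2} in fact bounds $|\sum_a\fF^{(z)}(a)|$ uniformly along every vertical line in the strip. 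Combined with the standard polynomial bound for $\zeta_F(s)$ on vertical lines bounded away from $s=1$, this gives the required vertical moderate growth of $\hat J_{\rm hyp}(\bfs,z)$.

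For (c), the identity is precisely the last display preceding \eqref{Jhypbfsz}; what is needed is to justify the interchange of $\int_{L_\s}$, $\sum_{a\in F^\times-\{1\}}$, and the $(x,k)$-integral over $\AA\times\bfK$ that was carried out there. Fubini's theorem applies with the single $z$-uniform majorant of Proposition~\ref{FHypP1}(2) on $L_\s$ against the rapid vertical decay of $\b\in\cB_1$, establishing the identity for some fixed $\s\in(-1,1)$. Since $\b$ vanishes to order at least two at $z=\pm 1$, the product $\b(z)\hat J_{\rm hyp}(\bfs,z)$ extends holomorphically across those points, so by Cauchy's theorem and the vertical decay of $\b$ the contour may be moved freely within $|\s|<1$, yielding \eqref{hatJ*-f1} for all such $\s$. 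The only step requiring genuine care is securing the uniformity of the majorant in $\Im(z)$, which is the main reason Proposition~\ref{FHypP1}(2) was stated in exactly that form; beyond this bookkeeping I do not anticipate any further obstacle.
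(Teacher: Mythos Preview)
Your proposal is correct and follows essentially the same approach as the paper. The paper does not give a separate proof of this proposition; it is stated as a summary of the computation carried out immediately before the definition \eqref{Jhypbfsz}, with the order exchange justified by ``the estimate of Proposition~\ref{FHypP2}''. Your write-up simply makes explicit the three verifications (holomorphy via uniform convergence of holomorphic summands, vertical moderate growth via the $|\Re(z)|$-only dependence of the majorant in Proposition~\ref{FHypP1}(2), and the Fubini justification for the contour-integral identity), which is exactly what the paper leaves implicit. The only minor redundancy is your contour-shifting step in (c): since \eqref{FhypL5} already holds for every $\s\in(-1,1)$, the derivation preceding \eqref{Jhypbfsz} gives the identity directly for each such $\s$, so no shifting is needed---but your argument is not wrong.
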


\section{The $F$-elliptic term} \label{The $F$-elliptic term}

In this section, we study $\JJ_{\rm ell}(\bfs; \b)$ to show Theorem~\ref{hatJ*} for the $F$-elliptic term. From this section on, we suppose that $2$ splits completely in the extension $F/\Q$; thus $F_v\cong \Q_2$ for all $v\in \Sigma_{\rm dyadic}$. 

\subsection{Parametrization of elliptic elements}
Set $Q_F=\{(t,n)\in F^{2}|\,t^2-4n\not=0\,\}$. Let us say that two elements $(t,n)$ and $(t',n')$ from $Q_F$ are $F$-equivalent if there exists $c\in F^\times$ such that $(t',n')=(ct,c^2n)$. The $F$-equivalence class of a pair $(t,n)\in Q_F$ is denoted by $(t:n)_{F}$. The quotient set of $Q_F$ by the $F$-equivalence relation is denoted by $\cQ_F$, i.e., 
$$\cQ_F=\{(t:n)_{F}|\,t,\,n\in F,\,t^{2}-4n\not=0\}.$$

Let $\cQ_{F}^{\rm Irr}$ be the set of $(t:n)_F\in \cQ_F$ such that the polynomial $X^2-tX+n$ is $F$-irreducible. For $\tilde\gamma\in \cQ_{F}^{\rm Irr}$, fix its representative $(t,n)\in Q_F$ once and for all and set $\gamma=\left[\begin{smallmatrix} t/2 & 1 \\ \Delta/4 & t/2 \end{smallmatrix}\right]\in G_F$ and $\Delta=t^2-4n$. The $G_F$-conjugacy class with characteristic polynomial $t^2-tX+n$ is represented by the element $\gamma$. For a quadratic extension $E\cong F[X]/(X^2-\Delta/4)$ of $F$ with a prescribed square root $\sqrt{\Delta}\in E^\times$, let $\iota_{\Delta}:E\hookrightarrow \Mat_2(F)$ be the $F$-algebra embedding 
$$
\iota_{\Delta}(a+b\sqrt{\Delta}/2)=\left[\begin{smallmatrix} a & b \\ \Delta b/4 & a \end{smallmatrix} \right], \quad a,b\in F. 
$$
Then the centralizer of $\gamma$ in $G_F$ is $G_{\gamma,F}=\iota_{\Delta}(E^\times)$. 

For any place $v\in \Sigma_F$, we can write the image of $4^{-1}\Delta$ in $F_v$ as $4^{-1}\Delta=\Delta_v^{0}\,m_v^2$ with $m_v\in F_v^\times,\,\Delta_v^{0}\in F_v^\times/(F_v^\times)^2$; we suppose (a) $v\in \Sigma_\fin$, $\Delta_v^{0}$ belongs to $(\fp_v-\fp_v^2)\cup \{1\}\cup (\cO_v^\times -(\cO_v^\times)^2)$, or (b) $v\in \Sigma_\infty$ and $\Delta_v^{0}\in \{+1,-1\}$. We fix such a factorization of $\Delta/4$ in $F_v^\times$. Since $2$ is assumed to be completely split in $F/\Q$, we have $\cO_v^\times/(\cO_v^\times)^2\cong \Z_2^\times/(1+8\Z_2)=\{\pm 5, \pm 1\}$ thus may suppose $\Delta_v^{0}\in \{\pm 5,\pm 1, \pm 10,\pm 2\}$ for all $v\in \Sigma_{\rm dyadic}$. Let $\fd_{E/F}$ denote the relative discriminant of $E/F$.  From this, it is easily seen that $\fd_{E/F}\cO_v=\Delta_v^{0}\cO_v$ if $v\in \Sigma_\fin-\Sigma_{\rm dyadic}$ or $v\in \Sigma_{\rm dyadic}$, $\Delta_v^{0}\in \{5,1\}$, and $\fd_{E/F}\cO_v=4\Delta_v^{0}\cO_v$ for $v\in \Sigma_{\rm dyadic}$, $\Delta_v^{0}\in\{-5,-1, \pm10,\pm2\}$. Let $\varepsilon_{\Delta}$ be idele class character associated with the quadratic extension $E=F(\sqrt{\Delta})$ by class field theory. Then $\zeta_{E}(z)=L(z,\varepsilon_{\Delta})\zeta_{F}(z)$ and the conductor of $\varepsilon_{\Delta}$ is $\fd_{E/F}$.

We have the direct sum decomposition $E_v=F_v+\sqrt{\Delta_v^{0}}F_v$ as $F_v$-vector spaces, which determines an $F_v$-embedding $\iota_{\Delta_v^0}:E_v\hookrightarrow \Mat_2(F_v)$ as 
\begin{align*}
	&\iota_{\Delta_v^0}(a+b\sqrt{\Delta_v^0})=\left[\begin{smallmatrix} a & b \\ b\Delta_v^{0} & a \end{smallmatrix} \right], \quad a,b \in F_v \quad \text{
		if $\Delta_v^0\neq1$,}\\
	&\iota_{\Delta_v^0}(a+b\sqrt{\Delta_v^0})=\left[\begin{smallmatrix} a+b & 0 \\ 0 & a-b \end{smallmatrix} \right], \quad a,b \in F_v \quad \text{
		if $\Delta_v^0=1$.}
\end{align*} 
Set ${\frak T}_{\Delta,v}=\iota_{\Delta_v^{0}}(E_v^\times)$. For $v\in \Sigma_\fin$, we set ${\frak T}_{\Delta,v}^{+}=Z_v\bK_v\cap {\frak T}_{\Delta,v}$.

\begin{lem} \label{Hensel}
	Let $v \in \Sigma_\fin-\Sigma_{\rm dyadic}$. If $\tau \in \cO_v^\times-(\cO_v^\times)^2$, then $a^2-\tau \in \cO_v^\times$ for all $a\in \cO_v$. 
\end{lem}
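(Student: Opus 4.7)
The plan is a contradiction argument powered by Hensel's lemma, using the non-dyadic hypothesis in an essential way.

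First I would suppose for contradiction that $a^2-\tau\in\fp_v$ for some $a\in\cO_v$. Since $\tau\in\cO_v^\times$, reducing modulo $\fp_v$ gives $\bar a^2=\bar\tau\neq 0$ in the residue field $\cO_v/\fp_v$, so in particular $a\in\cO_v^\times$ and $\bar\tau$ is a nonzero square in $\cO_v/\fp_v$.

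Next I would invoke Hensel's lemma applied to the polynomial $f(X)=X^2-\tau\in\cO_v[X]$ at the approximate root $a$. The key computation is $f(a)\in\fp_v$ while $f'(a)=2a$. Because $v$ is non-dyadic we have $2\in\cO_v^\times$, and since $a\in\cO_v^\times$ as established, $f'(a)\in\cO_v^\times$; thus the standard Hensel condition $|f(a)|_v<|f'(a)|_v^2$ holds trivially. Hensel's lemma then produces $b\in\cO_v$ with $b^2=\tau$, and $b\in\cO_v^\times$ since $\tau$ is a unit. This contradicts $\tau\notin(\cO_v^\times)^2$.

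The main (and only) subtlety is pinning down where non-dyadicity enters: it is exactly what makes $f'(a)=2a$ a unit, without which Hensel's lemma in its simple form fails (indeed the statement is false for dyadic $v$: over $\Z_2$, $\tau=5$ is a non-square unit but $a=1$ gives $a^2-\tau=-4\in 2\Z_2$). Since this is the entire content of the obstacle, no further estimates or constructions are needed, and the proof will be only a few lines.
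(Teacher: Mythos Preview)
Your proof is correct and matches the paper's own argument essentially line for line: assume $a^2-\tau\in\fp_v$, observe $a\in\cO_v^\times$ since $\tau$ is a unit, then apply Hensel's lemma to $X^2-\tau$ (using that $2a$ is a unit at a non-dyadic place) to force $\tau\in(\cO_v^\times)^2$, a contradiction. Your additional remark on the failure for $\tau=5$ over $\Z_2$ is a nice sanity check not present in the paper.
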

\begin{proof} If $a^2-\tau \in \fp_v$, then $a\in \cO_v^\times$ and $\tau\equiv a^2 \pmod{\fp_v}$. Since $\tau\not\in (\cO_v^\times)^2$, this is impossible by Hensel's lemma. 
\end{proof}

\begin{lem} \label{fTDeltaL}
	Let $v\in \Sigma_F$. 
	\begin{itemize}
		\item[(i)] If $v\in \Sigma_\fin-\Sigma_{\rm dyadic}$ and $\Delta_v^{0}\in \cO_v^\times-(\cO_v^\times)^2$, then ${\frak T}_{\Delta,v}^{+}={\frak T}_{\Delta,v}$.\item[(ii)] If $v\in \Sigma_\fin$ and $\Delta_v^{0}\in \fp_v-\fp_v^2$, then 
		${\frak T}_{\Delta,v}={\frak T}_{\Delta,v}^{+} \cup \left[\begin{smallmatrix} 0 & 1 \\ \Delta_{v}^{0} & 0 \end{smallmatrix}\right]{\frak T}_{\Delta,v}^{+}$. 
		\item[(iii)] If $v\in \Sigma_{\rm dyadic}$ and $\Delta_v^{0}\in \{-1,-5\}$, then ${\frak T}_{\Delta,v}={\frak T}_{\Delta,v}^{+} \cup \left[\begin{smallmatrix} 1 & 1 \\ \Delta_{v}^{0} & 1 \end{smallmatrix}\right]{\frak T}_{\Delta,v}^{+}$.
		\item[(iv)] If $v\in \Sigma_{\rm dyadic}$ and $\Delta_v^{0} =5$, then ${\frak T}_{\Delta,v}={\frak T}_{\Delta, v}^+ \sqcup [\begin{smallmatrix}1 & 1 \\ 5 & 1
		\end{smallmatrix}] {\frak T}_{\Delta, v}^+ \sqcup [\begin{smallmatrix}
		3 & 1 \\ 5 & 3
		\end{smallmatrix}] {\frak T}_{\Delta, v}^+.$
		\item[(v)] If $v\in \Sigma_\infty$, $\Delta_v^{0}=-1$, then ${\frak T}_{\Delta,v}=Z_v\bK_v^{0}$. 
	\end{itemize}
\end{lem}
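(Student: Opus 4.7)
The plan is to identify $\mathfrak{T}_{\Delta,v}\cong E_v^\times$ via the homomorphism $\iota=\iota_{\Delta_v^0}$ and to describe $\mathfrak{T}_{\Delta,v}^+$ as $\iota(F_v^\times\cdot R^\times)$, where $R=\cO_v+\sqrt{\Delta_v^0}\cO_v$ is the $\cO_v$-order in $E_v$ generated by $\sqrt{\Delta_v^0}$ and $R^\times=R\cap \cO_{E_v}^\times$. This is immediate from the definitions: $\iota(\alpha)\in Z_v\bK_v$ if and only if there exists $z\in F_v^\times$ with $\iota(z^{-1}\alpha)\in \bK_v$, and the latter holds precisely when $z^{-1}\alpha\in R$ with $\nr(z^{-1}\alpha)\in \cO_v^\times$, i.e.\ when $z^{-1}\alpha\in R^\times$. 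Thus $\mathfrak{T}_{\Delta,v}/\mathfrak{T}_{\Delta,v}^+\cong E_v^\times/(F_v^\times R^\times)$, and the lemma reduces to computing this quotient in each case.

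Cases (i), (ii), and (iii) are those in which $R=\cO_{E_v}$, so the quotient collapses to $E_v^\times/(F_v^\times \cO_{E_v}^\times)$, of order equal to the ramification index of $E_v/F_v$. For (i), Lemma~\ref{Hensel} gives $a^2-b^2\Delta_v^0\in \cO_v^\times$ whenever $(a,b)\not\in \fp_v\times\fp_v$, so $R^\times=\cO_{E_v}^\times$; since $E_v/F_v$ is unramified one has $E_v^\times=F_v^\times\cO_{E_v}^\times$, yielding $\mathfrak{T}_{\Delta,v}=\mathfrak{T}_{\Delta,v}^+$. For (ii), $X^2-\Delta_v^0$ is Eisenstein, so $\cO_{E_v}=R$ and $\sqrt{\Delta_v^0}$ is a uniformizer of the totally ramified extension $E_v$; its image $\iota(\sqrt{\Delta_v^0})=[\begin{smallmatrix}0&1\\ \Delta_v^0&0\end{smallmatrix}]$ represents the non-trivial coset. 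For (iii), the conditions $2x\in \cO_v$ and $x^2-\Delta_v^0 y^2\in \cO_v$ together with $\Delta_v^0\equiv 3,7\pmod 8$ force $x,y\in \cO_v$, so again $\cO_{E_v}=R$; the extension is ramified, and $\nr(1+\sqrt{\Delta_v^0})=1-\Delta_v^0\in\{2,6\}$ has valuation one in $F_v$, so $1+\sqrt{\Delta_v^0}$ is a uniformizer of $E_v$ and its image gives the listed non-trivial representative.

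The main difficulty is case (iv), where $R\neq\cO_{E_v}$. Setting $\omega=(1+\sqrt{5})/2$ with $\omega^2=\omega+1$, one has $\cO_{E_v}=\cO_v+\omega\cO_v$ while $R=\cO_v+2\omega\cO_v$ (using $\sqrt{5}=2\omega-1$), so $[\cO_{E_v}:R]=2$. The inclusion $2\cO_{E_v}=2\cO_v+(1+\sqrt{5})\cO_v\subset R$ gives $1+2\cO_{E_v}\subset R^\times$; conversely, the surjection $R\twoheadrightarrow R/2\cO_{E_v}\cong \FF_2$ forces $R^\times\subset 1+2\cO_{E_v}$, so $R^\times=1+2\cO_{E_v}$. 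Hence $\cO_{E_v}^\times/R^\times=(\cO_{E_v}/2\cO_{E_v})^\times=\FF_4^\times\cong \ZZ/3\ZZ$, and a short norm argument (if $z\beta\in \cO_{E_v}^\times$ with $z\in F_v^\times$ and $\beta\in R^\times$ then $z^2\in \cO_v^\times$, forcing $z\in \cO_v^\times$ and thus $z\beta\in R^\times$) shows $F_v^\times R^\times\cap \cO_{E_v}^\times=R^\times$. Combining with $E_v^\times=F_v^\times\cO_{E_v}^\times$ (the extension being unramified), we conclude $E_v^\times/(F_v^\times R^\times)\cong \FF_4^\times$. The identities $1+\sqrt{5}=2\omega$ and $3+\sqrt{5}=2\omega^2$ match the listed representatives with the three classes $1,\omega,\omega^2\in \FF_4^\times$.

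Case (v) is direct: for $\Delta_v^0=-1$, $\iota(a+b\sqrt{-1})=\sqrt{a^2+b^2}\,k_\theta$ with $k_\theta=[\begin{smallmatrix}\cos\theta&\sin\theta\\-\sin\theta&\cos\theta\end{smallmatrix}]\in \SO(2)=\bK_v^0$ and the scalar factor in $Z_v$, giving $\mathfrak{T}_{\Delta,v}=Z_v\bK_v^0$.
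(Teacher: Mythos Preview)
Your proof is correct and takes a genuinely different route from the paper's. The paper argues each case by direct matrix manipulation: for (i) it explicitly factors $[\begin{smallmatrix} x & y \\ \Delta_v^0 y & x\end{smallmatrix}]$ as a scalar times an element of $\bK_v$; for (ii) and (iii) it reduces to the criterion that $\iota_{\Delta_v^0}(a+b\sqrt{\Delta_v^0})\in\mathfrak{T}_{\Delta,v}^+$ if and only if $\ord_v(a^2-b^2\Delta_v^0)\in 2\Z$; and for (iv) it partitions $\bK_v\cap\mathfrak{T}_{\Delta,v}$ by the residue of the diagonal entry modulo $4$. By contrast, you set up once and for all the identification $\mathfrak{T}_{\Delta,v}/\mathfrak{T}_{\Delta,v}^+\cong E_v^\times/(F_v^\times R^\times)$ with $R=\cO_v[\sqrt{\Delta_v^0}]$, and then compute this quotient via the arithmetic of the order $R$ inside $\cO_{E_v}$.

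Your approach is more uniform and pays off most visibly in case (iv): recognizing $R^\times=1+2\cO_{E_v}$ and hence $\cO_{E_v}^\times/R^\times\cong\FF_4^\times$, together with the identifications $1+\sqrt{5}=2\omega$ and $3+\sqrt{5}=2\omega^2$, gives the three cosets transparently, whereas the paper's matrix argument at this point is rather compressed. The paper's method, on the other hand, is more self-contained for (i)--(iii), needing nothing beyond Lemma~\ref{Hensel} and the norm valuation, while you implicitly invoke standard facts about maximal orders (e.g.\ that $R=\cO_{E_v}$ when its discriminant is a unit, or equivalently when $R$ is a DVR). Both arguments are complete; yours trades a small amount of background for a cleaner structure.
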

\begin{proof} (i) For $(x, y) \in F_v^2-\{(0,0)\}$, we have $[\begin{smallmatrix} x& y \\ \Delta_v^0 y&x \end{smallmatrix}]=	[\begin{smallmatrix} x& 0 \\ 0 &x \end{smallmatrix}][\begin{smallmatrix} 1& y/x \\ \Delta_v^{0} y/x&1 \end{smallmatrix}]$.
	Then, $1-\Delta_v^{0} y^2/x^2 \in \go_v^\times$ if $|x|_v>|y|_v$. If $|x|_v=|y|_v$, by putting $x=\varpi_v^{m}u_x$ and $y=\varpi_v^{m}u_y$ with $u_x, u_y \in \go_v^{\times}$ and $m\in\ZZ$, we have $[\begin{smallmatrix} x&\Delta_v^0 y \\ y&x \end{smallmatrix}]=[\begin{smallmatrix} \varpi_v^m& 0 \\ 0 &\varpi_v^m \end{smallmatrix}][\begin{smallmatrix} u_x& u_y \\ \Delta_v^{0} u_y & u_x \end{smallmatrix}]$, and  $u_x^2-\Delta_v^{0}\, u_y^2 \in \cO_v^\times$ by Lemmma~\ref{Hensel}. Thus we are done. (ii) and (iii) follow from the observation that an element $\iota_{\Delta_v^{0}}(a+b\sqrt{\Delta_v^{0}})$ belongs to ${\frak T}_{\Delta,v}^{+}$ if and only if ${\rm{ord}}_v(a^2-b^2\Delta_v^{0})\in 2\Z$. (iv) Set $M_j = \{[\begin{smallmatrix}
	u & 1 \\ \Delta_v^0 & u
	\end{smallmatrix}] \in {\rm M}_2(\ZZ_2) \ | \ u \in j+4\ZZ_2\}$ for $j=1,3$.
	Then $Z_vM_v$ with $M_v=\bK_v\cap {\frak T}_{\Delta,v}$ is written as
	$Z_vM_v=Z_vM_1 \sqcup Z_vM_3 =  [\begin{smallmatrix}
	1 & 1 \\ 5 & 1
	\end{smallmatrix}] {\frak T}_{\Delta, v}^+ \sqcup [\begin{smallmatrix}
	3 & 1 \\ 5 & 3
	\end{smallmatrix}] {\frak T}_{\Delta, v}^+.$
	By ${\frak T}_{\Delta,v} = {\frak T}_{\Delta,v}^+ \sqcup Z_vM_v$, we are done. (v) is confirmed by direct computation.  
\end{proof}

For $v\in \Sigma_F$, set 
$$
R_{\Delta,v}=\left[\begin{smallmatrix} m_v & 0 \\ 0 & 1 \end{smallmatrix} \right] \quad \text{if $\Delta_v^{0}\not=1$, and}\,\, R_{\Delta, v}=\left[\begin{smallmatrix} 1 & 1 \\ 1 & -1 \end{smallmatrix} \right]
\left[\begin{smallmatrix} m_v & 0 \\ 0 & 1 \end{smallmatrix} \right] \quad \text{if $\Delta_v^0=1$}.
$$
Then we have the relation 
\begin{align}
	R_{\Delta,v}^{-1} {\frak T}_{\Delta,v} 
	R_{\Delta,v}
	=G_{\gamma,v}.
	\label{fTGgammaLoc}
\end{align}
Since $\Delta \in \cO_v^\times$ for almost all $v$, the system $R_{\Delta}=\{R_{\Delta,v}\}_{v\in \Sigma_F}$ belongs to $G_\A$. 
Define
\begin{align*}
	{\frak T}_\Delta=\{(h_v)\in \prod_{v\in \Sigma_F}{\frak T}_{\Delta,v}|\,\text{$h_v\in \bK_v\cap {\frak T}_{\Delta,v}$ for almost all $v\in \Sigma_\fin$}\,\}.\end{align*}
If we view ${\frak T}_{\Delta}$ as a closed subgroup of $G_\A$, then 
\begin{align}
	R_\Delta^{-1}\,
	{\frak T}_\Delta 
	R_\Delta = G_{\gamma,\A}.
	\label{fTGgamma}
\end{align}
For each place $v$, we fix a Haar measure on $E_v^\times$ by   
\begin{align*}
	\d^\times\tau_v=\zeta_{E_v}(1){|x_v^2-4^{-1}\Delta y_v^2|_v^{-1}}\d x_v\,\d y_v \quad \text{with $\tau_v=x_v+2^{-1}\sqrt{\Delta}y_v\,(x_v,y_v\in F_v)$,}
\end{align*} 
and transfer this to $G_{\gamma,v}$ by $\iota_{\Delta}: E_v^\times \cong G_{\gamma,v}$. We transfer the Haar measure $\d^\times \tau=\otimes_{v}\d^\times \tau_v$ on $\A_E^\times$ to $G_{\gamma,\A}$ to define a Haar measure on $G_{\gamma,\A}$. We use the relations \eqref{fTGgammaLoc} and \eqref{fTGgamma} to define Haar measures on groups ${\frak T}_{\Delta,v}$ and ${\frak T}_{\Delta}$.

\begin{lem} \label{VolZfTd}
	If $v\in \Sigma_\fin$, then $\vol(Z_v\bsl {\frak T}_{\Delta,v}^+)=|m_v|_v^{-1}q^{-d_v/2}_v$ unless $v\in \Sigma_{\rm dyadic}$, $\Delta_v^{0}=5$, in which case $\vol(Z_v\bsl {\frak T}_{\Delta, v}^+) = \tfrac{2}{3}\,|m_v|_v^{-1}2^{-d_v/2}.$
	If $v\in \Sigma_\infty$, then $\vol(Z_v\bsl {\frak T}_{\Delta,v})=|m_v|_v^{-1}$. \end{lem}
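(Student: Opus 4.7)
The plan is to compute the volume in each of the cases (i)--(v) of Lemma~\ref{fTDeltaL} separately, by transferring everything to $E_v^\times$ through the isomorphism $\iota_{\Delta_v^0}: E_v^\times \to {\frak T}_{\Delta,v}$. Since the Haar measure $d^\times\tau_v$ is defined in the $(x_v,y_v)$-coordinates associated with the basis $(1,\sqrt{\Delta}/2)$ of $E_v$, while $\iota_{\Delta_v^0}$ uses the basis $(1,\sqrt{\Delta_v^0})$, the coordinate change $y_v=b_v/m_v$ coming from $\sqrt{\Delta}/2 = m_v\sqrt{\Delta_v^0}$ introduces a Jacobian $|m_v|_v^{-1}$. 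In the new coordinates $\xi=a+b\sqrt{\Delta_v^0}$,
\[
d^\times\tau_v \;=\; |m_v|_v^{-1}\,\zeta_{E_v}(1)\,|a^2-b^2\Delta_v^0|_v^{-1}\,da\,db.
\]
This already accounts for the universal factor $|m_v|_v^{-1}$ in the lemma.

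Next, the preimage of ${\frak T}_{\Delta,v}^+ = Z_v\bK_v \cap {\frak T}_{\Delta,v}$ under $\iota_{\Delta_v^0}$ is $F_v^\times\cdot\cR_v^\times$, where $\cR_v := \cO_v[\sqrt{\Delta_v^0}] = \cO_v+\cO_v\sqrt{\Delta_v^0}$: the matrix $\iota_{\Delta_v^0}(\xi)$ lies in $Z_v\bK_v$ iff some scalar $c\in F_v^\times$ satisfies $c^{-1}a,c^{-1}b\in\cO_v$ and $c^{-2}(a^2-b^2\Delta_v^0)\in\cO_v^\times$, i.e.\ $c^{-1}\xi\in\cR_v^\times$. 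The quotient-measure formula, together with $\vol(\cO_v^\times, d^\times c) = q_v^{-d_v/2}$, then reduces the problem to
\[
\vol(Z_v\bsl {\frak T}_{\Delta,v}^+) \;=\; q_v^{d_v/2}\cdot\vol(\cR_v^\times,\,d^\times\tau_v).
\]

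For the non-exceptional cases (i), (ii), (iii), I would verify directly that $\cR_v=\cO_{E_v}$ and enumerate $\cR_v^\times$ in $(a,b)$-coordinates: $(\cO_v\times\cO_v)-(\fp_v\times\fp_v)$ in (i) by Hensel's Lemma, and $\cO_v^\times\times\cO_v$ in (ii) and (iii) since $\Delta_v^0$ has odd valuation. In each case $|a^2-b^2\Delta_v^0|_v=1$ on $\cR_v^\times$, and the $(da\,db)$-volume $q_v^{-d_v}(1-q_v^{-f})$ combines with $\zeta_{E_v}(1)=(1-q_v^{-f})^{-1}$ (where $f$ is the residue degree of $E_v/F_v$) to give $\vol(\cR_v^\times, d^\times\tau_v) = |m_v|_v^{-1}q_v^{-d_v}$, yielding the advertised formula $|m_v|_v^{-1}q_v^{-d_v/2}$. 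For the archimedean case (v) with $\Delta_v^0=-1$, one passes to polar coordinates $(r,\theta)$ on $\CC^\times$: the measure becomes $\pi^{-1}|m_v|_v^{-1}\,dr/r\,d\theta$, and integrating the angular part over the fundamental domain $[0,\pi)$ for $\RR^\times\bsl\CC^\times\cong S^1/\{\pm1\}$ gives $|m_v|_v^{-1}$.

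The main obstacle is the exceptional case (iv), $v\in\Sigma_{\rm dyadic}$ with $\Delta_v^0=5$: here $E_v=\QQ_2(\sqrt{5})$ is the unramified quadratic extension of $\QQ_2$, but the order $\cR_v=\ZZ_2[\sqrt{5}]$ is a \emph{proper} suborder of $\cO_{E_v}=\ZZ_2[(1+\sqrt{5})/2]$ of index $2$. The key computational input is the elementary congruence $a^2-5b^2\equiv -4\pmod{8}$ whenever $a,b$ are both odd (since $a^2,b^2\in 1+8\ZZ_2$), which forces $a^2-5b^2\notin\ZZ_2^\times$ in that subcase; hence
\[
\cR_v^\times \;=\; (\cO_v^\times\times 2\cO_v)\;\sqcup\;(2\cO_v\times\cO_v^\times),
\]
with $(da\,db)$-volume $1/2$. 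Since $|a^2-5b^2|_v=1$ on $\cR_v^\times$ and $\zeta_{E_v}(1)=(1-2^{-2})^{-1}=4/3$ for this unramified extension, one obtains $\vol(\cR_v^\times, d^\times\tau_v) = |m_v|_v^{-1}\cdot(4/3)\cdot(1/2) = (2/3)|m_v|_v^{-1}$, which produces exactly the factor $2/3$ in the lemma.
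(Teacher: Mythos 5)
Your computation follows essentially the same route as the paper's proof: the paper also reduces $\vol(Z_v\bsl {\frak T}_{\Delta,v}^{+})$ to $\vol(Z_v\cap M_v)^{-1}\int_{{\frak T}_{\Delta,v}}{\rm ch}_{M_v}\,\d^\times\tau$ with $M_v=\bK_v\cap{\frak T}_{\Delta,v}$ (your $q_v^{d_v/2}\,\vol(\cR_v^\times,\d^\times\tau_v)$ with $\cR_v=\cO_v[\sqrt{\Delta_v^0}]$), evaluates the resulting $(a,b)$-integral carrying the Jacobian $|m_v|_v^{-1}$ from $b=m_vy$, and isolates the dyadic $\Delta_v^0=5$ case by exactly the same parity count giving $da\,db$-volume $1/2$ against $\zeta_{E_v}(1)=4/3$; the archimedean case ($\Gamma_\C(1)=\pi^{-1}$, angular integral, factor $2$ from $\pm1$) is likewise identical in substance.

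One localized slip: in case (iii) ($v$ dyadic, $\Delta_v^0\in\{-1,-5\}$) your description of $\cR_v^\times$ as $\cO_v^\times\times\cO_v$, justified ``since $\Delta_v^0$ has odd valuation,'' is wrong — there $\Delta_v^0$ is a unit, and exactly as in your case (iv) one has $a^2-\Delta_v^0b^2\in 2\Z_2^\times$ when $a,b$ are both odd (and in $4\Z_2$ when both are even), so in fact $\cR_v^\times=(\Z_2^\times\times 2\Z_2)\sqcup(2\Z_2\times\Z_2^\times)$. This does not damage the result: that set also has $da\,db$-volume $\tfrac12=q_v^{-d_v}(1-q_v^{-1})$ and the norm is a unit on it, so your stated volume $q_v^{-d_v}(1-q_v^{-f})$ — which is just $\vol(\cR_v)(1-q_{E_v}^{-1})$, valid in (i), (ii), (iii) because $\cR_v=\cO_{E_v}$ there and the units are the complement of the maximal ideal — and hence the final value $|m_v|_v^{-1}q_v^{-d_v/2}$ are correct; only the stated justification for (iii) needs to be replaced by this remark or by rerunning the mod-$8$ argument you already used in (iv).
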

\begin{proof}
	Let $v\in \Sigma_\fin$. Then, $\frak{T}_{\Delta,v}^+ = Z_v M_v$ with $M_v=\bK_v\cap {\frak T}_{\Delta,v}$, and ${\rm ch}_{Z_vM_v}(t)=\vol(Z_v\cap M_v)^{-1}\int_{z\in Z_v}{\rm ch}_{M_v}(zt)\,\d z$, as in the proof of \cite[Lemma 7.39]{KnightlyLi}. We have 
	\begin{align*}
		\vol(Z_v\bsl \frak{T}_{\Delta,v}^{+}) = \vol(Z_v\cap M_v)^{-1}\tint_{{\frak T}_{\Delta,v}}{\rm ch}_{M_v}(\tau)\,\d^\times \tau
		=q_v^{d_v/2}{\textstyle\iint}_{\substack{a,b\in \cO_v \\ a^2-\Delta_v^0m_v^2b^2\in \cO_v^\times}} \zeta_{E_v}(1)\tfrac{\d a \d b}{|a^2-\Delta^{0}_v(m_vb)^2|_v}.
	\end{align*}
	This equals $|m_v|_v^{-1}q_v^{-d_v/2}$ unless $|2|_v<1$ and $\tau=5$. If $|2|_v<1$ and $\tau=5$, then $E_v=\Q_2(\sqrt{5})$ and $\zeta_{E_v}(1)=(1-4^{-1})^{-1}=4/3$; thus $\vol(Z_v\bsl {\frak T}_{\Delta,v}^{+})=(4/3)q_v^{d_v/2}|m_v|_v^{-1}\,\iint_{U}\d a\d b$ with $U=\{(a,b)\in \Z_2|\,a^2-5b^2\in \Z_2^\times\}$. Since $U=\Z_2\times \Z_2-\{(2\Z_2)\times (2\Z_2) \sqcup \Z_2^\times \times \Z^\times_2\}$, we have $\vol(U)=1-(1/2)^2-(1/2)^2=1/2$. Hence $\vol(Z_v\bsl {\frak T}_{\Delta,v}^{+})=q_v^{-d_v/2}|m_v|_v^{-1}2/3$ as desired. Let $v\in \Sigma_\infty$ and $\Delta_v^{0}=-1$. Then ${\frak T}_{\Delta,v}=\{\left[\begin{smallmatrix} a & -b \\ b & a \end{smallmatrix}\right]|\,a,b\in \R,\,a^2+b^2\not=0\}$ is a direct product of $\R_{+}1_2$ and $\SO(2)$ and $\d^\times \tau=|m_v|_v^{-1}\Gamma_\C(1)\tfrac{\d r}{r}\,\d \theta$, where $\d r$ is the Lebesgue measure on $\R$ and $\int_{\SO(2)}\d \theta=2\pi$. Thus
	$\vol(Z_v\bsl {\frak T}_{\Delta,v}) = 2^{-1}\vol(\RR_{+} 1_2\bsl {\frak T}_{\Delta,v})=|m_v|_v^{-1}$. \end{proof}

Using the relation \eqref{fTGgamma}, we compute
{\allowdisplaybreaks
	\begin{align}
		\JJ_{\rm ell}(\bfs, \b)&=\tfrac{1}{2}\sum_{\tilde \gamma\in \cQ_{F}^{\rm Irr}} \int_{Z_{\AA}G_F\bsl G_{\AA}}\sum_{\xi \in G_{\gamma,F}\bsl G_F}\Phi(\bfs;g^{-1}\xi^{-1}\gamma\xi g) \cE_\b^*(g)dg \notag\\
		&=\tfrac{1}{2}\sum_{\tilde \gamma\in \cQ_{F}^{\rm Irr}} \int_{G_{\gamma, \A}\bsl G_\A}\Phi(\bfs;g^{-1}\gamma g)\,\{\int_{Z_\A G_{\gamma,F}\bsl G_{\gamma,\A}}\cE^{*}_\b(hg)\,\d h\}\,\d g
		\notag
		\\
		&=\tfrac{1}{2}\sum_{\tilde \gamma\in \cQ_{F}^{\rm Irr}} \int_{{\frak T}_{\Delta} \bsl G_\A}\Phi(\bfs;g^{-1}R_\Delta
		\gamma
		R_\Delta^{-1} g)\,(\cE_\b^{*})^{\Delta}(g)\,\d g \notag\\
		& =\tfrac{1}{2}\sum_{\tilde \gamma\in \cQ_{F}^{\rm Irr}} \int_{{\frak T}_{\Delta} \bsl G_\A}\Phi(\bfs;g^{-1} \hat \gamma g)\,(\cE_\b^{*})^{\Delta}(g)\,\d g,
		\label{10.2-f1}
\end{align}}where $\hat \gamma=(\hat \gamma_v)_{v\in \Sigma_F}$ denotes the element of $G_\A$ such that $ \hat \gamma_v=
\left[\begin{smallmatrix} \frac{t}{2m_v} & 1 \\ \Delta_v^{0} & \frac{t}{2m_v} \end{smallmatrix} \right]$ if $\Delta_v^{0}\not=1$ and $\hat \gamma_v=\left[\begin{smallmatrix} \frac{t}{2m_v}+1 & 0 \\ 0 & \frac{t}{2m_v}-1 \end{smallmatrix}\right]$ if $\Delta_v^0=1$, and 
\begin{align}
	(\cE_\b^*)^{\Delta}(g)&=\int_{Z_\A G_{\gamma,F}\bsl G_{\gamma,\A}}
	\cE_\b^{*} \left(h R_\Delta^{-1}g\right)\,\d h
	=\int_{\A^\times E^\times\bsl \A_E^\times}
	\cE_\b^{*} \left(\iota_{\Delta}(\tau)\,R_\Delta^{-1}
	g\right)\,\d^\times \tau
	. 
	\label{EllEisPer}
\end{align}

\subsection{Periods of Eisenstein series along elliptic tori} \label{PEAET}
We shall calculate the integral \eqref{EllEisPer}, which is absolutely convergent due to the compactness of $\A^{\times}E^\times \bsl \A_E^\times$. To attain this, let us recall the multiplicity one property of the Waldspurger model of the principal series $I(|\,|_v^{z/2})\,(z\in \C)$ and the explicit formula of associated spherical function. 

\begin{lem} \label{sphericalftn1value}
	Suppose $\Delta_v^0\not=1$. Let $f_{0,v} \in I(|\,|_v^{z/2})$ be a $\bK_v$-invariant vector determined by $f_{0,v}(1_2)=1$, and 
	set
	$$\varphi_{0,v}(g)=\tint_{Z_v \bsl {\frak T}_{\Delta,v}} f_{0,v}(t g)d t.$$
	Then we have
	$$\varphi_{0,v}(1_2) =|m_v|_v^{-1}\times \begin{cases} 1 &(v\in \Sigma_\infty), \\
	q_v^{-d_v/2} & (v\in \Sigma_\fin-\Sigma_{\rm dyadic}\,\text{and}\,\Delta_v^0 \in \go_v^\times-(\go_{v}^\times)^2), \\
	q_v^{-d_v/2}3^{-1}2(1+2^{-z}) & (v\in \Sigma_{\rm dyadic},\, \Delta_v^0 =5), \\
	q_v^{-d_v/2}(1+q_v^{(z+1)/2}) & (v\in \Sigma_\fin,\,\Delta_v^0\in \fp_v-\fp_v^2),\\
	q_v^{-d_v/2}(1+2^{-(z+1)/2}) & (v\in \Sigma_{\rm dyadic},\,\Delta_v^0 \in \{-5,-1\}).
	\end{cases}$$
\end{lem}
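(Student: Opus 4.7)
The plan is to decompose $Z_v\bsl {\frak T}_{\Delta,v}$ into cosets of $Z_v\bsl {\frak T}_{\Delta,v}^+$ via the explicit representatives listed in Lemma~\ref{fTDeltaL}, and then to evaluate the $\bK_v$-invariant vector $f_{0,v}$ on each representative by means of the Iwasawa height $y$ on $G_v$. Since $I(|\,|_v^{z/2})$ has trivial central character and $f_{0,v}$ is right $\bK_v$-invariant with $f_{0,v}(1_2)=1$, the Iwasawa transformation rule $f_{0,v}(\left[\begin{smallmatrix} a & * \\ 0 & d \end{smallmatrix}\right] g) = |a/d|_v^{(z+1)/2} f_{0,v}(g)$ forces $f_{0,v}(g) = y(g)^{(z+1)/2}$ for every $g\in G_v$. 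In particular $f_{0,v}\equiv 1$ on $Z_v\bK_v$, which contains ${\frak T}_{\Delta,v}^{+}=Z_v\bK_v\cap {\frak T}_{\Delta,v}$ by definition.

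Writing the disjoint decomposition of Lemma~\ref{fTDeltaL} as ${\frak T}_{\Delta,v}=\bigsqcup_{j}h_{v,j}\,{\frak T}_{\Delta,v}^{+}$ with $h_{v,1}=1_2$, the defining integral of $\varphi_{0,v}(1_2)$ breaks up as
\begin{align*}
\varphi_{0,v}(1_2) = \vol(Z_v\bsl {\frak T}_{\Delta,v}^{+})\,\sum_{j} y(h_{v,j})^{(z+1)/2},
\end{align*}
and Lemma~\ref{VolZfTd} supplies the volume factor in closed form. It thus remains to compute $y(h_{v,j})$ at each representative. For $v\in \Sigma_\fin$ I would use the elementary identity $y(\left[\begin{smallmatrix} a & b \\ c & d \end{smallmatrix}\right]) = |\det g|_v/\max(|c|_v,|d|_v)^{2}$, valid on all of $G_v$ by right-multiplying $g$ by $\left[\begin{smallmatrix} 1 & 0 \\ c/d & 1 \end{smallmatrix}\right]\in \bK_v$ when $|d|_v\geq |c|_v$ and by $w_{0}\left[\begin{smallmatrix} 1 & 0 \\ d/c & 1 \end{smallmatrix}\right]\in \bK_v$ otherwise. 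Applied to the representatives from Lemma~\ref{fTDeltaL}, this gives $y(\left[\begin{smallmatrix} 0 & 1 \\ \Delta_v^{0} & 0 \end{smallmatrix}\right]) = |\Delta_v^{0}|_v^{-1}=q_v$ in case~(ii); $y(\left[\begin{smallmatrix} 1 & 1 \\ \Delta_v^{0} & 1 \end{smallmatrix}\right]) = |1-\Delta_v^{0}|_v=2^{-1}$ in case~(iii), since $1-\Delta_v^{0}\in\{2,6\}$ has $2$-adic valuation one; and $y(\left[\begin{smallmatrix} 1 & 1 \\ 5 & 1 \end{smallmatrix}\right]) = y(\left[\begin{smallmatrix} 3 & 1 \\ 5 & 3 \end{smallmatrix}\right]) = |4|_v=2^{-2}$ in case~(iv). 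For archimedean $v$ with $\Delta_v^{0}=-1$, Lemma~\ref{fTDeltaL}(v) gives ${\frak T}_{\Delta,v}=Z_v\bK_v^{0}\subset Z_v\bK_v$, so $f_{0,v}\equiv 1$ on ${\frak T}_{\Delta,v}$ and the computation is immediate. Pairing each of these with the appropriate volume from Lemma~\ref{VolZfTd} recovers all five asserted formulas; for instance case~(iv) produces the sum $1+2\cdot 2^{-(z+1)}=1+2^{-z}$ which, multiplied by $\tfrac{2}{3}|m_v|_v^{-1}2^{-d_v/2}$, matches the stated factor $q_v^{-d_v/2}|m_v|_v^{-1}\cdot 3^{-1}2(1+2^{-z})$.

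The essential content is thus the evaluation of $y$ on a handful of explicit matrices, so the main obstacle is bookkeeping rather than any conceptual issue: one must correctly match the three coset representatives in the dyadic subcase $\Delta_v^{0}=5$ with the anomalous volume factor $2/3$ of Lemma~\ref{VolZfTd}, and verify that the $2$-adic arithmetic for $\Delta_v^{0}\in\{-5,-1\}$ collapses to the uniform value $y=2^{-1}$ so that the same expression $1+2^{-(z+1)/2}$ appears in both sub-cases of~(iii).
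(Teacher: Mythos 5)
Your proof is correct and takes essentially the same route as the paper, whose proof is exactly the combination of the coset decomposition in Lemma~\ref{fTDeltaL} with the volumes in Lemma~\ref{VolZfTd}; your evaluations of the height at the representatives ($y=q_v$ in case (ii), $2^{-1}$ in case (iii), $2^{-2}$ twice in case (iv)) and the resulting five formulas all check out. The only cosmetic slip is that the unipotent matrix used to justify $y(g)=|\det g|_v/\max(|c|_v,|d|_v)^{2}$ should be $\left[\begin{smallmatrix} 1 & 0 \\ -c/d & 1 \end{smallmatrix}\right]$ rather than $\left[\begin{smallmatrix} 1 & 0 \\ c/d & 1 \end{smallmatrix}\right]$, which affects nothing.
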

\begin{proof} We have the values of $\varphi_{0,v}(1_2)$ by Lemmas~\ref{fTDeltaL}
	and \ref{VolZfTd} easily. \end{proof}

\begin{lem}\label{MFoneSphftn}
	Let $v\in \Sigma_F$. Then there exists a unique smooth function $\varphi_v^{\Delta,(z)}:G_v\rightarrow \C$ such that $\varphi_v^{\Delta,(z)}(1_2)=1$ with the properties: 
	\begin{itemize}
		\item[(i)] $\varphi_v^{\Delta,(z)}(hgk)=\varphi_v^{\Delta,(z)}(g)$ for all $h\in {\frak T}_{\Delta,v}$ and $k\in \bK_v$. 
		\item[(ii)] $R(\bT_v)\varphi_v^{\Delta,(z)}=(q_v^{\frac{1+z}{2}}+q_v^{\frac{1-z}{2}})\,\varphi_v^{\Delta,(z)}$ if $v \in \Sigma_{\fin}$.
		\item[(ii)'] $R(\Omega_v)\varphi_v^{\Delta, (z)}=\frac{z^2-1}{2}\varphi_v^{\Delta,(z)}$ if $v \in \Sigma_{\infty}$.
	\end{itemize}
\end{lem}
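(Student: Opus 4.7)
\textbf{Proof plan for Lemma~\ref{MFoneSphftn}.} The plan is to construct $\varphi_v^{\Delta,(z)}$ as a normalized toric period of the spherical vector in the principal series $I(|\,|_v^{z/2})$, and to deduce uniqueness from the local multiplicity one of the Waldspurger model cited earlier.

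\textbf{Existence.} Let $f_{0,v}\in I(|\,|_v^{z/2})$ be the unique $\bK_v$-fixed vector normalized by $f_{0,v}(1_2)=1$. Standard facts about the $\bK_v$-spherical vector give
$R(\bT_v)f_{0,v}=(q_v^{(1+z)/2}+q_v^{(1-z)/2})\,f_{0,v}$ for $v\in\Sigma_{\fin}$ and
$R(\Omega_v)f_{0,v}=\tfrac{z^2-1}{2}\,f_{0,v}$ for $v\in\Sigma_{\infty}$.
When $\Delta_v^{0}\ne 1$, the quotient $Z_v\bsl{\frak T}_{\Delta,v}$ is compact, and we set
\[
\varphi_v^{\Delta,(z)}(g)=\varphi_{0,v}(1_2)^{-1}\int_{Z_v\bsl {\frak T}_{\Delta,v}}f_{0,v}(tg)\,dt,
\]
where the normalizer $\varphi_{0,v}(1_2)$ is the explicit nonzero constant of Lemma~\ref{sphericalftn1value}. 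When $\Delta_v^{0}=1$, so that ${\frak T}_{\Delta,v}$ is split, the integral diverges and is replaced by its regularized version, obtained from a Tate-type meromorphic continuation in $z$ of the local zeta integral that defines a ${\frak T}_{\Delta,v}$-invariant functional on $I(|\,|_v^{z/2})$. Property (i) is immediate from the left translation-invariance of the toric integral and the right $\bK_v$-invariance of $f_{0,v}$; property (ii) (resp.\ (ii)') and the normalization $\varphi_v^{\Delta,(z)}(1_2)=1$ follow since $\bT_v$ (resp.\ $\Omega_v$) commutes with the left action of ${\frak T}_{\Delta,v}$ used to average $f_{0,v}$.

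\textbf{Uniqueness.} Let $\varphi_1,\varphi_2$ be two smooth functions meeting the conditions, and put $\psi=\varphi_1-\varphi_2$, so $\psi(1_2)=0$. The right $\bK_v$-invariance together with the eigenvalue equation in (ii)/(ii)' identifies $\psi$ with a $\bK_v$-fixed vector in a $G_v$-subrepresentation of $C^\infty(Z_v\bsl G_v)$ whose irreducible constituents are subquotients of $I(|\,|_v^{\pm z/2})$; the left ${\frak T}_{\Delta,v}$-invariance then realizes $\psi$ as an element of $\Hom_{{\frak T}_{\Delta,v}}(I(|\,|_v^{z/2}),\mathbf{1})^{\bK_v}$. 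By the local multiplicity one theorem for the Waldspurger model (\cite[Prop.~9']{Waldspurger0}, \cite{Waldspurger01}, \cite{Waldspurger}, \cite[\S1]{BFF}), this Hom-space is at most one-dimensional. Hence $\psi$ is a scalar multiple of the function built in the existence step, and the vanishing at $1_2$ forces $\psi\equiv 0$. Generic $z$ suffices for the multiplicity-one input, and the general case follows by analytic continuation in $z$.

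\textbf{Expected main obstacle.} The delicate point is the split case $\Delta_v^{0}=1$: the toric integral is divergent, and one must carry out the Tate-type regularization so that the resulting $\varphi_v^{\Delta,(z)}$ is smooth, normalized to $1$ at the identity, and genuinely satisfies (i), (ii)/(ii)' as stated. A secondary technical point, at the archimedean places, is verifying the explicit value at $1_2$ and the smoothness of the toric average near the identity coset, which requires invoking the hypergeometric identities underlying the structure of $\varphi_v^{(w,z)}$ in \S\ref{The $F$-hyperbolic term}.
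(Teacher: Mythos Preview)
Your treatment of the non-split case $\Delta_v^{0}\ne 1$ matches the paper's: integrate the spherical vector $f_{0,v}$ over the compact torus $Z_v\bsl{\frak T}_{\Delta,v}$, normalize by the nonzero constant of Lemma~\ref{sphericalftn1value}, and invoke Waldspurger's multiplicity one for uniqueness.

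In the split case $\Delta_v^{0}=1$, however, you take a harder road than necessary. Since ${\frak T}_{\Delta,v}=H_v$ here, the paper simply quotes Lemma~\ref{FhypL1}, which already constructed the $H_v$-spherical function $\varphi_v^{(0,z)}$ explicitly and proved its uniqueness by solving the Hecke recurrence (non-archimedean) or the Casimir ODE (archimedean) directly. No regularized toric integral is needed, and no multiplicity-one input is used. Your proposed Tate-type regularization would have to be carried out carefully, and more importantly your uniqueness argument via $\Hom_{{\frak T}_{\Delta,v}}(I(|\,|_v^{z/2}),\mathbf{1})$ is not safe when the torus is split: the Waldspurger references you cite concern anisotropic tori, and for the split torus the relevant invariant-functional space is not one-dimensional in general, so the step ``$\psi$ is a scalar multiple of the constructed function'' does not follow from those citations. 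The paper sidesteps this entirely by appealing to the elementary uniqueness already established in Lemma~\ref{FhypL1}.
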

\begin{proof}
	If $\Delta_v^{0}=1$, then ${\frak T}_{\Delta,v}=H_v$. Thus, the assertion follows from Lemma~\ref{FhypL1}. If $\Delta_v^{0}\not=1$, then the uniqueness of $\varphi_v^{\Delta,(z)}$ follows from \cite{Waldspurger} (\cite[Proposition 9']{Waldspurger0}). From Lemma~\ref{sphericalftn1value}, $\varphi_{0,v}(1_2)\not=0$. Then the function $\varphi_v^{\Delta,(z)}(g)=\varphi_{0,v}(1_2)^{-1}\varphi_{0,v}(g)$ meets all the requirements.  
\end{proof}

By exchanging the order of integrals,  
\begin{align}
	(\cE_\b^*)^{\Delta}(g)=\int_{L_\s}\b(z)\Lambda_F(z+1)\,E^{\Delta}(z;g)\,\d z
	\label{10.2-f3}
\end{align}
with 
\begin{align*}
	E^\Delta(z;g)=\int_{\A^\times E^\times \bsl \A_E^\times} E\left(z;\iota_{\Delta}(\tau) R_\Delta^{-1}g\right)\,\d^\times \tau, \quad z\in \C.  
\end{align*}
From Lemma~\ref{MFoneSphftn}, this is decomposed into a product as 
\begin{align}
	E^\Delta(z;g)=E^{\Delta}(z;1_2)\,\prod_{v\in \Sigma_F}\varphi_v^{\Delta,(z)}(g_v), \quad g\in G_\A. 
	\label{10.2-f2}
\end{align}
Let $\Re(z)>1$ and set $T=\iota_{\Delta}(E^\times)$. Since $BT = G$, by noting $T \cap B = Z$, we have
\begin{align}
	E^\Delta(z;1_2) =& \int_{Z_{\AA}\bsl T_{\AA}} y(h R_\Delta^{-1})^{(z+1)/2}dh = 
	\prod_{v \in \Sigma_F}Y_v(z) \quad \text{with} \quad
	Y_v(z)=\int_{Z_v\bsl T_v}y(h_v R_{\Delta,v}^{-1})^{(z+1)/2}\,dh_v.
	\label{EisPeriod-f0}
\end{align}

\begin{lem} \label{EisPeriod-L1}
	Let $D=\{v\in \Sigma_{\rm dyadic}|\,\Delta_v^{0}=5\,\}$ and $\Sigma_{\rm split}^{\Delta}=\{v\in \Sigma_\fin|\,\Delta_v^{0}=1\}$. Then
	\begin{align*}
		Y_v(z)=|m_v|_v^{-1}d(v)|\Delta_v^{0} m_v|_v^{-(z+1)/2}\zeta_{E_v}\left(\tfrac{z+1}{2}\right){\zeta_{F_v}(z+1)^{-1}}
		\times \begin{cases}
			|2|_v^{(z-1)/2} \quad (v\in \Sigma_{\rm split}^{\Delta}\cup \Sigma_\infty), \\
			1 \quad (v\in \Sigma_\fin-D\cup \Sigma_{\rm split}^{\Delta}), \\
			\tfrac{2}{3}(1+2^{-z}) \quad (v\in D),
		\end{cases}
	\end{align*}
	where we set $d(v)=q_v^{-d_v/2}$ for $v\in \Sigma_\fin$ and $d(v)=1$ for $v\in \Sigma_\infty$. 
\end{lem}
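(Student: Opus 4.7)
The plan is to compute each local factor $Y_v(z)$ directly by combining the explicit formula $y(g)=|\det g|_v/\max(|c|_v,|d|_v)^2$ (non-archimedean) or $y(g)=|\det g|_v/(c^2+d^2)$ (archimedean real) with the coset structure of $T_v=\iota_\Delta(E_v^\times)$ described in Lemma~\ref{fTDeltaL}. The starting point is to transfer the integral via $\iota_\Delta$ to the space $F_v^\times\bsl E_v^\times$ and to recognize $y(g)^{(z+1)/2}$ as the $\bK_v$-spherical flat section of the principal series $I_v(|\,|_v^{z/2})$, so that $Y_v(z)$ is (up to the measure normalization given by Lemma~\ref{VolZfTd}) a local toric period of the spherical vector. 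The claimed formula assembles the Euler factors of $\zeta_{E_v}$ and $\zeta_{F_v}$ through the identity $\zeta_{E_v}(s)=\zeta_{F_v}(s)\,L_v(s,\varepsilon_\Delta)$, so the main work is to isolate that Euler factor from the coset contributions.

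For the inert case at $v\in\Sigma_\fin-\Sigma_{\rm dyadic}$ with $\Delta_v^0\in\cO_v^\times-(\cO_v^\times)^2$, Lemma~\ref{fTDeltaL}(i) gives $T_v=T_v^+$, so a single coset contributes, and after the change of variable the integrand is a constant against a measure of volume $|m_v|_v^{-1}q_v^{-d_v/2}$ (Lemma~\ref{VolZfTd}); the ratio $\zeta_{E_v}/\zeta_{F_v}$ is trivial in this case because $E_v/F_v$ is unramified with the same Euler factor. For the ramified case ($\Delta_v^0\in\gp_v-\gp_v^2$), Lemma~\ref{fTDeltaL}(ii) gives $T_v=T_v^+\sqcup [\begin{smallmatrix}0&1\\\Delta_v^0&0\end{smallmatrix}]T_v^+$: the non-trivial coset representative $\sigma=\iota_\Delta(\sqrt{\Delta_v^0})$ has $y(\sigma)=q_v$, producing the Hecke-type factor $q_v^{(z+1)/2}$ that, combined with the volume and with $|\Delta_v^0 m_v|_v^{-(z+1)/2}$, reproduces $\zeta_{E_v}((z+1)/2)/\zeta_{F_v}(z+1)$. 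The non-dyadic inert and ramified cases both fall into the second bullet of the lemma.

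For the split case $v\in\Sigma_{\rm split}^\Delta$ (so $E_v=F_v\oplus F_v$), the coset space $F_v^\times\bsl E_v^\times\cong F_v^\times$ is non-compact; here the integral becomes a Tate-type local zeta integral on $F_v^\times$ which evaluates to $\zeta_{F_v}((z+1)/2)^2/\zeta_{F_v}(z+1)$ (in agreement with $\zeta_{E_v}=\zeta_{F_v}^2$ in the split case). The extra Cayley-type matrix $[\begin{smallmatrix}1&1\\1&-1\end{smallmatrix}]$ sitting inside $R_{\Delta,v}$ contributes the correction $|2|_v^{(z-1)/2}$. The archimedean case $v\in\Sigma_\infty$ (necessarily with $\Delta_v^0=-1$ since $F$ is totally real and elliptic tori are compact modulo center at $\infty$) is treated analogously: parametrize $F_v^\times\bsl E_v^\times$ by the angle $\theta\in[0,\pi)$, write $y(\iota_\Delta(e^{i\theta})R_{\Delta,v}^{-1})$ as a simple trigonometric expression, and evaluate the resulting integral against the measure from Lemma~\ref{VolZfTd}; the answer packages into the gamma factors $\Gamma_\CC((z+1)/2)/\Gamma_\RR(z+1)=2^{(1-z)/2}$ times the $|2|_v^{(z-1)/2}$ correction, exactly as in the finite split case.

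The main obstacle is the dyadic case $v\in D$ (where $F_v=\QQ_2$ and $\Delta_v^0=5$, corresponding to the unique unramified non-split dyadic quadratic extension). By Lemma~\ref{fTDeltaL}(iv), $T_v$ decomposes into \emph{three} cosets of $T_v^+$, with explicit representatives $1_2$, $[\begin{smallmatrix}1&1\\5&1\end{smallmatrix}]$ and $[\begin{smallmatrix}3&1\\5&3\end{smallmatrix}]$. For each representative $\sigma$ one computes $y(\sigma R_{\Delta,v}^{-1})$ directly from the bottom row and determinant, and the sum of the three contributions, weighted by the dyadic volume $\vol(Z_v\bsl T_v^+)=\tfrac{2}{3}|m_v|_v^{-1}2^{-d_v/2}$ from Lemma~\ref{VolZfTd}, yields the non-standard prefactor $\tfrac{2}{3}(1+2^{-z})$. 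The similar but simpler dyadic cases $\Delta_v^0\in\{-1,-5\}$ (Lemma~\ref{fTDeltaL}(iii)) are handled by the same mechanism with only two cosets. After collecting all the factors and invoking $\zeta_{E_v}(s)=\zeta_{F_v}(s)L_v(s,\varepsilon_\Delta)$ to repackage the Euler factors, one obtains the claimed closed form for $Y_v(z)$ in each of the three bullets.
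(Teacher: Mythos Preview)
Your approach is correct and genuinely different from the paper's. The paper does \emph{not} use the coset decompositions of Lemma~\ref{fTDeltaL} or the volumes of Lemma~\ref{VolZfTd} to prove Lemma~\ref{EisPeriod-L1}; instead it inserts the Godement--Jacquet/Tate representation
\[
y(g_v)^{(z+1)/2}=d(v)^{-1}\zeta_{F_v}(z+1)^{-1}|\det g_v|_v^{(z+1)/2}\int_{F_v^\times}|a|_v^{z+1}\Phi_v\bigl(a[0,1]g_v\bigr)\,d^\times a
\]
with $\Phi_v$ the standard Schwartz function, swaps the $a$-integral with the $\tau$-integral over $F_v^\times\bsl E_v^\times$, and then evaluates the resulting double integral over (a domain in) $\cO_v^2$ case by case. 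In the split case this reduces immediately to a product of two local Tate integrals after the linear change $X=\tfrac{a(m_vy+x)}{2}$, $Y=\tfrac{a(m_vy-x)}{2}$; in the non-split cases it becomes $\int_{\cO_v^2\setminus\{0\}}|y^2-\Delta_v^0x^2|_v^{(z-1)/2}\zeta_{E_v}(1)\,dx\,dy$, which the paper identifies with $\vol(\cO_{E_v}^\times)\,\zeta_{E_v}((z+1)/2)$ (with extra bookkeeping at $v\in D$ coming from the failure of Hensel for squares mod $8$).

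Your coset-sum method is cleaner for the compact tori: once one observes that $y(R_{\Delta,v}^{-1}t)$ is right-$\mathfrak T_{\Delta,v}^+$-invariant (because $\mathfrak T_{\Delta,v}^+\subset Z_v\bK_v$), the non-split finite cases reduce to a finite sum whose terms you evaluated correctly, and Lemmas~\ref{fTDeltaL} and~\ref{VolZfTd}---already proved for other purposes---do all the work. The paper's method, by contrast, handles the split case (where your coset description is unavailable) on exactly the same footing as the others and makes the appearance of $\zeta_{E_v}$ completely transparent. One correction: your parenthetical ``necessarily $\Delta_v^0=-1$ at $v\in\Sigma_\infty$'' is wrong---the global torus is $F$-elliptic, but at a given real place $v$ one may have $\Delta^{(v)}>0$, i.e.\ $\Delta_v^0=+1$, in which case $E_v\cong\R\times\R$ and the split computation applies; the Lemma's grouping of $\Sigma_\infty$ with $\Sigma_{\rm split}^\Delta$ reflects that both archimedean signs give the same correction factor $|2|_v^{(z-1)/2}$.
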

\begin{proof}
	Let us recall the formula
	$$
	y(g_v)^{(z+1)/2} = {d(v)^{-1}}{\zeta_{F,v}(z+1)^{-1}}|\det(g_v)|_v^{(z+1)/2} \tint_{F_v^{\times}}|a|_v^{z+1} \Phi_{v}(a[0,1]g)d^{\times}a,
	$$
	where $\Phi_v(x,y) = e^{-\pi(x^2+y^2)}$ if $v \in \Sigma_{\infty}$ and $\Phi_v(x,y)={\rm ch}_{\go_v\times \go_{v}}(x,y)$ if $v \in \Sigma_\fin$. Noting $\det (\iota_{\Delta}(\tau)) = \nr_{E/F}(\tau)$, we have
	\begin{align*}
		&Y_v(z)={d(v)^{-1}|\det(R_{\Delta, v}^{-1})|_v^{(z+1)/2}}{\zeta_{F_v}(z+1)^{-1}}\,Y_v^{0}(z)
	\end{align*}
	with 
	$$
	Y_v^{0}(z)=\tint_{F_v^\times \bsl E_v^\times}|\nr_{E_v/F_v}(\tau)|_v^{\frac{z+1}{2}}\{\int_{F_v^\times}|a|_v^{z+1}\Phi_v\biggl(a[0,1] \left[\begin{smallmatrix} x & y \\ 4^{-1}\Delta y & x \end{smallmatrix}\right] R_{\Delta,v}^{-1}\biggr)\d^\times a\}\,\d ^\times \tau,
	$$ where $\tau=x+y\sqrt{\Delta}/2$ and  $\d^\times\tau=\zeta_{E_v}(1)|x^2-4^{-1}\Delta y^2|_v^{-1}\,\d x\,\d y$. 
	
	(i) Suppose $v\in \Sigma_\fin$ and $\Delta_v^{0}=1$.  Then we have
	\begin{align*}
		Y_v^{0}(z)= & |m_v|_v^{-1}
		\zeta_{E_v}(1)\tint_{F_v^\times \times F_v^\times}|4XY|_{v}^{(z+1)/2}
		\Phi_v([X, Y]) \,|2XY|_v^{-1}\,\d X\,\d Y=|m_v|_v^{-1}\,|4|_v^{\frac{z}{2}}q_v^{-d_v} \zeta_{E_v}\left(\tfrac{z+1}{2} \right), 
	\end{align*}
	by the variable change $X=\frac{a(m_v y+x)}{2}$, $Y=\frac{a(m_vy-x)}{2}$ and by the relation $\Phi_v([X,Y])=\delta(X\in \cO_v)\delta(Y\in \cO_v)$.
	
	(ii) Suppose $v\in \Sigma_\fin-T\cup D$. Then we easily have
	\begin{align*}
		Y_v^{0}(z) = &|m_v|_v^{-1}
		\tint_{E_v^{\times}}|\nr_{E_v/F_v}(\tau_0)|_{v}^{(z+1)/2}
		\Phi_v([\Delta_v^0 y_0,x]) \,\d^\times \tau_0,
	\end{align*}
	with $y_0=m_v y$ and $\tau_0=x+\sqrt{\Delta_v^{0}}\, y_0$. From $\Phi_v([\Delta_v^{0}\, y_0,x])=\delta(x\in \cO_v)\delta(y_0\in (\Delta_v^{0})^{-1}\cO_v)$, 
	\begin{align*}
		Y_v^0(z)=&|m_v|_v^{-1}
		\tint_{x\in \cO_v-\{0\}} \int_{y_0\in \cO_v/\Delta_v^{0}-\{0\}} 
		|x^2-\Delta_v^{0}\, y_0^2|_{v}^{\frac{z+1}{2}-1}
		\zeta_{E_v}(1)\,\d x\,\d y_0
		\\
		= & |m_v|_v^{-1}|\Delta_v^{0}|_v^{-(z+1)/2}
		{\textstyle\iint}_{\cO_v^{2}-\{(0,0)\}}|y^2-\Delta_v^{0}\, x^2|_v^{\frac{z-1}{2}}\zeta_{E_v}(1)\d x\,\d y \\
		= & |m_v|_v^{-1} |{\Delta_v^{0}}|_v^{-\frac{z+1}{2}}
		\,\vol(\go_{E_v}^{\times}, d^\times\tau )\zeta_{E_v}\left(\tfrac{z+1}{2}\right)\end{align*}
	by making the variable change $y=\Delta_v^{0}y_0$ and noting $\go_{E_v} = \go_{v}+\sqrt{\Delta_v^0}\go_{v}$.
	
	(iii) Let $v\in D$. By the same computation as in (ii), we have 
	$$
	Y_v^0(z)=|m_v|^{-1}\tint_{\go_v-\{0\}}\tint_{\go_v-\{0\}}|y^2-\Delta_v^0 x^2|_v^{(z-1)/2}\zeta_{E_v}(1)dxdy.
	$$
	We decompose the set $(\cO_v-\{0\})\times (\cO_v-\{0\})$ into the disjoint union of $D_1=(\fp_v-\{0\}) \times (\cO_v-\{0\})$, $D_2=\cO_v^\times \times (\fp_v-\{0\})$ and $D_3=\cO_v^\times \times \cO_v^\times$ and write $|m_v|_v\,Y_v(z)=I_1+I_2+I_3$, where $I_i=\iint_{D_i} |y^2-\Delta_v^0 x^2|_v^{(z-1)/2} \zeta_{E_v}(1)dxdy$ with $i=1,2,3$. We have
	\begin{align*}
		I_1 =& \left(\tint_{\gp_v-\{0\}}\tint_{\go_v^\times} +\tint_{\gp_v-\{0\}}\int_{\gp_v-\{0\}}\right) |y^2-\Delta_v^0 x^2|_v^{(z-1)/2} \zeta_{E_v}(1)dxdy \\
		=&  (1+q_v)^{-1}q_v^{-d_v}  + q_v^{-z-1} |m_v|_v\,Y_v^{0}(z),
	\end{align*}
	and $I_2 = (1+q_v)^{-1}q_v^{-d_v}$ easily.  Since $v\in \Sigma_{\rm dyadic}$, we have that $u\in \cO_v^\times$ is a square if and only if $u\pmod{4\fp_v}$ is a square in $\cO_v^\times/1+4\fp_v$. In particular, $\xi^2-\Delta_v^0 \notin 4\gp_v$ for $\xi\in \cO_v^\times$. By this remark, we compute 
	\begin{align*}
		I_3 &= \tint_{\go_v^\times}\tint_{\go_v^\times}|x^2y^2-\Delta_v^0 x^2|_v^{(z-1)/2} \zeta_{E_v}(1)dxdy
		= (1+q_v^{-1})^{-1}q_v^{-d_v/2} \tint_{\go_v^\times}|y^2-\Delta_v^0|_v^{(z-1)/2} dy\\
		&= (1+q_v^{-1})^{-1}q_v^{-d_v/2} \sum_{\xi\in \go_v^\times / 1+4\gp_v} \tint_{\go_v}|\{\xi(1+4\varpi_vu)\}^2 - \Delta_v^0 |_v^{(z-1)/2}
		|4\varpi_v|_v du \\
		&= (1+q_v^{-1})^{-1}q_v^{-d_v}|4\varpi_v|_v \sum_{\xi\in \go_v^\times / 1+4\gp_v} |\xi^2 -\Delta_v^0|_v^{(z-1)/2}= (1+q_v^{-1})^{-1}q_v^{-d_v}|4|_v q_v^{-1}\sum_{j=0}^{\ord_v(4)}u(j)q_v^{-j(z-1)/2},
	\end{align*}
	where $u(j)$ is the number of all $\xi \in \go_v^{\times}/(1+4\gp_v)$ such that $\ord_v(\xi^2-\Delta_v^0)=j$. Hence 
	$$|m_v|_vY_v^{0}(z)=\zeta_{E_v}(\tfrac{z+1}{2}) \{2(1+q_v)^{-1}+(1+q_v^{-1})^{-1}|4|_vq_v^{-1}\sum_{j=0}^{\ord_v(4)}u(j)q_v^{-j(z-1)/2}\}=\zeta_{E_v}(\tfrac{z+1}{2}) 
	\tfrac{2}{3}(1+2^{-z})
	$$
	by using $q_v=2$, $\ord_v(4)=2$, $u(0)=u(1)=0$ and $u(2)=4$. We omit the detail of the computation for archimedean cases (iii) and (iv), which are elementary.\end{proof}

The following formula is originally due to Hecke (\cite[Chap. II, \S 3]{Siegel}).

\begin{prop} \label{EisPeriod}
	
	We have
	\begin{align*}
		E^\Delta(z;1_2) = &\{\prod_{v\in \Sigma_F}|m_v|_v^{-1}\}\,\{\prod_{\substack{v \in \Sigma_{F}\\
				\Delta_v^{0}=1}}|2|_v^{-1}\}\, 
		D_F^{-1/2}\,\nr(
		\fd_{E/F})^{\frac{z+1}{4}}\, \frac{\zeta_{F}((z+1)/2)L((z+1)/2,\varepsilon_{\Delta})}{\zeta_{F}(z+1)} \\
		& \times \prod_{\substack {v\in \Sigma_\infty \\ \Delta^{0}_v=-1}} 2^{-1} \,\prod_{\substack{v \in \Sigma_{\rm{dyadic}} \\ \Delta_v^{0}=5}}{3^{-1}}2^{\frac{z+1}{2}+1}(1+2^{-z}),
	\end{align*}
	where $E=F(\sqrt{\Delta})$ and $\fd_{E/F}$ denotes the relative discriminant of $E/F$. 
\end{prop}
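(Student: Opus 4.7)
The plan is to derive the formula by a direct product of the local evaluations already established in Lemma~\ref{EisPeriod-L1}. Starting from \eqref{EisPeriod-f0}, I write $E^\Delta(z;1_2)=\prod_{v\in\Sigma_F}Y_v(z)$ and decompose each factor as a \emph{universal} piece $|m_v|_v^{-1}\,d(v)\,|\Delta_v^0 m_v|_v^{-(z+1)/2}\,\zeta_{E_v}(\tfrac{z+1}{2})/\zeta_{F_v}(z+1)$ times an \emph{auxiliary} piece $\varepsilon_v(z)$ taking one of the values $|2|_v^{(z-1)/2}$, $1$, or $\tfrac23(1+2^{-z})$ according to the trichotomy $v\in\Sigma_\infty\cup\Sigma_{\rm split}^\Delta$, $v\in\Sigma_\fin-(D\cup\Sigma_{\rm split}^\Delta)$, or $v\in D$ in Lemma~\ref{EisPeriod-L1}.

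Globalizing the universal pieces, I would use $\prod_{v\in\Sigma_\fin}d(v)=D_F^{-1/2}$ for the absolute discriminant, keep $\prod_v|m_v|_v^{-1}$ as a prefactor, and globalize the local zeta ratio via $\zeta_E(s)=\zeta_F(s)L(s,\varepsilon_\Delta)$ for the completed Dedekind zeta, producing $\zeta_F(\tfrac{z+1}{2})L(\tfrac{z+1}{2},\varepsilon_\Delta)/\zeta_F(z+1)$. The remaining piece $\prod_v|\Delta_v^0 m_v|_v^{-(z+1)/2}$ simplifies by the relation $\Delta_v^0 m_v^2=\Delta/4\in F^\times$ together with the product formula $\prod_v|\Delta/4|_v=1$, jointly giving $\prod_v|\Delta_v^0 m_v|_v^{-(z+1)/2}=\prod_{v\in\Sigma_\fin}|\Delta_v^0|_v^{-(z+1)/4}$ (the archimedean $\Delta_v^0=\pm 1$ being trivial).

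To convert this into $\nr(\fd_{E/F})^{(z+1)/4}$ I would compare $\ord_v(\Delta_v^0)$ with $\ord_v(\fd_{E/F})$ place by place using the case analysis of \S\ref{PEAET}: they agree at every finite $v$ except at the exceptional dyadic set $D_{\rm exc}:=\{v\in\Sigma_{\rm dyadic}\mid\Delta_v^0\in\{-5,-1,\pm 2,\pm 10\}\}$, where $\fd_{E/F}\cO_v=4\Delta_v^0\cO_v$ inflates $\ord_v$ by two. This yields
$$
\prod_{v\in\Sigma_\fin}|\Delta_v^0|_v^{-(z+1)/4}=\nr(\fd_{E/F})^{(z+1)/4}\cdot\prod_{v\in D_{\rm exc}}|4|_v^{(z+1)/4}=\nr(\fd_{E/F})^{(z+1)/4}\cdot\prod_{v\in D_{\rm exc}}2^{-(z+1)/2}.
$$

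The final step is to multiply by $\prod_v\varepsilon_v(z)$ and match the aggregated result against the three explicit correction products in the statement. The non-$2$-power content $\prod_{v\in D}3^{-1}(1+2^{-z})$ feeds directly into the third product. For the $2$-powers I would invoke the decomposition $[F:\QQ]=|D_{\rm exc}|+|D|+|\{v\in\Sigma_{\rm dyadic}\mid\Delta_v^0=1\}|$ (forced by the hypothesis that $2$ splits completely in $F/\QQ$) together with the product formula $\prod_v|2|_v=1$ applied to $2\in F^\times$ to trade archimedean and dyadic $|2|_v$-factors, and verify that the total exponent of $2$ on both sides agree. The main obstacle is precisely this dyadic bookkeeping: the leftover $2^{-(z+1)/2}$ from each $v\in D_{\rm exc}$, the factors $|2|_v^{(z-1)/2}$ at $v\in\Sigma_\infty\cup\Sigma_{\rm split}^\Delta$, and the $\tfrac23(1+2^{-z})$ at $v\in D$ must be combined very carefully to reproduce $\prod_{v\in\Sigma_\infty,\Delta_v^0=-1}2^{-1}$, $\prod_{v:\Delta_v^0=1}|2|_v^{-1}$ and $\prod_{v\in D}3^{-1}2^{(z+1)/2+1}(1+2^{-z})$ exactly; once that balance is verified, no further idea is needed beyond Lemma~\ref{EisPeriod-L1}.
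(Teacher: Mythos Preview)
Your proposal is correct and follows essentially the same route as the paper's own proof: both start from the local evaluations of Lemma~\ref{EisPeriod-L1}, use the product formula for $\Delta/4\in F^\times$ to reduce $\prod_v|\Delta_v^0 m_v|_v^{-(z+1)/2}$ to $\prod_{v\in\Sigma_\fin}|\Delta_v^0|_v^{-(z+1)/4}$, and then compare with $\nr(\fd_{E/F})^{(z+1)/4}$ via the dyadic case analysis of \S\ref{PEAET}. The paper packages the dyadic bookkeeping through the auxiliary ideal ${\frak D}_\Delta$ and the single identity $\nr({\frak D}_\Delta)=\nr(\fd_{E/F})\prod_{v\in\Sigma_{\rm split}^\Delta\cup D\cup\Sigma_\infty}|2|_v^{-2}$, whereas you work directly with the set $D_{\rm exc}$ and the partition $\#\Sigma_{\rm dyadic}=\#D_{\rm exc}+\#D+\#D_1$; these are equivalent formulations of the same count.
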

\begin{proof} 
	From Lemma~\ref{EisPeriod-L1} and \eqref{EisPeriod-f0}, we have that $E^{\Delta}(z;1_2)$ with $\Re(z)>1$ is the product of $\zeta_E(\frac{z+1}{2})\zeta_{F}(1+z)^{-1}$ and finite factors
	\begin{align}
		\prod_{v\in \Sigma_F}|m_v|_v^{-1}\times D_F^{-1/2} \times \prod_{v\in \Sigma_F}|m_v\Delta_v^0|_v^{-\frac{z+1}{2}}\times \prod_{v\in \Sigma_{\rm split}^{\Delta}\cup \Sigma_{\infty}}|2|_v^{\frac{z-1}{2}} \times \prod_{v\in D}\tfrac{2}{3}(1+2^{-z}).
		\label{EisPeriod-f1}
	\end{align}
	By $1=\prod_{v\in \Sigma_F}|4^{-1}\Delta|_v=\prod_{v\in \Sigma_F}|\Delta_v^{0}|_v|m_v|_v^{2}=\prod_{v\in \Sigma_F}|\Delta_v^{0}|_v \prod_{v\in \Sigma_F}|m_v|_v^2$, 
	$$\prod_{v\in \Sigma_F}
	|\Delta_v^0 m_v|_v^{-(z+1)/2}=
	\prod_{v\in \Sigma_F}|\Delta_v^{0}|_v^{-\frac{z+1}{4}}=\nr({\frak D}_{\Delta})^{\frac{z+1}{4}},
	$$
	where ${\frak D}_{\Delta}=F\cap \prod_{v\in \Sigma_\fin}\Delta_v^0\cO_v$. Using the relation $\nr({\frak D}_\Delta)=\nr(\fd_{E/F})\prod_{v\in \Sigma_{\rm split}^{\Delta} \cup D \cup \Sigma_\infty}|2|_v^{-2}$, we see that the factor \eqref{EisPeriod-f1} is simplified in the desired form.  
\end{proof}

\begin{lem} \label{EisPeriodEst}
	For $M>1$ and $\e>0$,  
	\begin{align*}
		|(z^2-1)\Lambda_F(z+1)\,E^{\Delta}(z;1_2)|\ll_{M, \e} \{\prod_{v\in \Sigma_F}|m_v|_v^{-1}\}\,
		\nr({\frak D}_{\Delta})^{\frac{1+\e}{4}+\frac{\ro(z)}{4}}
	\end{align*}
	uniformly in $\Delta\in F^\times-(F^\times)^2$ and $z\in \C$ such that $\Re(z)\in[-M, M]$, where $\ro(z)=\max(|\Re(z)|,1)$. 
\end{lem}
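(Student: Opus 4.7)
My plan is to combine the explicit Hecke-type formula of Proposition~\ref{EisPeriod} with a convexity estimate for the completed Hecke $L$-function $L(s,\varepsilon_\Delta)$. Multiplying the formula of Proposition~\ref{EisPeriod} by $\Lambda_F(z+1)=D_F^{(z+1)/2}\zeta_F(z+1)$, the factor $\zeta_F(z+1)$ appearing in the denominator of $E^\Delta(z;1_2)$ cancels, leaving
\[
\Lambda_F(z+1)\,E^\Delta(z;1_2)=\Bigl(\prod_{v\in \Sigma_F}|m_v|_v^{-1}\Bigr)\,D_F^{z/2}\,\nr(\fd_{E/F})^{(z+1)/4}\,\zeta_F\bigl(\tfrac{z+1}{2}\bigr)\,L\bigl(\tfrac{z+1}{2},\varepsilon_\Delta\bigr)\,C_\Delta(z),
\]
where $C_\Delta(z)$ collects the archimedean and dyadic correction terms from Proposition~\ref{EisPeriod} and is uniformly bounded on $\Re(z)\in[-M,M]$ independently of $\Delta$. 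On this strip the only singularities arise from the simple poles of $\zeta_F((z+1)/2)$ at $z=\pm 1$ (the completed $L(s,\varepsilon_\Delta)$ is entire because $\Delta\notin (F^\times)^2$ makes $\varepsilon_\Delta$ nontrivial), and the prefactor $(z^2-1)$ cancels both.

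I then bound the individual pieces. The scalar $D_F^{z/2}$ is trivially majorized by $D_F^{M/2}$. The product $(z^2-1)\zeta_F((z+1)/2)$ is uniformly bounded on the strip: the exponential vertical decay of the archimedean gamma factors in the completed $\zeta_F$ (via Stirling) dominates both the polynomial vertical growth of $(z^2-1)$ and the standard convexity bound for the finite part of $\zeta_F$ on the critical strip.

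The heart of the argument is a convexity estimate for the entire function
\[
g(z):=\nr(\fd_{E/F})^{(z+1)/4}\,L\bigl(\tfrac{z+1}{2},\varepsilon_\Delta\bigr).
\]
Substituting $s=(z+1)/2$ into the functional equation $L(s,\varepsilon_\Delta)=\nr(\fd_{E/F})^{1/2-s}\,L(1-s,\varepsilon_\Delta)$ yields the self-symmetry $g(z)=g(-z)$. On $\Re(z)\geq 1+\delta$, $L((z+1)/2,\varepsilon_\Delta)$ is uniformly bounded in $\Delta$ by the absolute convergence of the Euler product together with Stirling applied to the archimedean gamma factors (which depend only on the signs of $\varepsilon_\Delta$ at the infinite places, giving only finitely many shapes), so $|g(z)|\ll \nr(\fd_{E/F})^{(\Re(z)+1)/4}$; by the symmetry the analogous bound holds on $\Re(z)\leq -1-\delta$. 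Since $g$ is entire and decays in $|\Im(z)|$ thanks to the gamma factors, Phragm\'en--Lindel\"of applied to $g$ on the vertical strip $|\Re(z)|\leq 1+\delta$ then gives
\[
|g(z)|\ll_{\varepsilon}\nr(\fd_{E/F})^{(1+\rho(z))/4+\varepsilon}
\]
uniformly in $z$ with $\Re(z)\in[-M,M]$ and in $\Delta$, where $\rho(z)=\max(|\Re(z)|,1)$.

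Finally, combining these pieces with the relation $\nr(\mathfrak{D}_\Delta)=\nr(\fd_{E/F})\prod_{v\in \Sigma_{\rm split}^{\Delta}\cup D\cup \Sigma_\infty}|2|_v^{-2}$ recorded in the proof of Proposition~\ref{EisPeriod}, which gives $\nr(\mathfrak{D}_\Delta)\asymp\nr(\fd_{E/F})$ with constants depending only on $F$, yields the asserted bound. The main obstacle is the Phragm\'en--Lindel\"of step: one must verify that the boundary bounds on $\Re(z)=\pm(1+\delta)$ are uniform in $\Delta$ (which comes down to the archimedean gamma factors of $\varepsilon_\Delta$ taking only finitely many forms) and, crucially, that the polynomial $|\Im(z)|$-growth contributed by $(z^2-1)$ and by the convexity interpolation is absorbed by the exponential vertical decay of the gamma factors, so that the final estimate is genuinely uniform in $z$ on the strip.
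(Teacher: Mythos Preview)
Your proof is correct and follows essentially the same approach as the paper: multiply through by $\Lambda_F(z+1)$ using Proposition~\ref{EisPeriod}, note that $(z^2-1)\zeta_F\bigl(\tfrac{z+1}{2}\bigr)$ is holomorphic and vertically bounded, and apply the convexity bound for $L(s,\varepsilon_\Delta)$ via Phragm\'en--Lindel\"of. The only cosmetic difference is that you package the interpolation step through the symmetric function $g(z)=\nr(\fd_{E/F})^{(z+1)/4}L\bigl(\tfrac{z+1}{2},\varepsilon_\Delta\bigr)$, whereas the paper bounds $L$ directly on the two boundary strips $\Re(z)\in[1,M]$ and $\Re(z)\in[-M,-1]$ and interpolates; both lead to the same convexity exponent.
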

\begin{proof} The function $(z^2-1)\zeta_F(\frac{z+1}{2})$ is holomorphic and vertically bounded on $\C$.
	By two estimates $|L(\frac{z+1}{2},\varepsilon_{\Delta})|\ll_{M,\e} \nr({\frak D}_{\Delta})^{\e}$ for $\Re(z) \in [1, M]$
	and $|L(\frac{z+1}{2},\varepsilon_{\Delta})|\ll_{M,\e} \nr({\frak D}_{\Delta})^{-\frac{\Re(z)}{2}+\e}$
	for $\Re(z)\in [-M,-1]$,
	a well-known argument by the Phragmen-Lindel\"{o}f principle yields a bound $|L(\tfrac{z+1}{2},\varepsilon_{\Delta})|\ll_{\e}\{\nr({\frak D}_{\Delta})\}^{\frac{1}{4}+\e-\frac{\Re(z)}{4}}$ uniformly valid for $\Re(z)\in [-1,1]$ and for any non-square $\Delta$. From this and Proposition~\ref{EisPeriod}, we have the desired bound easily. 
\end{proof}

\subsection{Explicit formulas of local orbital integrals} \label{EFLOInt}
For $v\in \Sigma_F$, let $\Phi_v(g_v)$ be the $v$-component of the test function $\Phi(g)=\Phi^{l}(\fn|\bfs,g)$ on $G_\A$. For $z\in \C$, set\begin{align}
	\fE_v^{(z)}(\hat \gamma_v)=\int_{{\frak T}_{\Delta,v}\bsl G_v}\Phi_v\left(g^{-1} \hat\gamma_v g\right)\,\varphi_v^{\Delta,(z)}(g)\,\d g 
	\label{EllipticorbitalIntegral}
\end{align}
with $\hat\gamma_v=\left[\begin{smallmatrix} {\frac{t}{2m_v}} & 1 \\ \Delta_v^{0} & {\frac{t}{2m_v}} \end{smallmatrix} \right]$ if $\Delta_v^{0}\not=1$ and $\hat\gamma_v=\left[\begin{smallmatrix} {\frac{t}{2m_v}+1} & 0 \\ 0 & {\frac{t}{2m_v}-1} \end{smallmatrix} \right]
$ if $\Delta_v^{0}=1$. 

\begin{thm} \label{OrbIntUnifEx}
	\begin{itemize}
		\item[(1)] Suppose $v\in \Sigma_\infty$. Then for $|\Re(z)|<2l_v-1$, we have 
		\begin{align*}
			\fE_v^{(z)}(\hat \gamma_v) = 2|m_v|_v \times \Ocal_v^{{\rm{sgn}}(\Delta_v^{0}),(z)}\left(\tfrac{t}{2m_v}\right).
		\end{align*}
		\item[(2)] Suppose $v\in \Sigma_\fin-(S\cup S(\fn)\cup \Sigma_{\rm dyadic})$ or $v\in \Sigma_{\rm dyadic}$ with $\Delta_v^{0}\neq 5$. Then we have 
		\begin{align*}
			\fE_v^{(z)}(\hat \gamma_v)=q_v^{-d_v/2}|2m_v|_v\,\Ocal_{v,0}^{\Delta_v^{0},(z)}\left(\tfrac{n}{4m_v^2}\right)
			\,\delta\left(\tfrac{t}{2m_v}\not\in \cO_v^{\times}\,{\text{or}}\, \tfrac{n}{4m_v^2}\not\in \fp_v\right)
		\end{align*}
		if $\Delta_v^0=1$, and otherwise
		\begin{align*}
			\fE_v^{(z)}(\hat \gamma_v)=q_v^{-d_v/2}|m_v|_v
			\,\Ocal_{v,0}^{\Delta_v^{0},(z)}\left(\tfrac{n}{m_v^2}\right)
			\begin{cases} 1 \quad &(v\not\in \Sigma_{\rm dyadic},\,\Delta_v^{0}\in \cO_v^\times-(\cO_v^\times)^2, ), \\
				\delta\left(\tfrac{t}{2m_v}\not\in \cO_v^{\times}\right) \quad &(v\in \Sigma_{\rm dyadic},\,\Delta_v^{0}\in \{-1,-5\}), \\
				\delta\left(\tfrac{t}{2m_v} \not\in \fp_v\right) \quad &(\Delta_v^{0}\in \fp_v-\fp_v^2)
				.
			\end{cases} 
		\end{align*}
		\item[(3)] Suppose $v\in \Sigma_{\rm dyadic}$ with $\Delta_v^{0}=5$. Then 
		\begin{align*}
			\fE_v^{(z)}(\hat \gamma_v)&=q_v^{-d_v/2}|2m_v|_v\,2^{\frac{-z-1}{2}}\,3(1+2^{-z})^{-1}\,\Ocal_v\left(\tfrac{n}{4m_v^2}\right).
		\end{align*}
		\item[(4)] Suppose $v\in S(\fn)$. Then 
		\begin{align*}
			\fE_{v}^{(z)}(\hat\gamma_v)&=q_v^{-d_v/2}|m_v|_v\,\Ocal_{v,1}^{\Delta_v^0,(z)}\left(\tfrac{n}{m_v^2}\right)\times \begin{cases} \delta\left(\tfrac{t}{2m_v} \not\in \cO_v^{\times}\, \text{or}\, \tfrac{n}{m_v^2}\not \in
				\fp_v\right) \quad (\Delta_v^{0}=1), \\
				\delta\left(\tfrac{n}{m_v^2} \not\in \cO_v\right) \quad (\Delta_v^{0} \in \cO_v^\times-(\cO_v^\times)^2), \\
				\delta\left(\tfrac{n}{m_v^2}\not\in \fp_v\right) \quad (\Delta_v^{0}\in \fp_v-\fp_v^2).
			\end{cases} 
		\end{align*}
		\item[(5)] Suppose $v\in S$. Then for $\Re(s_v)>(|\Re(z)|-1)/2$, we have 
		\begin{align*}
			\fE_v^{(z)}(\hat \gamma_v)=q_v^{-d_v/2}|m_v|_v\,\Scal_v^{\Delta_v^{0}, (z)}\left(s_v;\tfrac{n}{m_v^2}\right). 
		\end{align*}
	\end{itemize}
\end{thm}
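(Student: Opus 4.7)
The plan is to compute each local orbital integral $\fE_v^{(z)}(\hat\gamma_v)$ by exploiting the invariance properties of $\varphi_v^{\Delta,(z)}$---left-invariance under ${\frak T}_{\Delta,v}$ and right-invariance under $\bK_v$ (Lemma \ref{MFoneSphftn})---together with the explicit description of $\Phi_v$. After changing the integration variable $g \mapsto R_{\Delta,v}^{-1}g$ using the relation $R_{\Delta,v}^{-1}{\frak T}_{\Delta,v}R_{\Delta,v} = G_{\gamma,v}$, the integral becomes one over $G_{\gamma,v}\bsl G_v$, which is the natural parametrization of the conjugacy orbit $\{g^{-1}\gamma g\}$. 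The centralizer $G_{\gamma,v} = \iota_\Delta(E_v^\times)$ is commutative, so the orbit is in bijection with the set of $\cO_v$-lattices of $F_v^2$ carrying an action of $\cO_v[\gamma]$.

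For the non-archimedean cases (2), (3), (4), where $\Phi_v$ is the characteristic function of $Z_v\bK_v$ or $Z_v\bK_0(\fn\cO_v)$, the support condition $g^{-1}\hat\gamma_v g \in \mathrm{supp}(\Phi_v)$ translates into a lattice-preservation condition: the lattice $g^{-1}\cO_v^2$ is preserved (up to scalar) by $\hat\gamma_v$. The set of such lattices modulo $G_{\gamma,v}$ is countable, parametrized by the conductor of the $\cO_v$-order $\cO_v[\hat\gamma_v/c]$ inside $\cO_{E_v}$, which corresponds here to a non-negative integer controlled by $\ord_v(n/m_v^2)$. Enumerating representatives via the Iwasawa decomposition $G_v = B_v\bK_v$, evaluating $\varphi_v^{\Delta,(z)}$ on each by its explicit Macdonald-type formula (and using \eqref{KBruhat} for the Iwahori refinement in case (4)) yields a geometric series in $q_v^{\pm z/2}$ whose closed form is precisely $\Ocal_{v,0}^{\Delta_v^0,(z)}$ or $\Ocal_{v,1}^{\Delta_v^0,(z)}$; the boundary indicators $\delta(t/(2m_v)\not\in\cO_v^\times)$ and the like arise from the primitivity requirement that $\hat\gamma_v$ generate the maximal order.

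For $v \in S$ (case 5), the same strategy applies with the Green function $\Phi_v(s_v;-)$ of \eqref{nonarchGreenftn} replacing a characteristic function. The sum over lattices now becomes a double sum---one index parametrizing lattices invariant under $\hat\gamma_v$, the other giving the Cartan exponent coming from $\Phi_v$---which I will split according to the sign of $\ord_v(n/m_v^2)$ and resum as geometric series, producing the two cases of \eqref{Scaldeltazsa-f1} defining $\Scal_v^{\Delta_v^0,(z)}$; absolute convergence holds for $2\Re(s_v)+1 > |\Re(z)|$. The anomalous prefactor $2^{-(z+1)/2}\cdot 3(1+2^{-z})^{-1}$ in case (3) at dyadic $v$ with $\Delta_v^0 = 5$ traces back to the exceptional values $\vol(Z_v\bsl {\frak T}_{\Delta,v}^+) = \tfrac{2}{3}|m_v|_v^{-1}2^{-d_v/2}$ of Lemma \ref{VolZfTd} and the normalization $\varphi_{0,v}(1_2) = q_v^{-d_v/2}\cdot 3^{-1}\cdot 2(1+2^{-z})$ of Lemma \ref{sphericalftn1value}, which jointly rescale the generic formula.

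The archimedean case (1) is the most delicate. I will combine the explicit formula \eqref{DSMexplicitformula} for the matrix coefficient $\Phi_v^{l_v}$ with the hypergeometric formula \eqref{Hsphericalfnt} for $\varphi_v^{\Delta,(z)}$ (the $H$-spherical case when $\Delta_v^0 = 1$; its $\SO(2)$-biinvariant analog when $\Delta_v^0 = -1$, derived from the ODE in the proof of Lemma \ref{FhypL1} through Lemma \ref{MFoneSphftn}). A Cartan-type decomposition reduces the orbital integral to a one-variable integral of the shape $\int(\ldots)^{-l_v}\,{}_2F_1(\ldots)\,dx$, which is evaluated via a classical integral representation (Mehler--Dirichlet type, \cite[\S 4.3]{MOS}) of $\fP^{1-l_v}_{(z-1)/2}$; convergence requires $|\Re(z)| < 2l_v-1$. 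The main obstacle will be bookkeeping normalization constants consistently across all subcases---matching the volume factors of Lemma \ref{VolZfTd}, the sphericity normalizations of Lemma \ref{sphericalftn1value}, and the classical Legendre and hypergeometric identities---and separately handling the extra coset representatives of ${\frak T}_{\Delta,v}$ at dyadic places listed in Lemma \ref{fTDeltaL}(iii)--(iv), which contribute additional terms that must be averaged into the generic orbital sum.
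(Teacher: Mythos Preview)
Your outline is a plausible route, but it diverges from the paper's approach in a way that hides real work. The paper does \emph{not} evaluate $\varphi_v^{\Delta,(z)}$ on double-coset representatives via a Macdonald-type formula; for $\Delta_v^0\neq 1$ no such formula is supplied. Instead, the key device is the unfolding identity \eqref{simple of elliptic}: since $\varphi_v^{\Delta,(z)}=\varphi_{0,v}(1_2)^{-1}\varphi_{0,v}$ with $\varphi_{0,v}(g)=\int_{Z_v\bsl{\frak T}_{\Delta,v}}f_{0,v}(tg)\,dt$, one has
\[
\varphi_{0,v}(1_2)\,\fE_v^{(z)}(\hat\gamma_v)=\int_{Z_v\bsl G_v}\Phi_v(g^{-1}\hat\gamma_v g)\,f_{0,v}(g)\,dg,
\]
and the Iwasawa decomposition $g=\left[\begin{smallmatrix}1&x\\0&1\end{smallmatrix}\right]\left[\begin{smallmatrix}t&0\\0&1\end{smallmatrix}\right]k$ with $f_{0,v}(g)=|t|_v^{(z+1)/2}$ reduces everything to elementary $p$-adic volume computations (Lemmas~\ref{formula of X(t)}, \ref{S(n)-part value of B(t)}, \ref{EllOrbIntL3-L1}--\ref{EllOrbIntL3-L5}). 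This entirely avoids the Waldspurger-model spherical function on general cosets. Your lattice-enumeration approach would instead require those values explicitly---they exist in the literature, but deriving them case by case (including the dyadic subcases of Lemma~\ref{fTDeltaL}) is comparable in length to the paper's computation, and you have not indicated how you would obtain them.

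You also miss a structural simplification: for $\Delta_v^0=1$ the paper does not treat the elliptic integral ab initio but reduces it to the hyperbolic orbital integral $\fF_v^{(z)}$ of Theorem~\ref{explicit hyp} via the substitution $b=\tfrac{a+1}{a-1}$ with $a=\tfrac{t}{2m_v}$, giving $\fE_v^{(z)}(\hat\gamma_v)=|2m_v|_v\,\fF_v^{(z)}(b)$ directly. For the archimedean non-split case the paper again uses Iwasawa (not Cartan) via \eqref{simple of elliptic}, landing on Zagier's integral $I_l(-4,\pm 2a;\tfrac{z+1}{2})$ and a single Gradshteyn identity for $\fP^{1-l}_{(z-1)/2}$; the split archimedean case is the lengthy hypergeometric computation of Lemma~\ref{FHypL8}. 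Your Cartan-decomposition sketch could be made to work, but it is a genuinely different calculation and you should expect the same bookkeeping burden you anticipate, without the shortcut the paper's unfolding provides.
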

\begin{proof} The case $\Delta_v^{0}\not=1$ will be treated in \S~\ref{Localorbitalintegrals}. Suppose $\Delta_v^{0}=1$. Since $\tilde\gamma \in \Qcal_F^{\rm{Irr}}$, we have $n\not=0$, which in turn implies $t\not=2m_v$. Recall $\mathfrak{T}_{\Delta,v}=H_v$. Set $a =\frac{t}{2m_v}\in F_v-\{1\}$ and $b=\frac{a+1}{a-1}$. Then, by adjusting the measures, we have
	{\allowdisplaybreaks\begin{align*}
			\fE_v^{(z)}(\hat \gamma_v) = &|2m_v|_v \tint_{H_v \bsl G_v}\Phi_v\left(g^{-1} \left[\begin{smallmatrix} b & 0 \\ 0 & 1 \end{smallmatrix} \right] g\right)\,\varphi_v^{(0,z)}(g)\,\d g \\
			=& |2m_v|_v\tint_{F_v}\left(\int_{\bfK_v}\Phi_v\left(k^{-1} \left[\begin{smallmatrix} b & (b-1)x \\ 0 & 1 \end{smallmatrix} \right] k\right)\,\d k\, \right)\varphi_v^{(0, z)}(\left[\begin{smallmatrix} 1 & x \\ 0 & 1 \end{smallmatrix} \right] )\,\d x
			=|2m_v|_v\,\fF_v^{(z)}(b),
		\end{align*}
	}where $\fF_v^{(z)}(b)$ is the integral treated in \S~\ref{HypSecondStep}. The case $\Delta_v^{0}=+1$ of (1) follows from Theorem~\ref{explicit hyp} (1). The first case of (2) follows from Theorem~\ref{explicit hyp} (2) if one notes the relation $\delta(\tfrac{a+1}{a-1} \not \in \cO_v^\times)=\delta(a\in \cO_v^\times,a^2-1\in4\fp_v)$ by the identity $|\tfrac{a+1}{a-1}-1|_v^{2}=|\tfrac{4}{a^2-1}|_v$ for $a$ with $|a-1|_v=|a+1|_v$, which is seen to be valid as $|\tfrac{a+1}{a-1}-1|^{2}=\tfrac{|4|_v}{|a-1|_v^2}=\tfrac{|4|_v}{|a-1|_v|a+1|_v}=\tfrac{|4|_v}{|a^2-1|_v}$. The first case of (4) follows from Theorem~\ref{explicit hyp} (3) similarly. The case $\Delta_v^{0}=1$ in (5) follows from Theorem~\ref{explicit hyp} (4) by $|\tfrac{a+1}{a-1}-1|_v=\frac{|2|_v}{|a-1|_v}=|a|_v^{-1}=|a^2-1|^{-1/2}$ if $|a|_v>1$ and by $|\frac{a+1}{a-1}-1|_v^{-2}|\tfrac{a+1}{a-1}|_v=|a^2-1|_v$ if $|a|_v\leq 1$. 
\end{proof}

\begin{lem} \label{OrbIntUnifExL1}
	Let $v\in \Sigma_\fin$. 
	Then 
	\begin{align*}
		&|\tfrac{t}{2m_v}|_v=|\Delta_{v}^{0}\tfrac{t^2}{t^2-4n}|_v^{1/2}, \quad 
		|\tfrac{n}{m_v^2}|_v=|\Delta_v^{0}\tfrac{4n}{t^2-4n}|_v=|(\tfrac{t}{2m_v})^2-\Delta_v^{0}|_v.
	\end{align*}
	If $|\tfrac{t}{2m_v}|_v<1$, then $|\tfrac{n}{m_v^2}|_v=|\Delta_v^{0}|_v$. If $|\tfrac{t}{2m_v}|_v=1$, then 
	$$|\tfrac{n}{m_v^2}|_v=\begin{cases}|t|_v^{-2}|4n|_v \quad (\Delta_v^{0}=1,\, |t|_v^{2}>|4n|_v), \\  
	1 \quad (\Delta_v^{0}\not=1\,\text{or}\, \Delta_v^{0}=1,\,|t|_v^2\leq |4n|_v). 
	\end{cases}
	$$
	except when $\Delta_v^0\in \cO_v^\times-(\cO_v^\times)^2$ and $v$ is dyadic. If $|\tfrac{t}{2m_v}|_v>1$, then $|\tfrac{n}{m_v^2}|_v=|\frac{4n\Delta_{v}^{0}}{t^2-4n}|_v$. 
\end{lem}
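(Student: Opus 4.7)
The plan is to reduce the whole statement to the single defining relation $t^2-4n=4\Delta_v^0 m_v^2$ coming from the factorization $\Delta/4=\Delta_v^0 m_v^2$ in $F_v^\times$, together with the ultrametric strong triangle inequality and, at one point, Hensel's lemma (Lemma~\ref{Hensel}). The two displayed identities are purely formal: from the defining relation one reads off
\begin{align*}
\bigl(\tfrac{t}{2m_v}\bigr)^{2}=\tfrac{t^{2}\Delta_v^0}{t^{2}-4n},\qquad \tfrac{n}{m_v^{2}}=\tfrac{4n\Delta_v^0}{t^{2}-4n}=\bigl(\tfrac{t}{2m_v}\bigr)^{2}-\Delta_v^0,
\end{align*}
and taking $|\cdot|_v$ gives them immediately.

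The case analysis then rests on the last form $n/m_v^{2}=(t/(2m_v))^{2}-\Delta_v^0$ and on the fact that $\Delta_v^0\in\{1\}\cup(\gp_v-\gp_v^{2})\cup(\go_v^\times-(\go_v^\times)^{2})$, so that $|\Delta_v^0|_v\in\{1,q_v^{-1}\}$, while $|t/(2m_v)|_v$ lies in the value group $q_v^{\ZZ}$. When $|t/(2m_v)|_v<1$ one has $|(t/(2m_v))^{2}|_v\le q_v^{-2}$, strictly less than $|\Delta_v^0|_v$ in every admissible shape, and the strong triangle inequality gives $|n/m_v^{2}|_v=|\Delta_v^0|_v$. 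The case $|t/(2m_v)|_v>1$ is just a restatement of the already-proved identity for $|n/m_v^{2}|_v$.

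The only case with substance is $|t/(2m_v)|_v=1$. If $\Delta_v^0\in\gp_v-\gp_v^{2}$ then $|(t/(2m_v))^{2}|_v=1>|\Delta_v^0|_v$, again giving $|n/m_v^{2}|_v=1$. If $\Delta_v^0\in\go_v^\times-(\go_v^\times)^{2}$ and $v$ is non-dyadic, Hensel's lemma (Lemma~\ref{Hensel}) yields $(t/(2m_v))^{2}-\Delta_v^0\in\go_v^\times$ directly, whence $|n/m_v^{2}|_v=1$; in the dyadic setting Hensel fails for the squaring map, which forces the explicit exception in the statement. Finally, if $\Delta_v^0=1$, the defining relation combined with $|t/(2m_v)|_v=1$ gives $|t^{2}-4n|_v=|4m_v^{2}|_v=|t|_v^{2}$, hence $|4n|_v\le|t|_v^{2}$; when the inequality is strict, $n/m_v^{2}=4n/(t^{2}-4n)$ yields $|n/m_v^{2}|_v=|t|_v^{-2}|4n|_v$, and when $|4n|_v=|t|_v^{2}=|4m_v^{2}|_v$ we get $|n|_v=|m_v|_v^{2}$ and so $|n/m_v^{2}|_v=1$. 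The main obstacle is really just the bookkeeping induced by the three admissible orders of $\Delta_v^0$; the only substantive input beyond the strong triangle inequality is the Hensel step, which is already at hand.
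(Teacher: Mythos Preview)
Your proof is correct and follows essentially the same approach as the paper, which simply records that ``this follows from the relation $t^2-4n=\Delta_v^0(2m_v)^{2}$.'' Your version is more explicit---in particular you spell out the appeal to Lemma~\ref{Hensel} in the case $\Delta_v^0\in\go_v^\times-(\go_v^\times)^2$ with $v$ non-dyadic and the ultrametric bookkeeping in the $\Delta_v^0=1$ case---but the content is the same.
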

\begin{proof}
	This follows from the relation $t^2-4n=\Delta_v^0(2m_v)^{2}$. 
\end{proof}

For $v\in \Sigma_\fin$ and $m\in \N$, we set $U_v(m)=1+\fp_v^{m}$. 

\begin{lem} \label{OrbIntUnifExL2}
	Let $v\in \Sigma_\fin$.  
	\begin{itemize}
		\item[(1)] Suppose $v\not\in \Sigma_{\rm dyadic}$, $\Delta_v^{0}\in \cO_v^{\times}-(\cO_v^\times)^2$. Then 
		\begin{align*}
			|\tfrac{t}{2m_v}|_v<1 \, &\Longleftrightarrow \, |t|_v^2<|4n|_v, \\
			|\tfrac{t}{2m_v}|_v=1 \, &\Longleftrightarrow \, t\not=0,\, \tfrac{4n}{t^2} \in \cO_v^\times-U_v(1), \\
			|\tfrac{t}{2m_v}|_v>1 \, &\Longleftrightarrow \, t\not=0,\,\tfrac{4n}{t^2}\in U_v(1).
		\end{align*}
		\item[(2)] Suppose $\Delta_v^{0}\in \fp_v-\fp_v^2$. Then $t\not=0,\,\tfrac{4n}{t^2}\in \cO_v^\times-U_v(1)$ never happens. We have 
		\begin{align*}
			|\tfrac{t}{2m_v}|_v<1 \, &\Longleftrightarrow \, |t|_v^2<|4n|_v, \\
			|\tfrac{t}{2m_v}|_v=1 \, &\Longleftrightarrow \, t\not=0,\, \tfrac{4n}{t^2} \in U_v(1)-U_v(2), \\
			|\tfrac{t}{2m_v}|_v>1 \, &\Longleftrightarrow \, t\not=0,\,\tfrac{4n}{t^2}\in U_v(2).
		\end{align*}
		\item[(3)] Suppose $\Delta_v^{0}=1$. Then 
		\begin{align*}
			|\tfrac{t}{2m_v}|_v<1 \, &\Longleftrightarrow \, |t|_v^2<|4n|_v, \\
			|\tfrac{t}{2m_v}|_v=1 \, &\Longleftrightarrow \, |t|_v^2>|4n|_v\quad\text{or}\quad t\not=0,\, \tfrac{4n}{t^2} \in \cO_v^\times-U_v(1), \\
			|\tfrac{t}{2m_v}|_v>1 \, &\Longleftrightarrow \, t\not=0,\,\tfrac{4n}{t^2}\in U_v(1).
		\end{align*}
		\item[(4)]  Suppose $F_v=\Q_2$ and $\Delta_v^{0}=5$. Then 
		\begin{align*}
			|\tfrac{t}{2m_v}|_v<1 \, &\Longleftrightarrow \, |t|_v^2<|4n|_v, \\
			|\tfrac{t}{2m_v}|_v=1 \, &\Longleftrightarrow \, t\not=0,\, \tfrac{n}{t^2} \in \cO_v^\times, \\
			|\tfrac{t}{2m_v}|_v>1 \, &\Longleftrightarrow \, t\not=0,\,\tfrac{4n}{t^2}\in \cO_v^\times.
		\end{align*}
		Suppose $F_v=\Q_2$ and $\Delta_v^0\in \{-1,-5\}$. Then the above equivalences hold if we replace the $n$ in the second equivalence with $2n$.
	\end{itemize}
\end{lem}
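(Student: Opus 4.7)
The plan is to reduce everything to the basic identity
\begin{align*}
\left|\tfrac{t}{2m_v}\right|_v^{2} \;=\; |\Delta_v^{0}|_v \cdot \frac{|t|_v^{2}}{|t^{2}-4n|_v},
\end{align*}
which follows directly from $(2m_v)^{2}\Delta_v^{0}=t^{2}-4n$, and then carry out a case analysis according to the three possibilities $|t|_v^{2} \gtreqqless |4n|_v$. In the subcase $|t|_v^{2}<|4n|_v$ we have $|t^{2}-4n|_v=|4n|_v$, so the identity immediately yields $|t/(2m_v)|_v<1$ in every case. In the subcases $|t|_v^{2}\geq |4n|_v$ with $t\neq 0$, one factors $t^{2}-4n=t^{2}(1-4n/t^{2})$ and uses the fact that $(t/2)^{2}$ is a square in $F_v^{\times}$, so that the $(F_v^{\times})^{2}$-class of $(t^{2}-4n)/4$ coincides with that of $1-4n/t^{2}$; this class is $\Delta_v^{0}$ by definition, which imposes the crucial compatibility constraint on the allowed pairs $(t,n)$.

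For case (1) I would invoke Lemma~\ref{Hensel} in the form $U_v(1)\subset(\cO_v^{\times})^{2}$: if $4n/t^{2}\in \fp_v$ (e.g.\ when $|t|_v^{2}>|4n|_v$) then $1-4n/t^{2}$ is a unit square, forcing $\Delta_v^{0}=1$ up to valuation contributions, which rules out $|t|_v^{2}>|4n|_v$ entirely; a direct $q_v$-adic computation for $4n/t^{2}\in U_v(k)$ with $k$ forced even then yields $|t/(2m_v)|_v=q_v^{k/2}>1$, and the remaining subcase $4n/t^{2}\in \cO_v^{\times}-U_v(1)$ gives $|t/(2m_v)|_v=1$. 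For case (2) I would use the parity argument: $\ord_v(t^{2}-4n)=\ord_v(4\Delta_v^{0}m_v^{2})$ must be odd modulo $\ord_v(4)$, which eliminates the subcase $|t|_v^{2}>|4n|_v$ (even valuation) and the subcase $4n/t^{2}\in \cO_v^{\times}-U_v(1)$; only $|t|_v^{2}<|4n|_v$ and $4n/t^{2}\in U_v(2)$ survive, giving the claimed dichotomy. Case (3) requires no square-class constraint and falls out of the valuation identity directly.

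For case (4), which is the technical heart of the lemma, I would invoke the standard description of $\QQ_{2}^{\times}/(\QQ_{2}^{\times})^{2}\cong \{1,-1,5,-5\}\times\{1,2\}$, with unit square classes detected modulo $8$: $u\in \cO_v^{\times}$ lies in the class of $5$ iff $u\equiv 5\pmod 8$, in the class of $-1$ iff $u\equiv 7\pmod 8$, and in the class of $-5$ iff $u\equiv 3\pmod 8$. Enumerating $4n/t^{2}$ modulo $8$ in each of the subcases $\beta-2\alpha\in\{-2,-1,0,1,\ldots\}$ (where $\alpha=\ord_v(t),\beta=\ord_v(n)$), one checks that for $\Delta_v^{0}=5$ only the values $\beta=2\alpha$ (giving $|t/(2m_v)|_v=1$, equivalently $n/t^{2}\in \cO_v^{\times}$) and $\beta=2\alpha-2$ (giving $|t/(2m_v)|_v>1$, equivalently $4n/t^{2}\in \cO_v^{\times}$) are compatible, whereas for $\Delta_v^{0}\in\{-1,-5\}$ the analogous admissible subcases are $\beta=2\alpha-1$ (with $2n/t^{2}\in \cO_v^{\times}$) and $\beta=2\alpha-2$.

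The main obstacle will be the dyadic case (4): although conceptually straightforward, one must systematically verify which congruence class modulo $8$ arises in $1-4n/t^{2}$ for each value of $\beta-2\alpha$, and match them against the three distinct square classes $\{5,-1,-5\}$. The computation is eased by observing that for $\beta=2\alpha-2$ one always has $4n/t^{2}\equiv 4u_n/u_t^{2}\equiv 4\pmod 8$ (forcing $1-4n/t^{2}$ into the class of $5$) only when a further compatibility on $u_n\pmod 8$ holds, so the \emph{existence} of the relevant $(t,n)$ with prescribed $\Delta_v^{0}$ needs to be tracked in parallel with the \emph{value} of $|t/(2m_v)|_v$. Once the bookkeeping is organized into a small table indexed by $\Delta_v^{0}$ and by $\beta-2\alpha$, the four equivalences of the lemma all read off directly.
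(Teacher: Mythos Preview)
Your approach is essentially the same as the paper's: both reduce to the valuation identity coming from $(2m_v)^{2}\Delta_v^{0}=t^{2}-4n$ and then do a case analysis using Hensel's lemma (the paper applies it in the form $|a^{2}-\Delta_v^{0}|_v=1$ for $a\in\cO_v$, you in the equivalent form $U_v(1)\subset(\cO_v^{\times})^{2}$); the paper organizes case~(1) by first fixing $|t/(2m_v)|_v$ and deducing conditions on $t,n$, whereas you first fix the trichotomy $|t|_v^{2}\gtreqqless|4n|_v$ and then compute or rule out, but this is only a cosmetic difference. One small slip: in your sketch of case~(2) you write that only $|t|_v^{2}<|4n|_v$ and $4n/t^{2}\in U_v(2)$ ``survive, giving the claimed dichotomy'', but this drops the middle case $4n/t^{2}\in U_v(1)-U_v(2)$ (odd valuation $k=1$ of $1-4n/t^{2}$), which is precisely the case giving $|t/(2m_v)|_v=1$; with that case restored your argument goes through.
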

\begin{proof}
	Since $\Delta\not\in (F^\times)^2$, we have $n\not=0$; thus $|t|_v^2\geq |4n|_v$ implies $tn\not=0$. We prove the case (1). From $t^2-4n=\Delta_v^{0}(2m_v)^2$, we have $|(\tfrac{t}{2m_v})^2-\Delta_v^0|_v=|\tfrac{n}{m_v^2}|_v$ and $|2m_v|_v^2=|t^2-4n|_v$. Assume $|\frac{t}{2m_v}|_v\leq 1$. Then $|\frac{n}{m_v^2}|_v=|(\frac{t}{2m_v})^2-\Delta_v^0|_v=1$ by Lemma~\ref{Hensel}. Hence $|4n|_v=|2m_v|_v^2=|t^2-4n|_v$, which yields $|t|_v^2\leq |4n|_v$. If $|t|_v^2=|4n|_v$, then $|t|_v^2\leq|2m_v|_v^2=|t^2-4n|_v$ gives us $|1-\frac{4n}{t^2}|_v=1$, or equivalently $\tfrac{4n}{t^2} \in \cO_v^\times-U_v(1)$, which in turns implies $|\tfrac{t}{2m_v}|_v=1$ as $|4m_v^2|_v=|t|_v^2|1-\tfrac{4n}{t^2}|_v=|t|_v^2$. The condition $|t|_v^2<|4n|_v$ implies $|t|_v^2<|t^2-4n|_v=|4m_v^2|_v$. This completes the first two cases of (1).  
	
	Assume $|\frac{t}{2m_v}|_v>1$. Then $|t|_v^2>|4m_v^2|_v=|t^2-4n|_v$, which yields $|t|_v^2\leq |4n|_v$. From this remark, we obtain the third case of (1) from the first two cases. 
	
	The remaining assertions are proved similarly.
\end{proof}

The following proposition combined with Theorem~\ref{OrbIntUnifEx} gives us a convenient description of the local orbital integrals $\fE_v^{(z)}(\hat \gamma_v)$ in terms of $(t:n)_F$. 

\begin{prop} \label{OrbIntUnifExL4}
	Let $(t:n)_F\in \Qcal^{\rm Irr}_F$ and $v\in \Sigma_\fin$. Write $t^2-4n=(2m_v)^2\Delta_v^{0}$ with $m_v\in F_v$, $\Delta_v^0\in (\fp_v-\fp_v^2)\cup (\cO_v^\times-(\cO_v^\times)^2)\cup \{1\}$. Then 
	\begin{itemize}
		\item[(1)] Suppose $\ord_v(t^2-4n) \in 2\Z$ and $t^2-4n\not\in (F_v^\times)^2$. If $v\not\in \Sigma_{\rm dyadic}$, then $|t|_v^2\leq |4n|_v$ and
		\begin{align*}
			|\tfrac{n}{m_v^2}|_v=
			\begin{cases}
				|\tfrac{4n}{t^2-4n}|_v \quad &(t\not=0,\tfrac{4n}{t^2} \in U_v(1)), \\
				1 \quad &(\text{otherwise}).
			\end{cases} 
		\end{align*}
		If $F_v\cong \Q_2$ and $\Delta_v^0=5$, then $|t|_v^2\leq |n|_v$ and 
		\begin{align*}
			|\tfrac{n}{m_v^2}|_v=\begin{cases}
				1 \quad &(|t|_v^2<|4n|_v \quad \text{or} \quad t\not=0,\tfrac{n}{t^2}\in \cO_v^\times), \\
				|\tfrac{4n}{t^2-4n}|_v \quad &(t\not=0,\,\tfrac{4n}{t^2} \in \cO_v^\times).
			\end{cases}
		\end{align*}
		If $F_v\cong \Q_2$ and $\Delta_v^{0}\in \{-1,-5\}$, then $|t|^2_v< |n|_v$ and\begin{align*}
			|\tfrac{n}{m_v^2}|_v=\begin{cases}
				1 \quad &(|t|_v^2<|4n|_v),\\
				2^{-1} \quad &(t\not=0,\,\tfrac{2n}{t^2}\in \cO_v^\times), \\
				|\tfrac{4n}{t^2-4n}|_v \quad &(t\not=0,\,\tfrac{4n}{t^2} \in \cO_v^\times).
			\end{cases}
		\end{align*}
		\item[(2)] If $\ord_v(t^2-4n)\in 1+2\Z$. Then $|t|_v^2\leq |4n|_v$, and 
		\begin{align*}
			|\tfrac{n}{m_v^2}|_v=\begin{cases} q_v^{-1} \quad &(|t|_v^2<|4n|_v)
				, \\
				1 \quad &(t\not=0,\tfrac{4n}{t^2}\in U_v(1)-U_v(2)), \\
				q_v^{-1}|\tfrac{4n}{t^2-4n}|_v \quad &(t\not=0,\tfrac{4n}{t^2} \in U_v(2)).
			\end{cases}
		\end{align*}
		\item[(3)] If $t^2-4n\in (F_v^\times)^2$. Then 
		\begin{align*}
			|\tfrac{n}{m_v^2}|_v=\begin{cases} 1 \quad &(|t|_v^2<|4n|_v \quad \text{or}\quad t\not=0,\tfrac{4n}{t^2}\in \cO_v^\times-U_v(1)), \\
				|t|_v^{-2}|4n|_v \quad &(|t|_v^2>|4n|_v), \\
				|\tfrac{4n}{t^2-4n}|_v \quad &(t\not=0, \tfrac{4n}{t^2}\in U_v(1)). 
			\end{cases}
		\end{align*}
	\end{itemize}
\end{prop}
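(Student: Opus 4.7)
The starting point is the basic identity, already recorded in the first line of Lemma~\ref{OrbIntUnifExL1}:
\begin{equation*}
\left|\tfrac{n}{m_v^2}\right|_v = |\Delta_v^{0}|_v\cdot \left|\tfrac{4n}{t^2-4n}\right|_v.
\end{equation*}
This reduces the proof to evaluating the two factors on the right-hand side in each of the configurations enumerated in the proposition. The first factor $|\Delta_v^0|_v$ is read off directly from the hypothesis: in cases (1) and (3) one has $\Delta_v^0\in \cO_v^\times\cup\{1\}$ and $|\Delta_v^0|_v=1$, while in case (2) one has $\Delta_v^0\in\fp_v-\fp_v^2$ and $|\Delta_v^0|_v=q_v^{-1}$. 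When $v$ is dyadic, the assumption that $2$ splits completely in $F/\QQ$ gives $F_v\cong\QQ_2$, and the finer partition of $\Delta_v^0$ among the classes $\{\pm 1,\pm 5\}$ (unit) or $\{\pm 2,\pm 10\}$ (prime) in $\QQ_2^\times/(\QQ_2^\times)^2$ is what refines case (1) into its three sub-cases.

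The second factor $|4n/(t^2-4n)|_v$ is handled by splitting on the relative size of $|t|_v^2$ and $|4n|_v$. In the two asymmetric scenarios, the ultrametric inequality gives $|t^2-4n|_v=\max(|t|_v^2,|4n|_v)$ and the factor simplifies immediately to $1$ or to $|4n|_v/|t|_v^2$. In the symmetric scenario $|t|_v^2=|4n|_v$ one rewrites $t^2-4n=t^2(1-\tfrac{4n}{t^2})$, so that the computation reduces to locating the unit $\tfrac{4n}{t^2}$ in the filtration $U_v(m)=1+\fp_v^m$, since $|1-\tfrac{4n}{t^2}|_v$ measures the depth of cancellation. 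The assertion that certain sub-cases fail to occur---for instance $|t|_v^2>|4n|_v$ is excluded in cases (1) and (2), and the fine positions of $\tfrac{4n}{t^2}$ within $U_v(m)$ are constrained by the class of $\Delta_v^0$---is precisely what Lemma~\ref{OrbIntUnifExL2} is set up to supply, via the equivalence between the $U_v$-position of $\tfrac{4n}{t^2}$ and the $\cO_v^\times$-position of $\tfrac{t}{2m_v}$.

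The remaining work is case-by-case bookkeeping. For the non-dyadic branches of cases (1), (2), (3) it is completely routine: Lemma~\ref{OrbIntUnifExL2} identifies the admissible scenarios, and the master identity above delivers the claimed values on each. The genuinely computational obstacle is the dyadic branch of case (1), where I have to verify that the condition $(1-\tfrac{4n}{t^2})/\Delta_v^0\in(\QQ_2^\times)^2$ imposes congruences modulo $8$ that pin down the admissible square classes. Using the standard criterion that a unit in $\ZZ_2$ is a square if and only if it is $\equiv 1\pmod 8$, I work through the classes $\Delta_v^0\in\{5,-1,-5\}$ and check that the sub-case structure stated in the proposition is exactly what the analysis modulo $8$ produces---in particular, isolating the middle sub-case $\tfrac{2n}{t^2}\in\cO_v^\times$ for $\Delta_v^0\in\{-1,-5\}$, which corresponds to $\ord_v(\tfrac{4n}{t^2})=1$ with a specific residue modulo $4$, and confirming that the corresponding configuration with $\Delta_v^0=5$ is ruled out.
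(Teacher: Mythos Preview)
Your proposal is correct and follows essentially the same route as the paper, which records only that the proposition follows immediately from Lemmas~\ref{OrbIntUnifExL1} and~\ref{OrbIntUnifExL2}. You have simply unpacked the bookkeeping that the paper leaves implicit: the master identity $|n/m_v^2|_v=|\Delta_v^0|_v\,|4n/(t^2-4n)|_v$ is the first line of Lemma~\ref{OrbIntUnifExL1}, and the trichotomy on $|t|_v^2$ versus $|4n|_v$ together with the $U_v(m)$-refinements (including the dyadic mod-$8$ analysis) is exactly the content of Lemma~\ref{OrbIntUnifExL2}.
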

\begin{proof}
	This follows from Lemma~\ref{OrbIntUnifExL1}, \ref{OrbIntUnifExL2} immediately. 
\end{proof}

\subsection{The absolute convergence of the $F$-elliptic term} \label{ABCelliptic}
In this subsection, we prove the following. 

\begin{thm} \label{ABCTheorem}
	For any sufficiently small $\e>0$, we have
	\begin{align}
		\sum_{\gamma=(t:n)_F\in \Qcal_{F}^{\rm{Irr}}}|(z^2-1)\Lambda_F(z+1)E^{\Delta}(z;1_2)|\prod_{v\in \Sigma_F}|\fE_v^{(z)}(\hat\gamma_v)|
		<+\infty
		\label{ABCf0}
	\end{align}
	uniformly in $(z,\bfs)\in \C\times \fX_S$ such that $\Re(z) \in [-2\underline{l}+3+\e, \underline{l}-1/2-\e]$, $\min_{v\in S}\Re(s_v)>2{\underline l}+d_F-1$.
\end{thm}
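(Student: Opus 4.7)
The plan is to combine Lemma~\ref{EisPeriodEst} with the explicit local orbital integral formulas of Theorem~\ref{OrbIntUnifEx}, reducing the sum in \eqref{ABCf0} to a double sum over quadratic extensions $E/F$ and lattice points in $E_\infty=\prod_{v\in \Sigma_\infty}E_v$, and then estimating each piece via bounds on the $\Ocal$- and $\Scal$-functions together with the integral representation of the Legendre function. The key algebraic step is a cancellation of volume factors: from $\Delta/4=\Delta_v^0 m_v^2$ in $F_v$ and $\prod_v|\Delta|_v=1$ one obtains $\prod_{v\in \Sigma_F}|m_v|_v=\nr(\gD_\Delta)^{1/2}$, matching exactly the $\prod_v|m_v|_v^{-1}$ in Lemma~\ref{EisPeriodEst} against the $\prod_v|m_v|_v$ appearing in Theorem~\ref{OrbIntUnifEx}. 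Hence the summand in \eqref{ABCf0} is majorized by $\nr(\gD_\Delta)^{(1+\e+\rho(z))/4}$ times a product of $|\Ocal_{v,0}|$, $|\Ocal_{v,1}|$, $|\Scal_v|$, and $|\Ocal_v^{\pm}|$-factors over the appropriate places, subject to the support indicators of Theorem~\ref{OrbIntUnifEx}.

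Next, by \eqref{Ovanish} together with these support conditions, the effective sum is restricted to classes $(t:n)_F$ admitting a representative with $n\ff_\Delta^{-2}$ integral at all $v\in \Sigma_\fin-S$. For each square class $\Delta\in F^\times/(F^\times)^2$ (equivalently each $E=F(\sqrt{\Delta})$), such representatives are parametrized by a countable $\cO$-lattice $L_\Delta\subset E_\infty$ modulo the action of $F^\times\cdot\Gal(E/F)$. For the local estimates: at $v\in S$, Theorem~\ref{OrbIntUnifEx}(5) combined with $\Re(s_v)>2\underline{l}+d_F-1$ gives
$$|\Scal_v^{\Delta,(z)}(s_v;a)|\ll q_v^{-(\Re(s_v)+1)/2}\min(|a|_v,|a|_v^{-1})^{(\Re(s_v)+1)/2},$$
providing Dirichlet-series summability in $\ord_v$; at $v\in S(\fn)$, $|\Ocal_{v,1}|$ is uniformly bounded by \eqref{Ovanish}; at $v\in \Sigma_\fin-(S\cup S(\fn))$, $|\Ocal_{v,0}^{\Delta,(z)}(a)|\ll \max(|a|_v,|a|_v^{-1})^{(|\Re(z)|+1)/4}$ contributes a finite Euler product; at each $v\in \Sigma_\infty$, an integral representation of $\fP^{1-l_v}_{(z-1)/2}$ analogous to the one used in the proof of Corollary~\ref{FHypL11} yields
$$|\Ocal_v^{\pm,(z)}(t/(2m_v))|\ll (1+|t/(2m_v)|_v)^{(|\Re(z)|+1)/2}.$$
Assembling these, the inner sum over $L_\Delta$ is an absolutely convergent Epstein-type lattice series, while the outer sum over $\Delta$ converges because the number of quadratic extensions with $\nr(\gD_\Delta)\leq X$ grows polynomially while the inner sum decays sufficiently fast in $\nr(\gD_\Delta)$.

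\textbf{Main obstacle.} The technical crux is the archimedean estimate. In the hyperbolic case (Corollary~\ref{FHypL11}) the Legendre function is evaluated at $\lambda_v(a)^{-1/2}\to 1$ at infinity and produces genuine decay $|a|^{-l_v/2}$; here, by contrast, $\fP^{1-l_v}_{(z-1)/2}$ is evaluated directly at $|t/(2m_v)|_v$, where it grows polynomially. Uniform absolute convergence over the strip $\Re(z)\in[-2\underline{l}+3+\e,\underline{l}-1/2-\e]$ and $\min_{v\in S}\Re(s_v)>2\underline{l}+d_F-1$ therefore requires a careful balance between this archimedean growth, the Eisenstein-period growth $\nr(\gD_\Delta)^{(1+\e+\rho(z))/4}$, the non-archimedean $\Scal_v$-decay at $v\in S$, and the lattice-density constraints on $L_\Delta$ in $E_\infty$; securing uniform implied constants in this balance is the delicate part of the argument.
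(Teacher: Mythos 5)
There is a genuine gap, and it sits exactly where you locate your ``main obstacle'': the quantitative balance you defer is the entire content of the theorem, and the local bounds you propose to feed into it are not correct. At an archimedean place with $(t^2-4n)^{(v)}>0$ the argument of $\Ocal_v^{+,(z)}$ is $a=t^{(v)}/\sqrt{(t^{(v)})^2-4n^{(v)}}$, which tends to $1$ as $|t|_v\to\infty$; your bound $|\Ocal_v^{\pm,(z)}(a)|\ll(1+|a|_v)^{(|\Re(z)|+1)/2}$ is then essentially constant in $t$, so the ``Epstein-type lattice series'' over $t$ that you claim converges would in fact diverge. The decay in $t$ comes precisely from the factor $\min(1,|a-1|_v)^{l_v/2}\asymp(|n|_v/|t|_v^{2})^{l_v/2}$ in the correct estimate $|\Ocal_v^{+,(z)}(a)|\ll\delta(|a|_v>1)\min(1,|a-1|_v)^{l_v/2}|a|_v^{(|\Re(z)|+1)/2}$ (Lemma~\ref{ABC-L1234}(4)), and likewise $\Ocal_v^{-,(z)}$ \emph{decays} like $(1+|a|_v)^{(1-2l_v)/4}$ (Lemma~\ref{ABC-L1234}(5)), rather than growing; your estimate discards exactly the terms that make convergence possible. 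Similarly, your bound for $\Scal_v^{\Delta,(z)}(s_v;a)$ is wrong on the side $|a|_v>1$: by \eqref{Scaldeltazsa-f1} the function there grows like $|a|_v^{(|\Re(z)|+1)/4}$ (cf.\ Lemma~\ref{ABC-L1234}(3)), it does not decay like $|a|_v^{-(\Re(s_v)+1)/2}$.

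Beyond the local estimates, two structural ingredients are missing. First, after cancelling $\prod_v|m_v|_v$ you are left with the period growth $\nr({\frak D}_\Delta)^{(1+\e+\ro(z))/4}$, which on the allowed strip can be as large as $\nr({\frak D}_\Delta)^{\sim\underline{l}/2}$; the counting of quadratic extensions cannot absorb this. What saves the paper is a second cancellation via the product formula: the non-archimedean factors contribute $\prod_{v\in\Sigma_\fin}|\fn_v(t,n)|_v^{(|\Re(z)|+1)/4}$, and together with $\nr({\frak D}_\Delta)^{(|\Re(z)|+1)/4}$ this converts into archimedean quantities $\prod_{v\in\Sigma_\infty}|n|_v^{-(|\Re(z)|+1)/4}|4^{-1}(t^2-4n)|_v^{(|\Re(z)|+1)/4}$, so that only $\nr({\frak D}_\Delta)^{(\ro(z)-|\Re(z)|)/4+\e}\le\nr({\frak D}_\Delta)^{1/4+\e}$ survives to be absorbed by the archimedean decay; your outline never performs this step. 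Second, summing ``over a lattice $L_\Delta\subset E_\infty$ modulo $F^\times\cdot\Gal(E/F)$'' is not yet a lattice sum: one needs an explicit choice of representatives — in the paper, Lemma~\ref{ABC-L7} reduces $n$ to finitely many shapes $\varepsilon n_{i,\nu}$ (using finiteness of $\cO^\times/(\cO^\times)^2$ and the class group) with $t$ running over a lattice in $F_\infty$, then Lemmas~\ref{iotaThyoukaL} and \ref{SugTsudLA} control the lattice count, Lemma~\ref{levelcondition} handles the support conditions at $S(\fn)$, and the remaining infinite sum over the $S$-valuations $\nu\in\NN_0^S$ is exactly where the hypothesis $\min_{v\in S}\Re(s_v)>2\underline{l}+d_F-1$ is used. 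None of this machinery, nor a substitute for it, appears in your proposal, so the convergence claim is not established.
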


To prove this, we need estimates of local orbital integrals, which are given by the following lemmas. 

\begin{lem}\label{ABC-L1234}
	\begin{itemize}
		\item[(1)] We have $\Ocal_{v,0}^{\delta,(z)}(a)=1$ if $a\in \cO_v^\times$, and 
		\begin{align*}
			|\Ocal_{v,0}^{\delta,(z)}(a)|\leq 3|\ord_v(a)|\,|a|_v^{\frac{|\Re(z)|+1}{4}}, \quad a\in F_v-\cO_v.
		\end{align*}
		\item[(2)] 
		We have $|\Ocal_{v,1}^{\delta,(z)}(a)|\le 2(1+q_v)^{-1}$ if $a\in \cO_v^\times$, and
		\begin{align*}
			|\Ocal_{v,1}^{\delta,(z)}(a)|\leq \tfrac{4(1+q_v^{1/2})}{1+q_v}(1+|\ord_v(a)|)|a|_v^{\frac{|\Re(z)|+1}{4}}, \quad a\in F_v-\cO_v.
		\end{align*}
		\item[(3)]  Suppose $\Re(s)>1$ and $\Re(s)>(|\Re(z)|+1)/2$. Then for any $\e>0$, we have
		\begin{align*}
			|\Scal_v^{\delta,(z)}(s;a)|\leq 16\{1+\max(0,-\ord_v(a))\}\max(1,|a|_v^{-1})^{\frac{-\Re(s)+|\Re(z)|}{4}+\e}|a|_v^{\frac{|\Re(z)|+1}{4}+\e}\end{align*}
		for any $a\in F_v^\times$ with $\ord_v(a)\in \NN_0\cup(-2\NN)$. 
		\item[(4)] 
		Let $v\in \Sigma_\infty$. For any $0<\s<2l_v-1$,  
		\begin{align*}
			|\Ocal_v^{+,(z)}(a)|\ll \delta(|a|_v>1)\,\min(1,|a-1|_v)^{l_v/2}|a|_v^{\frac{|\Re(z)|+1}{2}}, \quad a\in F_v^\times
		\end{align*}
		uniformly for $z\in \C$ on the strip $|\Re(z)|\le \s$. 
		\item[(5)] 
		Let $v\in \Sigma_\infty$. For any subinterval $(\s_1,\s_2)\subset (-2l_v+1, l_v-1/2)$, we have    
		\begin{align*}
			|\Ocal_v^{-,(z)}(a)| \ll (1+|a|_v)^{\frac{1-2l_v}{4}}, \quad a\in F_v^\times 
		\end{align*}
		uniformly for $z\in \C$ on the strip $\s_1\le \Re(z) \le \s_2$. 
	\end{itemize}
\end{lem}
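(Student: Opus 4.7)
The plan is to prove each of the five estimates by direct analysis of the explicit formulas, following strategies already established in the paper for parts (1)--(3) and using integral representations of the Legendre function for parts (4)--(5).

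For parts (1)--(3), I introduce $t = q_v^{z/2}$ and rewrite the quantity under study as a rational function in $t$ that is invariant under $t \leftrightarrow t^{-1}$; this symmetry lets me assume $|t|\geq 1$, i.e.\ $\Re(z)\geq 0$. In part (1), the identity $\Ocal_{v,0}^{\delta,(z)}(a)=1$ on units is exactly \eqref{Ovanish}. For $a \in F_v - \cO_v$, set $\nu = -\ord_v(a)\geq 1$, factor out $|a|_v^{(|\Re(z)|+1)/4}$, and telescope the remainder into a sum of $\nu$ geometric terms each bounded by a constant depending only on the shape of $\zeta_{F_v}/L_{F_v}$; this produces the factor $3|\ord_v(a)|$ exactly as in Lemma~\ref{LocalHyp-almostall-esti}. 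Part (2) follows the same scheme with the prefactor $(1+q_v^{1/2})(1+q_v)^{-1}$ arising from the two additional factors $(1+q_v^{(\pm z+1)/2})$ in $\Ocal_{v,1}^{\delta,(z)}$, mirroring Lemma~\ref{Sfn-Est}. Part (3) splits according to \eqref{Scaldeltazsa-f1}: on $|a|_v\leq 1$ I adapt Lemma~\ref{lem of esti hyp-S}, using $\Re(s)>1$ to keep the denominator $(1-q_v^{-s-1/2}t^{\pm 1})$ bounded away from zero and $\Re(s)>(|\Re(z)|-1)/2$ to bound $L_{F_v}(s+1,\varepsilon_\delta)^{-1}$; on $|a|_v>1$ the bound is immediate from the two-term formula, with any logarithmic loss in $L$-factors absorbed into the $\e$ in the exponent.

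For part (4), the function vanishes on $|a|_v\leq 1$ by the indicator $\delta(|a|>1)$ built into its definition. For $|a|_v>1$, I substitute the Mehler--Dirichlet integral representation
\begin{align*}
\fP_{(z-1)/2}^{1-l_v}(\l_v(a)^{-1/2}) = \tfrac{|a|_v^{(l_v-1)/2}|a-1|_v^{(-z+1)/2}}{\sqrt{\pi}\,\Gamma(l_v-1/2)}\tint_0^\pi\{2|a|_v^{1/2}\cos\theta + |a|_v + 1\}^{(z+1)/2-l_v}(\sin^2\theta)^{l_v-1}\,\d\theta
\end{align*}
already used in the proof of Corollary~\ref{FHypL11}, and control the Gamma ratio $\Gamma(l_v+(z-1)/2)\Gamma(l_v-(z+1)/2)/\Gamma(l_v)$ by Stirling, which makes it vertically bounded on $|\Re(z)|\leq \s < 2l_v - 1$. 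The integral over $[0,\pi]$ is uniformly bounded since the integrand is dominated by $\{|a|_v+1\}^{(\Re(z)+1)/2-l_v}(\sin^2\theta)^{l_v-1}$, and matching prefactors through $(a^2-1)^{1/2} = |a-1|^{1/2}|a+1|^{1/2}$ gives the stated bound.

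The main obstacle is part (5), which has no direct counterpart elsewhere in the paper. I plan to exploit the intrinsic interpretation of $\Ocal_v^{-,(z)}$ as an archimedean elliptic orbital integral: by Theorem~\ref{OrbIntUnifEx}(1) applied at $\Delta_v^{0}=-1$, we have
\begin{align*}
\Ocal_v^{-,(z)}(t/(2m_v)) = (2|m_v|_v)^{-1}\tint_{\fT_{\Delta,v}\bsl G_v}\Phi_v^{l_v}(g^{-1}\hat\gamma_v g)\,\varphi_v^{\Delta,(z)}(g)\,\d g.
\end{align*}
Combining the pointwise bound $|\Phi_v^{l_v}(g)|\ll \|g\|_v^{-l_v/2}$ from \eqref{MCDSest} with a Cartan-type decomposition of $G_v$ into $\fT_{\Delta,v}$ times a one-parameter transverse subgroup times $\bK_v$, and using the explicit expression of $\varphi_v^{\Delta,(z)}$ along the transverse subgroup via Lemma~\ref{MFoneSphftn}, the integral reduces to a one-dimensional oscillatory integral that can be controlled by the asymptotic behavior of $\fP_{(z-1)/2}^{1-l_v}$ on the imaginary axis. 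The hypothesis $\s_1 > -2l_v+1$ ensures $\Gamma(l_v+(-z-1)/2)$ in front is finite, while $\s_2 < l_v-1/2$ ensures that the difference $\fP_{(z-1)/2}^{1-l_v}(ia)-\fP_{(z-1)/2}^{1-l_v}(-ia)$, after multiplication by $(1+a^2)^{1/2}$ and the Gamma ratio, exhibits the required decay $(1+|a|_v)^{(1-2l_v)/4}$. Extracting this precise exponent via the cancellation in $\fP(ia) - \fP(-ia)$ coming from the imaginary-axis reflection is the most delicate step.
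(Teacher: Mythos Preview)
For parts (1)--(4) your plan is essentially the paper's: it refers back to the proofs of Lemma~\ref{LocalHyp-almostall-esti}, Lemma~\ref{Sfn-Est}, Corollary~\ref{S-Est}, and Corollary~\ref{FHypL11} respectively, noting that the arguments carry over from $\delta\in(F_v^\times)^2$ to non-square $\delta$ with only cosmetic changes.

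For part (5) your route diverges from the paper's and the sketch is incomplete. The paper does not return to the orbital integral or invoke Legendre-function asymptotics. Instead it observes that the computation in \S\ref{EllOrbAcase} already expresses $\Ocal_v^{-,(z)}(a)$ (up to elementary factors) through Zagier's integral
\[
\int_0^\infty \frac{y^{\,l_v+(z-1)/2}}{(y^2+1\pm 2iay)^{\,l_v-1/2}}\,d^\times y,
\]
and bounds this integral directly: on $[0,1]$ one drops the $a$-term via $|y^2+1\pm 2iay|\ge y^2+1$, and on $[1,\infty)$ one applies the AM--GM inequality $(y^2+1)^2+4a^2y^2\ge 4|a|y^3$ to obtain the factor $|a|^{1/4-l_v/2}$. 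The restriction $\sigma_2<l_v-1/2$ is precisely the convergence condition for the $[1,\infty)$ piece after this substitution. Your proposed orbital-integral approach with the crude bound $|\Phi_v^{l_v}(g)|\ll\|g\|_v^{-l_v/2}$ runs into the difficulty that as $|a|\to\infty$ the element $\hat\gamma_v$ becomes nearly central, so $\|g^{-1}\hat\gamma_v g\|_v$ stays close to $1$ on a set of $g$ whose measure does not shrink; the decay in $a$ is invisible from that bound alone. Carrying out your ``Cartan-type decomposition'' carefully would in fact reproduce the Iwasawa computation of \S\ref{EllOrbAcase} and land you on the same one-dimensional integral---at which point the elementary splitting argument above is what actually delivers the exponent $(1-2l_v)/4$, not any cancellation between $\fP(ia)$ and $\fP(-ia)$.
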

\begin{proof} 
	When $\delta \in (F_v^\times)^2$, the estimates in (1), (2) and (3) follow from the proof of Lemma~\ref{LocalHyp-almostall-esti}, Lemma~\ref{Sfn-Est} and Corollary~\ref{S-Est}, respectively. The proofs are modified to be applied to the case $\delta\in F^\times_v-(F_v^\times)^2$. The estimate (4) is given by Corollary~\ref{FHypL11}. From \S \ref{EllOrbAcase}, to prove (5) it suffices to estimate
	$\int_{0}^{\infty}\frac{y^{l_v+\frac{z-1}{2}}}{(y^2+1\pm 2ayi)^{l_v-1/2}}d^\times y$,
	which is majorized by the sum of
	$$\int_{0}^{1}\frac{y^{l_v+\frac{\Re(z)-1}{2}}}{(y^2+1)^{l_v-1/2}} d^\times y\ll 1, \qquad \int_{1}^{\infty}\frac{y^{l_v+\frac{\Re(z)-1}{2}}}{(4|a|_v y^3)^{l_v/2-1/4}}d^\times y
	\ll  |a|_v^{1/4-l_v/2}.
	$$
\end{proof}

To be precise, let $a\mapsto a^{(v)}$ denote the natural embedding $F\hookrightarrow F_v$. As usual, $\ord_v(a^{(v)})$ and $|a^{(v)}|_v$ for $a\in F^\times$ will be abbreviated to $\ord_v(a)$ and $|a|_v$, respectively.

\begin{lem} \label{levelcondition}
	Let $\gamma=(t:n)_{F}\in \Qcal_F^{\rm Irr}$ be such that $t,n\in \cO$ and  
	\begin{itemize}
		\item[(a)] For all $v\in S(\fn)$, $4n\cO_v+t\cO_v=\cO_v$, 
		\item[(b)] $\prod_{v\in S(\fn)}\fE_v^{(z)}(\hat\gamma_v)\not=0$. 
	\end{itemize}
	Then $4n$ is relatively prime to $\fn$. If we define a divisor $\fn_2$ of $\fn$ by $S(\fn_2)=\{v\in S(\fn)|\Delta_v^0=1,\,|t|_v=|n|_v=|m_v|_v=1\}$ and set $\fn_1=\fn\fn_2^{-1}$, $\fc=t\cO+\fn$ and $\fc'=\fn_1\fc^{-1}$, then $\fn_1=\fc\fc'$, $((t^2-4n)^{(v)})_{v\in S(\fc)}\in \prod_{v\in S(\fc)}(\cO_v^\times)^2$ and $(t^2-4n)\cO\subset (\fc')^2$. 
	
	If $n$ further satisfies the condition
	\begin{itemize}
		\item[(c)] $n \pmod{\fp_v}$ is not a square residue for some $v\in S(\fn)$, 
	\end{itemize}
	then $t\not=0$. 
\end{lem}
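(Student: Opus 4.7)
The argument is a case analysis at each $v\in S(\fn)$, stratified by the three possibilities for $\Delta_v^{0}$: $\Delta_v^{0}=1$, $\Delta_v^{0}\in \cO_v^\times-(\cO_v^\times)^{2}$, and $\Delta_v^{0}\in \fp_v-\fp_v^{2}$, corresponding respectively to $v$ splitting, remaining inert, and ramifying in $F(\sqrt{\Delta})/F$; recall that $v$ is non-dyadic by basic assumption (iv) in \S\ref{BasicAssumption}. The inputs are Theorem~\ref{OrbIntUnifEx}(4), which supplies the explicit formula of $\fE_v^{(z)}(\hat\gamma_v)$ together with its characteristic $\delta$-factor; Lemma~\ref{OrbIntUnifExL2}, relating $|t/(2m_v)|_v$ to divisibility conditions between $t$ and $4n$; Proposition~\ref{OrbIntUnifExL4}, which evaluates $|n/m_v^{2}|_v$; and the vanishing in \eqref{Ovanish}, namely that $\Ocal_{v,1}^{\delta,(z)}\equiv 0$ on $\cO_v^\times$ whenever $v$ is inert.

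\textbf{Core case analysis.} For each $v\in S(\fn)$ I would run through the three subcases $|t/(2m_v)|_v<1$, $=1$, $>1$ of Lemma~\ref{OrbIntUnifExL2} and use condition (a), namely $\max(|t|_v,|4n|_v)=1$, together with the non-vanishing prescribed by (b) to discard inadmissible subcases. The surviving admissible configurations should be as follows: in the split case, either $v\in S(\fn_2)$ (the trivially admissible subcase $|t|_v=|n|_v=|m_v|_v=1$), or $|t|_v=1$ with $|m_v|_v<1$ (so that $\ord_v(t^2-4n)=2\ord_v(m_v)\geq 2$), or $|t|_v<1$ with $|m_v|_v=1$ (so that $t^2-4n=(2m_v)^{2}\in (\cO_v^\times)^{2}$); in the inert case, the vanishing of $\Ocal_{v,1}$ on $\cO_v^\times$ forces $|n/m_v^{2}|_v>1$, which combined with (a) yields $|t|_v=|4n|_v=1$ and $\ord_v(t^2-4n)\geq 2$; in the ramified case, both subcases $|t/(2m_v)|_v\leq 1$ are eliminated (one by a parity contradiction on $\ord_v(t^2-4n)=1+2\ord_v(m_v)$, the other by the valuation identity $|2m_v|_v^{2}=q_v|t^2-4n|_v$ combined with (a)), leaving only $|t/(2m_v)|_v>1$, which yields $|t|_v=|4n|_v=1$ and $\ord_v(t^2-4n)\geq 3$.

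\textbf{Deductions.} Part (1) drops out immediately, since $|4n|_v=1$ in every admissible configuration. For part (2), the condition $v\in S(\fc)$ is equivalent to $v\in S(\fn)$ and $|t|_v<1$; by the case analysis this happens only in the split subcase with $|m_v|_v=1$, which automatically places $v$ in $S(\fn)-S(\fn_2)$, so $\fc\mid\fn_1$ and $\fn_1=\fc\fc'$ with $\fc'$ integral, while $(t^2-4n)^{(v)}=(2m_v)^{2}\in(\cO_v^\times)^{2}$ gives the square-unit statement. The inclusion $(t^2-4n)\cO\subset (\fc')^{2}$ reduces to $\ord_v(t^2-4n)\geq 2\ord_v(\fc')=2$ at each $v\in S(\fc')$, i.e., at each $v\in S(\fn)-S(\fn_2)$ with $|t|_v=1$, and is supplied by the three admissible configurations collected above. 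For part (3), $t=0$ forces $|t|_v<1$ at every $v\in S(\fn)$, excluding both the inert and ramified cases, so every such $v$ must be split; but then $\Delta=-4n$ lies in $(F_v^\times)^{2}$ at every $v\in S(\fn)$, and specializing to the $v$ of (c) yields a contradiction via the quadratic-residue content of the hypothesis that $n\bmod\fp_v$ is not a square.

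\textbf{Main obstacle.} The substantive work is the case analysis, especially the elimination of both subcases $|t/(2m_v)|_v\leq 1$ in the ramified scenario; this demands the simultaneous use of condition (a), the $\delta$-factor in Theorem~\ref{OrbIntUnifEx}(4), and the parity constraint $\ord_v(t^2-4n)\in 1+2\NN_0$. Once the resulting table of admissible configurations of $(|t|_v,|4n|_v,|m_v|_v,\ord_v(t^2-4n))$ at each $v\in S(\fn)$ is assembled, all three conclusions of the lemma follow as direct corollaries.
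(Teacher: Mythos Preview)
Your approach is essentially the paper's: the same case analysis driven by Theorem~\ref{OrbIntUnifEx}(4), Lemma~\ref{OrbIntUnifExL2}, Proposition~\ref{OrbIntUnifExL4} and \eqref{Ovanish}, just stratified by the value of $\Delta_v^0$ rather than by membership in $S(\fc)$ versus $S(\fc')$ as the paper does. The deductions for parts (1)--(3) are arranged identically.

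One step does not go through as you describe it. In the ramified case you claim the subcase $|t/(2m_v)|_v=1$ is eliminated by ``the valuation identity $|2m_v|_v^{2}=q_v|t^2-4n|_v$ combined with (a)''. But that identity is just the relation $t^2-4n=(2m_v)^2\Delta_v^0$ rewritten, and it yields no contradiction here: under (a) together with $t\in\cO$ one obtains $|t|_v=|m_v|_v=1$, $|4n|_v=1$, and $\ord_v(t^2-4n)=1$. Then Proposition~\ref{OrbIntUnifExL4}(2) gives $|n/m_v^2|_v=1$, so the $\delta$-factor $\delta(n/m_v^2\notin\fp_v)$ in Theorem~\ref{OrbIntUnifEx}(4) equals $1$, and by \eqref{Ovanish} the factor $\Ocal_{v,1}^{\Delta_v^0,(z)}(n/m_v^2)=(1+q_v)^{-1}$ is nonzero; hence (b) does not exclude this configuration either, and your claimed bound $\ord_v(t^2-4n)\ge 2$ at such a $v\in S(\fc')$ is not established by the argument you give. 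The paper's own proof is equally terse at exactly this point---it asserts without elaboration that for $v\in S(\fc')$ with $\ord_v(t^2-4n)$ odd, non-vanishing of $\fE_v^{(z)}(\hat\gamma_v)$ forces $4n/t^2\in U_v(2)$---so you are faithfully reproducing its structure, but the justification you wrote down does not actually cover this subcase.
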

\begin{proof}
	Let $v\in S(\fc)$. Then $t^{(v)}\in \fp_v$. From (a), $t^{(v)}$ and $(t^2-4n)^{(v)}$ are relatively prime in $\cO_v$. Hence $(t^2-4n)^{(v)} \in \cO_v^\times$. From the relation $(t^2-4n)^{(v)}=4m^2_v\Delta_v^{0}$ with $\ord_{v}(\Delta_v^{0})\in \{0,1\}$, we conclude $m_v \in \cO_v^\times$ and $\Delta_v^{0}\in \cO_v^\times$. If $\Delta_v^{0}\in \cO_v^\times-(\cO_v^\times)^2$, from Theorem~\ref{OrbIntUnifEx} (3) and \eqref{Ovanish}, the non-vanishing of $\fE_v^{(z)}(\hat\gamma_v)$ implies $|\tfrac{n}{m_v^2}|_v>1$. From Lemma~\ref{OrbIntUnifExL4} (1), we have $|4n|_v=|t|_v^2$, which causes a contradiction when combined with $|t|_v<1$ and $|t^2-4n|_v=1$. Hence we must have $\Delta_v^{0}=1$ and $(t^2-4n)^{(v)}=(2m_v)^{2} \in (\cO_v^\times)^2$ as desired. Let $v\in S(\fc')$. Then $t^{(v)}\in \cO_v^\times$. Hence $|t|_v=1$ and $|4n|_v\leq 1$. From Theorem~\ref{OrbIntUnifEx}, Lemma~\ref{OrbIntUnifExL2} and Proposition~\ref{OrbIntUnifExL4}, the non-vanishing of $\fE_{v}^{(z)}(\hat \gamma_v)$ implies that $\tfrac{4n}{t^2}\in U_v(1)$ if $\ord_v(t^2-4n)\in 2\Z$ and $\tfrac{4n}{t^2}\in U_v(2)$ if $\ord_v(t^2-4n)\in 1+2\Z$. Therefore, $(t^2-4n)^{(v)}\in \fp_v^{2}$ and $|4n|_v=|t|_v^2=1$ for all $v\in S(\fc')$ as desired. Suppose $t=0$. Then $t^2-4n=-4n$ is prime to $\fn$ as shown above. Hence for all $v\in S(\fn)$, we have $m_v,\, \Delta_v^0\in \cO_v^\times$ and $|\tfrac{n}{m_v^2}|_v=1$. From (c), there is $v\in S(\fn)$ such that $\Delta_v^{0}\in \cO_v^\times-(\cO_v^\times)^2$. Hence from Theorem~\ref{OrbIntUnifEx} (4), we obtain $\fE_{v}^{(z)}(\hat \gamma_v)=0$ a contradiction. This shows $t\not=0$. \end{proof}

Let $\cQ_F^{S}$ be the set of $(t:n)_{F}\in \cQ_F$ such that $c_vt\in \cO_v$ and $c^{2}_vn\in \cO_v^\times$ for all $v\in \Sigma_\fin-S$ with an idele $c=(c_v)\in (\A_\fin^{S})^{\times}$ without $S$-component. 

\begin{lem} \label{ABC-L6}
	Let $\Phi=\otimes_v\Phi_v$ be our test function on $G_\A$ (which depends $l$, $\fn$ and $S$). Let $\gamma\in G_F$ be an element whose minimal polynomial is $X^2-tX+n$ with $(t,n)\in Q_F$. If $\Phi(g^{-1}\gamma g)\not=0$ with $g\in G_\A$, then $(t:n)_{F}\in \Qcal_F^{S}$.  
\end{lem}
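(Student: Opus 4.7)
\medskip
\noindent\textbf{Proof plan for Lemma~\ref{ABC-L6}.}
The plan is to exploit the tensor-product structure of the test function $\Phi=\Phi^l(\fn|\bfs;\cdot)$ together with the fact that at every finite place outside $S$ the local factor is supported on $Z_v\bfK_v$. Concretely, suppose $\Phi(g^{-1}\gamma g)\neq 0$ for some $g=(g_v)\in G_\A$. Then by the definition of $\Phi^{l}(\fn|\bfs;\cdot)$ in \S\ref{The kernel function}, each local factor is non-zero; in particular, for every $v\in \Sigma_\fin-S$ we have $\Phi_{\fn,v}(g_v^{-1}\gamma g_v)\neq 0$, i.e.\
\begin{equation*}
g_v^{-1}\gamma g_v\in Z_v\bfK_0(\fn\cO_v)\subset Z_v\bfK_v.
\end{equation*}

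Next, for each such $v$, write $g_v^{-1}\gamma g_v=\lambda_v\cdot k_v$ with $\lambda_v\in F_v^\times$ (identifying $Z_v\cong F_v^\times$) and $k_v\in \bfK_v=\GL_2(\cO_v)$. This decomposition is unique up to $(\lambda_v,k_v)\mapsto (\lambda_v u,u^{-1}k_v)$ with $u\in \cO_v^\times$, so $\lambda_v$ is well-defined modulo $\cO_v^\times$. Taking trace and determinant and using that these are conjugation-invariant,
\begin{equation*}
t=\tr(\gamma)=\lambda_v\,\tr(k_v),\qquad n=\det(\gamma)=\lambda_v^{2}\,\det(k_v),
\end{equation*}
so if we set $c_v:=\lambda_v^{-1}$, then $c_v t=\tr(k_v)\in \cO_v$ and $c_v^{2}n=\det(k_v)\in \cO_v^\times$, exactly the local conditions defining $\Qcal_F^{S}$.

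It remains to verify that $c=(c_v)_{v\in\Sigma_\fin-S}$ is an idele, i.e.\ $c_v\in \cO_v^\times$ for almost all $v\in\Sigma_\fin-S$. Since $\gamma\in G_F$ has coefficients in $F$ and $\det\gamma=n\in F^\times$, for all but finitely many $v\in\Sigma_\fin-S$ we have $\gamma\in\bfK_v$, i.e.\ $\gamma\in\GL_2(\cO_v)$ directly; at such places we may take $\lambda_v=1$, hence $c_v=1$. Thus $c\in (\A_\fin^{S})^\times$, and we conclude $(t:n)_{F}\in \Qcal_F^{S}$.

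\medskip
\noindent The only subtlety is step~(iii) on the idele condition, and it is immediate from the fact that $\gamma$ has integral entries and unit determinant almost everywhere; there is no serious obstacle.
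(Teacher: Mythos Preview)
Your proof is correct and follows essentially the same approach as the paper: use that $\Phi_{\fn,v}={\rm ch}_{Z_v\bK_0(\fn\cO_v)}$ for $v\in\Sigma_\fin-S$, write $g_v^{-1}\gamma g_v$ as a scalar times an element of $\bK_v$, and read off the conditions on $t,n$ from trace and determinant. The paper verifies the idele condition by noting $n\in\cO_v^\times$ for almost all $v$ forces the scalar into $\cO_v^\times$; your phrasing ``we may take $\lambda_v=1$'' is slightly loose (it is $g_v^{-1}\gamma g_v$, not $\gamma$, being decomposed), but since $|\lambda_v|_v^2=|n|_v$ the conclusion is the same.
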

\begin{proof}
	Let $v\in \Sigma_\fin-S$. Then $\Phi_v={\rm ch}_{Z_v\bK_0(\fn\cO_v)}$. Hence, from $\Phi(g^{-1}\gamma g)\not=0$, we see that there exist $g_v\in G_v$, $c_v\in F_v^{\times}$ and $k_v\in \bK_0(\fn\cO_v)$ such that $
	g_v^{-1} \gamma g_v=c_vk_v$. Hence $n=\det(\gamma)=c_v^2\,\det(k_v)$ and $t=\tr(\gamma)=c_v\,\tr(k_v)$. 
	Since $\det k_v\in \cO_v^\times$ and $\tr (k_v)\in \cO_v$ and since $n\in \cO_v^{\times}$, $t\in \cO_v$ for almost all $v$'s, we have $c=(c_v)\in (\A_\fin^S)^\times$ and $(t:n)_{F}\in \cQ^{S}_F$. 
\end{proof}

Let $h$ be the class number of $F$ and $\fa_{i}\,(1\leq i \leq h)$ a complete set of representatives of the ideal class group of $F$ such that $\fa_j \subset \cO $ are prime ideals different from $\fp_v\cap \go\,(v\in S)$. For $\nu=(\nu_v)_{v\in S} \in \Z^{S}$, set 
$\fp_S^{\nu}=\prod_{v\in S}(\fp_v\cap\go)^{\nu_v}$ and 
$$
E_i(S,\nu)=\left\{n\in F^\times|\,n\cO=\fa_i^2\fp_S^\nu\,\right\}, \quad (1\leq i \leq h).
$$
Let $I(S,\nu)$ be the set of indices $1\leq i\leq h$ such that $E_i(S,\nu)\not=\emp$. For each $i\in I(S,\nu)$, by fixing an element $n_{i,\nu}\in E_i(S,\nu)$ once and for all, we obtain a bijection $\cO^\times \rightarrow E_{i}(S,\nu)$ by sending $\varepsilon \in \cO^\times$ to $\varepsilon n_{i,\nu}\in E_i(S,\nu)$. By Dirichlet's unit theorem, the quotient group $\cO^\times/(\cO^\times)^2$ is finite. 

In the following lemma, we set $\ord_{v}(0)=+\infty$ for convention. 
\begin{lem} \label{ABC-L7}
	For any $\gamma\in \Qcal_{F}^{S}$, there exist $\nu\in \N_0^{S}$, $t\in \fa_{i}$, $i\in I(S,\nu)$ and $\varepsilon \in \cO^\times/(\cO^\times)^2$ such that $\gamma=(t:\varepsilon n_{i,\nu})_{F}$ and $\min(2\ord_v(t),\nu_v)\in \{0,1\}$ for all $v\in S$.  
\end{lem}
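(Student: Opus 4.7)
The plan is to start from an arbitrary representative $(t_0, n_0)\in Q_F$ of the class $\gamma=(t_0:n_0)_F\in \cQ_F^S$ and produce a single rescaling scalar $\alpha\in F^\times$ whose divisor realizes all the required conditions at once; the new pair $(\alpha t_0,\alpha^2 n_0)$ will serve as $(t,\varepsilon n_{i,\nu})$. The hypothesis $\gamma\in \cQ_F^S$ supplies local scalars $c_v\in F_v^\times$ with $c_v t_0\in \cO_v$ and $c_v^2 n_0\in \cO_v^\times$ for $v\in \Sigma_\fin-S$, which immediately forces $\ord_v(n_0)=2a_v$ to be even and $\ord_v(t_0)\geq a_v$ at each such $v$.

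At each $v\in S$ set $m_v=\min(2\ord_v(t_0),\ord_v(n_0))\in \Z$ (finite since $n_0\neq 0$) and choose $b_v\in \Z$ with $m_v+2b_v\in \{0,1\}$; this is uniquely determined by the parity of $m_v$. Consider the fractional ideal
$$
\fc := \prod_{v\in\Sigma_\fin-S}\fp_v^{\ord_v(\fa_i)-a_v}\,\prod_{v\in S}\fp_v^{b_v},
$$
which is principal exactly when $[\fa_i]=[\prod_{v\in\Sigma_\fin-S}\fp_v^{a_v}\,\prod_{v\in S}\fp_v^{-b_v}]$ in $\mathrm{Cl}(F)$; since $\{\fa_1,\dots,\fa_h\}$ is a full set of class representatives, this specifies a unique $i$, and I pick any $\alpha\in F^\times$ with $(\alpha)=\fc$. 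Setting $t=\alpha t_0$, $n=\alpha^2 n_0$ and $\nu_v=\ord_v(n)$ for $v\in S$, a valuation-by-valuation check then confirms everything: for $v\in \Sigma_\fin-S$ the construction gives $\ord_v(n)=2\ord_v(\fa_i)$ and $\ord_v(t)\geq \ord_v(\fa_i)$ (using $\ord_v(t_0)\geq a_v$); for $v\in S$ one has $\nu_v=\ord_v(n_0)+2b_v$, which equals $m_v+2b_v\in\{0,1\}$ when $m_v=\ord_v(n_0)$ and strictly exceeds it otherwise, while $\min(2\ord_v(t),\nu_v)=m_v+2b_v\in\{0,1\}$ in either case. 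Hence $n\cO=\fa_i^2\fp_S^\nu$ with $\nu\in \N_0^S$, $t\in \fa_i$, and the required minimum condition holds. Both $n$ and $n_{i,\nu}$ lie in $E_i(S,\nu)$, so $\varepsilon:=n/n_{i,\nu}\in \cO^\times$, and $i\in I(S,\nu)$ is automatic.

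The only genuine subtlety is that a single scalar $\alpha$ must simultaneously fulfill three a priori independent requirements (divisibility $t\in \fa_i$, the factorization $n\cO=\fa_i^2\fp_S^\nu$ with $\nu\geq 0$, and the $\{0,1\}$-minimum condition at $S$), and the parity of $\ord_v(n)$ at $v\in S$ is an invariant of the $F$-class that constrains the $b_v$. The class-group step forces a consistent $i$; the $\cQ_F^S$-hypothesis $\ord_v(t_0)\geq a_v$ at $v\in \Sigma_\fin-S$ ensures $t\in \fa_i$ after scaling; and the dichotomy $m_v=\ord_v(n_0)$ versus $m_v=2\ord_v(t_0)$ yields positivity of $\nu_v$ automatically.
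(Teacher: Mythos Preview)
Your proof is correct and essentially identical to the paper's argument, merely phrased in valuation/divisor language rather than ideles: both determine the local orders of a single global scalar from the $\cQ_F^S$ condition outside $S$ and from the parity of $\min(2\ord_v(t_0),\ord_v(n_0))$ at $S$, then select the unique class representative $\fa_i$ so that this scalar exists in $F^\times$. The one step you elide is the final unit-square adjustment: having $\varepsilon=n/n_{i,\nu}\in\cO^\times$, the paper writes $\varepsilon=u^2\varepsilon'$ with $\varepsilon'$ a fixed representative of $\cO^\times/(\cO^\times)^2$ and replaces $t$ by $u^{-1}t$, which you should add so that $\varepsilon$ genuinely ranges over the finite set of coset representatives as required downstream.
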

\begin{proof}
	To argue, we fix a representative $(t',n')\in F^2$ of $\gamma$. From definition, there exists a finite idele $c=(c_v)\in \A_\fin^\times$ with $c_v=1\,(v\in S)$ such that $c_vt'\in \cO_v$ and $c_v^2n' \in \cO_v^\times$ for all $v\in \Sigma_\fin-S$. Choose $(a_v)_{v\in S}\in \prod_{v\in S}F_v^\times$ such that the ideal $a_v^2\{(t')^2\cO_v+n'\cO_v\}$ is $\fp_v$ or $\cO_v$ for all $v\in S$. Let $\fc$ be a fractional ideal of $F$ defined by the idele $ca$, i.e., $\fc=F\cap (F_\infty \prod_{v\in \Sigma_\fin-S}c_v\cO_v\prod_{v\in S}a_v\cO_v)$. There exists a unique $i\in \{1,\dots,h\}$ and an element $b\in F^\times$ such that $\fc=(b)\fa_{i}^{-1}$. We set $t''=b t'$ and $n''=b^2n'$. Then $\gamma=(t':n')_{F}=(t'':n'')_{F}$, $t'' \in \fa_{i}$ and $n''\in E_{i}(S,\nu)$ with some $\nu\in \N_0^{S}$ such that $\min(2\ord_v(t''),\nu_v)\in \{0,1\}$ for all $v\in S$. We further write $n''=\varepsilon u^{2} n_{i,\nu}$ with $\varepsilon \in \cO^\times/(\cO^\times)^2$, $u\in \cO^\times$, and set $t=u^{-1}t''$. Then $\gamma=(t'':n'')_{F}=(t:\varepsilon n_{i,\nu})_{F}$ and $t\in \fa_{i}$ with $\min(2\ord_{v}(t),\nu_v)=0,1$ for all $v\in S$ as desired. 
\end{proof}

For $(t:n)_{F}\in \Qcal_F^{\rm{Irr}}$ and a place $v\in \Sigma_{\fin}$, by $(t^2-4n)^{(v)}=4\Delta_v^{0}m_v^{2}$ as before, we set
$\fn_v(t,n)=\frac{n^{(v)}}{m_v^2}$, which is an element of $F_v$ determined only up to proportionality constants from $\cO_v^\times$. We remark that $\fE_{v}^{(z)}(\hat \gamma_v)=0$ for $v \in \Sigma_\fin-S$ unless $|\fn_v(t,n)|_v\geq 1$ from Lemma~\ref{OrbIntUnifExL2} and Theorem~\ref{OrbIntUnifEx}.
For the arguments below to work, we need to make the following assumption \eqref{ASSUMPTION} so that any representative $(t,\varepsilon n_{i,\nu})$ as in Lemma~\ref{ABC-L7} satisfies the condition (a) in Lemma~\ref{levelcondition}: 
\begin{align}
	\text{The ideal $\fn$ is relatively prime to all of the ideals $\fa_j\,(1\leq j\leq h)$. }
	\label{ASSUMPTION}
\end{align}

\smallskip
\noindent
We fix prime ideals $\ga_i (1\le i \le h)$ satisfying \eqref{ASSUMPTION} for $\gn$ invoking Chebotarev's density theorem.
From Lemmas~\ref{ABC-L6} and \ref{ABC-L7}, we have a majorant of the series \eqref{ABCf0} by replacing the summation range with the set of all those pairs $(t,n)$ with $t\in \fa_i$, $n=\varepsilon n_{i,\nu}$ with $\varepsilon\in \cO^\times/(\cO^\times)^2$, $1\leq i\leq h$ and $\nu \in \N_0^S$. Let $\e>0$ be a small number. In the following all estimations are uniform in $z\in \C$ such that $\Re(z)\in [-2\underline{l}+3+6\e, \underline{l}-1/2-\e]$. By the evaluations of local orbital integrals $\fE_{v}^{(z)}(\hat\gamma_v)$ obtained in this section and by Lemmas~\ref{EisPeriodEst} and \ref{levelcondition}, we see that \eqref{ABCf0} is majorized by the sum of $\Xi^{(z,\bfs)}(\fc, \fn_1, i,n)$ over all tuples $(\fc,\fn_1, i, n)$ of an integral ideal $\fn_1$ dividing $\fn$, an integral ideal $\fc$ dividing $\fn_1$,
$1\leq i\leq h$, and an element $n\in F^\times$ which is of the form $n=\varepsilon n_{i,\nu}$ with $\varepsilon\in \cO^\times/(\cO^\times)^2$, $i\in I(S,\nu)$ and $\nu\in \N_0^{S}$, where $\Xi^{(z,\bfs)}(\fc,\fn_1,i,n)$ is defined to be
\begin{align*}
	&\sum_{t\in X(\fc,\fn_1,i,n)}
	\nr({\frak D}_{t^2-4n})^{\frac{1+\e}{4}+\frac{\ro(z)}{4}}
	\prod_{v\in \Sigma_\fin-(S\cup S(\fn))} |\Ocal_{v,0}^{\Delta_v^{0},(z)}(\fn_v(t,n))|
	\prod_{v\in S(\fn)} |\Ocal_{v,1}^{\Delta_v^{0},(z)}(\fn_v(t,n))|
	\\
	&\times \prod_{v\in S} |\Scal_{v}^{\Delta_v^{0},(z)}(s_v;\fn_v(t,n))|
	\prod_{\substack{v\in \Sigma_\infty \\ (t^2-4n)^{(v)}>0}} 
	\left|\Ocal_v^{+,(z)}\left(\tfrac{t^{(v)}}{|t^2-4n|_v^{1/2}}\right)\right|
	\prod_{\substack{v\in \Sigma_\infty \\ (t^2-4n)^{(v)}<0}} 
	\left|\Ocal_v^{-,(z)}\left(\tfrac{t^{(v)}}{|t^2-4n|_v^{1/2}}\right)\right|
\end{align*}
with $X(\fc,\fn_1,i,n)$ being the set of all $t\in \fa_i\fc-R(n)$ such that $t^{(v)} \in \cO_v^\times\,(\forall v\in S(\fn\fn_1^{-1}))$, $n^{(v)} \in \cO_v^\times \,(\forall v\in S(\fn\fn_1^{-1}))$, and such that $t^2-4n \in (\fc^{-1}\fn_1)^2$, $(t^2-4n)^{(v)} \in (\cO_v^{\times})^2\,(\forall v\in S(\fc\fn\fn_1^{-1}))$. Here $\bfs=(s_v)_{v\in S}\in \fX_S$ and we remark that $\Delta_v^{0}$ depends on $t,n$ through the relation $(t^2-4n)^{(v)}=4\Delta_v^0 m_v^2$. For $n=\varepsilon n_{i,\nu}$ as above, the subset $R(n)\subset \cO$ is defined as follows: $R(n)=\{0\}$ if $n^{(v)}\mod \fp_v$ is not a square residue for some $v\in S(\fn)$, and $R(n)=\emp$ otherwise. This comes from Lemma~\ref{levelcondition}.

From Lemmas~\ref{OrbIntUnifExL1} and \ref{OrbIntUnifExL2}, if $v\in \Sigma_\fin-\Sigma_{\rm dyadic}$, $t\in \cO_v^\times$, $4n\in \cO_v^\times,\,\tfrac{4n}{t^2}-1\in \cO_v^\times$, then $|\ft_v(t,n)|_v= 1$ and $|\fn_v(t,n)|_v=1$. In a way similar to Corollary~\ref{FHypL7.5}, from Lemma~\ref{ABC-L1234}, we have  
$$
\prod_{v\in \Sigma_\fin-(S\cup S(\fn))}|\Ocal_{v,0}^{\Delta_v^0, (z)}(\fn_v(t,n))|\leq C \prod_{v\in \Sigma_\fin-(S\cup S(\fn))}|\fn_v(t,n)|_v^{\frac{|\Re(z)|+1}{4}+\frac{\e}{2}}
$$
with a constant $C>0$, uniformly in $z\in \C$ and $(t,n)$ as above. 

Suppose $R(n)=\{0\}$ for a while. Applying Lemma~\ref{ABC-L1234}, we see that $\Xi^{(z,\bfs)}(\fc,\fn_1,i,n)$ is majorized by the product of 
\begin{align*}
	&\sum_{t\in X(\fc,\fn_1,i,n)}
	\nr({\frak D}_{t^2-4n})^{\frac{1+\e}{4}+\frac{\ro(z)}{4}}
	\times \prod_{v\in \Sigma_\fin-(S\cup S(\fn))}|\fn_v(t,n)|_v^{\frac{|\Re(z)|+1}{4}+\frac{\e}{2}} 
	\prod_{v\in S(\fn)} 
	|\fn_v(t,n)|_v^{\frac{|\Re(z)|+1}{4}+\frac{\e}{2}} \\
	&\times \prod_{v\in S}
	\{1+\max(0,-\ord_v(\fn_v(t,n)))\}\max(1,|\fn_v(t,n)|_v^{-1})^{\frac{-\Re(s_v)+|\Re(z)|}{4}+\frac{\e}{2}}|\fn_v(t,n)|_v^{\frac{|\Re(z)|+1}{4}+\frac{\e}{2}}
	\\
	&\times \prod_{\substack{v\in \Sigma_\infty \\ (t^2-4n)^{(v)}>0}} 
	\delta(|t|_v^2>|t^2-4n|_v)\,\left(\left|\frac{t^{(v)}}{\sqrt{(t^{(v)})^2-4n^{(v)}}}
	-1\right|_v^{+} \right)^{l_v/2} \left(\frac{|t|_v}{|t^2-4n|_v^{1/2}}\right)^{\frac{|\Re(z)|+1}{2}}\\
&	\times\prod_{\substack{v\in \Sigma_\infty \\ (t^2-4n)^{(v)}<0}} 
	\left(1+\frac{|t|_v}{|t^2-4n|_v^{1/2}} \right)^{\frac{1-2l_v}{4}} \times
	\prod_{v\in S(\fn\fn_1^{-1})}\frac{2}{q_v+1}\prod_{v\in S(\fn_1)}\frac{4(1+q_v^{1/2})}{1+q_v},
\end{align*}
where we set $|x|_v^{+}=\min(1,|x|_v)$ for $x\in F_v^\times$. Since $\nr({\frak D}_{t^2-4n})=\prod_{v\in \Sigma_\fin}|\Delta_v^{0}|_v^{-1}=\prod_{v\in \Sigma_F}|m_v|_v^{2}$ (see the last part of the proof of Proposition~\ref{EisPeriod}), it is easy to confirm the identity 
\begin{align*}
	\nr({\frak D}_{t^2-4n})^{\frac{|\Re(z)|+1}{4}+\frac{\e}{2}}\,
	\prod_{v\in \Sigma_\fin}|\fn_v(t,n)|_v^{\frac{|\Re(z)|+1}{4}+\frac{\e}{2}}=\prod_{v\in \Sigma_\infty}|n|_v^{-\frac{|\Re(z)|+1}{4}-\frac{\e}{2}}|4^{-1}(t^2-4n)|_v^{\frac{|\Re(z)|+1}{4}+\frac{\e}{2}}
\end{align*}
by the product formula and by the relation $|m_v|_v=|4^{-1}(t^2-4n)|_v^{1/2}$ for $v\in \Sigma_\infty$. Due to this, the above quantity is further majorized by 
\begin{align*}
	& \prod_{v\in S(\fn\fn_1^{-1})}\frac{2}{q_v+1}\,\prod_{v\in S(\fn_1)} \frac{4(1+q_v^{1/2})}{1+q_v}
	\sum_{t\in X(\fc,\fn_1,i,n)}
	\nr({\frak D}_{t^2-4n})^{\frac{\ro(z)-|\Re(z)|}{4}-\frac{\e}{4}}\\
	&\times
	\prod_{v\in S}
	\{1+\max(0,-\ord_v(\fn_v(t,n)))\}\max(1,|\fn_v(t,n)|_v^{-1})^{\frac{-\Re(s_v)+|\Re(z)|}{4}+\frac{\e}{2}} \\
&\times \prod_{\substack{v\in \Sigma_\infty \\ (t^2-4n)^{(v)}>0}} 
	\delta(n^{(v)}>0)\,\left(\left|\frac{t^{(v)}}{\sqrt{(t^{(v)})^2-4n^{(v)}}}
	-1\right|_v^{+}\right)^{l_v/2}\\
	& \times \prod_{v\in \Sigma_\infty} 
	\left(\frac{|t|_v^2}{|n|_v}\right)^{\frac{|\Re(z)|+1}{4}+\frac{\e}{2}} |t^2-4n|_v^{\frac{\e}{2}}
	\prod_{\substack{v\in \Sigma_\infty \\ (t^2-4n)^{(v)}<0}}
	\left(1+\frac{|t|_v}{|t^2-4n|_v^{1/2}}\right)^{\frac{1-2l_v}{4}}
	\left(\frac{|t|_v}{|t^2-4n|_v^{1/2}}\right)^{-\frac{1+|\Re(z)|}{2}-\e}.
\end{align*}
Let us examine the $S$-factor. If $\nu_v=0$, then $(4n)^{(v)}\in \cO_v^\times$ and $m_v \in \go_v$. Hence $|\fn_v(t,n)|_v=|m_v^2|_v^{-1}\ge 1$.
If $\nu_v>0$, then $(4n)^{(v)} \in \fp_v$.
When $t^{(v)}\in \cO_v^\times$, we have $|t|_v^{2}>|4n|_v$ and $t^2-4n \in \cO_v^\times$ is a square residue modulo $\fp_v$. Since $v$ is non-dyadic, this implies $(t^2-4n)^{(v)}$ is a square in $\cO_v^\times$, or equivalently $\Delta_v^0=1$ and $2m_v\in \cO_v^\times$. When $t^{(v)}\in \gp_v$, we have $\nu_v=1$ by $\min(2\ord(t), \nu_v) \in \{0,1\}$. Thus $\ord_v((2m_v)^{2}\Delta_v^0) = \ord_v(t^2-4n)=1$, and hence
$2m_v\in\go^\times$.
Thus we have $|\fn_v(t,n)|_v=|n|_v|m_v^2|_v^{-1}=|n|_v=q_v^{-\nu_v}$ if $\nu_v>0$. Therefore, ,  
\begin{align*}
	&\prod_{v\in S}\{1+\max(0,-\ord_v(\fn_v(t,n)))\}\max(1,|\fn_v(t,n)|_v^{-1})^{\frac{-\Re(s_v)+|\Re(z)|}{4}+\frac{\e}{2}} \\
	\le & 
	\prod_{v\in S}(1+\ord_v(m_v^2))^{\delta(\nu_v=0)}q_v^{\nu_v(\frac{-\Re(s_v)+|\Re(z)|}{4}+\frac{\e}{2})}
	\ll \prod_{v\in S}|m_v^2|_v^{-\frac{\e}{4}}q_v^{\nu_v(\frac{-\Re(s_v)+|\Re(z)|}{4}+\frac{\e}{2})}.
\end{align*}
Here, by noting $1=|\Delta/4|_\A=\prod_{v \in \Sigma_F}|m_v^2|_v \prod_{v \in \Sigma_\fin}|\Delta_v^0|_v$, the factor $\prod_{v \in S}(1+\ord_v(m_v^2))$ is bounded by 
\begin{align*}
	\prod_{v \in S}|m_v^2|_v^{-\frac{\e}{4}} = & \prod_{v \in \Sigma_{F}}|m_v^2|_v^{-\frac{\e}{4}} \times \prod_{v\in \Sigma_\infty}|m_v^2|_v^{\frac{\e}{4}} \times \prod_{v \in \Sigma_\fin-S}|m_v^2|_v^{\frac{\e}{4}} \\
	\le& \prod_{v \in \Sigma_{\fin}}|\Delta_v^0|_v^{\frac{\e}{4}} \times \prod_{v \in \Sigma_\infty}|m_v^2|_v^{\frac{\e}{4}} \times \prod_{v \in \Sigma_\fin-S}|4^{-1} (\Delta_v^0)^{-1}|_v^{\frac{\e}{4}} \\
	= & \prod_{v \in S}|\Delta_v^0|_v^{\frac{\e}{4}} \times \prod_{v \in \Sigma_\infty}|m_v^2|_v^{\frac{\e}{4}} \times \prod_{v \in \Sigma_\fin-S}|4^{-1}|_v^{\frac{\e}{4}}\\
	\ll_S & \prod_{v \in \Sigma_\infty}|t^2-4n|_v^{\frac{\e}{4}}.
\end{align*}
By this and by the majorization $\nr({\frak D}_{t^2-4n})\ll \prod_{v\in \Sigma_\infty}|t^2-4n|_v$, we have that $\Xi^{(z,\bfs)}(\fc,\fn_1,i,n)$ is majorized by the product of 
\begin{align*}
	& \prod_{v\in S(\fn\fn_1^{-1})}\frac{2}{1+q_v}\,\prod_{v\in S(\fn_1)} \frac{4(1+q_v^{1/2})}{1+q_v}
	\,\prod_{v\in S} q_v^{\nu_v(\frac{-\Re(s_v)+|\Re(z)|}{4}+\frac{\e}{2})}\end{align*}
and 
\begin{align}
	&\sum_{\substack{t\in \fa_i\fc-\{0\} \\ t^2-4n \in (\fn_1\fc^{-1})^2}}
	\prod_{\substack{v\in \Sigma_\infty \\ (t^2-4n)^{(v)}>0}} 
	\delta(n^{(v)}>0)\,\left(\left|\tfrac{t^{(v)}}{\sqrt{(t^{(v)})^2-4n^{(v)}}}
	-1\right|_v^{+}\right)^{l_v/2}
	\label{KKKK}
	\\
	&\prod_{v\in \Sigma_\infty}|t^2-4n|_v^{\frac{\ro(z)-|\Re(z)|}{4}+\frac{\e}{2}}\,\left(\tfrac{|t|_v^2}{|n|_v}\right)^{\frac{|\Re(z)|+1}{4}+\frac{\e}{2}}\times \prod_{\substack{v\in \Sigma_\infty \\ (t^2-4n)^{(v)}<0}}
	\left(1+\tfrac{|t|_v}{|t^2-4n|_v^{1/2}}\right)^{\frac{1-2l_v}{4}}
	\left(\tfrac{|t|_v}{|t^2-4n|_v^{1/2}}\right)^{-\frac{1+|\Re(z)|}{2}-\e}.
	\notag
\end{align}
By the embedding $\iota_\infty: t\mapsto (t^{(v)})_{v\in \Sigma_\infty}$, any fractional ideal of $F$ is viewed as a $\Z$-lattice of $[F:\Q]$-dimensional real vector space $F_\infty=\prod_{v\in \Sigma_\infty}F_v$. Let us examine the condition $t^2-4n\in (\fn_1\fc^{-1})^2$. Set $d_F=[F:\Q]$. We use the Euclidean norm $\|\xi\|=(\sum_{v\in \Sigma_\infty}x_v^2)^{1/2}$ on $F_\infty$ for estimations. 

\begin{lem} \label{iotaThyoukaL}
	For any $(t:n)_{F}\in \Qcal_{F}^{\rm Irr}$ with $t,\,n\in \cO$, $tn\not=0$ and a non-zero ideal $\fa \subset \cO$ such that $t^2-4n \in \fa$, it holds that $
	\|(\sqrt{|n|_v}^{-1}t^{(v)})_{v\in \Sigma_\infty} \|^{2} \geq d_F^{1/2}(\nr(n^{-1}\fa)^{1/d_F}-4).$
\end{lem}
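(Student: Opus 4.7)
The plan is to pass from the divisibility condition $t^2-4n\in\fa$ to a numerical inequality on $|\Norm_{F/\Q}(t^2-4n)|$ and then apply AM--GM to relate the archimedean sum $\|(\sqrt{|n|_v}^{-1}t^{(v)})_v\|^2$ to the product $\prod_v|(t^{(v)})^2/n^{(v)}-4|$.

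First I would observe that $t^2-4n\in\fa$ implies the ideal containment $(t^2-4n)\cO\subset\fa$, so $\nr(\fa)$ divides $\nr((t^2-4n)\cO)=|\Norm_{F/\Q}(t^2-4n)|=\prod_{v\in\Sigma_\infty}|(t^{(v)})^2-4n^{(v)}|$. Since $n\in\cO\cap F^\times$, dividing both sides by $|\Norm_{F/\Q}(n)|=\prod_{v\in\Sigma_\infty}|n^{(v)}|$ yields
\[
\nr(n^{-1}\fa)\leq\prod_{v\in\Sigma_\infty}\left|\frac{(t^{(v)})^2}{n^{(v)}}-4\right|.
\]
Setting $x_v=t^{(v)}/\sqrt{|n^{(v)}|}$, the elementary estimate $\bigl|x_v^2\,\mathrm{sgn}(n^{(v)})-4\bigr|\leq x_v^2+4$ gives
\[
\nr(n^{-1}\fa)\leq\prod_{v\in\Sigma_\infty}(x_v^2+4).
\]

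Next I would apply the AM--GM inequality to the $d_F$ positive numbers $x_v^2+4$, obtaining
\[
\prod_{v\in\Sigma_\infty}(x_v^2+4)\leq\left(\frac{\sum_{v\in\Sigma_\infty}(x_v^2+4)}{d_F}\right)^{d_F}=\left(\frac{\|x\|^2+4d_F}{d_F}\right)^{d_F},
\]
where $\|x\|^2=\sum_v x_v^2=\|(\sqrt{|n|_v}^{-1}t^{(v)})_v\|^2$. Combining the two displays, taking $d_F$-th roots and rearranging produces
\[
\|x\|^2\geq d_F\bigl(\nr(n^{-1}\fa)^{1/d_F}-4\bigr).
\]
Since $d_F\geq d_F^{1/2}$ and the inequality is trivial when $\nr(n^{-1}\fa)^{1/d_F}<4$ (the right-hand side is then non-positive while $\|x\|^2\geq0$), the weaker bound with $d_F^{1/2}$ follows.

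There is no real obstacle here: the argument is a two-line chain of elementary inequalities once one recognises that $\|x\|^2$ controls $\prod_v(x_v^2+4)$ via AM--GM. The only subtle point is making sure the crude bound $|x_v^2\,\mathrm{sgn}(n^{(v)})-4|\leq x_v^2+4$ (as opposed to trying to exploit the sign) is sufficient; it is, because the sought inequality is one-sided and we only need an upper bound on $\nr(n^{-1}\fa)$ in terms of $\|x\|^2$.
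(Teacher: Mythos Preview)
Your argument is correct and in fact yields the sharper inequality $\|x\|^2\ge d_F\bigl(\nr(n^{-1}\fa)^{1/d_F}-4\bigr)$, from which the stated bound with $d_F^{1/2}$ follows. The paper uses the same two ingredients (the divisibility $\nr(\fa)\mid|\Norm(t^2-4n)|$ and AM--GM) but combines them differently: it applies AM--GM to the squares $|t^2n^{-1}-4|_v^2$ to bound the product by $d_F^{-d_F/2}\|\iota_\infty(t^2n^{-1}-4)\|^{d_F}$, and then uses the triangle inequality for the Euclidean norm together with $(\sum y_v^4)^{1/2}\le\sum y_v^2$ to split off the constant vector $\iota_\infty(4)$, arriving directly at the constant $d_F^{1/2}$. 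Your route avoids that detour by bounding $|x_v^2\,\mathrm{sgn}(n^{(v)})-4|\le x_v^2+4$ first and applying AM--GM to $x_v^2+4$; this is both shorter and sharper by a factor of $d_F^{1/2}$, at the harmless cost of throwing away the sign of $n^{(v)}$.
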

\begin{proof}
	From $t^2-4n\in \fa$, there is an integral ideal $\fb$ such that $(t^2-4n)=\fa\fb$. By taking the norm, we have $|\nr(t^2-4n)|=\nr(\fa)\,\nr(\fb)\geq \nr(\fa)$ on one hand. On the other hand, by the geometric-arithmetic mean inequality, 
	\begin{align*}
		|\nr({t^2}{n^{-1}}-4)|=\prod_{v\in \Sigma_\infty}|t^2n^{-1}-4|_v
		\leq \biggl\{\sum_{v\in \Sigma_\infty}\tfrac{|t^2n^{-1}-4|_v^{2}}{d_F}\biggr\}^{d_F/2}=d_F^{-d_F/2}\|\iota_\infty(t^2n^{-1}-4)\|^{d_F}.
	\end{align*}
	Thus $\|\iota_\infty(t^2n^{-1}-4)\| \geq \sqrt{d_F}\,|\nr(n)|^{-1/d_{F}}\nr(\fa)^{1/d_F}.$ Set $\xi=(\sqrt{|n|_v}^{-1}t^{(v)})_{v\in \Sigma_\infty}$. Then from the inequality $(\sum_{v}x_v^4)^{1/2} \leq \sum_v x_v^2$,  
	\begin{align*}
		\|\xi\|^2+4d_F^{1/2} 
		&\geq  \|\iota_\infty(t^2n^{-1})\|+\|\iota_\infty(4)\|
		\geq \|\iota_\infty(t^2n^{-1}-4)\|\geq \sqrt{d_F}\,|\nr(n)|^{-1/d_F}\nr(\fa)^{1/d_F}.  
	\end{align*}
\end{proof}
Set 
$$
f(x)= 
\prod_{v\in \Sigma_\infty}|x_v^2-4|^{\frac{\ro(z)-|\Re(z)|}{4}+\frac{\e}{2}}\,
\prod_{v\in \Sigma_\infty} 
\begin{cases}
\biggl(\left|\tfrac{x_v}{\sqrt{x_v^2-4}}
-1\right|_v^{+}\biggr)^{l_v/2}|x_v|^{\frac{1+|\Re(z)|}{2}+\e} \quad &(|x_v|>2), \\
\left(|x_v^2-4|_v^{1/2}+{|x_v|}\right)^{\frac{1-2l_v}{4}}
|x_v^2-4|_v^{\frac{2l_v+2|\Re(z)|+1+4\e}{8}}.
\quad &(|x_v|\leq 2)
\end{cases}
$$
for $x=(x_v)\in \R^{d_F}$.
Set $\xi=(\sqrt{|n|_v}^{-1} t^{(v)})_{v\in \Sigma_\infty}$. From Lemma~\ref{iotaThyoukaL}, the series \eqref{KKKK} is bounded by 
\begin{align}
	&\prod_{v\in \Sigma_\infty} |n|_v^{\frac{\ro(z)-|\Re(z)|}{4}+\frac{\e}{2}}\sum_{\xi \in L_i(\fc,n)-\{0\}}\delta\biggl(\|\xi\|\geq \sqrt{d_F^{1/4}\,\max(0,\nr(n^{-1}\fn_1^2\fc^{-2})^{1/d_F}-4)}
	\biggr)\,f(\xi)
	\label{KKKKK}
\end{align}
where $L_i(\fc,n)$ denotes the $\Z$-lattice $\{(N_vx_v)_{v\in \Sigma_\infty}|\,x\in \iota_\infty(\fa_i\fc)\}$ in $\R^{d_F}$. The function $f(x)$ is continuous on $\R^{d_F}$ satisfying the majorization $|f(x)|\ll \prod_{v\in \Sigma_\infty}(1+|x_v|)^{-l_v+\frac{1+\ro(z)}{2}+2\e}$. 

\begin{lem} \label{SugTsudLA}
	Let $L=(L_j)_{j=1}^{d_F}\in (\R_{+})^{d_F}$ and define a function on $\R^{d_F}$ as $f_L(x)=\prod_{j=1}^{d_F}(1+|x_j|)^{-L_j}$. Suppose ${\underline L}>1$. Then we have a constant $C>0$ such that for any $A\geq 0$ and for any pair of $\Z$-lattices $\Lambda\subset \Lambda_0$ of full rank in $\R^{d_F}$, we have
	\begin{align*}
		\sum_{\xi \in \Lambda-\{0\}}\delta(\|\xi\|\geq A)\,f_{L}(\xi) \leq C \, r(\Lambda_0)^{-d_F}(1+r(\Lambda_0))^{d_F\bar L}\,\max(A, r(\Lambda))^{1-{\underline L}}\end{align*}
	with ${\underline L}=\min\{L_j|\,1\leq j\leq d_F\}$, $\bar L=\max\{L_j|\,1\leq j \leq d_F\}$ and $r(\Lambda)=2^{-1}\inf_{\xi\in \Lambda-\{0\}}\|\xi\|$. 
\end{lem}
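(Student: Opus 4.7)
The plan is to carry out a standard lattice-point-versus-integral comparison, exploiting the fact that points of $\Lambda$, viewed as points of the larger lattice $\Lambda_0$, are separated by at least $2r(\Lambda_0)$. First, for each $\xi \in \Lambda-\{0\}$, I would consider the open ball $B(\xi,r(\Lambda_0))$. Because $\Lambda\subset\Lambda_0$, any two distinct points of $\Lambda$ lie at distance $\geq 2r(\Lambda_0)$, so these balls are pairwise disjoint.

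Next, I would obtain a pointwise comparison between $f_L(\xi)$ and $f_L$ on this ball. If $y\in B(\xi,r(\Lambda_0))$, then $|y_j|\leq|\xi_j|+r(\Lambda_0)$, which yields
\[
1+|y_j|\leq (1+|\xi_j|)(1+r(\Lambda_0)),
\]
and hence $f_L(y)\geq(1+r(\Lambda_0))^{-d_F\bar L}\,f_L(\xi)$. Integrating this over $B(\xi,r(\Lambda_0))$, a ball of volume $\asymp r(\Lambda_0)^{d_F}$, gives
\[
f_L(\xi)\ll r(\Lambda_0)^{-d_F}(1+r(\Lambda_0))^{d_F\bar L}\int_{B(\xi,r(\Lambda_0))}f_L(y)\,dy.
\]
Summing over those $\xi\in\Lambda-\{0\}$ with $\|\xi\|\geq A$ and using the disjointness of the balls, the left-hand side is bounded by $C\cdot r(\Lambda_0)^{-d_F}(1+r(\Lambda_0))^{d_F\bar L}\int_{\Omega}f_L(y)\,dy$, where $\Omega$ is a subset of $\mathbb{R}^{d_F}$ that I now need to control.

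For a point $y$ in one of these balls we have $\|y\|\geq\|\xi\|-r(\Lambda_0)\geq\|\xi\|-r(\Lambda)\geq\|\xi\|/2$ (since $\|\xi\|\geq 2r(\Lambda)$ for $\xi\in\Lambda-\{0\}$ and $r(\Lambda_0)\leq r(\Lambda)$), so $\|y\|\geq\tfrac12\max(A,2r(\Lambda))$. It then suffices to prove the tail estimate
\[
\int_{\|y\|\geq B}f_L(y)\,dy \ll B^{1-\underline L}\qquad (B>0),
\]
for which I would split according to which coordinate is largest: in the region where $|y_j|$ dominates, $|y_j|\gtrsim B/\sqrt{d_F}$, so $\int(1+|y_j|)^{-L_j}\,dy_j\ll(1+B)^{1-L_j}\ll B^{1-\underline L}$, while the remaining factors $\int_{\mathbb R}(1+|y_k|)^{-L_k}\,dy_k$ are uniformly bounded thanks to $L_k\geq\underline L>1$. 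For small $B$ the same bound is vacuously valid since $B^{1-\underline L}\geq 1$ while the total integral of $f_L$ is finite.

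Combining all these, I would obtain
\[
\sum_{\xi\in\Lambda-\{0\},\,\|\xi\|\geq A}f_L(\xi)\ll r(\Lambda_0)^{-d_F}(1+r(\Lambda_0))^{d_F\bar L}\max(A,r(\Lambda))^{1-\underline L},
\]
absorbing the factor $2^{\underline L-1}$ coming from $\max(A/2,r(\Lambda))\geq\max(A,r(\Lambda))/2$ into $C$. There is no real obstacle here; the only small subtlety is tracking constants so that the $(1+r(\Lambda_0))^{d_F\bar L}$ factor cleanly absorbs the distortion of $f_L$ on balls, and ensuring the tail estimate is written in the form $B^{1-\underline L}$ rather than $(1+B)^{1-\underline L}$ so that the final bound does not require an extra "$+1$" inside $\max(A,r(\Lambda))$.
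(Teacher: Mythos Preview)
Your argument is correct and is the standard ball-packing comparison one expects here. The paper itself does not give a self-contained proof but simply cites \cite[Theorem A.1]{SugiyamaTsuzuki2}; your proposal spells out precisely the kind of argument that reference contains, so there is no substantive difference in approach.
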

\begin{proof}
	This follows from the proof of \cite[Theorem A.1]{SugiyamaTsuzuki2}. 
\end{proof}
Set $\Lambda_n=\{(\sqrt{|n|_v}^{-1} x_v)|\,x\in \iota_\infty(\cO)\}$. Since $L_i(\fc,n)$ are all contained in $\Lambda_n$, from Lemma~\ref{SugTsudLA}, the series \eqref{KKKKK} is absolutely convergent if ${\underline l} \geq 4$ and $\e>0$ is small enough, and majorized by
\begin{align*}
	\prod_{v\in \Sigma_\infty} |n|_v^{\frac{\ro(z)-|\Re(z)|}{4}+\frac{\e}{2}}
	\times \delta (n)\, \max\biggl(\sqrt{d_F^{1/4} \max(0,\nr(n^{-1}\fn_1^2\fc^{-2})^{1/d_F}-4)}
	,\, r(L_i(\fc,n))\biggr)^{1-{\underline L}(z)}
\end{align*}
with ${\underline L}(z)={\underline l}-\frac{1+\ro(z)}{2}-2\e\ge 1+\e$ and $\delta(n)
=r(\Lambda_n)^{-d_F}(1+r(\Lambda_n))^{d_F{\overline L}(z)}$ with $\bar L(z)=\overline{l}-\frac{1+\ro(z)}{2}-2\e$.
By a slight modification of the proof of \cite[Lemma A.7]{SugiyamaTsuzuki2}, we have $|\nr(n)|^{-1/2d_F}\leq d_F^{1/2}r(\Lambda_n)$ and $(|\nr(n)|^{-1/2}\nr(\fa_i\fc))^{1/d_F}\leq r(L_i(\fc,n))$. From Minkowski's convex body theorem, we have an upper-bound $r(\Lambda_n) \ll |\nr(n)|^{-1/2d_F}$. Hence 
$$\delta(n)\ll |\nr(n)|^{1/2}(1+|\nr(n)|^{-1/2d_F})^{d_F\overline{L}(z)} \ll |\nr(n)|^{1/2}\quad \text{for $n\in \cO$}.$$
As a consequence, we obtain a majorant of the series \eqref{KKKKK} as
\begin{align*}
	|\nr(n)|^{\frac{\ro(z)-|\Re(z)|}{4}+\frac{\e}{2}}\, |\nr(n)|^{1/2} \, \max\biggl(\sqrt{d_F^{1/4}\max(0,\nr(n^{-1}\fn_1^2\fc^{-2})^{1/d_F}-4)}, |\nr (n)|^{-1/2d_F}\nr(\fa_i\fc)^{1/d_F}\biggr)^{1-{\underline L}(z)}.\end{align*}
From the argument so far, we obtain the half of the following majorization when $R(n)=\{0\}$. 

\begin{lem} \label{ABC-L8}
	Let $\e>0$ be small enough. Let $\fn$ be a square-free integral ideal satisfying the assumption \eqref{ASSUMPTION}, $\fc$ a divisor of $\fn$, $\varepsilon\in \cO^\times/(\cO^\times)^2$, $i\in I(S,\nu)$ and $\nu \in \N_0^{S}$. Then there exists a constant $C>0$ independent of $(\fn,\fc,i,\nu)$ such that the following inequality holds uniformly in $(z,\bfs)\in \C\times \fX_S$ with $\Re(z)\in [-2\underline{l}+3+6\e, \underline{l}-1/2-\e]$,
	$\min_{v\in S}\Re(s_v)>\max(1, \tfrac{|\Re(z)|+1}{2})$, where ${\underline L}(z)={\underline l}-\frac{1+\ro(z)}{2}-2\e$ and $\ro(z)=\max(1,|\Re(z)|)$ : 
	{\allowdisplaybreaks \begin{align*}
			&|\Xi^{(z,\bfs)}(\fc,\fn_1, i,\varepsilon n_{i,\nu})\\
			\leq &\,  C\,
			\prod_{v\in S(\fn\fn_1^{-1})}\frac{2}{q_v+1} \prod_{v\in S(\fn_1)} \frac{4(1+q_v^{1/2})}{1+q_v}
			\,\prod_{v\in S} q_v^{\nu_v(\frac{-\Re(s_v)+\ro(z)}{4}+\e)} |\nr(n_{i,\nu})|^{1/2+({\underline L}(z)-1)/2d_F}
			\\
			&\times 
			\max\biggl(\sqrt{d_F^{1/4}\max\bigl(\,0,\nr(\fn_1^2\fc^{-2})^{1/d_F}-4|\nr(n_{i,\nu})|^{1/d_F}\,\,\bigr)}, \,
			\nr(\fa_i\fc)^{1/d_F}\biggr)^{1-{\underline L}(z)} 
			\\
			&+\delta(R(\varepsilon n_{i,\nu})=\emp)\,\delta((\nu_v)_{v \in S} \in \{0,1\}^S)\,\prod_{v\in S(\fn)} \frac{2}{1+q_v} \prod_{v \in S} \max(1,q_v^{\frac{-\Re(s_v)+|\Re(z)|}{4}+\e}).
	\end{align*}} 
\end{lem}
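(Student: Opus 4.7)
The first displayed term of the bound has already been worked out in the paragraphs immediately preceding the statement, where it is shown to be a majorant for the subsum of $\Xi^{(z,\bfs)}(\fc,\fn_1,i,\varepsilon n_{i,\nu})$ extended over $t\in X(\fc,\fn_1,i,\varepsilon n_{i,\nu})\setminus\{0\}$: one combines the pointwise bounds of Lemma~\ref{ABC-L1234} on $\Ocal_{v,j}^{\delta,(z)}$, $\Scal_v^{\delta,(z)}$ and $\Ocal_v^{\pm,(z)}$ with the Eisenstein period bound of Lemma~\ref{EisPeriodEst}; reduces the archimedean lattice sum via Lemmas~\ref{iotaThyoukaL} and \ref{SugTsudLA}; and uses $r(\Lambda_n)\ll|\nr(n)|^{-1/2d_F}$ (from Minkowski) to produce the exponent on $|\nr(n_{i,\nu})|$. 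Weakening the exponents by replacing $|\Re(z)|$ with $\ro(z)$ and $\tfrac{\e}{2}$ with $\e$ yields exactly the first displayed term. The only remaining task is to account for the $t=0$ summand, which produces the second displayed term.

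By Lemma~\ref{levelcondition}, $t=0$ belongs to $X(\fc,\fn_1,i,\varepsilon n_{i,\nu})$ exactly when condition~(c) of that lemma fails for $n=\varepsilon n_{i,\nu}$, i.e.\ precisely when $R(\varepsilon n_{i,\nu})=\emp$; and since $\ord_v(0)=+\infty$, the normalization of Lemma~\ref{ABC-L7} forces $\nu_v=\min(2\ord_v(0),\nu_v)\in\{0,1\}$ at every $v\in S$. These account for the two Dirac factors in the second term. For the summand $t=0$ itself, $t^2-4n=-4n$ gives $-n^{(v)}=\Delta_v^0 m_v^2$ and $|\fn_v(0,n)|_v=|\Delta_v^0|_v$, which evaluates to $1$ or $q_v^{-1}$ at each non-dyadic place $v\in S\cup S(\fn)$ according as $\nu_v$ is $0$ or $1$. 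Substituting into the explicit formulas of $\Ocal_{v,1}^{\delta,(z)}$ and $\Scal_v^{\delta,(z)}$ (the bounds of Lemma~\ref{ABC-L1234}(2)--(3) extend to the range $|a|_v\le 1$ by direct inspection of the defining series) yields $|\Ocal_{v,1}^{\Delta_v^0,(z)}(\fn_v(0,n))|\ll(1+q_v)^{-1}$ for $v\in S(\fn)$ and $|\Scal_v^{\Delta_v^0,(z)}(s_v;\fn_v(0,n))|\ll\max(1,q_v^{(-\Re(s_v)+|\Re(z)|)/4+\e})$ for $v\in S$. At each archimedean place the argument of $\Ocal_v^{\pm,(z)}$ vanishes and Lemma~\ref{ABC-L1234}(4)--(5) contributes a bounded factor; at $v\in\Sigma_\fin-(S\cup S(\fn))$ Lemma~\ref{ABC-L1234}(1) gives $O(1)$. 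Multiplying these local bounds produces the second term.

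The main obstacle is absorbing the Eisenstein prefactor $\nr({\frak D}_{-4n})^{(1+\e)/4+\ro(z)/4}$ attached to each summand of $\Xi^{(z,\bfs)}$ into the constant $C$, since $\nr({\frak D}_{-4n})$ would be unbounded if $n$ were allowed to range over all of $\cO$. Fortunately, the constraints $\nu\in\{0,1\}^S$, $i\in I(S,\nu)\subset\{1,\ldots,h\}$ and $\varepsilon\in\cO^\times/(\cO^\times)^2$ (finite by Dirichlet's unit theorem) confine $(i,\nu,\varepsilon)$ to a finite set depending only on $F$ and $S$, so $\nr({\frak D}_{-4\varepsilon n_{i,\nu}})$ takes only finitely many values and the prefactor can be absorbed into $C$. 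Note that, by contrast, the analogous analytic challenge for the $t\ne 0$ part---namely summing the archimedean factors over an infinite lattice with an exponent-dependent cutoff---was already discharged in the preceding paragraphs via Lemma~\ref{SugTsudLA}.
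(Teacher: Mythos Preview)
Your proposal is correct and follows essentially the same route as the paper: the first term of the bound is taken from the argument preceding the lemma (the $t\neq 0$ summands), and the second term is obtained by bounding the single $t=0$ summand, which occurs precisely when $R(\varepsilon n_{i,\nu})=\emp$ and forces $\nu\in\{0,1\}^S$ via Lemma~\ref{ABC-L7}.

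A couple of minor remarks. First, the parenthetical about extending Lemma~\ref{ABC-L1234}(2)--(3) to $|a|_v\le 1$ is unnecessary: for $v\in S(\fn)$ one has $n\in\cO_v^\times$ (by \eqref{ASSUMPTION} and $S\cap S(\fn)=\emp$), hence $|\fn_v(0,n)|_v=|\Delta_v^0|_v=1$ and Lemma~\ref{ABC-L1234}(2) applies as stated; for $v\in S$ with $\nu_v\in\{0,1\}$ the value $\ord_v(\fn_v(0,n))\in\{0,1\}\subset\N_0$ is already covered by Lemma~\ref{ABC-L1234}(3). (Your sentence also indexes by $\nu_v$ for $v\in S(\fn)$, where $\nu$ is not defined; but since $|\fn_v(0,n)|_v=1$ there, the case distinction is moot.) Second, your explicit remark that $\nr({\frak D}_{-4n})^{(1+\e)/4+\ro(z)/4}$ is absorbed into $C$ because $(i,\nu,\varepsilon)$ ranges over a finite set is a point the paper leaves implicit; your version is clearer on this.
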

\begin{proof} The case $R(\varepsilon n_{i,\nu})=\{0\}$ is settled before the lemma. Suppose $R(\varepsilon n_{i,\nu})=\emp$. In this case we have to estimate the extra terms with $t=0$ in $\Xi^{(z,\bfs)}(\fc,\fn_1,i,n)$. Set $n=\varepsilon n_{i,\nu}$. By $t=0$, (from Lemma~\ref{ABC-L7}) we have $\nu_v\in \{0,1\}\,(\forall v\in S)$. Combining this with the relation $(-4n)^{(v)}=m_v^2\Delta_v^0$, we have $m_v\in \cO_v^\times$ for all $v\in S$. Thus the $S$-factor is bounded by $\prod_{v \in S} 16\max(1,q_v^{\frac{-\Re(s_v)+|\Re(z)|}{4}+\e})$ by Lemma~\ref{ABC-L1234} (3). The equality $R(\varepsilon n_{i,\nu})=\emp$ means that $n^{(v)}\pmod {\fp_v}$ is a square residue at all $v\in S(\fn)$. Hence $n^{(v)}\in (\cO_v^\times)^2$ for all $v\in S(\fn)$.
	Thus from the case of $\Delta_v^0=1$ in Lemma \ref{ABC-L1234} (2), we see $\prod_{v\in S(\fn)}|\fE_v^{(z)}(\hat \gamma_v)|\le \prod_{v\in S(\fn)}\tfrac{2}{1+q_v}.$ The archimedean component to be accounted for is the product of $\Ocal_v^{-,(z)}(0)=2^{l_v-1}(-1)^{l_v/2}\sqrt{\pi}\Gamma(\frac{l_v}{2}+\frac{z-1}{4})\Gamma(\frac{l_v}{2}+\frac{-z-1}{4})\Gamma(l_v)^{-1}$, which is bounded on the vertical strip $\Re(z)\in[-2\underline{l}+3+6\e, \underline{l}-1/2-\e]$. The conditions on $\Re(z)$ and $\min_{v\in S}\Re(s_v)$ come from ${\underline L}(z)>1+\e$ and Lemma~\ref{ABC-L1234} (3), (5). 
\end{proof}

By Lemma~\ref{ABC-L8}, to complete the proof of Theorem~\ref{ABCTheorem} it suffices to note the multi-series
$$
\sum_{\nu \in \N_0^{S}} \prod_{v\in S} q_v^{\nu_v(\frac{-\Re(s_v)+\ro(z)}{4}+\e)}|\nr(n_{i,\nu})|^{1/2+({\underline L}(z)-1)/2d_F}
$$
converges absolutely if $\Re(s_v) > \ro(z)+2d_F^{-1}({\underline L}(z)-1)+2+4\e$ for all $v\in S$. Note $\ro(z)+2d_F^{-1}({\underline L}(z)-1)+4\e+2<\ro(z)+2({\underline L}(z)-1)+4\e+2=2{\underline l}-1$ due to ${\underline L}(z)>1$.

\subsection{The conclusion}
From \eqref{10.2-f3} and \eqref{10.2-f2}, 
\begin{align*}
	(\Ecal_\b^{*})^\Delta(g)= \int_{L_\s}\b(z)\,\Lambda_F(z+1)E^{\Delta}(z;1_2) \prod_{v\in \Sigma_F}\varphi_v^{\Delta,(z)}(g_v)\,\d z, \quad g=(g_v)\in G_\A. 
\end{align*}
Substituting this to \eqref{10.2-f1} and exchanging the order of integrals, we obtain 
{\allowdisplaybreaks\begin{align*}
		\JJ_{\rm{ell}}(\bfs, \b)&=\tfrac{1}{2}\sum_{\tilde \gamma \in \Qcal_{F}^{\rm{Irr}}} \int_{L_\s}\b(z)\,\Lambda_F(z+1)E^{\Delta}(z;1_2)\,\{\int_{{\frak T}_{\Delta}\bsl G_\A} \Phi(\bfs;g^{-1}\hat \gamma g)\,\prod_{v\in \Sigma_F}\varphi^{\Delta,(z)}(g_v)\,\d g \}\,\d z
		\\
		&=\tfrac{1}{2}\sum_{\tilde \gamma \in \Qcal_{F}^{\rm{Irr}} } \int_{L_\s}\b(z)\,\Lambda_F(z+1)E^{\Delta}(z;1_2)\,\{\prod_{v\in \Sigma_F} \fE_v^{(z)}(\hat \gamma_v)\} \, \d z. 
\end{align*}}We need to legitimatize the order exchange of integral above. By Fubini's theorem and Lemma~\ref{EisPeriodEst}, it suffices to show the following. 

\begin{lem}
	For a fixed $\tilde \gamma=(t:n)_{F}$, there exist $N>0$ and a finite set $T_0\subset \Sigma_F$ containing $\Sigma_\infty$ such that 
	\begin{align}
		\prod_{v\in T} \int_{{\frak T}_{\Delta,v}\bsl G_v}|\Phi_v(g_v^{-1} \hat\gamma_v g_v)|\,|\varphi_v^{\Delta,(z)}(g_v)|\,\d g_v =O((1+|z|)^{N}) 
		\label{10.6-f1}
	\end{align}
	uniformly for all finite sets $T\subset \Sigma_F$ containing $T_0$ and for all $z$ in the strip $-2\underline{l}+3+6\e\le |\Re(z)|\leq \underline{l}-1/2-\e$ with small $\e>0$. 
\end{lem}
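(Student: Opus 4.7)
\smallskip\noindent
\textbf{Proposal.} The idea is to choose a finite set $T_0$ of ``bad'' places outside of which the local integral is essentially trivial, and then bound the contributions at the finitely many $v \in T_0$ by a polynomial in $|z|$.

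\smallskip\noindent
\emph{Step 1: Choice of $T_0$.} We set $T_0 = \Sigma_\infty \cup S \cup S(\fn) \cup \Sigma_{\rm dyadic} \cup T_1$, where $T_1$ is the finite set of non-archimedean places at which at least one of $t \in \cO_v$, $n \in \cO_v^\times$, $d_v = 0$ fails. Since $\tilde\gamma$ and $(\fn,S,l)$ are fixed, $T_0$ is a finite set.

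\smallskip\noindent
\emph{Step 2: Trivialization for $v \notin T_0$.} For such $v$, one has $v$ non-dyadic, $d_v=0$, $t\in\cO_v$, $n \in\cO_v^\times$, hence $\Delta = t^2-4n \in \cO_v^\times$, forcing $m_v\in \cO_v^\times$ and $\Delta_v^0 \in \{1\}\cup(\cO_v^\times-(\cO_v^\times)^2)$; in particular $\hat\gamma_v$ has entries in $\cO_v$ and lies in $\bfK_v \cap {\frak T}_{\Delta,v}$. Since $\Phi_v=\mathrm{ch}_{Z_v\bfK_v}$ and $\varphi_v^{\Delta,(z)}$ is left ${\frak T}_{\Delta,v}$- and right $\bfK_v$-invariant with $\varphi_v^{\Delta,(z)}(1_2)=1$ by Lemma~\ref{MFoneSphftn}, the support of $g_v \mapsto \Phi_v(g_v^{-1}\hat\gamma_v g_v)$ is $Z_v{\frak T}_{\Delta,v}\bfK_v$, on which $|\varphi_v^{\Delta,(z)}(g_v)|$ is identically $1$. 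Using Lemma~\ref{VolZfTd} and $d_v=0$, the local integral reduces to the volume $\vol(Z_v{\frak T}_{\Delta,v}\bsl Z_v{\frak T}_{\Delta,v}\bfK_v) = 1$ independently of $z$. Thus only places in $T_0$ contribute non-trivially to the product over $T$.

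\smallskip\noindent
\emph{Step 3: Finite places in $T_0$.} At $v\in S$, the Green function satisfies $|\Phi_v(g)|\ll \|g\|_v^{-(\Re(s_v)+1)/2}$ by \eqref{GreenFtnEst}; at $v\in S(\fn)$ the function $\Phi_v=\mathrm{ch}_{Z_v\bfK_0(\fn\cO_v)}$ has compact support modulo $Z_v$; at any remaining $v \in T_0 \cap \Sigma_\fin$ we have $\Phi_v = \mathrm{ch}_{Z_v\bfK_v}$, again compactly supported modulo $Z_v$. In each case the function $g_v\mapsto \Phi_v(g_v^{-1}\hat\gamma_v g_v)$ is supported on a set which is compact modulo ${\frak T}_{\Delta,v}$, or decays sufficiently fast. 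By Lemma~\ref{MFoneSphftn} and the explicit Iwasawa expression \eqref{Hsphericalfnt} (resp.\ its elliptic analogue obtained by specializing the construction in Lemma~\ref{sphericalftn1value}), $|\varphi_v^{\Delta,(z)}(g_v)|$ is a finite sum of terms of the shape $|A_v(0,\pm z)|\cdot y(g_v)^{-(\pm z+1)/2}$, which grows at most polynomially in $y(g_v)$ with coefficient $O((1+|z|)^{N_v})$ via standard estimates for $\zeta_{F_v}$. Bringing these together yields convergence and a polynomial bound in $|z|$ at each such $v$, provided $\Re(s_v)$ is taken within the stated range.

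\smallskip\noindent
\emph{Step 4: Archimedean places.} For $v\in\Sigma_\infty$ we combine \eqref{MCDSest}, which gives $|\Phi_v^{l_v}(g_v)|\ll \|g_v\|_v^{-l_v/2}$, with a bound $|\varphi_v^{\Delta,(z)}(g_v)|\ll (1+|z|)^{N}\,\|g_v\|_v^{(|\Re(z)|-1)/2}$ obtained from the hypergeometric representation in \eqref{Hsphericalfnt} (split case) or from its Legendre-function analogue in the elliptic case, together with Stirling estimates for the gamma factors $A_v(0,\pm z)$. Integrating in Cartan coordinates on ${\frak T}_{\Delta,v}\bsl G_v$ converges whenever $l_v/2 > (|\Re(z)|-1)/2 + 1$, which is satisfied in our strip $|\Re(z)|\le \underline{l}-1/2-\epsilon$; the resulting bound is polynomial in $|z|$. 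Taking the product over any finite $T\supset T_0$ and using Step 2 to kill the extra factors, we obtain \eqref{10.6-f1}.

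\smallskip\noindent
\emph{Main obstacle.} The delicate point is Step 4: one must obtain a uniform polynomial-in-$|z|$ bound on the archimedean spherical function $\varphi_v^{\Delta,(z)}$ throughout the vertical strip $|\Re(z)|\le \underline{l}-1/2-\epsilon$ that is integrable against the rapidly-decaying discrete series matrix coefficient $\Phi_v^{l_v}$. This requires pairing the explicit hypergeometric/Legendre expansions with careful Stirling estimates and the condition $\underline{l}\geq 4$ built into \S\ref{BasicAssumption}.
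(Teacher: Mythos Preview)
Your overall strategy—isolate a finite set $T_0$ outside of which the local integral is identically $1$, and then bound the finitely many remaining factors polynomially in $|z|$—is exactly the paper's approach. However, several steps contain gaps that need to be fixed.

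In Step~2 your implication ``$t\in\cO_v$, $n\in\cO_v^\times$, hence $\Delta=t^2-4n\in\cO_v^\times$'' is false: nothing prevents $t^2\equiv 4n\pmod{\fp_v}$. You must enlarge $T_1$ to include the finitely many places where $\Delta\notin\cO_v^\times$ (the paper's $T_0$ in fact requires $|t|_v=|4n|_v=|t^2-4n|_v=1$). More seriously, your claim that the support of $g_v\mapsto\Phi_v(g_v^{-1}\hat\gamma_v g_v)$ is exactly $Z_v{\frak T}_{\Delta,v}\bK_v$ is asserted without proof; one inclusion is clear, but the reverse is a nontrivial statement about conjugation of regular elliptic (or split hyperbolic) elements into $\bK_v$. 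The paper avoids this by explicit computation: in the split case it reduces to $\fF_v^{(z)}(b)$ and checks $|b|_v=|b-1|_v=1$ so that the support in the $x$-variable is $\cO_v$, on which $\varphi_v^{(0,z)}(n(x))=A_v(0,z)+A_v(0,-z)=1$ by \eqref{FFz1}; in the inert case it refers to the support analysis \eqref{EllOrbIntL1-1}--\eqref{EllOrbIntL1-3} carried out in \S\ref{EllOrbIntL1}.

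In Step~4 you conflate $\|g_v\|_v$ and $\|g_v^{-1}\hat\gamma_v g_v\|_v$: the bound \eqref{MCDSest} applies to the latter, while your bound on $\varphi_v^{\Delta,(z)}$ is in terms of the former. These are not directly comparable, and your resulting convergence condition $l_v/2>(|\Re(z)|-1)/2+1$, i.e.\ $|\Re(z)|<l_v-1$, does \emph{not} cover the required strip $|\Re(z)|\le\underline l-1/2-\e$ when $l_v=\underline l$. The correct analysis (done explicitly in the proof of Lemma~\ref{FHypL9} and in \S\ref{EllOrbAcase}) shows that in Iwasawa-type coordinates on ${\frak T}_{\Delta,v}\bsl G_v$ the integrand decays like $|x|^{-l_v+(|\Re(z)|-1)/2}$, giving convergence for $|\Re(z)|<2l_v-1$, which does cover the strip. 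The polynomial-in-$|z|$ bound then follows from Stirling's formula applied to the explicit gamma factors in Theorem~\ref{explicit hyp}(1) and Theorem~\ref{OrbIntUnifEx}(1), as the paper indicates by referring back to \S\ref{HypSecondStep} and \S\ref{Localorbitalintegrals}.
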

\begin{proof}
	For simplicity we argue assuming $t\not=0$. (The case $t=0$ is easier.) Let $T_0$ be the union of $\Sigma_\infty \cup \Sigma_{\rm{dyadic}} \cup S\cup S(\fn D_F)$ and the set of $v\in \Sigma_\fin$ such that $|t|_v=|4n|_v=|t^2-4n|_v=1$ does not hold. If $v\in \Sigma_\fin-T_0$ is such that $\Delta_v^{0}=1$, then $a=\frac{t}{2m_v}$, $b=\frac{a+1}{a-1}$ satisfies $|a-1|_v=|a+1|_v=1$ and $|b-1|_v^2=\frac{|4|_v}{|a^2-1|_v}=1$. Thus as we have seen in \S~\ref{EFLOInt},  
	{\allowdisplaybreaks\begin{align*}
			\tint_{{\frak T}_{\Delta,v}\bsl G_v}|\Phi_v(g_v^{-1} \hat \gamma_v g_v)||\varphi^{\Delta,(z)}(g_v)|\,\d g_v 
			=&
			\tint_{F_v}\left(\int_{\bfK_v}|\Phi_v\left(k^{-1} \left[\begin{smallmatrix} b & (b-1)x \\ 0 & 1 \end{smallmatrix} \right] k\right)|\,\d k\, \right)|\varphi_v^{(0,z)}(\left[\begin{smallmatrix} 1 & x \\ 0 & 1 \end{smallmatrix} \right] )|\,\d x\\
			=&\tint_{x\in \cO_v} \left|\varphi_v^{(0,z)}\left(\left[\begin{smallmatrix} 1 & x \\ 0 & 1 \end{smallmatrix} \right] \right)\right|\,\d x=1. 
	\end{align*}}If $v\in \Sigma_\fin-T_0$ is such that $\Delta_v^{0}\in \cO_v^\times-(\cO_v^\times)^2$, then since $|\fn_v(t,n)|_v=|\frac{t^{(v)}}{2m_v}|_v=1$, from the proof given in \S~\ref{EllOrbIntL1}, we also have $\int_{{\frak T}_{\Delta,v}\bsl G_v}|\Phi_v(g_v^{-1} \hat \gamma_v g_v)||\varphi^{\Delta,(z)}(g_v)|\,\d g_v=1$. Thus the left-hand side of \eqref{10.6-f1} is independent of $T$ containing $T_0$. At places in $T_0$, the absolute convergence of the integral and their necessary polynomial bound are shown or are easily deducible from the arguments in \S~\ref{HypSecondStep} and in \S~\ref{Localorbitalintegrals}. 
\end{proof}

Set 
\begin{align}
	\hat J_{\rm{ell}}(\bfs, z)=\tfrac{1}{2}\sum_{\tilde \gamma \in \Qcal_{F}^{\rm{Irr}}\cap\Qcal_F^{S}}
	\Lambda_F(z+1)E^{\Delta}(z;1_2)\,\{\prod_{v\in \Sigma_F} \fE_v^{(z)}(\hat \gamma_v)\}. 
	\label{JJell}
\end{align}

Now we obtain Theorem~\ref{hatJ*} for $F$-elliptic terms in the following form.

\begin{thm}\label{Conclusion of ell}
	For $(z,\bfs)\in \C\times \fX_S$ satisfying $z\neq \pm 1$, $\Re(z) \in (-2\underline{l}+3, \underline{l}-1/2)$, $\min_{v\in S}\Re(s_v)>2{\underline l}-1$, the series \eqref{JJell} converges absolutely and locally uniformly in $(z,\bfs)$. For a fixed $\bfs \in \fX_S$ such that $\min_{v\in S}\Re(s_v)> 2{\underline l}-1$, the function $z\mapsto \hat J_{\rm{ell}}(\bfs, z)$ is holomorphic away from $z=\pm 1$ and is vertically of moderate growth on the strip $\Re(z) \in [-2{\underline l}+3+\e, {\underline l}-1/2-\e]$ for any sufficiently small $\e>0$. We have the formula \eqref{hatJ*-f1} with $\natural={\rm ell}$. 
\end{thm}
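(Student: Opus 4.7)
The plan is to combine the three main ingredients already established earlier in the section: the uniform estimate of the Eisenstein period (Lemma~\ref{EisPeriodEst}), the explicit formulas of the local orbital integrals (Theorem~\ref{OrbIntUnifEx}), and the master convergence statement (Theorem~\ref{ABCTheorem}). The series defining $\hat{J}_{\rm ell}(\bfs,z)$ in \eqref{JJell} differs from the series in Theorem~\ref{ABCTheorem} only by the removal of the harmless factor $|z^2-1|$, so once $(z,\bfs)$ lies in the stated region and $z \neq \pm 1$, Theorem~\ref{ABCTheorem} yields the absolute and locally uniform convergence on compact subsets of $\{z \in \C \mid \Re(z) \in (-2\underline{l}+3,\underline{l}-1/2),\ z\neq \pm 1\}\times\{\bfs \in \fX_S \mid \min_v \Re(s_v) > 2\underline{l}-1\}$.

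For holomorphicity, each summand of \eqref{JJell} is meromorphic in $z$: the local factors $\fE_v^{(z)}(\hat\gamma_v)$ are holomorphic by the explicit formulas in Theorem~\ref{OrbIntUnifEx} (removable singularities as noted right after the definitions of $\Ocal_{v,\e}^{\delta,(z)}$ and $\Scal_v^{\delta,(z)}$), while $\Lambda_F(z+1)E^\Delta(z;1_2)$ factorizes via Proposition~\ref{EisPeriod} into $L((z+1)/2,\varepsilon_\Delta)\zeta_F((z+1)/2) \zeta_F(z+1)^{-1}\Lambda_F(z+1)$ times elementary factors; since $\varepsilon_\Delta$ is non-trivial (as $\Delta \notin (F^\times)^2$), this function is holomorphic on the strip away from $z=\pm 1$. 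Locally uniform convergence of the sum then promotes this to holomorphicity of $\hat J_{\rm ell}(\bfs,z)$. Vertical moderate growth follows from the same argument: Lemma~\ref{EisPeriodEst} gives a polynomial bound on $|(z^2-1)\Lambda_F(z+1)E^{\Delta}(z;1_2)|$ depending only on $\Re(z)$ and $\nr(\fD_\Delta)$; combined with the uniform-in-$\Im(z)$ control of each local orbital integral (visible in the explicit formulas), the absolutely convergent majorant supplied by Theorem~\ref{ABCTheorem} is vertically bounded on any closed substrip $\Re(z)\in[-2\underline{l}+3+\e,\underline{l}-1/2-\e]$ away from neighborhoods of $z=\pm 1$.

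It then remains to establish the integral identity \eqref{hatJ*-f1} for $\natural = \rm ell$. Starting from the orbit-integral expansion \eqref{10.2-f1} and substituting \eqref{10.2-f3} together with the product formula \eqref{10.2-f2} for $E^\Delta(z;g)$, the expression
\[
\JJ_{\rm ell}(\bfs,\b)
= \tfrac{1}{2}\sum_{\tilde\gamma\in\Qcal_F^{\rm Irr}}
\int_{L_\s}\b(z)\,\Lambda_F(z+1)\,E^\Delta(z;1_2)
\Bigl\{\prod_{v\in\Sigma_F}\fE_v^{(z)}(\hat\gamma_v)\Bigr\}\,\d z
\]
is obtained formally. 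To legitimize the interchange of the sum over $\tilde\gamma$ and the contour integral against $\b(z)$, I will apply Fubini's theorem: the rapid vertical decay of $\b$ on $L_\s$ combined with Theorem~\ref{ABCTheorem} (which gives term-by-term majorization uniform in $\Im(z)$ on the contour) provides an integrable majorant. The restriction from $\Qcal_F^{\rm Irr}$ to $\Qcal_F^{\rm Irr}\cap \Qcal_F^S$ in \eqref{JJell} is free since Lemma~\ref{ABC-L6} shows all orbital integrals outside $\Qcal_F^S$ vanish at $v \in \Sigma_\fin-S$. This gives the desired identity with a contour $L_\s$ taken inside the strip.

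The principal obstacle is exactly the Fubini justification, which rests on Theorem~\ref{ABCTheorem}—itself the hardest technical input. All the delicate work (the estimates of $\Scal_v^{\delta,(z)}$ and of $\Ocal_v^{\pm,(z)}$, the Minkowski-style lattice point count via Lemmas~\ref{iotaThyoukaL} and \ref{SugTsudLA}, and the control of the Eisenstein period through Lemma~\ref{EisPeriodEst}) has already been carried out. Consequently, the remaining proof amounts to assembling these results and verifying that the bookkeeping of convergence regions (the interplay between $\Re(z)$, $\min_v \Re(s_v)$, and $\underline{l}$) is compatible with the strip claimed in the statement.
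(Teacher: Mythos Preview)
Your proposal follows the paper's approach closely and is essentially correct, but there is one step you gloss over that the paper treats with care. When you write that substituting \eqref{10.2-f3} and \eqref{10.2-f2} into \eqref{10.2-f1} yields the displayed expression ``formally'', you have in fact performed \emph{two} interchanges: first the exchange of the $g$-integral over ${\frak T}_\Delta\bsl G_\A$ with the contour integral $\int_{L_\s}$ (which turns the global orbital integral into the product $\prod_v \fE_v^{(z)}(\hat\gamma_v)$), and second the exchange of $\sum_{\tilde\gamma}$ with $\int_{L_\s}$. You justify only the second. Theorem~\ref{ABCTheorem} controls $\sum_{\tilde\gamma}\prod_v|\fE_v^{(z)}(\hat\gamma_v)|$, i.e.\ the sum of absolute values of local orbital integrals, but Fubini for the first interchange requires a bound on $\prod_v\int_{{\frak T}_{\Delta,v}\bsl G_v}|\Phi_v(g_v^{-1}\hat\gamma_v g_v)|\,|\varphi_v^{\Delta,(z)}(g_v)|\,\d g_v$, which is a priori larger.

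The paper closes this gap with the lemma immediately preceding the theorem: for each fixed $\tilde\gamma$ it shows that for all $v$ outside a finite set $T_0$ (depending on $\gamma$) the integral of absolute values equals $1$, and that the finitely many remaining factors are $O((1+|z|)^N)$ on the strip. This, together with Lemma~\ref{EisPeriodEst} and the rapid decay of $\b$, legitimizes the first interchange term by term; Theorem~\ref{ABCTheorem} then handles the second. The missing argument is not deep---it recycles the local computations of \S\ref{HypSecondStep} and \S\ref{Localorbitalintegrals}---but it is a genuine step, and you should either supply it or cite the lemma explicitly.
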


\section{Proof of the main theorems} \label{MVT}

For any holomorphic function $f(\bfs)$ on $\fX_S$ and for any $\bfc=(c_v)_{v\in S}\in \R^{S}$, the multi-dimensional contour integral $\int_{L(\bfc)}f(\bfs)\d\mu_S(\bfs)$ is defined as the iteration of the one-dimensional contour integrals $\int_{L_v(c_v)} \d \mu_{v(j)}(s_{v(j)})$ $(j=1,\dots,k)$ in any ordering $S=\{v(1),\dots,v(k)\}$.

In this section, we let the square-free ideal $\fn$ vary in such a way that $\fn$ is prime to $2\fp_{S}\prod_{j=1}^{h}\fa_j$, where $\fa_j$ are the ideals fixed in \S~{7}
and we set $\gp_S=\prod_{v \in S}(\gp_v\cap \go)$.

\subsection{Proof of Theorem \ref{thm0} and Corollary \ref{MAINTHM}}

For $\natural\in \{{\rm unip},{\rm hyp},{\rm ell}\}$,
we define $\hat J_{\natural}^{0}(\bfs,z)$ by \eqref{Unipotent0Term} for $\natural={\rm unip}$
and $\hat J_{\natural}^{0}(\bfs,z)=\zeta_{F}\left(\tfrac{z+1}{2}\right)^{-1}\hat J_{\natural}(\bfs,z)$ for $\natural\in \{{\rm hyp},{\rm ell}\}$.
Set $\hat I_{\rm cusp}^0(\bfs,z)=\zeta_F\left(\tfrac{z+1}{2}\right)^{-1}\hat I_{\rm cusp}(\bfs,z)$.
Recall the function $\hat I_{\rm cusp}(\bfs,z)$ defined by \eqref{Icusp0sz}
and the explicit formulas of $\hat J_{\natural}^{0}(\bfs,z)$ stated in \S \ref{singular terms},
\S \ref{The $F$-hyperbolic term} and \S \ref{The $F$-elliptic term}.
From \S \ref{overview of geom}, we have the identity
$$\hat I_{\rm cusp}^{0}(\bfs,z)=D_{F}^{\frac{z}{4}}\{\hat J^{0}_{\rm unip}(\bfs,z)+\hat J^{0}_{\rm unip}(\bfs,-z)\}+\hat J_{\rm hyp}^{0}(\bfs,z)+\hat J_{\rm ell}^{0}(\bfs,z)
$$
for $|\Re(z)|< \underline{l}-3$ and $\bfs\in \fX_S$ with $\min_{v\in S}\Re(s_v)> 2\underline{l}-1$
(cf.\ Proposition \ref{Conclusion of hyp} and Theorem \ref{Conclusion of ell}).
To obtain the formula of $\hat J_{\rm ell}^{0}(\bfs,z)$ in Theorem~\ref{thm0}, we examine local conditions on $(t,n)$ posed by various $\delta$-factors in Theorem~\ref{OrbIntUnifEx}. From Lemma~\ref{OrbIntUnifExL1}, when $|\Delta_v^0|_v=1$ we have that $|\tfrac{t}{2m_v}|_v\not=1$ if and only if $|\tfrac{n}{m_v^2}|_v \geq 1$; hence $\tfrac{t}{2m_v}\not\in \cO_v^\times$ implies $|\tfrac{n}{4m_v^2}|_v\geq |4|_v^{-1}\geq 1$. Thus the $\delta$-symbol in the first formula of Theorem~\ref{OrbIntUnifEx} (2) is simplified to $\delta(\tfrac{n}{4m_v^2}\not\in \fp_v)$. By a similar argument, the local conditions posed by $\delta$-symbols in Theorem~\ref{OrbIntUnifEx} are reduced to the following: 
\begin{itemize}
	\item If $\Delta_v^0=1$, then $\tfrac{n}{4m_v^2} \not\in \fp_v$, 
	\item If $\Delta_v^0 \in \cO_v^\times-(\cO_v^\times)^2$, $v\in S(\fn)$, then $\tfrac{n}{m_v^2} \not\in \cO_v$, 
	\item If $\Delta_v^0 \in \fp_v-\fp_v^2$, or $v\in \Sigma_{\rm dyadic}$, $\Delta_v^0\in \{-1,-5\}$, then $\tfrac{n}{m_v^2} \not\in \fp_v$.  
\end{itemize}
We remark that $\ff_{\Delta}\cO_v$ coincides with $m_v\cO_v$ unless $\Delta_v^0=1$ or $v\in \Sigma_{\rm dyadic}, \Delta_v^0=5$, in which case it equals $2m_v\cO_v$. By this, we can write the condition above as follows :
\begin{itemize}
	\item $v\in S(\fn)$ and $\varepsilon_{\Delta,v}$ is unramified and non-trivial, then $\ord_v(n\ff_{\Delta}^{-2})<0$, 
	\item $\varepsilon_{\Delta,v}=1$ or $v\in S(\fd_{F(\sqrt{\Delta})/F})$, then
	$\ord_v(n\ff_{\Delta}^{-2}) \le 0$. 
\end{itemize}
The second condition follows from $(t:n)_{F}\in \Qcal_F^{S}$. Indeed, from the relations $c_vt\in \cO_v$, $c^2_vn\in \cO_v^\times$ and $(c_vt)^2-4(c_vn)=(2m_vc_v)^2 \Delta_v^0$, we see $m_v c_v\in \cO_v$. Hence $\ord_v({n}\ff_{\Delta}^{-2})=\ord_v(\frac{c_v^2n}{c_v^2\ff_{\Delta}^2}) \leq 0$. This completes the proof of Theorem~\ref{thm0}.

Take any $\a \in \Acal_S$. For $\natural\in \{{\rm unip},{\rm hyp},{\rm ell}\}$, set
\begin{align*}
	\JJ_{\natural}^0(\fn|\a,z)=\left(\tfrac{1}{2\pi i } \right)^{\# S} \int_{L(\bfc)}\hat J_{\natural}^{0}(\bfs,z)\,\a(\bfs)\,\d\mu_{S}(\bfs)
\end{align*}
with $\bfc=(c_v)\in \R^{S}$, $c_v\gg 1$.
We note that the ideal $\fn$ is implicit in the definition of $\hat J_{\natural}^{0}(\bfs,z)$.
Set
$$
\II_{\rm cusp}^{0}(\fn|\a,z)=(-1)^{\# S}C(l,\fn)^{-1}\,\left(\tfrac{1}{2\pi i }\right)^{\# S} \int_{L(\bfc)} \hat I_{\rm cusp}^{0}(\bfs,z)\,\a(\bfs)\,\d\mu_S(\bfs)
$$
for $\a(\bfs)$ as above.
By taking a multi-dimensional contour integral, we obtain the identity
\begin{align}
	(-1)^{\# S}C(l,\fn)\,
	\II^{0}_{\rm cusp}(\fn|\a,z)=D_{F}^{\frac{z}{4}}\{\JJ^{0}_{\rm unip}(\fn|\a,z)+\JJ_{\rm unip}^{0}(\fn|\a,-z)\}+\JJ_{\rm hyp}^{0}(\fn|\a,z)+\JJ_{\rm ell}^{0}(\fn|\a,z).
	\label{JZtraceformula}
\end{align}
from Theorem \ref{thm0}.
By \eqref{Icusp0sz} and the formula $$\tfrac{1}{2\pi i} \tint_{L_v(c)} \{q_v^{(1+\nu)/2}+q_v^{(1-\nu)/2}-q_v^{(1+s)/2}-q_v^{(1-s)/2}\}^{-1}\a(s)\d\mu_v(s)=-\a(\nu),$$
we see that the left-hand side of \eqref{JZtraceformula} coincides with \eqref{AverageAdL} multiplied by $(-1)^{\# S}C(l,\fn)$.
This completes the proof of Corollary \ref{MAINTHM}.

\subsection{Proof of Theorem \ref{MVT-thm}}
Let $z\in (0,1]$. Then the factor $\prod_{v\in S(\fn)}(1+q_v^{(z+1)/2})/(1+q_v)$ of $\JJ_{\rm unip}^{0}(\fn|\a,z)$ is bounded in absolute value by $\prod_{v\in S(\fn)}q_v^{(z-1)/2}=\nr(\fn)^{(z-1)/2}$ from below. The value of $\JJ_{\rm unip}^{0}(\fn|\a,z)+\JJ_{\rm unip}^{0}(\fn|\a,-z)$ at $z=0$ is of the form$$\left(\prod_{v \in S(\fn)}\frac{1+q_v^{\frac{1}{2}}}{1+q_v}\right)\times\biggl(C_0+C_1\sum_{v\in S(\fn)}\frac{\log q_v}{1+q_v^{-1/2}}\biggr)
$$
with some quantities $C_0,C_1$ independent of $\fn$. This is bounded from below by $\nr(\fn)^{-1/2}\log \nr(\fn)$. Hence to prove Theorem~\ref{MVT-thm}, it suffices to show the hyperbolic and the elliptic terms are bounded from above by $O(\nr(\fn)^{-1/2-\eta})$ with some $\eta>0$ uniformly on the strip $|\Re (z)| \le 1$.

Let $v\in S$ and $s_v$ a complex variable. Let $\Acal_v^0$ be the space of all Laurent polynomials in $\zeta=q_v^{-s_v/2}$ which is invariant by the substitution $\zeta\rightarrow \zeta^{-1}$. Set $\s_n(\zeta)=\zeta^{n}+\zeta^{-n}$ for $n\in \N_0$. Then as is well known, the elements $\s_n(\zeta)\,(n\in \N_0)$ form a $\C$-basis of the space $\Acal_v^{0}$. For $m\in \N_0$, let $\Acal_v^0[m]$ be the $\C$-linear span of functions $\s_n(0\leq n \leq m)$. For an integral ideal $\fa$ with the prime decomposition $\fa=\prod_{v\in S}(\fp_v\cap \go)^{n_v}$ with $n_v\in \N$, set $\Acal(\fa)=\bigotimes_{v\in S(\fa)}\Acal_v^0[n_v]$. 

\begin{lem} \label{SfactorvanishingL}
	Let $m\in \N_0$. We have $\hat \Scal_v^{\delta,(z)}(\a;a)=0$ for all $\a\in \Acal_v^{0}[m]$ if $\ord_v(a)>m$. 
\end{lem}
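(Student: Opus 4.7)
The plan is to perform the change of variables $\zeta = q_v^{-s/2}$, which sends the period contour $L_v(c)$ to a clockwise loop once around the circle $|\zeta|=q_v^{-c/2}$, and converts the differential via $\d\mu_v(s) = q_v^{1/2}(1-\zeta^{-2})\,\d\zeta$ (a short computation from $\d s = -(2/\log q_v)\,\d\zeta/\zeta$). Since $\Acal_v^0[m]$ is spanned by $\sigma_n(\zeta) = \zeta^n+\zeta^{-n}$ for $0\le n\le m$, by linearity it suffices to establish the vanishing for each such $\sigma_n$.

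Write $p=q_v$, set $t_\delta = \varepsilon_\delta(\varpi_v)$ with the convention $t_\delta=0$ if $\varepsilon_\delta$ is ramified, and put $N=\ord_v(a)$. Using $\zeta_{F_v}(u)=(1-p^{-u})^{-1}$, $|a|_v^{(s+1)/2} = p^{-N/2}\zeta^N$, and $q_v^{-(s+1)/2}=p^{-1/2}\zeta$, a direct computation gives
\begin{align*}
\Scal_v^{\delta,(z)}(s;a)\,\sigma_n(\zeta)\,\d\mu_v(s)
&= -p^{-N/2}\,\frac{P_n(\zeta)\,\d\zeta}{(1-\zeta^2 p^{-(z+1)/2})(1-\zeta^2 p^{(z-1)/2})},
\end{align*}
where $P_n(\zeta) = \zeta^{N+1}(1-\zeta^{-2})(1-t_\delta\zeta^2 p^{-1})(\zeta^n+\zeta^{-n})$. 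This is a priori a Laurent polynomial, but its smallest exponent is $N-1-n$, and the hypothesis $N>m\ge n$ makes this nonnegative, so $P_n(\zeta)$ is an honest polynomial in $\zeta$.

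The two denominator factors vanish at $|\zeta|=p^{(\Re z+1)/4}$ and $|\zeta|=p^{(1-\Re z)/4}$ respectively. Both radii strictly exceed $p^{-c/2}$ precisely because the contour parameter satisfies $c>(|\Re z|-1)/2$, which is the same convergence constraint that makes $\Scal_v^{\delta,(z)}$ (and hence the original integral) well-defined. Consequently the integrand extends to a rational function of $\zeta$ that is holomorphic on the closed disk $|\zeta|\le p^{-c/2}$, and Cauchy's theorem yields the desired vanishing.

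The argument is essentially bookkeeping. The one conceptual point, and likely the only mildly delicate step, is recognizing that the measure $\d\mu_v$ combines with the explicit $q_v^{-(s+1)/2}|a|_v^{(s+1)/2}$ prefactor of $\Scal_v^{\delta,(z)}$ to produce a polynomial rather than a Laurent polynomial in $\zeta$ as soon as $\ord_v(a)$ exceeds the degree of $\alpha$; everything else is a matching of radii between the contour and the denominator poles.
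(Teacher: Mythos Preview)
Your proof is correct and follows essentially the same approach as the paper: substitute $\zeta=q_v^{-s/2}$, rewrite the integrand as a rational function in $\zeta$ whose numerator is a genuine polynomial (lowest exponent $N-1-n\ge 0$) and whose denominator poles lie outside the circle $|\zeta|=q_v^{-c/2}$ under the constraint $c>(|\Re z|-1)/2$, then invoke Cauchy's theorem. The paper's version is terser but identical in substance.
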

\begin{proof}
	Let $e:=\ord_v(a)>m$. Then from \eqref{Scaldeltazsa-f1}, we have that $\hat \Scal_v^{\delta,(z)}(\s_n,a)$ equals
	\begin{align*}
		q_v^{-e/2}\frac{1}{2\pi i }{\textstyle{\oint}} _{|\zeta|=q^{-c/2}}\frac{(1-\varepsilon_\delta(\varpi_v)\,q_v^{-1}\zeta^2)(1-\zeta^2)\zeta^{e-n}(1+\zeta^{2n})}{(1-q^{-(z+1)/2}\zeta^2)(1-q_v^{(z-1)/2}\zeta^2)}\frac{\d \zeta}{\zeta},
	\end{align*}
	which is zero by Cauchy's theorem if $e-n-1\geq 0$. 
\end{proof}

Let us take a test function $\a \in \Acal(\fa)$.

\begin{lem}\label{MVT-L4}
	Let $\s\in (0,1]$. Suppose $l$ satisfies ${\underline l}>\s+3$. Set $\delta=1$ if ${\underline l}>d_F+3+\s$ and $\delta=1/2+(\underline{l}-3-\s)/(2d_F)$ otherwise. Then $1/2<\delta\le 1$, and for any small $\e>0$,  
	\begin{align}\label{MVT-L4-f0}
		|\JJ_{\rm hyp}^0(\fn|\a,z)| \ll& \nr(\fn)^{-\delta+\e}
	\end{align}
	uniformly for $z$ on the strip $|\Re(z)|\leq \s$.
\end{lem}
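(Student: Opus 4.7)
The plan is to estimate each factor in the explicit expression
\[
\JJ_{\rm hyp}^0(\fn|\a,z)=\tfrac{1}{2}D_F^{-1/2}\zeta_{F}\!\left(\tfrac{1-z}{2}\right)\sum_{a\in \cO(S)_{+}^{\times}-\{1\}}{\bf B}_{\fn}^{(z)}(\a|1;a(a-1)^{-2}\cO)\prod_{v\in \Sigma_\infty}\Ocal_{v}^{+,(z)}\!\left(\tfrac{a+1}{a-1}\right)
\]
provided by Corollary~\ref{MAINTHM}, using the pointwise bounds in Lemma~\ref{ABC-L1234} together with the vanishing criterion in Lemma~\ref{SfactorvanishingL}, and then to sum over units while tracking the $\fn$-dependence carefully. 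At $v\in \Sigma_\infty$, Lemma~\ref{ABC-L1234}(4) yields the decay $\min(1,|a-1|_v^{-1})^{l_v/2}(1+|a|_v)^{(\s+1)/2}$; at $v\in S$, deforming the contour defining $\hat\Scal_v^{1,(z)}(\a_v;\cdot)$ to $L_v(c_v)$ for a suitable $c_v$ and applying Lemma~\ref{ABC-L1234}(3), combined with the polynomial-degree cut-off of Lemma~\ref{SfactorvanishingL}, produces a factor bounded independently of $\fn$; at $v\in \Sigma_\fin-(S\cup S(\fn))$, Lemma~\ref{ABC-L1234}(1) together with \eqref{Ovanish} shows the factor is $1$ unless $a\equiv 1\pmod{\fp_v}$; and at $v\in S(\fn)$, Lemma~\ref{ABC-L1234}(2) gives the essential saving $2/(1+q_v)$ when $\ord_v(a-1)=0$, while the weaker bound $\tfrac{4(1+q_v^{1/2})}{1+q_v}(1+k)q_v^{k(\s+1)/2}$ applies when $k=\ord_v(a-1)\geq 1$.

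I then partition the unit sum according to the set of exceptional primes $T(a):=\{v\in S(\fn):\ord_v(a-1)\geq 1\}$. Writing $\fn=\fn_T\fn_T'$ with $\fn_T=\prod_{v\in T}(\fp_v\cap\cO)$, the $S(\fn)$-product contributes roughly $\nr(\fn_T')^{-1}\cdot\nr(\fn_T)^{-1/2}\prod_{v\in T}(1+k_v)q_v^{k_v(\s+1)/2}$ with $k_v=\ord_v(a-1)$. The product formula $\prod_v|a-1|_v=1$ together with $|a-1|_v\leq 1$ for $v\in \Sigma_\fin-S$ forces $\prod_{v\in\Sigma_\infty\cup S}|a-1|_v\geq M_T(a):=\prod_{v\in T}q_v^{k_v}$, so the archimedean decay $\prod_{v\in\Sigma_\infty}|a-1|_v^{-\underline{l}/2+(\s+1)/2}$ combined with the $\fn$-independent bound at $S$ produces an effective saving of order $M_T(a)^{-(\underline{l}-\s-1)/(2d_F)}$ after AM-GM across the $d_F$ archimedean places. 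Combining, the $T$-contribution is bounded up to $\nr(\fn)^\e$ by $\nr(\fn_T')^{-1}\nr(\fn_T)^{-1/2-(\underline{l}-3-\s)/(2d_F)}$, and optimizing over $T$ yields $\delta=\min(1,\tfrac12+(\underline{l}-3-\s)/(2d_F))$: when $\underline{l}>d_F+3+\s$ the $\nr(\fn_T)$-exponent exceeds $1$ and the worst case $T=\emptyset$ gives $\delta=1$; for smaller $\underline{l}$, the balanced intermediate value is optimal.

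The main obstacle will be the uniform count of totally positive $S$-units $a\equiv 1\pmod{\fn_T}$ of controlled archimedean size, which must yield a dependence on $\fn_T$ only of polylogarithmic (hence $\nr(\fn)^\e$) nature after summing over all $T\subseteq S(\fn)$. This is handled by Dirichlet's $S$-unit theorem, which identifies the logarithms of $\cO(S)_+^\times$ with a lattice in a hyperplane of $\R^{d_F+\#S}$, together with a lattice-point count in a fundamental parallelotope: the additional congruence $a\equiv 1\pmod{\fn_T}$ selects a sublattice of index $\asymp \nr(\fn_T)$, which cancels exactly against one power of $\nr(\fn_T)$ in the summation and keeps the exponent $\delta$ intact up to the $\nr(\fn)^\e$ margin.
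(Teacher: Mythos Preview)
Your outline tracks the paper's strategy closely: both start from the explicit hyperbolic term (Corollary~\ref{MAINTHM} resp.\ \eqref{Jhypbfsz}), invoke the local bounds of Lemma~\ref{ABC-L1234}, use Lemma~\ref{SfactorvanishingL} to truncate at $S$, and stratify the unit sum according to the set $T=\{v\in S(\fn):\ord_v(a-1)\geq 1\}$ (the paper writes this as a divisor $\fc\mid\fn$). The final exponent you state, $\nr(\fn_T')^{-1}\nr(\fn_T)^{-1/2-(\underline l-3-\s)/(2d_F)}$, agrees with the paper's.

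Where your write-up diverges, and becomes shaky, is the endgame. Two points:

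\emph{The $1/d_F$ does not come from a pointwise ``AM-GM''.} Your sentence ``produces an effective saving of order $M_T(a)^{-(\underline l-\s-1)/(2d_F)}$ after AM-GM'' is not right as a termwise inequality: applying the product formula pointwise only yields $M_T(a)^{-(\underline l-\s-1)/2}$, with no $d_F$ in the denominator. The factor $1/d_F$ enters only \emph{after} summing over the lattice, because the relevant sublattice $\fc\fa^{-1}\subset\R^{d_F}$ has minimal vector $\asymp\nr(\fc)^{1/d_F}$; the paper makes this explicit via Lemma~\ref{SugTsudLA}, which gives the sum the bound $r(\fc\fa^{-1})^{1-\underline L}\asymp\nr(\fc)^{(1-\underline L)/d_F}$ with $\underline L=\underline l/2-(\s+1)/2-\e$.

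\emph{The unit-lattice route is an unnecessary detour.} You propose to count $S$-units $a\equiv 1\pmod{\fn_T}$ via Dirichlet's theorem and a logarithmic sublattice of index $\asymp\nr(\fn_T)$. This can in principle be made to work, but the summand $\prod_{v\in\Sigma_\infty}(1+|a-1|_v)^{-L_v}$ is not a pleasant function of the log-coordinates, the index claim needs care (the reduction map $\cO(S)^\times\to(\cO/\fn_T)^\times$ need not be surjective), and the phrase ``cancels exactly against one power of $\nr(\fn_T)$'' is not justified. The paper sidesteps all of this: since the local majorants are nonnegative, one simply \emph{enlarges} the summation from $\cO(S)^\times_+-\{1\}$ to the additive lattice $\fa^{-1}-\{0,1\}$ (using the truncation from Lemma~\ref{SfactorvanishingL} to land in $\fa^{-1}$), writes $a-1=x\in\fc\fa^{-1}$, and applies Lemma~\ref{SugTsudLA} directly to $\sum_{x\in\fc\fa^{-1}-\{0\}}\prod_{v\in\Sigma_\infty}(1+|x_v|)^{-L_v}$. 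This is both shorter and yields the required $\nr(\fc)^{(1-\underline L)/d_F}$ immediately; summing over $\fc\mid\fn$ (at cost $\ll\nr(\fn)^{\e}$) then gives the dichotomy on $\delta$.
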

\begin{proof}
	From \eqref{Jhypbfsz} and Theorem \ref{explicit hyp} (2) and (3), 
	\begin{align}
		\JJ_{\rm hyp}^0(\fn|\a,z)&=\tfrac{1}{2}\zeta_{F,\fin}\left(\tfrac{1-z}{2}\right)\sum_{a\in \cO(S)^{\times} -\{1\}}\prod_{v\in \Sigma_\fin-S}\fF_v^{(z)}(a)\times \prod_{v\in \Sigma_\infty} \tilde \fF_v^{(z)}(a) 
		\label{MVT-L4-0}
		\\
		&\quad \times \prod_{v\in S}\tfrac{1}{2\pi i} \tint_{L_v(c_v)} \fF_v^{(z)}(s_v,a)\,\alpha_{v}(s_v)\,\d\mu_{v}(s_v),
		\label{MVT-L4-1}
	\end{align}
	where $S=S(\fa)$ and $\cO(S)$ denotes the $S$-integer ring of $F$, and we set $\tilde \fF_v^{(z)}(a)=\Gamma_{\R}(\tfrac{1-z}{2})\fF_v^{(z)}(a)$ for $v\in \Sigma_\infty$. By Theorem~\ref{explicit hyp} (4) and Lemma~\ref{SfactorvanishingL}, the $S$-factor \eqref{MVT-L4-1} vanishes unless $a^{(v)} \in \fp_{v}^{-n_v}$ for all $v\in S$. Hence we may suppose that the summation in \eqref{MVT-L4-0} is over the set $a\in \fa^{-1}-\{0,1\}$. From Corollary~\ref{S-Est}, the $v$-factor in \eqref{MVT-L4-1} for $a^{(v)}\in \fp_{v}^{-n_v}$ is majorized by $O(|1-a|_v^{-\frac{|\Re(z)|+1}{2}-\e})$ uniformly in $z$. Hence as in the proof of Proposition~\ref{FHypP2}, we have that for any small $\e>0$, 
	\begin{align*}
		\prod_{v\in \Sigma_\fin-S}|\fF_v^{(z)}(a)|\prod_{v\in \Sigma_\infty}|\tilde \fF_v^{(z)}(a)|\,\prod_{v\in S}\left|\tfrac{1}{2\pi i} \tint_{L_v(c_v)} \fF_v^{(z)}(s_v,a)\,\alpha_{v}(s_v)\,\d\mu_{v}(s_v)\right| \ll f(a),\quad a\in \fa^{-1}-\{0,1\}
	\end{align*}
	uniformly on $|\Re(z)|\leq \s$ with $f(a)=\prod_{v\in S(\fn)\cup \Sigma_\infty}f_v(a^{(v)})$, where $f_v(a^{(v)})=\delta(a^{(v)}>0)(1+|a|_v)^{-\frac{l_v}{2}+\frac{\s+1}{2}+\e}$ if $v\in \Sigma_\infty$ and $f_v(a^{(v)})=\frac{4}{q_v+1}(q_v^{1/2}+1)^{\delta(a^{(v)}\in1+\fp_v)}$ if $v\in S(\fn)$. The sum $\sum_{a\in \fa^{-1}-\{0,1\}}f(a)$ is majorized by 
	\begin{align*}
		\sum_{\fc|\fn}\sum_{\substack{ a\in \fa^{-1}-\{0,1\} \\ a-1 \in \fc, a-1\not\in \fn\fc^{-1}}} f(a) \ll 
		\sum_{\fc|\fn}\prod_{v\in S(\fn\fc^{-1})}\tfrac{4}{q_v+1}\prod_{v\in S(\fc)}\tfrac{4(q_v^{1/2}+1)}{q_v+1}\,\biggl\{\sum_{x\in \fc\fa^{-1}-\{0\}} f_\infty(x+1)\biggr\}.  
	\end{align*}
	Let $T(\fn)$ denote the majorant. Since $\iota_\infty(\fc\fa^{-1}) \subset \iota_\infty(\fa^{-1})$ are $\Z$-lattices of full rank in $\R^{\Sigma_\infty}$, we apply Lemma~\ref{SugTsudLA} to estimate the sum $\sum_{x\in \fc\fa^{-1}-\{0\}}f_\infty(x+1)$ by $\nr(\fa)\times \nr(\fc\fa^{-1})^{\{1-{\underline l}/2+(\s+1)/2+\e\}/d_F}$ uniformly in the ideals $\fc$ and $\fa$. Therefore,  
	\begin{align*}
		T(\fn)&\ll \sum_{\fc|\fn}\nr(\fc^{-1}\fn)^{-1+\e/2}
		\times \nr(\fc)^{-1/2+\e/2}
		\times \nr(\fc)^{(1-{\underline l}/2+(\s+1)/2+\e)/d_F}\\
		&\ll \nr(\fn)^{-1+\e/2} \sum_{\fc|\fn}\nr(\fc)^{1/2+(1-{\underline l}/2+(\s+1)/2+\e)/d_F}. 
	\end{align*}
	Suppose ${\underline l}>d_F+3+\s$. Then we can choose $\e>0$ so that $1/2+(1-{\underline l}/2+(\s+1)/2+\e)/d_F \leq 0$. Thus, 
	\begin{align*}
		T(\fn)\ll \nr(\fn)^{-1+\e/2}\times \sum_{\fc|\fn}1\ll \nr(\fn)^{-1+\e}. 
	\end{align*}
	Suppose ${\underline l}\leq d_F+3+\s$. Then $1/2+(1-{\underline l}/2+(\s+1)/2+\e)/d_F>0$. Thus,  
	\begin{align*}
		T(\fn)&\ll \nr(\fn)^{-1+\e/2} \times \nr(\fn)^{1/2+(1-{\underline l}/2+(\s+1)/2+\e)/d_F}\sum_{\fc|\fn}1
		\ll \nr(\fn)^{-1/2+\e+(1-{\underline l}/2+(\s+1)/2+\e)/d_F}.
	\end{align*}
	This completes the proof. 
\end{proof}

\begin{lem}\label{MVT-L5}
	
	Set $\delta=1$ if ${\underline l}>d_F+2$ and $\delta=1/2+({\underline l}-2)/(2d_F)$ if $d_F+2\geq {\underline l}\geq 4$. Then $1/2<\delta \le 1$, and for any small $\e>0$,
	\begin{align*}
		|\JJ_{\rm ell}^{0}(\fn|\a,z)|\ll \nr(\fn)^{-\delta+\e}+\nr(\fn)^{-1+\e}
	\end{align*}
	uniformly in $z$ on the strip $|\Re(z)|\le 1$. 
\end{lem}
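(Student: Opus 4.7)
The plan is to adapt the strategy of Lemma \ref{MVT-L4} to the elliptic term, relying crucially on the uniform majorant in Lemma \ref{ABC-L8}. First, I would express $\JJ_{\rm ell}^0(\fn|\a, z)$ via Corollary \ref{MAINTHM} and apply Lemma \ref{SfactorvanishingL} to each $\hat \Scal_v^{\delta,(z)}(\a_v;n\ff_\Delta^{-2})$ to see that the $S$-factor is uniformly bounded for $|\Re(z)| \le 1$ and vanishes unless $\ord_v(n\ff_\Delta^{-2}) \le n_v$, where $\fa = \prod_{v\in S}\fp_v^{n_v}$ is the ideal determining $\a\in \Acal(\fa)$.

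Next, after parameterizing orbits $(t:n)_F$ via Lemmas \ref{ABC-L6}--\ref{ABC-L7} and decomposing the level as $\fn = (\fn\fn_1^{-1})\cdot \fc\cdot \fc'$ with $\fn_1 = \fc\fc'$ through Lemma \ref{levelcondition}, I would invoke Lemma \ref{ABC-L8}. The $\fn$-dependence of the resulting majorant of the main term decomposes as
\begin{align*}
\nr(\fn\fn_1^{-1})^{-1+\e}\,\nr(\fn_1)^{-1/2+\e}\,\max\bigl(\nr(\fc),\,\nr(\fn_1\fc^{-1})\bigr)^{(2-\underline l + 2\e)/d_F},
\end{align*}
since $|\Re(z)|\le 1$ gives $\ro(z)=1$ and $\underline L(z) = \underline l - 1 - 2\e$. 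The remaining dependence on $(i,\varepsilon,\nu)$ is summable to a quantity independent of $\fn$, since the factor $\prod_{v\in S}q_v^{\nu_v(\frac{-\Re(s_v)+\ro(z)}{4}+\e)}$ yields rapid decay in $\nu$ after integration against $\a$ along contours $L_v(c_v)$ with $c_v\gg 1$.

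Using $\max(\nr(\fc),\nr(\fn_1 \fc^{-1}))\ge \nr(\fn_1)^{1/2}$ together with the fact that the exponent $(2-\underline l)/d_F$ is negative for $\underline l\ge 4$, the $\fc$-sum over divisors of the square-free ideal $\fn_1$ is bounded by $O(\nr(\fn_1)^{(2-\underline l)/(2d_F)+\e})$, yielding
\begin{align*}
|\JJ_{\rm ell}^0(\fn|\a, z)| \ll \nr(\fn)^{-1+\e}\sum_{\fn_1\mid\fn}\nr(\fn_1)^{1/2 + (2-\underline l)/(2d_F)+\e} + O(\nr(\fn)^{-1+\e}),
\end{align*}
where the extra $O(\nr(\fn)^{-1+\e})$ absorbs the $t=0$ contribution from the second summand of Lemma \ref{ABC-L8}, which carries the factor $\prod_{v \in S(\fn)}\frac{2}{1+q_v}$ without further level decomposition and has $n$-sum restricted by $R(\varepsilon n_{i,\nu})=\emp$ and $\nu\in\{0,1\}^S$. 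When $\underline l > d_F + 2$ the exponent of $\nr(\fn_1)$ in the divisor sum is negative, giving $O(\nr(\fn)^{-1+\e})$; otherwise the sum is dominated by $\fn_1 = \fn$, giving $O(\nr(\fn)^{-\delta+\e})$ with $\delta = 1/2 + (\underline l - 2)/(2d_F)$.

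The main obstacle will be the careful treatment of the two alternatives inside the max of Lemma \ref{ABC-L8}: the one involving $\sqrt{\nr(\fn_1^2 \fc^{-2})^{1/d_F}-4\nr(n)^{1/d_F}}$, arising from the lower bound on $\|\iota_\infty(t^2 n^{-1}-4)\|$ in Lemma \ref{iotaThyoukaL}, and the one involving $\nr(\fa_i\fc)^{1/d_F}$, coming from the shortest-vector estimate in $\iota_\infty(\fa_i\fc)$. Both must be handled uniformly, and the geometric-mean inequality $\max(A,B)\ge\sqrt{AB}$ provides the archimedean decay $\nr(\fn_1)^{(2-\underline l)/(2d_F)}$ that must offset the arithmetic growth $\nr(\fn_1)^{1/2}$ from $\prod_{v\in S(\fn_1)}\frac{4(1+q_v^{1/2})}{1+q_v}$. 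This balance is precisely what dictates the threshold $\underline l > d_F + 2$ for the optimal rate $\delta = 1$ and forces the interpolated rate $\delta = 1/2 + (\underline l -2)/(2d_F)$ in the borderline regime.
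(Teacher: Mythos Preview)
Your plan is correct and matches the paper's proof closely: both pivot on Lemma~\ref{ABC-L8}, reduce to a divisor sum $\nr(\fn)^{-1+\e}\sum_{\fn_1|\fn}\nr(\fn_1)^{1/2+(2-{\underline l})/(2d_F)+\e}$, and split on whether the exponent is negative to obtain the stated $\delta$. Two points of comparison are worth noting. First, your step replacing the $\max$ in Lemma~\ref{ABC-L8} by $\max(\nr(\fc),\nr(\fn_1\fc^{-1}))^{1/d_F}$ is exactly what the paper's explicit dichotomy $\Sigma_I/\Sigma_{II}$ (according to whether $\nr(\fn_1^2\fc^{-2})^{1/d_F}\geq 8|\nr(n)|^{1/d_F}$) establishes; the bare inequality $\max(A,B)\ge\sqrt{AB}$ applied to the original arguments of the $\max$ does not suffice since the square-root term can vanish, so the case split is not optional. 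Second, for the $(i,\varepsilon,\nu)$-sum the paper uses Lemma~\ref{SfactorvanishingL} together with the constraint $\min(2\ord_v(t),\nu_v)\in\{0,1\}$ from Lemma~\ref{ABC-L7} to reduce to \emph{finitely many} $\nu$, which cleanly absorbs the growth factor $|\nr(n_{i,\nu})|^{1/2+({\underline L}(z)-1)/(2d_F)}$ in Lemma~\ref{ABC-L8}; your ``rapid decay'' argument via large $c_v$ also works but must overcome this growth, which you did not mention.
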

\begin{proof} In this proof, we set $d=d_F$. 
	From \eqref{JJell} and Proposition~\ref{EisPeriod}, by changing the order of the integral and the summation, 
	\begin{align}
		\JJ_{\rm ell}^0(\fn|\a,z)=\, &\frac{1}{2}\sum_{\gamma=(t:n)_{F}\in \Qcal_{F}^{\rm Irr}\cap \Qcal_F^{S}} D_F^{z/2}L\left(\tfrac{z+1}{2},\varepsilon_{t^2-4n}\right)\nr({\frak D}_{t^2-4n})^{\frac{z+1}{4}}{\bf D}_\gamma(z)\{\prod_{v\in \Sigma_F-S}\tilde \fE_v^{(z)}(\hat \gamma_v)\} \\
		& \times \prod_{v\in S}\tfrac{1}{2\pi i }\tint_{L(c_v)} \tilde \fE_{v}^{(z)}(s_v;\hat\gamma_v)\,\a_{v}(s_v)\,\d\mu_{v}(s_v).
		\label{MVT-L5-0}
	\end{align}
	Here ${\bf D}_\gamma(z)$ is the dyadic factor of the formula of $E^{\Delta}(z;1_2)$ in Proposition~\ref{EisPeriod}, and $\tilde \fE_v^{(z)}(\hat \gamma_v)$ equals $|2m_v|_v^{-1}\fE_v^{(z)}(\hat \gamma_v)$ or $|m_v|_v^{-1}\fE_v^{(z)}(\hat \gamma_v)$ according to ``$\Delta_v^{0}=1$ or $v \in \Sigma_{\infty}$'' or not, respectively. We choose the representatives $(t,\varepsilon n_{i,\nu})$ of $\Qcal_F^{\rm Irr}\cap \Qcal_{F}^{S}$ as in Lemma~\ref{ABC-L7}. For $v\in S$ such that $|t|_v^2>|n_{i,\nu}|_v$, we have $|t|_v=1$ from $\min(2\ord_v(t),\nu_v)\in \{0,1\}$; since $v$ is non-dyadic and $t^2-4\varepsilon n_{i,\nu}$ is a square residue modulo $\fp_v$, we have $t^2-4\varepsilon n_{i,\nu}\in (F_v^\times)^2$ by Lemma \ref{Hensel}. Then Proposition~\ref{OrbIntUnifExL4} implies $|\tfrac{n}{m_v^2}|_v=|n_{i,\nu}|_v=q_v^{-\nu_v}$. From Theorem~\ref{OrbIntUnifEx} and Lemma~\ref{SfactorvanishingL},
	\begin{align}
		\tfrac{1}{2\pi i }\tint_{L(c_v)} \tilde \fE_{v}^{(z)}(s_v;\hat\gamma_v)\,\sigma_{k}(s_v)\,\d\mu_{v}(s_v)=0
		\label{MVT-L5-1}
	\end{align}
	unless $|\frac{n}{m_v^2}|_v \geq q_v^{-k}$ for all $v\in S$. Thus we may suppose $\nu_v\le n_{v}(=\ord_{v}(\fa))$ for all $v\in S$ such that $|n_{i,\nu}|_v\leq |t|_v^2$. Combined with the constraint $\min(2\ord_{v}(t),\nu_v)\in \{0,1\}$ $(v\in S)$, this implies the vanishing of the $S$-factor \eqref{MVT-L5-0} except for finitely many $n=\varepsilon n_{i,\nu}$. Thus $|\JJ_{\rm ell}^{0}(\fn | \a,z)|$ is majorized by the sum of all $\Xi^{(z,\bfc)}(\fc,\fn_1,i,\varepsilon n_{i,\nu})$ with $\nu_v\le \ord_v(\fa)$ for all $v\in S$. Then we invoke the estimate Lemma~\ref{ABC-L8} for each of these to obtain the majorization $|\JJ_{\rm ell}^0(\fn|\a,z)|\ll_\e \sum_{i}\Sigma(\fn,N_i)+\nr(\fn)^{-1+\e}$ uniformly in $z$ with $\Re(z)\in [-2\underline{l}+3+6\e,\underline{l}-1/2-\e]$ and
	\begin{align*}
		\Sigma(\fn,N_i)&=
		\sum_{\fn_1|\fn} \sum_{\fc|\fn_1} \nr(\fn)^{-1+\e}\nr(\fn_1)^{1/2+\e/2}
		\max\biggl(\sqrt{\max(0,\nr(\fn_1^2\fc^{-2})^{1/d}-4N_i^{1/d})},\,\nr(\fc)^{1/d}\biggr)^{1-L},
	\end{align*}
	where $L={\underline L}(z)={\underline l}-\frac{1+\ro(z)}{2}-2\e$ and $N_i=|\nr(n_{i,\nu})|$. We divide the sum $\Sigma(\fn, N)$ to two parts $\Sigma_{I}$ and $\Sigma_{II}$ according as $\nr(\fn_1^2\fc^{-2})^{1/d}\geq 8 N^{1/d}$ or not. Set $M=8^{d/2}N^{1/2}$. Then $\Sigma_{I}$ is over those pairs of ideals $(\fn_1,\fc)$ such that $\fn_1|\fn$, $\fc|\fn_1$ and $\nr(\fn_1)\geq M\,\nr(\fc)$. As such, 
	\begin{align*}
		&\max\biggl(\sqrt{\max(0,\nr(\fn_1^2\fc^{-2})^{1/d}-4N^{1/d})},\,\nr(\fc)^{1/d}\biggr)
		\\
		\geq & \max(\sqrt{\nr(\fn_1^2\fc^{-2})^{1/d}-\tfrac{1}{2}\nr(\fn_1^2\fc^{-2})^{1/d} },\,\nr(\fc)^{1/d})
		\geq 2^{-1/2} \max(\nr(\fn_1\fc^{-1})^{1/d}
		,\,\nr(\fc)^{1/d})
		\\
		\geq & 2^{-1/2} \{{\nr(\fn_1\fc^{-1})^{1/d}+\nr(\fc)^{1/d}}\}/{2}
		\geq2^{-1/2}\left(\nr(\fn_1\fc^{-1})^{1/d} \times \nr(\fc)^{1/d}\right)^{1/2}
		=2^{-1/2} \nr(\fn_1)^{1/(2d)}.
	\end{align*}
	Since $1-L$ is non-positive, 
	\begin{align*}
		\Sigma_{I}&\leq \nr(\fn)^{-1+\e} \sum_{\fn_1|\fn}\nr(\fn_1)^{1/2+\e/2}\sum_{\fc|\fn_1}(2^{-1/2}\nr(\fn_1)^{1/(2d)})^{1-L}
		\\
		&\ll \nr(\fn)^{-1+\e} \sum_{\fn_1|\fn} \sum_{\fc|\fn_1}\nr(\fn_1)^{1/2+\e/2+(1-L)/(2d)}
		\ll \nr(\fn)^{-1+\e}\sum_{\fn_1|\fn}\nr(\fn_1)^{\e+1/2+(1-L)/(2d)},
	\end{align*}
	where we use the estimate $\sum_{\fc|\fn_1}1\ll \nr(\fn_1)^{\e/2}$. 
	
	The sum $\Sigma_{II}$ is over all pairs of ideals $(\fn_1,\fc)$ such that $\fn_1|\fn$, $\fc|\fn_1$, $M^{-1}\,\nr(\fn_1)\leq \nr(\fc)\leq \nr(\fn_1)$. By $\max(A,B)\geq B$ and by noting $1-L\leq 0$, we have trivially
	{\allowdisplaybreaks\begin{align*}
			\Sigma_{II}&\leq 
			\sum_{\fn_1|\fn}\sum_{\fc|\fn_1}\nr(\fn)^{-1+\e}\nr(\fn_1)^{1/2+\e/2}
			(\nr(\fc)^{1/d})^{1-L}
			\ll \sum_{\fn_1|\fn}\sum_{\fc|\fn_1}\nr(\fn)^{-1+\e}\nr(\fn_1)^{1/2+\e/2+(1-L)/d}
			\\
			&\ll \nr(\fn)^{-1+\e} \sum_{\fn_1|\fn}\nr(\fn_1)^{\e+1/2+(1-L)/d}
			\ll \nr(\fn)^{-1+\e} \sum_{\fn_1|\fn}\nr(\fn_1)^{\e+1/2+(1-L)/(2d)}. 
	\end{align*}}
	Thus
	$\Sigma(\fn,N)=\Sigma_{I}+\Sigma_{II} \ll \nr(\fn)^{-1+\e} \sum_{\fn_1|\fn}\nr(\fn_1)^{\e+1/2+(1-L)/(2d)}.$
	
	Suppose ${\underline l}>d+2$. Then for sufficiently small $\e>0$, we have $\e+1/2+(1-L)/(2d)\leq 0 $ for $\Re(z)\in [0,1]$. Then 
	$$
	\Sigma(\fn,N)\ll_{\e} \nr(\fn)^{-1+\e}\sum_{\fn_1|\fn}\nr(\fn_1)^{\e+1/2+(1-L)/(2d)}\leq \nr(\fn)^{-1+\e}\sum_{\fn_1|\fn}1\leq \nr(\fn)^{-1+2\e}.
	$$
	Suppose ${\underline l}\leq d+2$. Then $\e+1/2+(1-L)/(2d)>0$ for $\Re(z)\in [0,1]$. Thus
	\begin{align*}
		\Sigma(\fn,N)\ll \nr(\fn)^{-1+\e}\times\nr(\fn)^{\e+1/2+(1-L)/(2d)} \sum_{\fn_1|\fn}1 \ll\nr(\fn)^{-1/2+3\e+(1-L)/(2d)}.
	\end{align*}
	Note $L={\underline l}-1-2\e$ for $|\Re(z)|\le 1$. This completes the proof.
\end{proof} 

\noindent
{\bf Remark} : With a bit more work, it can be shown that the implied constants in Lemmas~\ref{MVT-L4} and \ref{MVT-L5} are taken to be of the form $\|\alpha\|_{\Acal(\fa)}\nr(\fa)^{N}$ with $\|\,\|_{\Acal(\fa)}$ being a norm on the finite dimensional space $\Acal(\fa)$. 

\smallskip
Theorem~\ref{MVT-thm} (1) for $f\in \Pcal(\Omega_S)$ follows from Corollary~\ref{MAINTHM} combined with Lemmas~\ref{MVT-L4} and \ref{MVT-L5} immediately. The assertion of Theorem~\ref{MVT-thm} (1) for $f \in C(\Omega_S)$ is proved by a standard argement (cf.\ \cite[Proposition 2]{Serre}), where the non-negativity of the measure $\Lambda_{l,\fn}^{(z)}$ is indispensable.
Indeed,
for any $f \in C(\Omega_S)$ and any $\e>0$,
the Weierstrass approximation theorem allows us to take a polynomial function $P_\e \in \Pcal(\Omega_S)$ such that $\sup_{x \in \Omega_S}|f(x)-P_\e(x)|<\e$.
We set $\Lambda_\gn^{(z)}= r(z)^{-1}(C_l^{(z)})^{-1}\Lambda_{l,\gn}^{(z)}$ and $\lambda_S^{(z)}=\otimes_{v \in S}\l_v^{(z)}$ for simplicity.
By the condition ({\bf P}), we have
$|\langle \Lambda_{\gn}^{(z)}, f-P_\e \rangle| \le \langle \Lambda_{\gn}^{(z)}, 1\rangle \sup|f-P_\e|< \langle \Lambda_{\gn}^{(z)}, 1\rangle \e$.
Furthermore, there exists $N_\e>0$ such that $\sup_{z}|\langle\Lambda_{\gn}^{(z)}-\lambda_S^{(z)}, P_\e\rangle| < \e$
and $\sup_{z}|\langle \Lambda_\gn^{(z)}-\lambda_S^{(z)}, 1\rangle| < \langle\lambda_S^{(z)},1\rangle$ for all $\gn$ with $\nr(\gn)>N_\e$.
As a result, we have
\begin{align*}
	\sup_z|\langle\Lambda_{\gn}^{(z)}-\lambda_S^{(z)}, f\rangle|
	\le &\sup_z |\langle\Lambda_{\gn}^{(z)}, f-P_\e \rangle|
	+ \sup_z |\langle\Lambda_{\gn}^{(z)}-\lambda_S^{(z)}, P_\e\rangle|
	+ \sup_z |\langle\lambda_S^{(z)}, P_\e-f\rangle|\\
	<&(\sup_z 3\langle\lambda_S^{(z)},1\rangle + 1)\e.
\end{align*}
This completes the proof as $z$ varies in the compact set $[0,\min(1,\s)]$.

As for the proof of Theorem~\ref{MVT-thm} (2),
we first show that, under the assumption {\bf (P)}, the limit formula in Theorem \ref{MVT-thm} (1) is valid even for $f={\rm ch}_J$ with $J=\prod_{v \in S}[t_v,t'_v]$, which is discontinuous. We set $\Lambda_\gn^{(z)}= r(z)^{-1}(C_l^{(z)})^{-1}\Lambda_{l,\gn}^{(z)}$ and $\lambda_S^{(z)}=\otimes_{v \in S}\l_v^{(z)}$ for simplicity.
Put $J_\delta=\Omega_S\cap \prod_{v\in S}[t_v-\delta,t'_v+\delta]$ for any $\delta \in \RR$.
Then $[0,1] \ni z \mapsto \vol(J_\delta-J_{-\delta};\lambda_S^{(z)})$ is monotonously increasing for each $\delta>0$.
For any fixed $\e>0$, take $\delta>0$ such that $\vol(J_\delta-J_{-\delta};\lambda_S^{(1)})<\e$,
and $f_\e, g_\e \in C(\Omega_S)$ such that ${\rm ch}_{J_{-\delta}} \le g_\e\le {\rm ch}_J \le f_\e \le {\rm ch}_{J_\delta}$.
By Theorem~\ref{MVT-thm} (1), there exists $N_\e>0$ such that both $|\langle \Lambda_{\gn}^{(z)} -\lambda_{S}^{(z)}, f_\e \rangle|<\e$
and $\langle \Lambda_{\fn}^{(z)}, f_\e-g_\e\rangle < \langle \lambda_S^{(z)},  f_\e-g_\e \rangle+\e$ hold for all $\gn$ with $\nr(\gn)>N_\e$ and for all $z\in [0,\min(1,\s)]$. Then, $|\langle  \Lambda_{\gn}^{(z)} -\lambda_{S}^{(z)}, {\rm ch}_J \rangle|$ for $\nr(\gn)>N_\e$ with $z\in [0,\min(1,\s)]$ is majorized by
\begin{align*}
	& |\langle \Lambda_{\gn}^{(z)}, {\rm ch}_J-f_\e \rangle| +|\langle \Lambda_{\gn}^{(z)} -\lambda_{S}^{(z)}, f_\e \rangle| +|\langle \lambda_S^{(z)}, f_\e -{\rm ch}_J\rangle|
	\le \langle \Lambda_{\fn}^{(z)}, f_\e-g_\e \rangle + \e + \vol(J_\delta-J;\lambda_S^{(1)})\\
	\le& (\langle \lambda_S^{(z)}, f_\e-g_\e \rangle +\e) +2\e \le \vol(J_\delta-J_{-\delta}; \lambda_S^{(1)})+3\e<4\e.
\end{align*}
This completes the required convergence for $f={\rm ch}_J$.

Let us return to the proof of Theorem~\ref{MVT-thm} (2).
Let $\Ical$ be the set of all the prime ideals $\fn\subset \cO$ relatively prime to $2\gp_S\prod_{j=1}^{h}\fa_j$.
Set $f={\rm ch}_J$.
As shown above, $\lim_{\nr(\gn)\rightarrow \infty}\Lambda_{l,\gn}^{(z)}(f)>0$ holds with the convergence being uniform in $z\in [0,\min(1,\s)]$. Let us define $\Lambda_{l,\gn}^{(z),*}(f)$ by the same formula as $\Lambda_{l,\gn}^{(z)}(f)$ reducing the summation range from $\Pi_{\rm cus}(l,\fn)$ to $\Pi_{\rm cus}(l,\fn)-\Pi_{\rm cus}(l,\fo)$. By the uniform bound $0<W_{\fn}^{(z)}(\pi)\ll_{\e}\nr(\fn)^{1/2+\e}$ $(z\in [0,1],\,\pi \in \Pi_{\rm cus}(l,\cO)$), we easiy confirm the difference $|\Lambda_{l,\gn}^{(z)}(f)-\Lambda_{l,\gn}^{(z)*}(f)|$ tends to $0$ as $\nr(\fn)\rightarrow \infty$ uniformly in $z\in [0,1]$. Hence there exists $M>0$ such that $\Lambda_{l,\gn}^{(z)*}(f)>0$
for all $z \in [0,\min(1,\s)]$ and for all $\fn \in \Ical$ with $\nr(\fn)>M$.
Hence for any $\fn \in \Ical$ with $\nr(\fn)>M$ and any $z \in [0,\min(1,\s)]$,
there exists $\pi \in \Pi_{\rm cus}(l,\fn)-\Pi_{\rm cus}(l,\cO)$ such that $L(\tfrac{z+1}{2},\pi;\Ad){\rm ch}_{J}(\bfx_S(\pi))\neq 0$. We have $\gf_{\pi}=\fn$ since $\fn$ is prime. This completes the proof. We remark that in the proof above the condition {\bf (P)} is not necessary when $J=\Omega_{S}$, in which case $f$ belongs to $\Pcal(\Omega_S)$.

\section{The cuspidal case}
Let ${\mathbf \Phi}^l(\gn|\bfs,g,h)$ be the automorphic kernel function constructed in \S~\ref{The kernel function}. Let $\xi\cong \otimes_v\xi_v$ be an irreducible cuspidal automorphic representation of $G_\A$ with trivial central character which is everywhere unramified, i.e., $\xi_v^{\bK_v}\not=\{0\}$ for all $v\in \Sigma_F$. Thus we have a set of numbers $\nu_v(\xi) \in i\RR_{\ge0} \cup (0,1)$ $(v\in \Sigma_\infty)$ and $\nu_v(\xi) \in \{iy \ | \ 0\le y \le 2\pi(\log q_v)^{-1}\} \cup \{x+iy \ | \ 0<x<1, \ y\in\{0,2\pi(\log q_v)^{-1}\}\}$ $(v\in \Sigma_\fin)$ such that $\xi_v\cong I(|\,|_v^{\nu_v(\xi)/2})$ for all $v\in \Sigma_F$. Let $\varphi_\xi^{0} =\varphi_\xi^{\rm new}$ be the new vector of $\xi$. Since $\varphi_\xi^0$ is rapidly decreasing on the Siegel set $\fS^1$ of $G_\AA^1$, the integral 
\begin{align}
	\int_{Z_\A G_F\bsl G_\A}\varphi_\xi^{0}(g){\mathbf \Phi}^l(\gn|\bfs,g,g)\,\d g
	\label{CUSPINT}
\end{align}
converges absolutely. By replacing $\Ecal_\b(g)$ with $\varphi_\xi^0$ in every occurrence, almost all proofs to compute \eqref{IIb} so far also work for the integral \eqref{CUSPINT} by a slight modification. We end up with a formula resemble to the one in Corollary~\ref{MAINTHM}. To describe it, we need further notation. For $\pi \in \Pi_{\rm cus}(l,\fn)$, we consider the sum of triple product of cusp forms  
$$
\PP_{\varphi_\xi^0}(l,\fn|\pi)=\sum_{\varphi\in \Bcal_\pi(l,\fn)}\langle \varphi_\xi^0|\varphi\,\bar\varphi \rangle_{L^2}
$$
as \eqref{AverageInnprod}. We modify the definition of
${\bf B}^{(z)}_{\fn}(\bfs|\Delta;\fa)$
in \S~\ref{DMR} by replacing the parameter $z$ with $\nu_v(\xi)$ in the $v$-factor for all $v\in \Sigma_F$. i.e.,
$$
{\bf B}_{\fn}^{\xi}(\bfs|\Delta;\fa)=\prod_{v\in \Sigma_\fin-(S\cup S(\fn))}\Ocal_{0,v}^{\Delta,(\nu_v(\xi))}(a_v)\prod_{v\in S(\fn)}\Ocal_{1,v}^{\Delta,(\nu_v(\xi))}(a_v)\prod_{v\in S}
\Scal_v^{\Delta,(\nu_v(\xi))}(s_v,a_v).
$$
For any quadratic field extension $E=F(\sqrt{\Delta})$ with a prescribed square root of $\Delta\in F^\times$, we define
\begin{align*}
	\Pcal_\Delta(\varphi_\xi^0) = D_F^{-1/2}\nr(\fd_{E/F})^{1/2}
	\{\prod_{ \substack{v\in \Sigma_{\rm dyadic}  \\ \Delta_v^{0}=5}} 2^{-(1+\nu_v(\xi))/2}3(1+2^{-\nu_v(\xi)})^{-1}\}
	\int_{\A^\times E^\times\bsl \A_{E}^\times}\varphi_\xi^0\bigl(\iota_\Delta(\tau)R_\Delta^{-1}\bigr)\,\d^\times \tau. 
\end{align*}

\begin{thm}\label{MAINTHM2} Retain all the assumptions and notation in Theorem \ref{thm0}.
	When $\min_{v\in S}\Re(s_v)>2\min_{v \in \Sigma_{\infty}}l_v-1$, we have the identity
	\begin{align*}
 & (-1)^{\#S} C(l,\fn)\,\sum_{\pi \in \Pi_{\rm cus}(l,\fn)}
		\frac{\PP_{\varphi_\xi^0}(l,\fn|\pi)}{\prod_{v\in S} \{(q_v^{(1+\nu_v(\pi))/2}+q_v^{(1-\nu_v(\pi))/2})-(q_v^{(1+s_v)/2}+q_v^{(1-s_v)/2})\}}\\
		=&\JJ_{\rm hyp}(\bfs,\varphi_\xi^{0})+\JJ_{\rm ell}(\bfs,\varphi_\xi^{0}).\end{align*}
	The right-hand side is given by the absolutely convergent sums 
	\begin{align*}
		\JJ_{\rm hyp}(\bfs,\varphi_\xi^{0})&= \tfrac{1}{2}D_F^{-1}L\left(\tfrac{1}{2},\xi\right)
		\sum_{a\in \cO(S)^\times-\{1\}}{\bf B}_{\fn}^{\xi}(\bfs|1;a(a-1)^{-2}\cO)\prod_{v\in \Sigma_\infty}\Ocal_{v}^{+,(\nu_v(\xi))}((a+1)/(a-1)), \\
		\JJ_{\rm ell}(\bfs,\varphi_\xi^{0})&=\tfrac{1}{2} \sum_{(t:n)_F}
		\Pcal_\Delta(\varphi_\xi^0)\, {\bf B}_{\fn}^{\xi}(\bfs|\Delta;n\ff_{\Delta}^{-2})\,\prod_{v\in \Sigma_\infty}\Ocal_v^{\sgn(\Delta^{(v)}),(\nu_v(\xi))}(t|\Delta|_v^{-1/2}),
	\end{align*}
	where $\Delta=t^2-4n$ and $(t:n)_F$ ranges over the same set as in Corollary~\ref{MAINTHM}.
	
\end{thm}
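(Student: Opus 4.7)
The plan is to repeat the two-way computation of $\int_{Z_\A G_F \bsl G_\A} \varphi_\xi^0(g)\,{\bf \Phi}^l(\fn|\bfs;g,g)\, dg$, following the template of Corollary~\ref{MAINTHM} but with the smoothed Eisenstein series $\cE_\b^*$ replaced by $\varphi_\xi^0$ throughout. Absolute convergence of this integral is immediate from Lemma~\ref{CONVERGE} combined with the rapid decay of $\varphi_\xi^0$ on Siegel domains. Substituting the spectral expansion of Proposition~\ref{SPECTRALEXP}(2) and integrating term by term produces the left-hand side of the claimed identity; the sign $(-1)^{\#S}$ is a normalization matching that of Corollary~\ref{MAINTHM}. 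On the geometric side I invoke the decomposition \eqref{kernelftndec}; both $\int \varphi_\xi^0\,J_{\rm id}\,dg$ and $\int \varphi_\xi^0\,J_{\rm unip}\,dg$ vanish by cuspidality (the unipotent unfolding \eqref{Junip-g} exposes an inner $N$-constant term $\int_{N_F \bsl N_\A} \varphi_\xi^0(ng)\,dn \equiv 0$). This is the structural reason no unipotent term appears on the right, and it obviates the contour-shifting performed in \S~\ref{UnConSh}.

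For the hyperbolic contribution, the manipulation leading to \eqref{Firststep-1} carries over verbatim with $P_\b(0;\cdot)$ replaced by the cuspidal period
\begin{equation*}
  P_\xi(g) \;=\; \int_{F^\times \bsl \A^\times} \varphi_\xi^0\!\left(\left[\begin{smallmatrix} t & 0 \\ 0 & 1 \end{smallmatrix}\right] g\right) d^\times t.
\end{equation*}
Unfolding $\varphi_\xi^0$ via its Fourier expansion and computing the local Mellin integrals of the spherical Whittaker functions as in \eqref{nonarchLocWhitt}--\eqref{archLocWhitt}, and then comparing with Lemma~\ref{FhypL1}, factors $P_\xi$ on $N_\A$ as a global constant (namely $D_F^{-1} L(\tfrac{1}{2},\xi)$ in the chosen normalizations) times the pure tensor $\prod_v \varphi_v^{(0,\nu_v(\xi))}$. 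After substitution, the local orbital integrals reduce to $\fF_v^{(\nu_v(\xi))}(a)$ of Theorem~\ref{explicit hyp} specialized at $z = \nu_v(\xi)$, producing the claimed expression for $\JJ_{\rm hyp}(\bfs,\varphi_\xi^0)$.

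For the elliptic contribution, the reduction \eqref{10.2-f1}--\eqref{EllEisPer} is identical and yields the Waldspurger-type period $(\varphi_\xi^0)^\Delta(g) = \int_{\A^\times E^\times \bsl \A_E^\times} \varphi_\xi^0(\iota_\Delta(\tau) R_\Delta^{-1} g)\,d^\times \tau$. By the local multiplicity-one property of the Waldspurger model cited before Lemma~\ref{MFoneSphftn}, this factors as $(\varphi_\xi^0)^\Delta(g) = \Pcal_\Delta(\varphi_\xi^0) \prod_v \varphi_v^{\Delta,(\nu_v(\xi))}(g_v)$ — the exact analogue of \eqref{10.2-f2}, with $\Lambda_F(z+1)\,E^\Delta(z;1_2)$ replaced by the global Waldspurger period $\Pcal_\Delta(\varphi_\xi^0)$ whose dyadic correction and $\nr(\fd_{E/F})^{1/2}$ factor are chosen precisely so that the local spherical values computed in Lemma~\ref{sphericalftn1value} produce correctly renormalized local periods. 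Applying Theorem~\ref{OrbIntUnifEx} with $z = \nu_v(\xi)$ then gives the stated formula for $\JJ_{\rm ell}(\bfs,\varphi_\xi^0)$.

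The one step requiring genuine care — and the main obstacle — is the absolute convergence of the geometric sums, which legitimizes all the preceding interchanges. The essential observation is that the parameters $\nu_v(\xi)$ lie in a bounded vertical strip by unitarity (and are purely imaginary almost everywhere by \cite{Blasius}), so the uniform estimates of \S~\ref{PAbsConv} for the hyperbolic term and \S~\ref{ABCelliptic} for the elliptic term apply with $z$ replaced by $\nu_v(\xi)$ without essential change. The role previously played by Lemma~\ref{EisPeriodEst} is trivial here, since $\Pcal_\Delta(\varphi_\xi^0)$ is at worst of polynomial growth in $\nr(\fd_{E/F})$ by a Cauchy--Schwarz estimate against the rapidly decreasing $\varphi_\xi^0$, and no factor such as $\zeta_F\bigl(\tfrac{1-z}{2}\bigr)$ requiring analytic continuation appears. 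This is why — as advertised in the introduction — the proof is substantially easier than that of Corollary~\ref{MAINTHM}.
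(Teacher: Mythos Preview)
Your proposal is correct and follows essentially the same route as the paper's proof: replace $\cE_\b^*$ by $\varphi_\xi^0$, use cuspidality to kill the identity and unipotent terms, factor the hyperbolic period via the Hecke integral $\Pcal_1(\varphi_\xi^0)=D_F^{-1/2}L(1/2,\xi)$ and Lemma~\ref{FhypL1}, factor the elliptic period via Waldspurger's multiplicity one, and invoke the estimates of \S\ref{PAbsConv} and \S\ref{ABCelliptic} with $z$ specialized to $\nu_v(\xi)$. The only substantive difference is in bounding $\Pcal_\Delta(\varphi_\xi^0)$: the paper obtains $|\Pcal_\Delta(\varphi_\xi^0)| \ll \vol(\A^\times E^\times \bsl \A_E^\times)$ by the sup-norm of $\varphi_\xi^0$ and then controls the volume via the residue of $E^\Delta(z;1_2)$ at $z=1$ using Lemma~\ref{EisPeriodEst}, whereas your ``Cauchy--Schwarz'' gesture is imprecise (it is really the same sup-norm/volume bound, and you still need the volume estimate $\vol \ll_\e \{\prod_v |m_v|_v^{-1}\}\nr(\fd_{E/F})^{1/2+\e}$ to feed into the machinery of \S\ref{ABCelliptic}).
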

\begin{proof}
	In accordance with \eqref{kernelftndec}, the integral \eqref{CUSPINT} breaks up to the sum of four terms $\JJ_{\natural}(\bfs,\varphi_\xi^0)=\int_{Z_\A G_F\bsl G_\A}
	\varphi_\xi^0(g)J_{\natural}(\bfs;g)\,\d g$ with $\natural\in \{{\rm id},{\rm unip},{\rm hyp},{\rm ell}\}$. By the cuspidality of $\varphi_\xi^0$, it is seen easily that $\JJ_{\rm id}(\bfs;\varphi_\xi^0)=\JJ_{\rm unip}(\bfs;\varphi_\xi^0)=0$. By the same argument as in \S~\ref{The $F$-hyperbolic term}, the hyperbolic term becomes 
	$$
	\JJ_{\rm hyp}(\bfs,\varphi_\xi^0)=\tfrac{1}{2} \Pcal_1(\varphi_\xi^0) \sum_{a\in F^\times-\{1\}} 
	\prod_{v\in \Sigma_F}\fF_v^{(\nu_v(\xi))}(a),
	$$
	with $\Pcal_1(\varphi_\xi^0)=\int_{F^\times \bsl \A^\times}
	\varphi_{\xi}^{0}\left(\left[\begin{smallmatrix} t & 0 \\ 0 & 1 \end{smallmatrix}\right]\right)\,\d^\times t$ being the Hecke's integral, which is identified with the central value of $L$-function: $\Pcal_1(\varphi_\xi^0)=D_F^{-1/2}L(1/2,\xi)$. Since $\varphi_\xi^0$ has no constant term in the Fourier expansion, the first part of \S~\ref{UnConSh} is irrelevant; the absolute convergence is shown as in \S~\ref{PAbsConv}. In the same way as in \S~\ref{The $F$-elliptic term}, the elliptic term becomes 
	$$
	\JJ_{\rm ell}(\bfs,\varphi_\xi^0)=\tfrac{1}{2}\sum_{(t:n)_F\in \Qcal_{F}^{\rm Irr}\cap \Qcal_{F}^{S}}\{\tint_{\A^\times E^\times\bsl \A_{E}^\times}\varphi_\xi^0\bigl(\iota_\Delta(\tau)R_\Delta^{-1}\bigr)\,\d^\times \tau\}
	\prod_{v\in \Sigma_F}\fE_{v}^{(\nu_v(\xi))}(\hat \gamma_v).
	$$ 
	Since $\lim_{z\rightarrow 1} (z-1)E^{\Delta}(z;1_2)=D_F^{-1/2}\zeta_F(2)^{-1}{\rm Res}_{z=1}\zeta_F(z)\,\vol(\A^\times E^\times \bsl \A_E^\times;\d^\times \tau)$ (cf.\ \cite[Lemma 2.13]{Tsuzuki2015}), from Lemma~\ref{EisPeriodEst}, we obtain the majorization $|\tint_{\A^\times E^\times\bsl \A_{E}^\times}\varphi_\xi^0\bigl(\iota_\Delta(\tau)R_\Delta^{-1}\bigr)\,\d^\times \tau|\ll \vol(\A^\times E^\times \bsl \A_E^\times;\d^\times \tau) \ll_{\e} \{\prod_{v\in \Sigma_F}|m_v|_v^{-1}\}\nr({\frak D}_{\Delta})^{\frac{1}{2}+2\e}$, which should be a substitute of Lemma~\ref{EisPeriodEst}. By Theorem~\ref{OrbIntUnifEx} and the arguments in \S~\ref{ABCelliptic}, we have the absolute convergence and the desired formula. \end{proof}

Theorem \ref{MAINTHM2} will be applied to non-vanishing of $L$-values for $\GL(2)\times \GL(3)$
in our forthcoming paper \cite{SugiyamaTsuzuki2018}.

\section{Explicit formulas of local orbital integrals} \label{Localorbitalintegrals}

In this section, we prove Theorems \ref{explicit hyp} and 
\ref{OrbIntUnifEx} by separating cases as in the following table.
\begin{center}\begin{table}[htb]
		\begin{tabular}{|c|c|c|} \hline
			$v$ & $E_v/F_v$ & Proof \\ \hline
			$\Sigma_\infty$ & split & \S \ref{The proof of Theorem explicit hyp (1)} \\ \hline
			$\Sigma_\infty$ & ramified & \S \ref{EllOrbAcase} \\ \hline
			$\Sigma_{\fin}-(S\cup S(\gn))$ & split & \S\ref{The proof of Theorem explicit hyp (2)} \\ \hline
			$\Sigma_{\fin}-(S\cup S(\gn))$ & non-split & \S\ref{EllOrbIntL1}\\ \hline
			$S(\gn)$ & split & \S \ref{The proof of Theorem explicit hyp (3)}\\ \hline
			$S(\gn)$ & non-split & \S \ref{The proof of Theorem OrbIntUnifEx (4)}\\ \hline
			$S$ & split & \S \ref{The proof of Theorem explicit hyp (4)} \\ \hline
			$S$ & non-split & \S \ref{The proof of Theorem OrbIntUnifEx (5)}\\ \hline
		\end{tabular}
	\end{table}
\end{center}
To ease notation,
in the archimedean cases (\S \ref{The proof of Theorem explicit hyp (1)} and \ref{EllOrbAcase}), we write $l$ and $|\,|$ for $l_v$ and $|\,|_v$, respectively.
In the non-archimedean cases (\S \ref{The proof of Theorem explicit hyp (2)}, \S \ref{The proof of Theorem explicit hyp (3)},
\S \ref{The proof of Theorem explicit hyp (4)},
\S \ref{EllOrbIntL1},
\S \ref{The proof of Theorem OrbIntUnifEx (4)} and
\S \ref{The proof of Theorem OrbIntUnifEx (5)}), we omit the subscript $v$ of $\cO_v$, $\fp_v$, $q_v$, $d_v$ and $|\,|_v$ and write them $\cO$, $\fp$, $q$, $d$ and $|\,|$, respectively.

\subsection{Local hyperbolic orbital integrals}
Let $v \in \Sigma_F$. In this subsection, we compute the hyperbolic local orbital integral
$$
\fF_v^{(z)}(a)=\int_{F_v} \left(\int_{\bK_v}\Phi_v\left(k^{-1}\left[\begin{smallmatrix} a & (a-1)x \\ 0 & 1 \end{smallmatrix} \right] k\right)\,\d k \right)\,\varphi_v^{(0,z)} \left(\left[\begin{smallmatrix} 1 & x \\ 0 & 1 \end{smallmatrix}\right] \right)\,\d x,  \quad v\in \Sigma_F, \quad a \in F_v^\times-\{1\},
$$
where $\Phi_v(g_v)$ is the $v$-th factor of $\Phi(\bfs;g)$.

\subsubsection{The proof of Theorem \ref{explicit hyp} (1)} \label{The proof of Theorem explicit hyp (1)}
Suppose $v\in \Sigma_\infty$, so that $F_v\cong \RR$. Fix $a \in F_v^\times-\{1\}$.
\begin{lem}\label{FHypL8} 
	On the region $|\Re(z)|<1$, we have
	$$
	\fF_v^{(z)}(a)=A_v(0,z)J_v^{(z)}(a)+A_v(0,-z)\,J_{v}^{(-z)}(a),
	$$
	where $J_v^{(z)}(a)$ for $\Re(z)>-1$ is defined as
	\begin{align}
		&\delta(a>0)\,2^{l}\pi^{1/2}\frac{\Gamma\left(\tfrac{z}{2}+1\right)\Gamma\left(\tfrac{l-1}{2}+\tfrac{z+1}{4}\right)
			\Gamma\left( \tfrac{l}{2}+\tfrac{z+1}{4}\right)}{\Gamma\left(\tfrac{z+1}{4}\right)^2\Gamma\left(\tfrac{z+3}{4}\right)\Gamma\left(l+\tfrac{z+1}{4}\right)} \label{FHypL8-f0}\\
&\times		|a|^{l/2}|a+1|^{-l}\int_{0}^{1}(1-y)^{\frac{z+1}{4}-1}{}_2F_1\left(\tfrac{l+1}{2},\tfrac{l}{2};l+\tfrac{z+1}{4};1-
		\left|\tfrac{a-1}{a+1}\right|^2\,y\right)\,\d y.
		\notag
	\end{align}
\end{lem}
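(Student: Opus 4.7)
The plan is to substitute the explicit formula~\eqref{Hsphericalfnt} for $\varphi_v^{(0,z)}\bigl([\begin{smallmatrix}1&x\\0&1\end{smallmatrix}]\bigr) = A_v(0,z)\,h_v^{(0,z)}(x) + A_v(0,-z)\,h_v^{(0,-z)}(x)$ directly into the integral defining $\fF_v^{(z)}(a)$. By linearity we immediately obtain the claimed decomposition $\fF_v^{(z)}(a)=A_v(0,z)J_v^{(z)}(a)+A_v(0,-z)J_v^{(-z)}(a)$, where
\[
J_v^{(z)}(a)=\int_{F_v}\Bigl(\int_{\bK_v}\Phi_v^{l}\bigl(k^{-1}[\begin{smallmatrix}a&(a-1)x\\0&1\end{smallmatrix}]k\bigr)\,dk\Bigr)\,h_v^{(0,z)}(x)\,dx.
\]
So the task reduces to evaluating $J_v^{(z)}(a)$ and matching the result with~\eqref{FHypL8-f0}.

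The next step is to simplify the inner $\bK_v$-integral. By property (b) of $\Phi^{l}$ in \S\ref{MCDS}, conjugation by $\SO(2)$ fixes $\Phi^{l}$. The only remaining action is from the non-trivial coset $O(2)/\SO(2)$, which amounts to replacing $x$ by $-x$ in the explicit formula~\eqref{DSMexplicitformula}. Averaging and using~\eqref{DSMexplicitformula} yields
\[
\int_{\bK_v}\Phi_v^{l}\bigl(k^{-1}[\begin{smallmatrix}a&(a-1)x\\0&1\end{smallmatrix}]k\bigr)\,dk=\tfrac{1}{2}\delta(a>0)(4a)^{l/2}\bigl[\{(a+1)-i(a-1)x\}^{-l}+\{(a+1)+i(a-1)x\}^{-l}\bigr],
\]
which is an even function of $x$.

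Then I substitute $y=(a-1)x$ to normalize the linear factor to $(a+1)\mp iy$, so that $1+x^{2}=\{(a-1)^{2}+y^{2}\}/(a-1)^{2}$. Writing $h_v^{(0,z)}(x)$ explicitly and using Euler's integral representation
\[
{}_2F_1\Bigl(\tfrac{z+1}{4},\tfrac{z+1}{4};\tfrac{z+2}{2};w\Bigr)=\tfrac{\Gamma(\frac{z+2}{2})}{\Gamma(\frac{z+1}{4})\Gamma(\frac{z+3}{4})}\int_{0}^{1}t^{\frac{z+1}{4}-1}(1-t)^{\frac{z+3}{4}-1}(1-tw)^{-\frac{z+1}{4}}\,dt
\]
(convergent for $|\Re(z)|<1$), and swapping the order of integration by Fubini, the integral over $y$ becomes a standard one-dimensional Fourier-type integral of the form $\int_{\R}[\{(a+1)-iy\}^{-l}+\{(a+1)+iy\}^{-l}](C+y^{2})^{-\alpha}\,dy$, evaluable in closed form via the classical formula
\[
\int_{\R}\frac{dy}{(A-iy)^{l}(C+y^{2})^{\alpha}}=\frac{2\pi\,\Gamma(l+2\alpha-1)}{\Gamma(l)\Gamma(2\alpha)}\,(A+\sqrt{C})^{1-l-2\alpha}\cdot (\text{elementary factor}),
\]
which follows from the Beta integral after a partial-fractions / residue computation. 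A final change of variables $t\mapsto 1-y$ together with Kummer's quadratic transformation expresses the remaining $t$-integral as the ${}_{2}F_1$ at argument $1-|(a-1)/(a+1)|^{2}y$ appearing in~\eqref{FHypL8-f0}.

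The main obstacle will be the bookkeeping of the $\Gamma$-factors: one must combine the normalizing constant of Euler's representation, the output of the $y$-integral, and the quadratic transformation, and then invoke the Legendre duplication formula $\Gamma(s)\Gamma(s+\tfrac{1}{2})=2^{1-2s}\sqrt{\pi}\,\Gamma(2s)$ (applied at $s=l/2$ and $s=(z+1)/4$) to reconcile the product with the explicit prefactor in~\eqref{FHypL8-f0}. Absolute convergence on $|\Re(z)|<1$, required to justify Fubini, is controlled by the estimates on $\varphi_v^{(0,z)}$ already used in Lemma~\ref{FHypL9}.
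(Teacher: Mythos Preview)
Your overall strategy --- replacing the paper's series expansion of $h_v^{(0,z)}$ by Euler's integral representation and swapping the $t$- and $x$-integrals --- is sound and in fact leads to a shorter argument than the paper's. But the step you call ``the classical formula'' contains a genuine gap. The integral
\[
\int_{\R}\frac{dx}{(A-ix)^{l}(C+x^{2})^{\alpha}}
\]
is \emph{not} $(A+\sqrt{C})^{1-l-2\alpha}$ times an elementary factor, and it does not follow from the Beta integral $\int(p+iy)^{-\mu}(q-iy)^{-\nu}\,dy$ by partial fractions: writing $(C+x^2)^{-\alpha}=(\sqrt{C}-ix)^{-\alpha}(\sqrt{C}+ix)^{-\alpha}$ leaves three non-integer-power factors. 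The correct evaluation (Fourier-transform $(C+x^{2})^{-\alpha}$ to a $K$-Bessel, then apply the Laplace transform in Gradshteyn--Ryzhik 6.621.3) is
\[
\int_{\R}\frac{dx}{(A-ix)^{l}(C+x^{2})^{\alpha}}
=\frac{2\pi\,\Gamma(l+2\alpha-1)}{\Gamma(\alpha)\,\Gamma(l+\alpha)}\,(A+\sqrt{C})^{\,1-l-2\alpha}\,
{}_2F_1\!\Bigl(l+2\alpha-1,\alpha;\,l+\alpha;\tfrac{A-\sqrt{C}}{A+\sqrt{C}}\Bigr),
\]
so the hypergeometric in the target is already present here rather than produced by the outer $t$-integral. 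With this correction your plan does go through: the parameters satisfy $c=a-b+1$, so Goursat's quadratic transformation ${}_2F_1(a,b;a-b+1;z)=(1+z)^{-a}{}_2F_1\bigl(\tfrac{a}{2},\tfrac{a+1}{2};a-b+1;\tfrac{4z}{(1+z)^2}\bigr)$ applies, and with $A=a+1$, $\sqrt{C}=|a-1|\sqrt{1-t}$ one finds $4z/(1+z)^{2}=1-\lambda(1-t)$ where $\lambda=|(a-1)/(a+1)|^{2}$. An additional Euler transformation turns the parameters $(\tfrac{l-1}{2}+\alpha,\tfrac{l}{2}+\alpha)$ into $(\tfrac{l+1}{2},\tfrac{l}{2})$, after which the substitution $t\mapsto 1-y$ and Legendre duplication give~\eqref{FHypL8-f0}.

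By contrast, the paper expands $h_v^{(0,z)}$ as a power series in $1/(1+x^{2})$, evaluates each $x$-integral via $K$-Bessel term by term, and then passes through the parabolic cylinder $D_{-l}$ and Kummer $U$ functions before an Euler integral for ${}_1F_1$ and a Whittaker--Laplace formula finally produce the target ${}_2F_1$. Your route is more direct once the $x$-integral is correctly identified as hypergeometric; the paper's route avoids locating the specific Goursat transformation but pays with several additional special-function identities.
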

\begin{proof}
	Set
	$$
	J_v^{(z)}(a)=\tint_{\bK_v}\tint_{\R}\Phi_v^{l}\left(k^{-1}\left[\begin{smallmatrix} a & (a-1)x \\ 0 & 1 \end{smallmatrix}\right]k\right)\,h_v^{(0,z)}\left(x
	\right)\,\d x\, \d k.
	$$By the Cartan decomposition, we have 
	$$
	\Phi_v^{l}\left(k^{-1}\left[\begin{smallmatrix} a & (a-1)x \\ 0 & 1 \end{smallmatrix}\right]k\right)=\delta(a>0)\,2^{l}a^{l/2}\,\{(a+1)+i(a-1)x\}^{-l}
	$$
	for any $k\in \bK_v$, $a\in \R^\times$ and $x\in \R$. Since $|h^{(0,z)}(x)|\ll |\log x|$ as $x\rightarrow 0$ and $|h^{(0,z)}(x)|\ll (1+x^{2})^{-\frac{\Re(z)+1}{4}}$ as $|x|\rightarrow \infty$, the integral $J_v^{(z)}(a)$ converges absolutely for $\Re(z)>1-2l$ and defines a holomorphic function. From now on, we suppose $a>0$ and $a\neq 1$. Then, 
	{\allowdisplaybreaks\begin{align*}
			J_v^{(z)}(a)&=2^{l}a^{l/2}\tint_{\R}\{(a+1)-i(a-1)x\}^{-l}(1+x^2)^{-\frac{z+1}{4}}{}_2F_1\left(\tfrac{z+1}{4},\tfrac{z+1}{4};\tfrac{z}{2}+1;\tfrac{1}{1+x^2}\right)
			\,\d x
			\\
			&=
			2^{l}a^{l/2}\frac{\Gamma\left(\frac{z}{2}+1\right)}{\Gamma\left(\tfrac{z+1}{4}\right)^2}
			\sum_{m=0}^{\infty}\frac{\Gamma\left(\tfrac{z+1}{4}+m\right)^{2}}{m!\,\Gamma\left(\tfrac{z}{2}+m+1\right)}\tint_{\R}\{(a+1)-i(a-1)x\}^{-l}(1+x^2)^{-\frac{z+1}{4}-m}\,\d x,
	\end{align*}}which is valid for $z\notin\{0,-2,-4,\ldots\}$.
	For any $\Re(\a)>1/2$, 
	{\allowdisplaybreaks\begin{align*}
			&\tint_{\R}\{(a+1)-i(a-1)x\}^{-l}(1+x^2)^{-\a}\,\d x\\
			=&\tint_{\R}{\Gamma(l)^{-1}} \int_{0}^{\infty} e^{-\{(a+1)-i(a-1)x\}t} t^{l-1}\d t\,(1+x^{2})^{-\a}\,\d x
			\\
			=&{\Gamma(l)^{-1}} \tint_{0}^{\infty} e^{-(a+1)t}t^{l-1}\left(\tint_{\R}e^{(a-1)itx}(1+x^{2})^{-\a}\d x\right)\,\d t
			\\
			=&2\pi^{1/2}\left(\tfrac{|a-1|}{2}\right)^{\a-1/2}\Gamma(l)^{-1}\Gamma(\a)^{-1}\tint_{0}^{\infty}t^{l+\a-3/2}e^{-(a+1)t}K_{\a-1/2}(|a-1| t)\,\d t
	\end{align*}}by the formula in the last line of \cite[p.85]{MOS}.
	From now on suppose $\Re(z)>1$. By this, we see that $J_v^{(z)}(a)$ for $\Re(z)>1$ is the product of 
	\begin{align}
		{2^{l+1}a^{l/2}\pi^{1/2}\,\Gamma\left(\tfrac{z}{2}+1\right)}{\Gamma(l)^{-1}\,\Gamma\left(\tfrac{z+1}{4}\right)^{-2}}
		\label{FHypL8-1}
	\end{align}
	and 
	\begin{align}
		\sum_{m=0}^{\infty}\frac{\Gamma\left(\tfrac{z+1}{4}+m\right)}{m!\,\Gamma\left(\tfrac{z}{2}+1+m\right)}
		\left(\tfrac{|a-1|}{2}\right)^{\tfrac{z+1}{4}+m-1/2}\tint_{0}^{\infty}t^{l+\frac{z+1}{4}+m-\frac{3}{2}}e^{-(a+1)t}K_{\frac{z+1}{4}+m-\frac{1}{2}}(|a-1|t)\,\d t.
		\label{FHypL8-2}
	\end{align} 
	By $K_{\a}(z)=\frac{1}{2}\int_{0}^{\infty}\exp\left(-\tfrac{z}{2}(y+y^{-1})\right)\,y^{-\a-1}\d y$, the formula \eqref{FHypL8-2} becomes
	{\allowdisplaybreaks 
		\begin{align*}
			&\left(\tfrac{|a-1|}{2}\right)^{\frac{z+1}{4}-\frac{1}{2}}
			\sum_{m=0}^{\infty}\frac{\Gamma\left(\tfrac{z+1}{4}+m\right)}
			{m!\,\Gamma\left(\tfrac{z}{2}+1+m\right)}
			\left(\tfrac{|a-1|}{2}\right)^{m}\tint_{0}^{\infty}t^{l+\frac{z+1}{4}+m-\frac{3}{2}}e^{-(a+1)t}\,\d t\,\\
			& \times
			\tfrac{1}{2}\tint_{0}^{\infty} \exp\left(-\tfrac{|a-1| t}{2}(y+y^{-1})\right)y^{-\frac{z+1}{4}-m-\frac{1}{2}}\d y
			\\
			=&
			\tfrac{1}{2}\left(\tfrac{|a-1|}{2}\right)^{\frac{z+1}{4}-\frac{1}{2}} 
			{\Gamma\left(\tfrac{z+1}{4}\right)}{\Gamma\left(\tfrac{z}{2}+1\right)^{-1}} \tint_{0}^{\infty}\int_{0}^\infty \d t \,\d y\, t^{l+\frac{z+1}{4}-\frac{3}{2}}e^{-(a+1)t}\\
			&\times \exp\left(-\tfrac{|a-1|t}{2}(y+y^{-1})\right){}_1F_1\left(\tfrac{z+1}{4},\tfrac{z}{2}+1;\tfrac{t}{y}\tfrac{|a-1|}{2}\right)
			y^{-\frac{z+1}{4}-\frac{1}{2}}
			\\
			=&
			\tfrac{1}{2}\left(\tfrac{|a-1|}{2}\right)^{\frac{z+1}{4}-\frac{1}{2}} 
			{\Gamma\left(\tfrac{z+1}{4}\right)}{\Gamma\left(\tfrac{z}{2}+1\right)^{-1}}
			\tint_{0}^{\infty}\int_{0}^\infty \d t \,\d y\,
			t^{l+\frac{z+1}{4}-\frac{3}{2}}e^{-(a+1)yt}\\
			&\times\exp\left(-\tfrac{|a-1| yt}{2}(y+y^{-1})\right){}_1F_1\left(\tfrac{z+1}{4},\tfrac{z}{2}+1;\tfrac{t|a-1|}{2}\right)
			y^{l-1}.
		\end{align*}
	}To have the last equality, we made the variable change $t\rightarrow ty$. The $y$-integral is computed as
	\begin{align*} 
		\tint_{0}^{\infty} \exp\left(\tfrac{-|a-1|}{2}ty^2-(a+1)ty\right)y^{l-1}\,\d y&=\left(\tfrac{1}{|a-1|t}\right)^{l/2} \tint_{0}^{\infty}
		\exp\left(-\tfrac{y^2}{2}-\tfrac{a+1}{|a-1|^{1/2}}t^{1/2}y\right)y^{l-1}\,\d y\\
		&=\left(\tfrac{1}{|a-1|t}\right)^{l/2}\Gamma(l)
		\exp\left(\tfrac{(a+1)^2t}{4|a-1|}\right)\,
		D_{-l}\left(\tfrac{a+1}{|a-1|^{1/2}}t^{1/2}\right)
	\end{align*}
	in terms of the parabolic cylinder function $D_{-l}(z)$ by the first formula on \cite[p.328]{MOS}.
	Hence,
	{\allowdisplaybreaks
		\begin{align*}
			J_v^{(z)}(a)=&{2^{l}a^{l/2} {\pi^{1/2}}}{\Gamma\left(\tfrac{z+1}{4}\right)^{-1}}
			|a-1|^{-l/2}
			\left(\tfrac{|a-1|}{2}\right)^{\frac{z+1}{4}-\frac{1}{2}} 
			\\
			&\times 
			\tint_{0}^{\infty}
			\exp\left(\tfrac{(a+1)^2t}{4|a-1|}-\tfrac{|a-1|t}{2}\right)\,
			D_{-l}\left(\tfrac{a+1}{|a-1|^{1/2}}t^{1/2}\right)\,{}_1F_1\left(\tfrac{z+1}{4},\tfrac{z}{2}+1;\tfrac{t|a-1|}{2}\right)\,t^{\frac{l}{2}+\frac{z+1}{4}-\frac{3}{2}}\,\d t
			\\
			=&{2^{l}a^{l/2} {\pi^{1/2}}}{\Gamma\left(\tfrac{z+1}{4}\right)^{-1}}
			|a-1|^{-l/2}
			\left(\tfrac{|a-1|}{2}\right)^{-l/2} 
			\\
			&\times 
			\tint_{0}^{\infty} 
			\exp\left(\left\{\tfrac{(a+1)^2}{2(a-1)^2}-1\right\}t\right)\,
			D_{-l}\left(\tfrac{a+1}{|a-1|}\sqrt{2t}\right)\,{}_1F_1\left(\tfrac{z+1}{4},\tfrac{z}{2}+1;t\right)\,t^{\frac{l}{2}+\frac{z+1}{4}-\frac{3}{2}}\,\d t
			\\
			=&
			{2^{l}a^{l/2} {\pi^{1/2}}}{\Gamma\left(\tfrac{z+1}{4}\right)^{-1}}
			|a-1|^{-l/2}
			\left(\tfrac{|a-1|}{2}\right)^{-l/2} 
			\\
			&\times 2^{-l/2}
			\tint_{0}^{\infty} 
			e^{-t}\,U\left(\tfrac{l}{2},\tfrac{1}{2},\left(\tfrac{a+1}{a-1}\right)^{2}t\right)\,{}_1F_1\left(\tfrac{z+1}{4},\tfrac{z}{2}+1;t\right)\,t^{\frac{l}{2}+\frac{z+1}{4}-\frac{3}{2}}\,\d t
		\end{align*}
	}by the formula on \cite[p.287]{MOS}. Applying the integral expression
	$$
	e^{-t}{}_1F_1 \left(\tfrac{z+1}{4},\tfrac{z}{2}+1;t\right)
	={\Gamma\left(\tfrac{z}{2}+1\right)}{\Gamma\left(\tfrac{z+1}{4}\right)^{-1}\Gamma\left(\tfrac{z+3}{4}\right)^{-1}}\tint_{0}^{1}e^{-ty}y^{\frac{z+3}{4}-1}(1-y)^{\frac{z+1}{4}-1}\,\d y
	$$
	obtained from the formula on the last line of \cite[p.274]{MOS} by an obvious variable change, and then using the formula
	{\allowdisplaybreaks\begin{align*}
			\tint_{0}^{\infty} t^{\frac{l}{2}+\frac{z+1}{4}-\frac{3}{2}}e^{-ty}
			U\left(\tfrac{l}{2},\tfrac{1}{2},\b t\right)
			\d t
			&=\b^{-\frac{l}{2}-\frac{z+1}{4}+\frac{1}{2}}\,\frac{\Gamma\left(\frac{l}{2}+\frac{z+1}{4}-\frac{1}{2}\right)\Gamma\left(\frac{l}{2}+\frac{z+1}{4}\right)}{\Gamma\left(l+\frac{z+1}{4}\right)}\\
			&\times{}_2F_1\left( \tfrac{l}{2}+\tfrac{z+1}{4}-\tfrac{1}{2},\tfrac{l}{2}+\tfrac{z+1}{4};l+\tfrac{z+1}{4};1-\tfrac{y}{\b}\right),
	\end{align*}}which is deduced from the first formula of \cite[\S 7.5.2]{MOS} by applying $U(a,c;z)=e^{z/2}z^{-c/2}W_{c/2-a,c/2-1/2}(z)$ (\cite[p.304]{MOS}), 
	it turns out that $J_v^{(z)}(a)$ is the product of 
	$$
	2^{l}\pi^{1/2}{\Gamma\left(\tfrac{z}{2}+1\right)\Gamma\left(\tfrac{l-1}{2}+\tfrac{z+1}{4}\right)
		\Gamma\left( \tfrac{l}{2}+\tfrac{z+1}{4}\right)}{\Gamma\left(\tfrac{z+1}{4}\right)^{-2}\Gamma\left(\tfrac{z+3}{4}\right)^{-1}\Gamma\left(l+\tfrac{z+1}{4}\right)^{-1}}
	$$
	and 
	$$
	|a|^{l/2}|a-1|^{-l}\l(a)^{\frac{l}{2}+\frac{z+1}{4}-\frac{1}{2}}
	\tint_{0}^{1}y^{\frac{z+3}{4}-1}(1-y)^{\frac{z+1}{4}-1}{}_2F_1\left(\tfrac{l-1}{2}+\tfrac{z+1}{4}, \tfrac{l}{2}+\tfrac{z+1}{4}; l+\tfrac{z+1}{4};1-\l(a)y
	\right)\,\d y, 
	$$
	where $\l(a)=|(a-1)/(a+1)|^{2}$. We note that the integral is convergent for $\Re(z)>-1$. We apply the formula ${}_2F_1(a,b;c;z)=(1-z)^{c-a-b}{}_2F_1(c-a,c-b;c;z)$ to obtain the desired formula \eqref{FHypL8-f0} for $\Re(z)>1$. Then the equality \eqref{FHypL8-f0} is extended to $\Re(z)>-1$ due to the holomorphy.
\end{proof}

Let us return to prove Theorem \ref{explicit hyp} (1).
Suppose $|\Re(z)|<1$ and set $\l=\l(a)$; we note $0<\l<1$ if $a>0$. By the Gauss connection formula for $(1-z)/4\notin \ZZ$ (on the last line of \cite[p.47]{MOS}), the integral
\begin{align*}
	&\tint_{0}^{1}(1-y)^{\frac{z+1}{4}-1}{}_2F_1\left(\tfrac{l+1}{2},\tfrac{l}{2};l+\tfrac{z+1}{4};1-\l{y}\right)\,\d y
\end{align*}
becomes the sum of 
\begin{align}
	\frac{\Gamma\left(l+\tfrac{z+1}{4}\right)
		\Gamma\left(\tfrac{z-1}{4}\right)
	}{\Gamma\left(\tfrac{z+1}{4}+\tfrac{l-1}{2}\right)
		\Gamma\left(\tfrac{z+1}{4}+\tfrac{l}{2}\right)
	}\tint_{0}^{1}(1-y)^{\frac{z-3}{4}}{}_2F_1\left( \tfrac{l+1}{2},\tfrac{l}{2};\tfrac{5-z}{4};\l y \right)dy
	\label{FHypL10-1}
\end{align}
and 
\begin{align}
	\l^{\frac{z-1}{4}}\,\frac{\Gamma\left(l+\tfrac{z+1}{4}\right)
		\Gamma\left(\tfrac{1-z}{4}\right)}
	{\Gamma\left(\tfrac{l+1}{2}\right)\Gamma\left(\tfrac{l}{2}\right)}
	\tint_{0}^{1}y^{\frac{z-1}{4}}(1-y)^{\frac{z-3}{4}}{}_2F_1\left(\tfrac{z+1}{4}+\tfrac{l}{2},\tfrac{z+1}{4}+\tfrac{l-1}{2};\tfrac{z+3}{4};\l y\right)\d y.
	\label{FHypL10-2}
\end{align}
We see that these are computed as{\allowdisplaybreaks
	\begin{align*}
		\eqref{FHypL10-1}=&\frac{\Gamma\left(l+\tfrac{z+1}{4}\right)
			\Gamma\left(\tfrac{z-1}{4}\right)
		}{\Gamma\left(\tfrac{z+1}{4}+\tfrac{l-1}{2}\right)
			\Gamma\left(\tfrac{z+1}{4}+\tfrac{l}{2}\right)}
		\frac{\Gamma\left(\tfrac{5-z}{4}\right)\Gamma\left(\tfrac{z+1}{4}\right)}{\Gamma\left(\tfrac{l}{2}\right)\Gamma\left(\tfrac{l+1}{2}\right)}
		\sum_{m=0}^{\infty} 
		\frac{\Gamma\left(\tfrac{l}{2}+m\right)\Gamma\left(\tfrac{l+1}{2}+m\right)}{\Gamma\left(\tfrac{z+1}{4}+m+1\right)\Gamma\left(\tfrac{5-z}{4}+m\right)}
		\,\l^{m}\end{align*}
	and
	\begin{align*}
		\eqref{FHypL10-2}=&\l^{\frac{z-1}{4}}\,\frac{\Gamma\left(l+\tfrac{z+1}{4}\right)
			\Gamma\left(\tfrac{1-z}{4}\right)}
		{\Gamma\left(\tfrac{l+1}{2}\right)\Gamma\left(\tfrac{l}{2}\right)}
		\frac{\Gamma\left(\tfrac{z+3}{4}\right)\Gamma\left(\tfrac{z+1}{4}\right)}{\Gamma\left(\tfrac{z+1}{4}+\tfrac{l}{2}\right)\Gamma\left(\tfrac{z+1}{4}+\tfrac{l-1}{2}\right)}
		\sum_{m=0}^{\infty}\frac{\Gamma\left(\tfrac{z+1}{4}+\tfrac{l}{2}+m\right)\Gamma\left(\tfrac{z+1}{4}+\tfrac{l-1}{2}+m\right)}
		{m!\,\Gamma\left(\tfrac{z}{2}+1+m\right)}\l^{m}. 
\end{align*}}
Combining these evaluations with the formula of $J_v^{(z)}(a)$ in Lemma~\ref{FHypL8}, we obtain
{\allowdisplaybreaks
	\begin{align*}
		J_v^{(z)}(a)= \, & \delta(a>0){2^{l}\pi^{1/2}\Gamma\left(\tfrac{z}{2}+1\right)}
		{\Gamma\left(\tfrac{l}{2}\right)^{-1}\Gamma\left(\tfrac{l+1}{2}\right)^{-1}\Gamma\left(\tfrac{z+1}{4}\right)^{-1}\Gamma\left(\tfrac{z+3}{4}\right)^{-1}}|a|^{l/2}|a-1|^{-l}
		\\
		&\times \biggl\{\Gamma\left(\tfrac{z-1}{4}\right)\Gamma\left(\tfrac{5-z}{4}\right) 
		\sum_{m=0}^{\infty} 
		\frac{\Gamma\left(\tfrac{l}{2}+m\right)\Gamma\left(\tfrac{l+1}{2}+m\right)}{\Gamma\left(\tfrac{5+z}{4}+m\right)\Gamma\left(\tfrac{5-z}{4}+m\right)}
		\,\l^{m}\\
		&\qquad +\l^{\frac{z-1}{4}} \Gamma\left(\tfrac{z+3}{4}\right)\Gamma\left(\tfrac{1-z}{4}\right) \sum_{m=0}^{\infty}\frac{\Gamma\left(\tfrac{z+1}{4}+\tfrac{l}{2}+m\right)\Gamma\left(\tfrac{z+1}{4}+\tfrac{l-1}{2}+m\right)}
		{m!\,\Gamma\left(\tfrac{z}{2}+1+m\right)}
		\l^{m}\biggr\}
		\\
		= \, &\delta(a>0){2^{2l-1}\Gamma\left(\tfrac{z}{2}+1\right)\Gamma\left(\tfrac{1-z}{4}\right)}
		{\Gamma\left({l}\right)^{-1}
			\Gamma\left(\tfrac{z+1}{4}\right)^{-1}}|a|^{l/2}|a-1|^{-l}
		\\
		&\times \biggl\{-\sum_{m=0}^{\infty} 
		\frac{\Gamma\left(\tfrac{l}{2}+m\right)\Gamma\left(\tfrac{l+1}{2}+m\right)}{\Gamma\left(\tfrac{5+z}{4}+m\right)\Gamma\left(\tfrac{5-z}{4}+m\right)}
		\,\l^{m}
		+\l^{\frac{z-1}{4}} 
		\sum_{m=0}^{\infty}\frac{\Gamma\left(\tfrac{z+1}{4}+\tfrac{l}{2}+m\right)\Gamma\left(\tfrac{z-1}{4}+\tfrac{l}{2}+m\right)}
		{m!\,\Gamma\left(\tfrac{z}{2}+1+m\right)}\l^{m}\biggr\}
\end{align*}}using the formula $\Gamma\left(\tfrac{z+3}{4}\right)\Gamma\left(\tfrac{1-z}{4}\right)=\frac{\pi}{\sin(\frac{z+3}{4}\pi)}=\frac{-\pi}{\sin(\frac{z-1}{4}\pi)}=-\Gamma\left(\tfrac{z-1}{4}\right)\Gamma\left(\tfrac{5-z}{4}\right)$, and the duplication formula $\Gamma(l)=2^{l-1}\pi^{-1/2}\Gamma(l/2)\Gamma((l+1)/2)$. Pluging this and $A_v(0,z)=\frac{\sqrt{\pi}\Gamma(-z/2)}{\Gamma((1-z)/4)^2}$ to $\fF_v^{(z)}(a)=A_v(0,z)J_v^{(z)}(a)+A_v(0,-z)J_v^{(-z)}(a)$, after a simple computation, we have the formula
{\allowdisplaybreaks \begin{align*}
		\fF_v^{(z)}(a)=&
		2^{2l-1}\sqrt{\pi}\,\Gamma(l)^{-1} 
		\,{2\Gamma\left(1+\tfrac{z}{2}\right)
			\Gamma\left(1+\tfrac{-z}{2}\right)}\,
		{z^{-1}\,\Gamma\left(\tfrac{1+z}{4}\right)^{-1}\Gamma\left(\tfrac{1-z}{4}\right)^{-1}}
		\\
		&\times \delta(a>0)\, |a|^{l/2}|a+1|^{-l} 
		\{-\l(a)^{\frac{z-1}{4}}\Gcal_{l}^{(z)}(\l(a))
		+\l(a)^{\frac{-z-1}{4}}\Gcal_{l}^{(-z)}\left(\l(a)\right)
		\},
\end{align*}}where for $z\in \C$, $l\in 2\N$ and $|\l|<1$, we set 
\begin{align}
	\Gcal_{l}^{(z)}(\l)={\Gamma\left(\tfrac{z+1}{4}+\tfrac{l}{2}\right)\Gamma\left(\tfrac{z+1}{4}+\tfrac{l-1}{2} \right)}{\Gamma\left(\tfrac{z}{2}+1\right)^{-1}}\,{}_2F_1\left(\tfrac{z+1}{4}+\tfrac{l}{2},\tfrac{z+1}{4}+\tfrac{l-1}{2};\tfrac{z}{2}+1;\l\right).
	\label{Gcalftn}
\end{align}
From \cite[line 14, p.153]{MOS} and \cite[line 18, p.164]{MOS},  
\begin{align*}
	\l^{\frac{z-1}{4}}\,\Gcal_{l}^{(z)}(\l)=-2^{2-l}(1-\l)^{\frac{1-l}{2}}\l^{-1/2}\,\fQ_{\frac{z-1}{2}}^{l-1}\left(\l^{-1/2}\right)
\end{align*}
and 
\begin{align*}
	\fQ_{\frac{z-1}{2}}^{l-1}(\l^{-1/2})-\fQ_{\frac{-z-1}{2}}^{l-1}(\l^{-1/2})
	=&\sin\left(\tfrac{\pi z}{2}\right)\,\Gamma\left(l+\tfrac{z-1}{2}\right)\Gamma\left(l+\tfrac{-z-1}{2}\right)\,\fP_{\frac{z-1}{2}}^{1-l}(\l^{-1/2}),
\end{align*}
where $\fP_{\nu}^{\mu}(x)$ and $\fQ_{\nu}^{\mu}(x)$ are the associated Legendre functions of the first kind and of the second kind, respectively. Thus we obtain the desired formula for $|\Re(z)|<1$. By analytic continuation, it remains valid on $|\Re(z)|<2l-1$.  
This completes the proof of Theorem \ref{explicit hyp} (1).

\subsubsection{The proof of Theorem \ref{explicit hyp} (2)} \label{The proof of Theorem explicit hyp (2)}
Let $v \in \Sigma_\fin - (S\cup S(\gn))$.
Recall $\Phi_v={\rm ch}_{Z_v\bK_v}$. For $a\in F_v^\times-\{1\}$ and $x\in F_v$, we have $\Phi_v\left(k^{-1}\left[\begin{smallmatrix} a & (a-1)x \\ 0 & 1 \end{smallmatrix}\right]k\right)\not=0$ for some $k\in \bK_v$ if and only if $a\in \cO^\times$ and $x\in (a-1)^{-1}\cO$. Thus, $\fF_v^{(z)}(a)=0$ unless $a\in \cO^\times$, in which case $\fF_v^{(z)}(a)=A_v(0,z)\,I(z)+A_v(0,-z)I(-z)$, where $I(z)=\int_{x\in (a-1)^{-1}\cO}h_v^{(0,z)}(x)\,\d x$. For $a\in \cO^\times$, we easily have that $I(z)$ is absolutely convergent for all $z$ and
$$I(z)=q^{-d/2}\left\{1-\zeta_{F_v}(1)^{-1}\zeta_{F_v}\left(\tfrac{-z+1}{2}\right)(1-|a-1|^{\frac{z-1}{2}})\right\}$$ 
if $q_v^{z}\not=1$. Hence
\begin{align*}
	\fF_v^{(z)}(a)&=q^{-d/2}\,\delta(a\in\cO^\times)\,\biggl\{A_v(0,z)+A_v(0,-z)\\
	&-{\zeta_{F_v}(-z)}{\zeta_{F_v}\left(\tfrac{-z+1}{2}\right)^{-1}}
	(1-|a-1|^{\frac{z-1}{2}})-{\zeta_{F_v}(z)}{\zeta_{F_v}\left(\tfrac{z+1}{2}\right)^{-1}}(1-|a-1|^{\frac{-z-1}{2}})\biggr\}.
\end{align*}
By \eqref{FFz1}, we are done. The second claim is obvious from the formula of $\fF_v^{(z)}(a)$.

\subsubsection{The proof of Theorem \ref{explicit hyp} (3)}
\label{The proof of Theorem explicit hyp (3)}
Let $v \in S(\gn)$. From \eqref{KBruhat}, we have the equality 
\begin{align*}
	\fF_v^{(z)}(a)&=\int_{F_v} \{\int_{\bK_0(\fp)}\Phi_v\left(k^{-1}\left[\begin{smallmatrix} a & (a-1)x \\ 0 & 1 \end{smallmatrix} \right]k\right)\d k \\
	&+\sum_{\xi\in \cO/\fp}\int_{\bK_0(\fp)}\Phi_v\left(k^{-1}w_0^{-1}
	\left[\begin{smallmatrix} 1 & -\xi \\ 0 & 1 \end{smallmatrix}\right]
	\left[\begin{smallmatrix} a & (a-1)x \\ 0 & 1 \end{smallmatrix}\right]
	\left[\begin{smallmatrix} 1 & \xi \\ 0 & 1 \end{smallmatrix}\right]w_0k\right)
	\,\d k\}\,\varphi_v^{(0,z)}\left(\left[\begin{smallmatrix} 1 & x \\ 0 & 1 \end{smallmatrix}\right]\right)\,\d x,
\end{align*}
from which we see that $\fF_v^{(z)}(a)$ is $\vol(\bK_0(\fp_v))$ times the sum of the following integrals
\begin{align}
	&\tint_{F_v} \Phi_v\left(\left[\begin{smallmatrix} a & (a-1)x \\ 0 & 1 \end{smallmatrix} \right]\right)\,\varphi_v^{(0,z)}\left(\left[\begin{smallmatrix} 1 & x \\ 0 & 1 \end{smallmatrix}\right]\right)\,\d x,\  \label{HOI-1}
	\\
	&\tint_{F_v} \Phi_v\left(
	\left[\begin{smallmatrix} 1 & 0 \\ -(a-1)(x+\xi) & a \end{smallmatrix}\right]
	\right)
	\,\varphi_v^{(0,z)}\left(\left[\begin{smallmatrix} 1 & x \\ 0 & 1 \end{smallmatrix}\right]\right)\,\d x\quad \text{for $\xi \in \cO/\fp$}, \label{HOI-2}
\end{align}
where $\Phi_v={\rm ch}_{Z_v\bK_0(\fp)}$. Since $\Phi_v\left(\left[\begin{smallmatrix} a & (a-1)x \\ 0 & 1 \end{smallmatrix} \right]\right)\not=1$ if and only if $x\in (a-1)^{-1}\cO$, $a\in \cO^\times$, from the proof of Theorem \ref{explicit hyp} (2) in \S \ref{The proof of Theorem explicit hyp (2)}, the integral \eqref{HOI-1} equals $\delta(a\in \cO^\times)q^{-d/2} \Ocal_{0,v}^{1,(z)}((a-1)^{-2})$. The integral \eqref{HOI-2} is computed as $\delta(a\in \cO^\times)\{A_v(0,z)I_\xi(z)+A_v(0,-z)I_\xi(-z)\}$, where $I_\xi(z)=\int_{x\in -\xi+(a-1)^{-1}\fp} h_v^{(0,z)}(x)\d x$. Let $a\in \cO^\times$. Suppose $|a-1|=1$. Then we easily have $I_\xi(z)=\vol(\fp)=q^{-1-d/2}$, and \eqref{HOI-2} becomes $q^{-1-d/2}(A_v(0,z)+A_v(0,-z))=q^{-1-d/2}$. Hence $\fF_v^{(z)}(a)=\vol(\bK_0(\fp))\,(q^{-d/2}+q\times q^{-1-d/2})={2q^{-d/2}}{(1+q)^{-1}}.$ 
Suppose $|a-1|<1$. Noting $-\xi\in \cO\subset (a-1)^{-1}\fp$, we have
\begin{align*}
	I_\xi(z)&=\tint_{x\in (a-1)^{-1}\fp} h_v^{(0,z)}(x)\,\d x
	=I(z)-h_v^{(0,z)}((a-1)^{-1})\,q^{-d/2}(1-q^{-1})|a-1|^{-1},
\end{align*} where $I(z)$ is the same integral as in the proof of Theorem \ref{explicit hyp} (2) in \S \ref{The proof of Theorem explicit hyp (2)}. Thus \eqref{HOI-2} is the sum of $A_v(0,z)I(z)+A_v(0,-z)I(-z)$, which becomes $\delta(a\in \cO^\times)q^{-d/2} \Ocal_{0,v}^{1,(z)}((a-1)^{-2})$ as before and $-q^{-d/2}(1-q^{-1})\,\{A_v(0,z)|a-1|^{\frac{z-1}{2}}+A_v(0,-z)|a-1|^{\frac{-z-1}{2}}\}.$ 
Hence $\fF_v^{(z)}(a)$ equals the following expression multiplied by $\vol(\bK_0(\fp))q^{-d/2}$:
\begin{align*}
	(1+q) \Ocal_{0,v}^{1,(z)}((a-1)^{-2})
	-q \times (1-q_v^{-1})\,\left(A_v(0,z)|a-1|^{\frac{z-1}{2}}+A_v(0,-z)|a-1|^{\frac{-z-1}{2}}\right).
\end{align*}
From this we get the desired formula by a short computation. 

\subsubsection{The proof of Theorem \ref{explicit hyp} (4)}
\label{The proof of Theorem explicit hyp (4)}
Let $v \in S$. Put $s=s_v$. By applying \eqref{nonarchGreenftn} and \eqref{Hsphericalfnt} and then changing the order of integrals, 
\begin{align}
	\fF_v^{(z)}(s;a)&=|a-1|^{-1}\tint_{F_v}\hat\Phi_v\left(s;\left[\begin{smallmatrix} a & x \\ 0 & 1\end{smallmatrix}\right]\right)\,\varphi_{v}^{(0,z)}\left( \left[\begin{smallmatrix} 1 & (a-1)^{-1}x \\ 0 & 1 \end{smallmatrix} \right]\right)\,\d x
	\label{HOIS-0}
	\\
	& = (-q^{-\frac{s+1}{2}})|a-1|^{-1}{|a|^{(s+1)/2}\zeta_{F_v}(s+1)}\,\{A_v(0,z)I(z,s;a)+A_v(0,-z)I(-z,s;a)\},
	\notag
\end{align}
where we set
\begin{align*}
	I(z,s;a)=\tint_{F_v} \max(1,|a|,|x|)^{-(s+1)}\,\max(1,|a-1|^{-1}|x|)^{-\frac{z+1}{2}}\,\d x. 
\end{align*}
As will be shown below, the integral $I(\pm z,s;a)$ is absolutely convergent if $\Re(s)>(|\Re(z)|-1)/2$. Suppose $|a-1|<1$. Then $|a|=1$. Hence
\begin{align*}
	I(z,s;a)
	&=\tint_{|x|\leq |a-1|}\,\d x
	+\tint_{|a-1|<|x| \leq 1}(|a-1|^{-1}|x|)^{-\frac{z+1}{2}}\,\d x
	+\tint_{1<|x|} |x|^{-(s+1)}(\,|a-1|^{-1}|x|)^{-\frac{z+1}{2}}\,\d x
	\\
	&=q^{-d/2}|a-1|
	+|a-1|^{\frac{z+1}{2}}\,q^{-d/2}(1-q^{-1})
	{(1-|a-1|^{-\frac{z-1}{2}})}{(1-q^{\frac{z-1}{2}})^{-1}} \\
	&\qquad +|a-1|^{\frac{z+1}{2}}\,q^{-d/2}(1-q^{-1})\,{q^{-\frac{z-1}{2}-s-1}}{(1-q^{-\frac{z-1}{2}-s-1})^{-1}}
\end{align*}
if $\Re(\frac{z-1}{2}+s+1)>0$. By a computation, we obtain 
\begin{align*}
	I(z,s;a)=
	q^{-d/2}|a-1|\,\left\{-q^{\frac{z-1}{2}}\frac{\zeta_{F_v}\left(\frac{1-z}{2}\right)}{\zeta_{F_v}\left(\frac{1+z}{2}\right)}+\frac{\zeta_{F_v}\left(\frac{1-z}{2}\right)\zeta_{F_v}\left(\frac{z-1}{2}+s+1\right)}{\zeta_{F_v}(1)\zeta_{F_v}(s+1)}|a-1|^{\frac{z-1}{2}}\right\}. 
\end{align*}
From this, combined with \eqref{FAQ1}, we get the formula
\begin{align}
	A_v(0,z)I(z,s;a)+A_v(0,-z)I(-z,s;a)& 
	= (-q^{\frac{s+1}{2}}){|a-1|}{\zeta_{F_v}(s+1)^{-1}}\,q^{-d/2}\Scal_v^{1,(z)}((a-1)^{-2}).
	\label{HOIS-1}
\end{align}
Suppose $|a-1|>1$. Then $|a-1|=|a|$ and 
\begin{align*}
	I(z,s;a)&=\tint_{|x|\leq |a-1|}
	\max(1,|x|,|a|)^{-(s+1)}\,\d x
	+\tint_{|a-1|<|x|}
	|x|^{-(s+1)}(|a-1|^{-1}|x|)^{-\frac{z+1}{2}}\,\d x\\
	&=q^{-d/2}|a-1|^{-s}\,{(1-q^{-\frac{z+1}{2}-s-1})}{(1-q^{-\frac{z-1}{2}-s-1})^{-1}} 
\end{align*}
if $\Re(\tfrac{z-1}{2}+s+1)>0$. Suppose $|a-1|=1$. Then $|a|\leq 1$ and 
\begin{align*}
	I(z,s;a)
	&=\tint_{|x|\leq 1}\d x+\int_{1<|x|}\max(|a|, |x|)^{-(s+1)}|x|^{-\frac{z+1}{2}}\,\d x
	\\
	&=q^{-d/2}\left\{1+{(1-q^{-1})\,q^{-\frac{z-1}{2}-s-1}}{(1-q^{-\frac{z-1}{2}-s-1})^{-1}}\right\}\\
	&=q^{-d/2}{(1-q^{-\frac{z+1}{2}-s-1})}{(1-q^{-\frac{z-1}{2}-s-1})^{-1}}
\end{align*}
if $\Re(\tfrac{z-1}{2}+s+1)>0$. Hence $I(z,s;a)=q^{-d/2}|a-1|^{-s}{\zeta_{F_v}\left(\frac{z-1}{2}+s+1\right)}{\zeta_{F_v}\left(\frac{z+1}{2}+s+1\right)^{-1}}$ for $|a-1|\geq 1$. From this, by a direct computation, we get the formula \eqref{HOIS-1} again.

\subsection{Local elliptic orbital integrals}
Let $v\in \Sigma_F$. In this subsection, we compute the integral \eqref{EllipticorbitalIntegral} with $\Delta_v^{0}\not=1$ to complete the proof of Theorem~\ref{OrbIntUnifEx}. In this section throughout, we fix $(t:n)_F\in \Qcal_F^{{\rm Irr}}$ with the decomposition $(t^2-4n)^{(v)}=\Delta_v^{0}(2m_v)^2$ at a place $v$ as before. Set $a=\tfrac{t}{2m_v}$ and $\tau=\Delta_v^0$ to simplify notation. Recall the construction at the begining of \S~\ref{PEAET}. By multiplying $\varphi_{0, v}(1_2)$, the integral $\fE_v^{(z)}(\hat \gamma_v)$ is transformed to
\begin{align} \label{simple of elliptic}& \varphi_{0,v}(1_2)\fE_v^{(z)}(\hat \gamma_v)
	= 
	\tint_{Z_v\bsl G_v}
	\Phi_v\left(g^{-1} \left[\begin{smallmatrix}a & 1 \\ \tau & a \end{smallmatrix}\right]g\right)\,f_{0,v}(g)\,\d g
	\\
	=& \tint_{F_v^\times} \int_{F_v} \int_{\bK_v} \d k\, \Phi_v\left( k^{-1} 
	\left[\begin{smallmatrix}t & 0 \\ 0 & 1 \end{smallmatrix}\right]^{-1}
	\left[\begin{smallmatrix}1 & -x \\ 0 & 1 \end{smallmatrix}\right]
	\left[\begin{smallmatrix}a & 1 \\ \tau & a \end{smallmatrix}\right]
	\left[\begin{smallmatrix}1 & x \\ 0 & 1 \end{smallmatrix}\right]\left[\begin{smallmatrix}t & 0 \\ 0 & 1 \end{smallmatrix}\right]
	k \right)\,
	f_{0,v}\left(
	\left[\begin{smallmatrix}1 & x \\ 0 & 1 \end{smallmatrix} \right]
	\left[\begin{smallmatrix}t & 0 \\ 0 & 1 \end{smallmatrix}\right]
	\right)\,|t|_v^{-1}\d^\times t\,\d x  \notag
	\\
	= & \tint_{F_v^\times} \tint_{F_v} \tint_{\bK_v} \d k\, \Phi_v\left(k^{-1} 
	\left[\begin{smallmatrix}a-\tau x & t^{-1}(1-\tau x^2)
		\\ \tau t & a+\tau x \end{smallmatrix}\right]
	k \right)\,|t|_v^{\frac{z-1}{2}}\d^\times t\,\d x. \notag
\end{align}

\subsubsection{The proof of Theorem~\ref{OrbIntUnifEx} (1)} \label{EllOrbAcase}

Let $v\in \Sigma_\infty$ and $\Delta_v^{0}=-1$. By $\bfK_v=\bfK_v^0 \cup[\begin{smallmatrix} -1 & 0 \\ 0 & 1 \end{smallmatrix}]\bfK_v^0$ and by (b) in \S~\ref{MCDS}, we have
\begin{align*}\varphi_{0,v}(1_2)\fE_v^{(z)}(\hat\gamma_v)=
	\tint_{F_v^\times}B(t)|t|^{\frac{z-1}{2}}\,d^\times t
\end{align*}
with $$B(t)= \tint_{F_v}\Phi^l(\begin{smallmatrix}
a+x & t^{-1}(1+x^2) \\ -t & a-x
\end{smallmatrix}) dx
= {2^{l}(1+a^2)^{l/2}}\tint_{F_v}\left(2a-i(t^{-1}x^2+t^{-1}+t)\right)^{-l}dx.$$
By the formula $\tint_{\RR}{(1+zx^2)^{-l}}dx= {\Gamma(l-1/2)\sqrt{\pi}}{\Gamma(l)^{-1}}\,\sqrt{z}^{-1}$ ($\Re(z)>0)$ easily proved by the residue theorem,
\begin{align*}
	B(t) =2^l(1+a^2)^{l/2}{\Gamma(l-1/2)\sqrt{\pi}}{\Gamma(l)^{-1}}\times (it)^l(t^2+1+2ati)^{1/2-l} ,
\end{align*}
Since $l$ is even, by decomposing the $t$-integral over $\R^+$ into positive and negative reals, we obtain 
\begin{align*}
	\tint_{F_v^{\times}}B(t)|t|^{\frac{z-1}{2}} d^\times t = 
	2^l(1+a^2)^{l/2}\,i^{l}\,\{I_{l}\left(-4,2a;\tfrac{z+1}{2}\right)+I_{l}\left(-4,-2a;\tfrac{z+1}{2}\right)\},
\end{align*}
where $I_{l}(\Delta,a;s)$ is the Zagier's function defined in \cite[p.110]{Zagier}. We have 
\begin{align*}
	I_{l}\left(-4,2a; \tfrac{z+1}{2} \right)
	&= \Gamma(l-1/2)\sqrt{\pi}\Gamma(l)^{-1}\tint_{0}^{\infty}\tfrac{y^{l+\frac{z-1}{2}}}{(y^2+2iay+1)^{l-1/2}}\d^{\times} y
	\\&= 2^{1-l}\pi \Gamma(l)^{-1}{\Gamma\left(l+\tfrac{z-1}{2}\right)\Gamma\left(l+\tfrac{-z-1}{2}\right)}
	{(-a^2-1)^{-\frac{l-1}{2}}}{\frak P}_{\frac{z-1}{2}}^{1-l}(ai).
\end{align*}
for $|\Re(z)|<2l-1$ by means of the formula \cite[p.961, 8.713, 3]{Gradshteyn}, where the square root of $ai\pm 1$ is chosen so that $\arg(ai\pm 1) \in (-\pi, \pi)$. Consequently, by noting $\sqrt{-1-a^2} = \sgn(a) i \sqrt{a^2+1}$, we have
\begin{align*}
	\varphi_{0,v}(1_2)\fE_v^{(z)}(\hat{\gamma}_v)
	= & \sgn(a) i \sqrt{1+a^2} \times 2\pi{\Gamma\left(l+\tfrac{z-1}{2}\right)\Gamma\left(l+\tfrac{-z-1}{2}\right)}{\Gamma(l)^{-1}}\left({\frak P}_{\frac{z-1}{2}}^{1-l}(ai)-{\frak P}_{\frac{z-1}{2}}^{1-l}(-ai)\right).
\end{align*}
We complete the proof by Lemma~\ref{sphericalftn1value}. 


\subsubsection{The proof of Theorem~\ref{OrbIntUnifEx} (2) and (3)} \label{EllOrbIntL1}
Since $\Phi_v={\rm ch}_{Z_v\bK_v}$, the integral domain of \eqref{simple of elliptic} is restricted to only those $(t,x)\in F_v^\times \times F_v$ such that 
\begin{align}
	&c^{-1}(a-\tau x)\in \cO, \quad 
	c^{-1}(a+\tau x)\in \cO, \quad
	c^{-1}t^{-1}(1-\tau x^2)\in \cO, 
	\label{EllOrbIntL1-1}
	\\
	&c^{-1}\tau t \in \cO,
	\label{EllOrbIntL1-2}
	\\
	&c^{-2}(a^2-\tau )\in \cO^{\times}
	\label{EllOrbIntL1-3}
\end{align}
with some $c\in F_v^\times$. 

(I) {\underline{The case $|a|\leq |\tau|$}}. 

\smallskip
\noindent
(i) Suppose $v\not\in \Sigma_{\rm dyadic}$, $\tau\in \cO^\times-(\cO^\times)^{2}$. From Lemma~\ref{Hensel}, $a^2-\tau \in \cO^\times$. Hence from \eqref{EllOrbIntL1-3} we get $c\in \cO^\times$, which, combined with \eqref{EllOrbIntL1-2}, yields $t\in \cO$. From the last relation in \eqref{EllOrbIntL1-1}, we have the containment $1-\tau x^2\in t\cO$ from which $x\in \cO$ follows. If $t$ were non-unit, then $1-\tau x^2\in t\cO\subset \fp$, which is impossible due to Lemma~\ref{Hensel}. Hence we have that the existence of $c$ satisfying \eqref{EllOrbIntL1-1}, \eqref{EllOrbIntL1-2} and \eqref{EllOrbIntL1-3} is equivalent to $t\in \cO^\times$ and $x\in \cO$.
Thus by Lemm~\ref{sphericalftn1value}, we have $\fE_v^{(z)}(\hat\gamma_v)=\varphi_{0,v}(1_2)^{-1}\tint_{\cO^{\times}}\d^\times t \tint_{\cO}\d x=|m_v| q^{-d/2}$. 

(ii) Suppose $v\in \Sigma_{\rm dyadic}$, $\tau\in \{-5, -1, 5\}$. Let us consider the case $\tau=5$. If $|a|=1$, then \eqref{EllOrbIntL1-3} implies $c\in 2\ZZ_2^{\times}$. Combining this with \eqref{EllOrbIntL1-1} and \eqref{EllOrbIntL1-2}, we obtain $x \in \ZZ_2^\times$ and $t\in 2\ZZ_2^\times$.
Hence by Lemma~\ref{sphericalftn1value},
$$\fE_v^{(z)}(\hat{\gamma}_v) = \varphi_{v,0}(1_2)^{-1}
\tint_{|t|=|2|}\tint_{|x|=1} |t|^{(z-1)/2} d^\times tdx =
\tfrac{3}{2}|m_v|\,2^{-d/2}\tfrac{2^{(-z-1)/2}}{1+2^{-z}}.$$
If $|a|<1$, the condition \eqref{EllOrbIntL1-3} implies $c \in \Z^\times_2$. Then using \eqref{EllOrbIntL1-1} and \eqref{EllOrbIntL1-2}, we get $x \in \Z_2$ and $1-5x^2 \in t\Z_2$. Thus $|4|\le |t|\le 1$ if $|x|=1$ and $|t|=1$ if $|x|<1$, and whence
\begin{align*}\fE_v^{(z)}(\hat{\gamma}_v) =& 
	\varphi_{0,v}(1_2)^{-1} \bigg(\tint_{|4| \le |t| \le 1}d^\times t\tint_{|x|=1}dx|t|^{(z-1)/2} + \tint_{|t|=1}d^\times t \tint_{|x|<1}dx |t|^{(z-1)/2}\bigg) \\
	= & \tfrac{3}{2}|m_v|_v 2^{-d/2}
	\{1+2^{(-z-1)/2}+2^{-z}\}/(1+2^{-z}).
\end{align*}
We note $|\frac{n}{m_v^2}|=1$ from Lemma~\ref{OrbIntUnifExL2} (4) and Proposition~\ref{OrbIntUnifExL4} (1).
Next consider the case $\tau \in \{-5,-1\}$. If $|a|=1$, then $a^2-\tau \in 2\Z_2^\times$, which contradicts to \eqref{EllOrbIntL1-3}. Thus $\fE_v^{(z)}(\hat\gamma_v)=0$. If $|a|<1$, then \eqref{EllOrbIntL1-3} implies $c \in \ZZ_2^\times$. We may set $c=1$.
Then, $x \in \ZZ_2$ and $1-\tau x^2 \in t\ZZ_2$.
Thus, $|2|\le |t| \le 1$ if $|x|=1$ and $|t|=1$ if $|x|<1$. We have
\begin{align*}\fE_v^{(z)}(\hat{\gamma}_v) =&\varphi_{0,v}(1_2)^{-1}
	\bigg(\tint_{|2| \le |t| \le 1}d^\times t\tint_{|x|=1}dx|t|^{(z-1)/2} + \tint_{|t|=1}d^\times t \tint_{|x|<1}dx |t|^{(z-1)/2}\bigg) = 
	|m_v|2^{-d/2}.
\end{align*}
(iii) Suppose $\ord_v(\tau)=1$. Then, $|a|\leq |\tau|$ implies $|a|<1$, where $|c^{-2}(a^{2}-\tau)|=|c|^{-2}|\tau|$ is an odd power of $q$ and \eqref{EllOrbIntL1-3} is never attained. Thus $\fE_v^{(z)}(\hat\gamma_v)=0$.

\smallskip
\noindent
(II) {\underline {The case $|a|>|\tau|$.} 
	
	\smallskip
	\noindent
	Since $|\tau| < |a^2|$, the condition \eqref{EllOrbIntL1-3} yields $|c|=|a|$. Thus the condition \eqref{EllOrbIntL1-1} is equivalent to $ a^{-1}x \in \cO$, $a^{-1}t^{-1}(1-\tau x^2)\in \cO$, and the condition \eqref{EllOrbIntL1-2} to $a^{-1}t\in \cO$. Hence, 
	\begin{align}
		\varphi_{0,v}(1_2)\fE_v^{(z)}(\hat \gamma_v)=
		|a|^{(z+1)/2}\tint_{t \in \tau^{-1} \go-\{0\}}|t|^{(z-1)/2} \vol(X(t))
		\,d^\times t.
		\label{AAAAAAAAAAAA}
	\end{align}
	where we put $X(t)= \{x \in \tau^{-1}\go \, | \, a^{-2}-\tau x^2 \in t\go \}$.
	For $l\in \Z$, we set 
	\begin{align*}
		[q^{-l/2}]=\begin{cases} q^{-l/2}, \quad (l\equiv 0 \pmod{2}), \\
			q^{-(l+1)/2}, \quad (l\equiv 1 \pmod{2}).
		\end{cases}
	\end{align*}
	\begin{lem}\label{formula of X(t)}
		Let $t \in \tau^{-1}\go-\{0\}$.
		If $v \not\in \Sigma_{\rm dyadic}$ and $\tau \in \go^\times-(\go^\times)^2$, 
		we have
		$$\vol(X(t))= \delta(|a^{-2}| \le |t|) \, q^{-d/2} \, [|t|^{1/2}].$$
		If $v\in \Sigma_{\rm dyadic}$ and $\tau =5$,
		$$\vol(X(t))=\delta(|4a^{-2}|\le |t|)2^{-d/2}
		\begin{cases}
		[|t|^{1/2}]  & (|a^{-2}|\le |t|), \\
		2^{-1}|a^{-1}| & (|4a^{-2}|\le |t|<|a^{-2}|).
		\end{cases}
		$$
		If $\tau \in \fp-\fp^2$, we have $\vol(X(t))= \delta(|a^{-2}|\le |t| )[|\varpi^{-1}t|^{1/2}]q_v^{-d/2}$. If $v\in \Sigma_{\rm dyadic}$ and $\tau \in \{-5, -1 \}$, we have
		$$\vol(X(t))=\delta(|2a^{-2}|\le |t|)2^{-d/2}
		\begin{cases}
		[|t|^{1/2}]  & (|a^{-2}|\le |t|), \\
		2^{-1}|a^{-1}| & (|t|=|2a^{-2}|).
		\end{cases}
		$$
	\end{lem}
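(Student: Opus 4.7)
The plan is to analyze
\[
X(t)=\bigl\{x\in \tau^{-1}\cO \,\bigm|\, |a^{-2}-\tau x^{2}|\le |t|\bigr\}
\]
by stratifying the $x$-variable according to whether $v(\tau x^{2})$ equals $v(a^{-2})=2j$ or not, where $j=v(a^{-1})$. On the \emph{off-resonance} stratum $v(\tau x^{2})\ne 2j$, the ultrametric inequality gives the exact identity $|a^{-2}-\tau x^{2}|=\max(|a^{-2}|,|\tau||x|^{2})$, so the defining condition becomes $|a^{-2}|\le|t|$ together with $|x|^{2}\le|t|/|\tau|$. Integrating over this stratum accounts for the ``main'' contribution $[|t|^{1/2}]q^{-d/2}$ when $|\tau|=1$, respectively $[|\varpi^{-1}t|^{1/2}]q^{-d/2}$ when $v(\tau)=1$.

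For the \emph{resonance} shell $|x|^{2}=|a^{-2}|$, I would write $a^{-2}=u\varpi^{2j}$ and $x=w\varpi^{j}$ with $u,w\in\cO^{\times}$, so that the defining condition becomes $|u-\tau w^{2}|\le|ta^{2}|$. The decisive observation is that $u=(a^{-1}\varpi^{-j})^{2}$ is itself the square of a unit, which sharply pins down the relevant residue class. The four sub-cases of the lemma are then handled as follows:
\begin{itemize}
\item[(a)] $\tau\in\fp-\fp^{2}$ for any $v$: here $v(\tau x^{2})=1+2v(x)$ is odd and $v(a^{-2})$ is even, so the resonance shell is empty.
\item[(b)] $v\notin\Sigma_{\rm dyadic}$ and $\tau\in\cO^{\times}-(\cO^{\times})^{2}$: the quotient $u/\tau$ lies in the non-square coset, and Lemma~\ref{Hensel} gives $|w^{2}-u/\tau|=1$, hence $|u-\tau w^{2}|=|\tau|=1$ uniformly in $w$.
\item[(c)] $v\in\Sigma_{\rm dyadic}$ and $\tau=5$: from $(\ZZ_{2}^{\times})^{2}=1+8\ZZ_{2}$ we get $u\equiv 1\pmod 8$ and $5w^{2}\equiv 5\pmod 8$, whence $|u-5w^{2}|=\tfrac14$ uniformly in $w$.
\item[(d)] $v\in\Sigma_{\rm dyadic}$ and $\tau\in\{-1,-5\}$: an analogous residue calculation yields $|u-\tau w^{2}|=\tfrac12$ uniformly in $w$.
\end{itemize}

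Since the shell contribution is constant in $w$ in every non-empty case, it equals $\vol(\{|x|=|a^{-1}|\})=2^{-1}|a^{-1}|q^{-d/2}$ whenever the threshold $|u-\tau w^{2}|\le|ta^{2}|$ is met and vanishes otherwise. The thresholds are $|a^{-2}|\le|t|$ in case (b), never attained in case (a), $|4a^{-2}|\le|t|$ in case (c), and $|2a^{-2}|\le|t|$ in case (d). Since $|F_{v}^{\times}|\subset q^{\ZZ}$, the range $|4a^{-2}|\le|t|<|a^{-2}|$ reduces in case (c) to $|t|\in\{|2a^{-2}|,|4a^{-2}|\}$, while the analogous range in case (d) collapses to the single value $|t|=|2a^{-2}|$. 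Adding the shell contribution to the off-resonance measure then recovers each of the four piecewise formulas of the lemma. The main obstacle is the dyadic analysis: Hensel's lemma fails, and one must combine $(\ZZ_{2}^{\times})^{2}=1+8\ZZ_{2}$ with the crucial fact that $u$ is a \emph{square} (not merely a unit) to show that the exceptional shell contribution is $w$-independent, which is precisely what produces the two-regime structure distinguishing $|a^{-2}|\le|t|$ from $|4a^{-2}|\le|t|<|a^{-2}|$ (respectively $|t|=|2a^{-2}|$).
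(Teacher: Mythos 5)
Your proof is correct in substance and follows essentially the same route as the paper's: the paper likewise stratifies by $|x|$ relative to $|a^{-1}|$ (splitting $X$ into $X\cap a^{-1}\go$ and its complement, and in the dyadic cases further into $a^{-1}\go^{\times}$ and $a^{-1}\varpi\go$), uses the ultrametric identity on the outer region, Lemma~\ref{Hensel} when $\tau$ is a non-square unit at a non-dyadic place, and $(\ZZ_{2}^{\times})^{2}=1+8\ZZ_{2}$ combined with the fact that $a^{-2}$ is a square unit times an even power of $\varpi$ at the dyadic places; the parity argument for $\tau\in\fp-\fp^{2}$ is also the same. Only two bookkeeping slips in your closing paragraph need fixing, neither of which affects your case analysis: the off-resonance stratum has volume $[|t|^{1/2}]q^{-d/2}$ \emph{minus} the shell volume (as written, adding the shell back would double count in the regime $|a^{-2}|\le|t|$), and the shell volume is $(1-q^{-1})|a^{-1}|q^{-d/2}$, which equals $2^{-1}|a^{-1}|q^{-d/2}$ only at the dyadic places where $q_v=2$, not in case (b) for general $q_v$.
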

	\begin{proof}
		In the proof, we write $X$ for $X(t)$ for simplicity. First suppose $\tau \in \go^\times$. Then $x \in X$ is equivalent to $x\in \cO$ and $a^{-2} -\tau x^2 \in t\go$. Suppose $v\not\in \Sigma_{\rm dyadic}$. Due to Lemma~\ref{Hensel}, $X \cap a^{-1}\go = \{ x \in a^{-1}\go
		\ | \ a^{-2}(1-\tau (ax)^2) \in t\go \} = \{ x \in a^{-1}\go  \ | \ |a^{-2}|\le |t|\}$	and $X-a^{-1}\go = \{ x \in \go \ | \ |a^{-1}|<|x|\le |t|^{1/2} \}$; thus
		\begin{align*}
			\vol(X) = & \vol(X\cap a^{-1}\go) + \vol(X-a^{-1}\go)\\
			= & \delta(|a^{-2}|\le |t|) |a^{-1}| q^{-d/2} + \delta(|a^{-1}|^2<
			|t|)([|t|^{1/2}]-|a^{-1}|)q^{-d/2} \\
			=& \delta(|a^{-2}| \le |t|) \, q^{-d/2} \, [|t|^{1/2}].
		\end{align*}
		Suppose $v\in \Sigma_{\rm dyadic}$ and $\tau=5$. Then since $(\Z_2^\times)^2=1+8\Z_2$, we have $1-5(ax)^2\in 4\Z_2^\times$ for $x\in X$. Hence $X\cap a^{-1}\cO^\times$ is empty unless $4a^{-2}\in t\Z_2$, in which case $X\cap a^{-1}\cO^\times= a^{-1}\ZZ_2^\times$. Similarly, $X\cap a^{-1}\varpi\go$ is empty unless $a^{-2}\in t\Z_2$, in which case it is $a^{-1}2\Z_2$. We also have $X-a^{-1}\go= \{x \in \ZZ_2 \ | \ |a^{-1}| <|x|\le |t|^{1/2}\}.$ Thus
		\begin{align*}\vol(X)= &\, q^{-d/2}\{\delta(|4a^{-2}|\le |t|)(1-q^{-1})|a^{-1}|+
			\delta(|a^{-2}|\le |t|)q^{-1}|a^{-1}| \\
			&+ \delta(|a^{-2}|\le |t|)([|t|^{1/2}]-|a^{-1}|)
			\}, 
		\end{align*}
		which is simplified to the desired form. The case $v\in \Sigma_{\rm dyadic}$ with $\tau \in \{-5, -1\}$ is similar. 
		
		Next suppose $\tau \in \fp-\fp^2$. In the case, $1-\tau u^2\in \cO^\times$ for all $u \in \go_v$. Hence $X\cap a^{-1} \go = \{x \in a^{-1} \go \ | \ a^{-2}(1-\tau a^2x^2) \in t \go\} = \{ x \in a^{-1}\go \ | \ |a^{-2}|\le |t| \}$ whose volume is $\delta(|a^{-2}|\leq |t|)|a^{-1}|q^{-d/2}$, and $X-a^{-1}\go =\{ x \in \tau^{-1}\go \ | \ |a^{-1}|< |x| \le |\tau^{-1}t|^{1/2} \}$ whose volume is $\delta(|a|^{-2}\leq |t|)([|\varpi^{-1}t|^{1/2}]-|a^{-1}|)q^{-d/2}$. By $\vol(X) =\vol(X\cap a^{-1}\go)+\vol(X-a^{-1}\cO)$, we are done.
	\end{proof}

	\begin{lem} \label{formula of integral of X(t)} For any $z\in \CC$ such that $q^z\neq q^{\pm1}$ and $a \in F_v$ such that $|a|> |\tau|$, we have
		\begin{align*}& \int_{|a^{-2}| \le |t| \le |\tau^{-1}|}[|\tau^{-1}t|^{1/2}]\, |t|^{(z-1)/2}d^\times t
			= q^{-d/2}\left\{\frac{1+q^{-(-1)^\delta\frac{z+1}{2}}}{1-q^{-z}} + \frac{1+q^{\frac{z-(-1)^\delta}{2}}}{1-q^{z}}|a|^{-z}\right\}
		\end{align*}
		with $\delta=\ord_v(\tau) \in \{0,1\}$.
	\end{lem}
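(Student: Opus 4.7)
The plan is to compute the integral by discretizing $t$ according to its valuation and summing the resulting geometric series. Since $t\in\tau^{-1}\cO\setminus\{0\}$ with $|a^{-2}|\leq|t|\leq|\tau^{-1}|$, writing $|a|=q^{e}$ with $e\in\ZZ$ forced by the hypothesis $|a|>|\tau|=q^{-\delta}$ to satisfy $e\geq 1-\delta$, the integration domain is the disjoint union of the shells $\{t:|t|=q^{-k}\}$ with $k$ running over integers in $[-\delta,2e]$. With the normalization $d^{\times}t=\zeta_{F_v}(1)\,dt/|t|$ and $\int_{\cO}dt=q^{-d/2}$, a direct check gives $\vol(\{|t|=q^{-k}\},d^{\times}t)=q^{-d/2}$ independently of $k$. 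On such a shell, $[|\tau^{-1}t|^{1/2}]=[q^{(\delta-k)/2}]$ is constant, so the integral equals
\[
q^{-d/2}\sum_{k=-\delta}^{2e}[q^{(\delta-k)/2}]\,q^{-k(z-1)/2}.
\]

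The next step is to split this sum according to the parity of $\delta-k$, since the bracket takes the value $q^{(\delta-k)/2}$ or $q^{(\delta-k-1)/2}$ accordingly. Writing $k=2j$ or $k=2j+1$, each partial sum collapses, after elementary cancellations in the exponent, to a geometric series of common ratio $q^{-z}$. Summing these using $\sum_{j=0}^{N}q^{-jz}=(1-q^{-(N+1)z})/(1-q^{-z})$ yields, in the case $\delta=0$, the expression $\frac{1-q^{-(e+1)z}}{1-q^{-z}}+q^{-(z+1)/2}\frac{1-q^{-ez}}{1-q^{-z}}$, and in the case $\delta=1$ the expression $(1+q^{(z+1)/2})\frac{1-q^{-(e+1)z}}{1-q^{-z}}$.

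Finally, to bring these into the symmetric form displayed in the lemma, I would invoke the elementary identity
\[
\frac{1}{1-q^{z}}=-\frac{q^{-z}}{1-q^{-z}},
\]
which converts the $q^{-ez}$-contribution into a term with denominator $1-q^{z}$ and recovers $|a|^{-z}=q^{-ez}$ as its prefactor. A straightforward verification, best done by comparing the coefficients of $q^{0}$ and $q^{-ez}$ in the two numerators once common denominator $1-q^{-z}$ is taken, confirms the claimed formula in both cases $\delta=0$ and $\delta=1$; the exponents $-(-1)^{\delta}(z+1)/2$ and $(z-(-1)^{\delta})/2$ in the right-hand side are precisely what the parity bookkeeping produces.

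The hypothesis $q^{z}\neq q^{\pm 1}$ enters only to ensure that the geometric-series denominators $1-q^{\pm z}$ do not vanish; there is no analytic subtlety. The main (and only) obstacle is therefore purely combinatorial: carefully tracking the parity of $\delta-k$ together with the two possible values of $\delta$, and matching the resulting four-term expression to the compact form $(1+q^{-(-1)^{\delta}(z+1)/2})/(1-q^{-z})+(1+q^{(z-(-1)^{\delta})/2})/(1-q^{z})\cdot|a|^{-z}$. Handling the cases $\delta=0$ and $\delta=1$ separately avoids any sign ambiguity.
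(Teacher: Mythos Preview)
Your proposal is correct and is precisely the ``direct computation'' the paper omits: discretize over shells $|t|=q^{-k}$, use $\vol(\{|t|=q^{-k}\},d^\times t)=q^{-d/2}$, split by parity of $\delta-k$, and sum the two resulting geometric progressions in $q^{-z}$. The intermediate expressions you obtain for $\delta=0$ and $\delta=1$ rearrange via $\tfrac{1}{1-q^{z}}=-\tfrac{q^{-z}}{1-q^{-z}}$ to the stated right-hand side, exactly as you indicate.
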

	\begin{proof}
		A direct computation. 
	\end{proof}
	First we consider the case where $\tau \in \go^\times - (\go^\times)^2$. If $v$ is non-dyadic, from \eqref{AAAAAAAAAAAA}, Lemmas \ref{formula of X(t)} and \ref{formula of integral of X(t)}, we get 
	\begin{align*}
		\varphi_{v,0}(1_2)\fE_v^{(z)}(\hat\gamma_v)=
		&=q^{-d}|a|^{(z+1)/2} \left(\tfrac{1+q^{(-z-1)/2}}{1-q^{-z}} + \tfrac{1+q^{(z-1)/2}}{1-q^{z}}|a|^{-z}\right).
	\end{align*}
	To complete the proof, it suffices to use Lemma~\ref{sphericalftn1value} and to note that $|\tfrac{n}{m_v^2}|$ equals $|a|^2$ or $1$ according to $|a|>1$ or $|a|\leq 1$, which follows from Lemma~\ref{OrbIntUnifExL2} (1) and Proposition~\ref{OrbIntUnifExL4} (1). If $v$ is dyadic and $\tau=5$, in the same way, we have
	\begin{align*}
		\varphi_{0,v}(1_2)\fE_v^{(z)}(\hat\gamma_v)&=|a|^{(z+1)/2}2^{-d/2}
		\left\{ \tint_{|4a^{-2}|\leq |t|<|a^{-2}|}2^{-1}|a|^{-1}|t|^{(z-1)/2}\d^\times t+\int_{|a|^{-2}\leq|t|\leq 1}[|t|^{1/2}]|t|^{(z-1)/2}\d^\times t \right\}
		\\
		&=2^{-d}\left(\tfrac{1+2^{(-z-1)/2}}{1-2^{-z}}|a|^{\frac{z+1}{2}}+\tfrac{1+2^{(z-1)/2}}{2^z(1-2^{z})}|a|^{(-z+1)/2}\right).
	\end{align*}
	To complete the proof, it suffices to use Lemma~\ref{sphericalftn1value} and to note that $|\tfrac{n}{m_v^2}|=|a|^{-2}$ if $|a|>1$ from Lemma~\ref{OrbIntUnifExL2} (4) and Proposition~\ref{OrbIntUnifExL4} (1). The remaining cases are similar.

	\subsubsection{The proof of Theorem~\ref{OrbIntUnifEx} (4)}
	\label{The proof of Theorem OrbIntUnifEx (4)}
	Let $v\in S(\fn)$ and $\Phi_v={\rm ch}_{Z_v\bK_0(\fp)}$. By \eqref{KBruhat} and
	\eqref{simple of elliptic}, we have 
	\begin{align} 
		\varphi_{0,v}(1_2)\fE_v^{(z)}(\hat\gamma_v)=\vol(\bfK_0(\gp)) |a|^{(z+1)/2} \int_{F_v^\times}\{B(t)+\sum_{\xi \in \go_v/\gp_v}B_\xi(t)\}|t|^{(z-1)/2}d^\times t,
		\label{expression by Bs :level case}
	\end{align}
	with 
	\begin{align*}
		B(t)&=\int_{F_v} \Phi_v(\left[\begin{smallmatrix}1-\tau x & t^{-1}(a^{-2}-\tau x^2)
			\\ \tau t & 1+ \tau x \end{smallmatrix}\right]) dx, \quad
		B_{\xi}(t)=\int_{F_v} \Phi_v([\begin{smallmatrix}
			1+\tau(x+\xi t)& -\tau t \\ -t^{-1}(a^{-2}-\tau(x+\xi t)^2) & 1-\tau (x+t\xi)
		\end{smallmatrix}]) dx.
	\end{align*}
	We consider the case $|a|\le |\tau|$.
	\begin{lem} \label{some vanishing of Bs}Suppose $|a|\le |\tau|$. Then, we have
		$B(t)=B_{\xi}(t)=0$.
	\end{lem}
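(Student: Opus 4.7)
The plan is to exploit the explicit structure of $\Phi_v={\rm ch}_{Z_v\bfK_0(\fp)}$: its non-vanishing at a matrix $M$ requires some $c\in F_v^\times$ with $c^{-1}M\in \bfK_0(\fp)$, forcing $|c|^{2}=|\det M|$ to be an even power of $q$. A direct computation shows that both integrand matrices in $B(t)$ and $B_\xi(t)$ have determinant $1-\tau a^{-2}=a^{-2}(a^{2}-\tau)$, where $\tau=\Delta_v^0$. By assumption~(iv) of \S~\ref{BasicAssumption}, $v$ is non-dyadic, and since we are treating the elliptic case ($\tau\neq 1$) we have $\tau\in (\fp-\fp^{2})\cup(\cO^\times-(\cO^\times)^{2})$; I would split into these two sub-cases.

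In the ramified sub-case $|\tau|=q^{-1}$, the hypothesis $|a|\le |\tau|$ gives $|a|^{2}\le q^{-2}<|\tau|$, so $|a^{2}-\tau|=q^{-1}$ and $|\det M|=q^{-1}|a|^{-2}$ has odd valuation. No admissible $c$ exists and both integrands vanish identically.

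The main obstacle is the inert sub-case $\tau\in\cO^\times-(\cO^\times)^{2}$, where $|a|\le 1$. Here Lemma~\ref{Hensel} gives $a^{2}-\tau\in \cO^\times$, so $|\det M|=|a|^{-2}$, $|c|=|a|^{-1}$, and one may take $c=a^{-1}$. Since $c^{-1}M\in \bfK_0(\fp)$ has unit determinant and $(2,1)$-entry in $\fp$, both diagonal entries must be units of $\cO$. For $B(t)$, these are $a(1\pm\tau x)$, which splits the analysis according to whether $|a|=1$ (forcing $x\in \cO$) or $|a|<1$ (forcing $|x|=|a|^{-1}$, so $x=a^{-1}u$ with $u\in \cO^\times$). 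In the first scenario, integrality of the $(1,2)$-entry $(at)^{-1}(1-a^{2}\tau x^{2})$ combined with $a\tau t\in \fp$ yields $\tau(ax)^{2}\equiv 1\pmod{\fp}$, contradicting the non-squareness of $\tau$ modulo $\fp$; in the second, the same non-squareness gives $1-\tau u^{2}\in \cO^\times$, and the $\cO$- and $\fp$-conditions force $|t|$ into the empty range $|a|^{-1}\le |t|\le q^{-1}|a|^{-1}$. Either way $B(t)=0$.

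For $B_\xi(t)$, the change of variable $y=x+\xi t$ (with trivial Jacobian) brings its integrand matrix into a form differing from the $B(t)$ matrix only by swapping and negating the two off-diagonal entries, hence preserving the determinant. The parity argument in the ramified sub-case carries over unchanged, and in the inert sub-case the same Hensel-type contradiction arises with the roles of the upper-right and lower-left entries interchanged: the relevant constraint becomes integrality of $-(at)^{-1}(1-\tau u^{2})$ together with $-a\tau t\in \fp$, producing the analogous empty range for $|t|$. Thus $B_\xi(t)=0$ as well.
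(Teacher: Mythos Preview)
Your argument is correct and follows the same line as the paper's: pin down $|c|$ from the determinant, use the parity obstruction when $|\tau|=q^{-1}$, and in the inert case invoke Lemma~\ref{Hensel} to make the two off-diagonal constraints on $|t|$ incompatible. Two remarks. First, in the $B_\xi$ analysis you have swapped the membership conditions: after exchanging the off-diagonals, the $(2,1)$-entry of $c^{-1}M$ (which must lie in $\fp$) is $-(at)^{-1}(1-\tau u^{2})$, while the $(1,2)$-entry $-a\tau t$ needs only lie in $\cO$; with the correct assignment the empty range becomes $q|a|^{-1}\le |t|\le |a|^{-1}$, still empty, so nothing fails. Second, your split into $|a|=1$ versus $|a|<1$ is not needed: the paper observes directly that $c(1\pm\tau x)\in\cO$ with $|c|=|a|$ forces $ax\in\cO$, hence $|a^{-2}-\tau x^{2}|=|a|^{-2}$ by Lemma~\ref{Hensel}, and then the two off-diagonal constraints read $|at|\ge 1$ together with $|at|<1$ (or the $\fp/\cO$-reversed pair for $B_\xi$), a uniform contradiction.
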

	\begin{proof} Suppose $\tau \in \go^\times-(\go^\times)^2$. Then $|a|\leq 1$. We have that $B_\xi(t)\not=0$ if and only if there exists $c\in F_v^\times$ with the properties:\begin{align}
			c(1\pm \tau y)\in \cO, \quad 
			c\tau t \in \cO, \quad 
			c\tau t^{-1}(a^{-2}- \tau y^2) \in \fp, \quad
			c^2(1-\tau a^{-2}) \in \cO^\times,
			\label{ff2}
		\end{align}
		where we set $y=x+\xi t$. By $|a|\leq 1$, then $|1-\tau a^{-2}|=|a|^{-2}$, and hence the last condition of \eqref{ff2} yields $|c|=|a|$. From the first and the second ones, we then have $ay\in \cO$ and $at\in \cO$. The third condition is impossible by Lemma~\ref{Hensel}. Thus $B_\xi(t)=0$ for all $t\in F_v$. In the same way, $B(t)=0$ for all $t\in F_v$. Suppose $\ord_v(\tau) = 1$. Then, $|a|\le|\varpi|=q^{-1}$, and $\tilde B_\xi(t)\not=0$ if and only if \eqref{ff2}; from the last of \eqref{ff2}, $1=|c^2(1-\tau a^{-2})|=|c|^{2}|\tau||a|^{-2}$, which is impossible due to $|\tau|=q^{-1}$. Hence ${\tilde B}_{\xi}(t)=0$ for all $t\in F_v^\times$. Similarly, we obtain $B(t)=0$ for all $t\in F_v^\times$.
	\end{proof}

	Next we consider the case $|a|>|\tau|$.
	For the computation of $B(t)$ and $B_\xi(t)$, we set $X(t)= \{x \in \tau^{-1}\go | a^{-2}-\tau x^2 \in t\go \}$.
	\begin{lem}\label{S(n)-part value of B(t)}
		If $|a|>|\tau|$,
		\begin{align*}
			B(t) = \delta(t \in \tau^{-1}\gp)\vol(X(t)), \quad B_{\xi}(\varpi^{-1}t)= \delta(\varpi^{-1} t \in \tau^{-1}\go)\vol(X(t)).
		\end{align*}
	\end{lem}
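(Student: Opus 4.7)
\smallskip
\noindent
\textbf{Proof plan.} The key observation is that the determinants of both matrices appearing in $B(t)$ and $B_\xi(\varpi^{-1}t)$ are
\begin{align*}
(1-\tau x)(1+\tau x)-t^{-1}(a^{-2}-\tau x^{2})(\tau t)=1-\tau a^{-2},
\end{align*}
and similarly for the $\xi$-shifted matrix (by direct computation, the translation $y=x+\xi\varpi^{-1}t$ does not alter the determinant). Under the hypothesis $|a|>|\tau|$ we have $|\tau a^{-2}|<1$, hence $|1-\tau a^{-2}|=1$. Since $\Phi_v={\rm ch}_{Z_v\bK_0(\gp)}$ and membership in $Z_v\bK_0(\gp)$ requires the determinant of $c^{-1}g$ to be a unit for some $c\in F_v^\times$, this forces $|c|=1$, so we may take $c\in\cO^\times$ (say $c=1$) and reduce to checking the four entry conditions directly.

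For $B(t)$: after taking $c=1$, the entry conditions read $1\pm\tau x\in\cO$ (equivalently $x\in\tau^{-1}\cO$), $t^{-1}(a^{-2}-\tau x^{2})\in\cO$ (equivalently $a^{-2}-\tau x^{2}\in t\cO$), and the $(2,1)$-entry condition $\tau t\in\gp$ (equivalently $t\in\tau^{-1}\gp$). The two conditions on $x$ are precisely $x\in X(t)$, while the condition on $t$ is independent of $x$; since $\Phi_v$ on $Z_v\bK_0(\gp)$ contributes the constant value $1$, integrating in $x$ produces $B(t)=\delta(t\in\tau^{-1}\gp)\,\vol(X(t))$ as claimed.

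For $B_\xi(\varpi^{-1}t)$: substituting $\varpi^{-1}t$ for $t$ in the definition and making the translation $y=x+\xi\varpi^{-1}t$ (with $dy=dx$), the matrix becomes
\begin{align*}
\begin{bmatrix}1+\tau y & -\tau\varpi^{-1}t \\ -\varpi t^{-1}(a^{-2}-\tau y^{2}) & 1-\tau y\end{bmatrix}.
\end{align*}
With $c=1$, the four entry conditions are: $y\in\tau^{-1}\cO$, $\varpi^{-1}t\in\tau^{-1}\cO$ (from the $(1,2)$-entry condition $-\tau\varpi^{-1}t\in\cO$), and the $(2,1)$-entry condition $\varpi t^{-1}(a^{-2}-\tau y^{2})\in\gp$, which rearranges to $a^{-2}-\tau y^{2}\in\varpi^{-1}t\gp=t\cO$. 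Thus the conditions on $y$ are again exactly $y\in X(t)$, while the condition on $t$ is $\varpi^{-1}t\in\tau^{-1}\cO$, giving the stated formula.

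The step that needs the most care is tracking which of the four matrix-entry conditions yield the $X(t)$-condition on the integration variable and which yield the $\delta$-factor on $t$; in particular, the different placements of the $\fp$-condition in $B(t)$ versus $B_\xi$ (owing to the Bruhat cell containing $w_0$) are what produce the asymmetry $\delta(t\in\tau^{-1}\gp)$ versus $\delta(\varpi^{-1}t\in\tau^{-1}\cO)$. No genuine obstacle is expected: once $|c|=1$ is established from the determinant, the remainder is bookkeeping of valuations.
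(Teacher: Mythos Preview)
Your proposal is correct and follows essentially the same approach as the paper: both arguments use the determinant identity $\det=1-\tau a^{-2}$ together with $|a|>|\tau|$ to force $|c|=1$, then read off the entry conditions of $\bK_0(\gp)$ to separate the $\delta$-factor in $t$ from the $X(t)$-conditions on the integration variable. The only cosmetic difference is that the paper computes $B_\xi(t)$ first and substitutes $t\mapsto\varpi^{-1}t$ at the end, whereas you substitute first and then shift $y=x+\xi\varpi^{-1}t$; the content is identical.
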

	\begin{proof}
		The integrand of $B(t)$ is non-zero if and only if there exists $c \in F_v^\times$ such that $1-\tau x \in c\go$, $a^{-2}-\tau x^2 \in ct \go$, $\tau t \in c\gp$, $1+\tau x \in c\go$, $1-\tau a^{-2} \in c^2 \go^\times$.
		By $|a|>|\tau|$, $1-\tau a^{-2}$ is a unit and whence $|c|=1$.
		From this, $B(t)=\delta(t \in \tau^{-1}\gp)\int_{x \in \tau^{-1}\go, a^{-2}-\tau x^2 \in t\go}dx$.
		
		The integrand of $B_\xi(t)$ is non-zero if and only if $1+\tau(x+\xi t) \in c\go$, $\tau t \in \go$, $a^{-2} - \tau(x+\xi t)^2 \in ct\gp$, $1-\tau(x+\xi t) \in c\go$, $1-\tau a^{-2} \in c^2 \go^\times$. By $|a|>1$, $c$ is a unit and whence
		\begin{align*}B_{\xi}(t) = & \delta(t \in \tau^{-1}\go)\tint_{x \in \tau^{-1}\go,\, a^{-2}-\tau(x+\xi t)^2 \in \varpi t\go}dx = \delta(t \in \tau^{-1}\go)\tint_{x \in \tau^{-1}\go, a^{-2}-\tau x^2 \in \varpi t\go}dx
		\end{align*}
	\end{proof}
	
	Thus, if $\tau \in \go^\times-(\go^\times)^2$, the value \eqref{expression by Bs :level case} for $|a|\leq 1$ vanishes by Lemma \ref{some vanishing of Bs}.
	For $|a|>1$, Lemmas \ref{formula of X(t)}, \ref{formula of integral of X(t)} and \ref{S(n)-part value of B(t)} yield an evaluation of \eqref{expression by Bs :level case} as
	\begin{align*}
		&\vol(\bfK_0(\gp)) |a|^{(z+1)/2} (1+q^{(z+1)/2})\tint_{\tau^{-1}\gp-\{0\}} \vol(X(t))|t|^{(z-1)/2}d^\times t \\
		= & \vol(\bfK_0(\gp)) |a|^{(z+1)/2} (1+q^{(z+1)/2})\left(\tint_{\go-\{0\}} \vol(X(t))|t|^{(z-1)/2}d^\times t -\tint_{\go^\times}q^{-d/2}|t|^{(z-1)/2}d^\times t\right) \\
		= & |a|^{(z+1)/2} \frac{1+q^{(z+1)/2}}{1+q}\left(\tint_{\go-\{0\}} \vol(X(t))|t|^{(z-1)/2}d^\times t -q^{-d} \right) \\
		= & q^{-d} \left( \frac{1+q^{(-z-1)/2}}{1-q^{-z}}\frac{1+q^{(-z+1)/2}}{1+q}|a|^{(z+1)/2} + \frac{1+q^{(z-1)/2}}{1-q^z}\frac{1+q^{(z+1)/2}}{1+q}|a|^{(-z+1)/2} \right).
	\end{align*}
	If $\ord_v(\tau) =1$, the value \eqref{expression by Bs :level case} for $|a|\ge 1$ is evaluated as
	\begin{align*}
		& \vol(\bfK_0(\gp)) |a|^{(z+1)/2} (1+q^{(z+1)/2})\{\tint_{\tau^{-1}\go-\{0\}} \vol(X(t))|t|^{(z-1)/2}d^\times t -\tint_{\tau^{-1}\go^\times}q^{-d/2}q|t|^{(z-1)/2}d^\times t\}\\
		= & (1+q^{(z+1)/2})\times q^{-d} 
		\left( \frac{1}{1-q^{-z}}\frac{1+q^{(-z+1)/2}}{1+q}|a|^{(z+1)/2} + \frac{1}{1-q^z}\frac{1+q^{(z+1)/2}}{1+q}|a|^{(-z+1)/2}\right).
	\end{align*}
	Theorem~\ref{OrbIntUnifEx} (4) follows from these as before by Lemmas~\ref{sphericalftn1value}, Lemma~\ref{OrbIntUnifExL2} and Proposition~\ref{OrbIntUnifExL4}.

	\subsubsection{The proof of Theorem~\ref{OrbIntUnifEx} (5)}
	\label{The proof of Theorem OrbIntUnifEx (5)}
	Let $v\in S$ and $\Phi_v(g_v)=\Phi(s;g_v)$ the Green function on $G_v$ defined in \S \ref{Greenftn}.
	First we consider the case $a\neq0$. The case $a=0$ is treated after Lemma \ref{explicit of J(a):non-dyadic, unramified}.
	From \eqref{nonarchGreenftn}, the integral \eqref{simple of elliptic} is written as
	\begin{align*}
		\varphi_{0,v}(1_2)\fE_v^{(z)}(\hat\gamma_v)= & \int_{{\frak T}_{\Delta,v} \bsl G_v}\Phi(s;g^{-1}\hat{\gamma}_v g)\varphi_{0,v}(g)dg \\
		= & (q^{-\frac{s+1}{2}}-q^{\frac{s+1}{2}})^{-1}\,
		\tint_{F_v^\times}\tint_{F_v}|1-b^2\tau|^{\frac{s+1}{2}} \\
		&\times\{\max(|1-\tau b x|,|1+\tau b x|,|\tau b t|,|t^{-1}b(1-\tau x^2)|)^2 \}^{-\frac{s+1}{2}}
		|t|^{\frac{z-1}{2}}\d^\times t\,\d x,
	\end{align*}
	where $b=a^{-1}$. By the variable change $t\rightarrow t/(\tau b)$, $x\rightarrow x/(\tau b)$, we have
	\begin{align}
		\varphi_{0,v}(1_2)\fE_v^{(z)}(\hat\gamma_v)=|1-b^2\tau|^{\frac{s+1}{2}}|\tau b|^{-\frac{z+1}{2}}(q^{-\frac{s+1}{2}}-q^{\frac{s+1}{2}})^{-1}\tint_{F_v^\times}B(t)|t|^{\frac{z-1}{2}}\,\d^\times t
		\label{EllOrbIntL3-f1}
	\end{align}
	with 
	$$
	B(t)=\tint_{F_v}
	\max(|1-x|,|1+x|,|t|,|t^{-1}(b^2\tau-x^2)|)^{-s-1}
	\,\d x.
	$$
	When $t$ is viewed as a constant, we let $f(x)$ denote the integrand of $B(t)$. First we consider the case when $\tau=\Delta_v^{0}$ is a non-unit. 
	\begin{lem}\label{EllOrbIntL3-L1}
		Suppose $|\tau|=q^{-1}$, $|a|\leq 1$ and set $b=a^{-1}$. Let $\Re(s)>-1/2$. 
		\begin{itemize}
			\item[(i)] When $|\tau b|\geq |t|$, 
			\begin{align*}
				B(t)&=q^{-d/2}|t|^{s+1}|\tau b|^{-2s-1}\left\{q^{-s-1}+(1-q^{-1}){q^{-2s-1}}{(1-q^{-2s-1})^{-1}}\right\}.
			\end{align*}
			\item[(ii)] When $|\tau b|<|t|$, 
			\begin{align*}
				B(t)&=q^{-d/2}|t|^{-s}\left\{1+(1-q^{-1}){q^{-2s-1}}{(1-q^{-2s-1})^{-1}}\right\}.
			\end{align*}
		\end{itemize}
	\end{lem}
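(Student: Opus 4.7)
The plan is to evaluate $B(t)$ by reducing the maximum inside the integrand to an explicit piecewise description in $|x|$, then computing the resulting integral as a sum of geometric series over the annuli $\{|x|=q^k\}$. Three ultrametric reductions do most of the work before the case split. First, because $v \in S$ is non-dyadic by assumption (iv) of \S\ref{BasicAssumption}, the residue characteristic of $F_v$ is odd, and the ultrametric inequality yields $\max(|1-x|, |1+x|)=\max(1, |x|)$. Second, since $\ord_v(b^2\tau)=2\ord_v(b)+1$ is odd while $\ord_v(x^2)$ is always even, $|b^2\tau|\neq |x|^2$ for every $x\in F_v$, hence $|b^2\tau - x^2|=\max(|b^2\tau|, |x|^2)$. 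Setting $N:=|b^2\tau/t|=|b|\,|\tau b|/|t|$, the integrand of $B(t)$ becomes $\max(1, |x|, |t|, N, |x|^2/|t|)^{-s-1}$.

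For the case split, recall $|b|\ge 1$ (as $|a|\le 1$) and $|\tau b|=|b|/q$. In case (i), the hypothesis $|\tau b|\ge |t|$ forces $N\ge |b|\ge 1$ and $N\ge |t|$, so the five terms inside the max collapse to $N$ near $0$, and moreover $|b|\ge q|t|>|t|$. The same parity obstruction as above rules out $|x|^2/|t|=N$; comparing the two, one finds $|x|^2/|t|>N \Longleftrightarrow |x|\ge |b|$. Thus the dominant term in the max is $N$ on $\{|x|<|b|\}$ and $|x|^2/|t|$ on $\{|x|\ge |b|\}$. In case (ii), $|\tau b|<|t|$ forces $|b|\le |t|$ and $N<|t|$, so $|t|$ dominates the constants; then the dominant term is $|t|$ on $\{|x|\le |t|\}$ and $|x|^2/|t|$ on $\{|x|>|t|\}$.

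After this dichotomy, the integral decomposes into a plateau piece equal to $(\text{dominant constant})^{-s-1}$ times the volume of a ball, plus a geometric tail $\sum_{k\ge k_0}q^{-k(2s+1)}\vol\{|x|=q^k\}$ whose convergence requires exactly the stated hypothesis $\Re(s)>-1/2$. Evaluating the series and rewriting the case (i) answer via $|b|^{-1}=q|\tau b|^{-1}$ to match the normalization of the statement yields the two displayed formulas. I anticipate no genuine obstacle: the entire argument is careful bookkeeping of non-archimedean absolute values guided by the parity obstruction between $\ord_v(\tau)$ and $\ord_v(x^2)$, and the only real risk is an arithmetic slip in the $q$-exponents.
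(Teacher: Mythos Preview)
Your proof is correct and follows essentially the same strategy as the paper's: both reduce the integrand by identifying the dominant term in the maximum via the parity obstruction $\ord_v(b^2\tau)\equiv 1\pmod 2$, split the domain at the same break point ($\{|x|<|b|\}$ versus $\{|x|\ge |b|\}$ in case~(i), which coincides with the paper's $D_1=\{|x|\le|\tau b|\}$ versus $D_2$; $\{|x|\le|t|\}$ versus $\{|x|>|t|\}$ in case~(ii)), and compute a plateau plus a geometric tail. Your presentation is somewhat cleaner because you perform the simplifications $\max(|1-x|,|1+x|)=\max(1,|x|)$ and $|b^2\tau-x^2|=\max(|b^2\tau|,|x|^2)$ once upfront, whereas the paper rederives the analogous inequalities piecewise within each subcase (and further subdivides $D_1$ by $x\in\cO$ versus $x\notin\cO$); but the underlying computation is identical.
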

	\begin{proof} Suppose $|a|\leq 1$. Then $|b|=|a|^{-1}\geq 1$ and $|\tau|=q^{-1}$. Hence $|\tau b|\geq q^{-1}$. By dividing the integration domain into $D_1=\{x\in F_v||x|\leq \max(|\tau b|,|t|)\}$ and into $D_2=\{x\in F_v | |x|> \max(|\tau b|,|t|)\}$, we write the integral as the sum $B_1(t)+B_2(t)$ with $B_{i}(t)=\int_{D_i}f(x)\d x$. Let $x\in D_2$. Then by $|\tau b|\geq q^{-1}$, we have $|x|>\max(|\tau b|,|t|)\geq \max(q^{-1},|t|)\ge q^{-1}$. Hence $|1\pm x| \leq |x|$, and $|x|^2>|\tau b|^2$ which implies $|x|^2\geq |\tau b|^2q=|\tau b^2|$. Hence $|\tau b^2-x^2|=|x|^2$. Thus $f(x)=(|t|^{-1}|x|^2)^{-s-1}$ for all $x\in D_2$ and 
		\begin{align}
			B_2(t)=\tint_{x\in D_2}(|t|^{-1}|x|^2)^{-s-1}\,\d x=|t|^{s+1} \tint_{D_2}|x|^{-2s-2}\d x.
			\label{EllOrbIntL3-L1-1}
		\end{align}
		(i) Suppose $|t|\leq |\tau b|$. Then $D_1=\{x\in F_v||x|\leq |\tau b|\}$ and $|t|\leq |\tau b|\leq |\tau b^2|$. For $x\in \cO \cap D_1$, we have $|1\pm x|\leq 1\leq |t|^{-1}|\tau b|\leq |t|^{-1}|\tau b^2|=|t^{-1}(\tau b^2-x^2)|$ and $|t|\leq |t|^{-1}|\tau^2 b^2|<|t|^{-1}|\tau b^2|=|t^{-1}(b^2\tau -x^2)|$. Hence $f(x)=(|t|^{-1}|\tau b^2|)^{-s-1}$. For $x\in (F-\cO)\cap D_1$, we have $|1\pm x|=|x|$ and $|x|\leq |\tau b|$. Hence $|xt|\leq |\tau^2 b^2|<|\tau b^2|$. Thus $f(x)=\max(|x|,|t|,|t|^{-1}|\tau b^2|)^{-s-1}=(|t|^{-1}|\tau b^2|)^{-s-1}$. Therefore, 
		\begin{align*}
			B_{1}(t)&=\tint_{|x|\leq|\tau b|}(|t|^{-1}|\tau b^2|)^{-s-1}\d x=q^{-d/2}|\tau b|(|t|^{-1}|\tau b^2|)^{-s-1}.
		\end{align*}
		We have
		\begin{align*}
			B_2(t)& = |t|^{s+1}\tint_{|x|>|\tau b|}|x|^{-2s-2}\d x
			=q^{-d/2}(1-q^{-1})|\tau b|^{-2s-1}|t|^{s+1}\frac{q^{-2s-1}}{1-q^{-2s-1}}. \end{align*}
		(ii) Suppose $|t|>|\tau b|$. Then from $|\tau|=q^{-1}$, we have $|t|\geq |b|\,(\geq 1)$. Thus $|t|^2\geq |b|^2>|\tau b^2|$. Hence $|t|>|t|^{-1}|\tau b^2|$. Moreover, $D_1=\{x\in F_v||x|\leq |t|\,\}$. If $x\in D_1$, we have $|1\pm x|\le \max(1, |x|)\le|t|$ and $|t^{-1}(\tau b^2-x^2)| = \max(|\tau b^2|, |x^2|) \le |t^{-1}|\max(|t|^2, |x|^2) =|t|$;
		thus $f(x)=|t|^{-s-1}$. Hence
		\begin{align*}
			B_{1}(t)& = \tint_{D_1}|t|^{-s-1}dx = q^{-d/2}|t|^{-s}.
		\end{align*}
		Since $D_2=\{x\in F_v ||x|>|t|\}$, from \eqref{EllOrbIntL3-L1-1} we have
		\begin{align*}
			B_2(t)& = |t|^{s+1}\tint_{|x|>|t|}|x|^{-2s-2}\d x =q^{-d/2}(1-q^{-1})\frac{q^{-2s-1}}{1-q^{-2s-1}}|t|^{-s}.\end{align*}
		Therefore, we obtain the desired formula by $B(t)=B_1(t)+B_2(t)$. 
	\end{proof}

	\begin{lem} \label{EllOrbIntL3-L2}
		Suppose $|\tau|=q^{-1}$, $|a|>1$ and set $b=a^{-1}$. Let $\Re(s)>-1/2$. 
		\begin{itemize}
			\item[(i)] When $|t|\leq |\tau b|^2$,
			\begin{align*}
				B(t)&=q^{-d/2}|t|^{s+1}|\tau b|^{-2s-1}\left(-1+q^{-s-1}+{(1-q^{-2s-2})}{(1-q^{-2s-1})^{-1}}\right).
			\end{align*}
			\item[(ii)] When $|\tau b|^2<|t|\leq 1$, 
			\begin{align*}
				B(t)&=q^{-d/2}\biggl( [|t|^{1/2}]+
				|t|^{s+1}[|t|^{1/2}]^{-2s-1}{(1-q^{-1})q^{-2s-1}}{(1-q^{-2s-1})^{-1}} \biggr).
			\end{align*}
			\item[(iii)] When $1<|t|$, 
			\begin{align*}
				B(t)&=q^{-d/2}|t|^{-s}{(1-q^{-2s-2})}{(1-q^{-2s-1})^{-1}}.
			\end{align*}
		\end{itemize}
	\end{lem}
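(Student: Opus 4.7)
\medskip
\noindent
\textbf{Proof plan for Lemma~\ref{EllOrbIntL3-L2}.}
The strategy is entirely parallel to the proof of Lemma~\ref{EllOrbIntL3-L1}, only now with $|b|=|a|^{-1}<1$, which forces $n_b:=\ord_v(b)\geq 1$ and hence $|b^2\tau|=q^{-2n_b-1}\leq q^{-3}$. I write $B(t)=\int_{F_v}f(x)\,dx$ with $f(x)=\max(|1-x|,|1+x|,|t|,|t^{-1}(b^2\tau-x^2)|)^{-s-1}$, and split the $x$-integral according to whether $|x|>1$, $|x|=1$, or $|x|=q^{-m}$ with $m\geq 1$. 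Because $v\in S$ is non-dyadic (assumption (iv) of \S\ref{BasicAssumption}), one has $\max(|1-x|,|1+x|)=\max(1,|x|)$, and for $|x|=q^{-m}$ the quantity $|b^2\tau-x^2|$ equals $\max(q^{-2m},q^{-2n_b-1})$.

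First I dispose of the region $|x|>1$, which is the same in all three cases: here $|x|^2/|t|\geq |x|\geq\max(1,|t|)$ (directly in (i)--(ii) and since $|x|>|t|$ in (iii) by separating the subregion $1<|x|\leq|t|$), so a direct geometric-series evaluation yields
\[
\int_{|x|>1}f(x)\,dx=|t|^{s+1}q^{-d/2}(1-q^{-1})\frac{q^{-2s-1}}{1-q^{-2s-1}}
\]
(with an extra contribution $|t|q^{-d/2}\cdot|t|^{-s-1}$ from $1<|x|\leq|t|$ in case (iii)).
In case (iii), the region $|x|\leq 1$ just contributes $q^{-d/2}|t|^{-s-1}\cdot|t|$, and summing with the above gives case (iii) after a short manipulation of the factor $(1-q^{-2s-2})/(1-q^{-2s-1})$.

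For cases (i) and (ii) the crucial work is on $|x|\leq 1$, where $\max(|1\pm x|,|t|)=1$ and only $D(x):=|t^{-1}(b^2\tau-x^2)|$ competes. Put $N=\ord_v(t)\geq 0$. For $|x|=q^{-m}$, $1\leq m\leq n_b$, one has $D(x)=q^{N-2m}$, and for $m\geq n_b+1$, $D(x)=q^{N-2n_b-1}$. Thus the critical cutoff is $m\leq\lfloor(N-1)/2\rfloor=:M'$ (where $D(x)>1$, contributing $|t|^{s+1}q^{2m(s+1)}$) versus $m\geq M'+1$ (where $D(x)\leq 1$, contributing $1$). In case (ii) we have $N\leq 2n_b+1$, so the cutoff is reached before $m=n_b$, and combining the contributions from $|x|>1$, $|x|=1$, and $1\leq m\leq M'$ produces the geometric sum
\[
(1-q^{-1})\Bigl[\tfrac{q^{-2s-1}}{1-q^{-2s-1}}+1+\sum_{m=1}^{M'}q^{m(2s+1)}\Bigr]
=\tfrac{(1-q^{-1})\,q^{M'(2s+1)}}{1-q^{-2s-1}}.
\]
Using the identity $M'+1=\lceil N/2\rceil$ one checks $q^{-M'-1}=[|t|^{1/2}]$ and $q^{M'(2s+1)}=q^{-(2s+1)}[|t|^{1/2}]^{-2s-1}$, yielding the stated formula in case (ii). In case (i) we instead have $N\geq 2n_b+2$, so the cutoff occurs only at $m=n_b$, and the region $m\geq n_b+1$ contributes $|t|^{s+1}q^{(2n_b+1)(s+1)}\cdot q^{-n_b-1}q^{-d/2}$; the same identity $u/(1-u)+1+\sum_{m=1}^{n_b}u^{-m}=u^{-n_b}/(1-u)$ (with $u=q^{-2s-1}$) telescopes everything, and after rewriting $|\tau b|^{-2s-1}=q^{(n_b+1)(2s+1)}$ the answer becomes the stated closed form.

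The main obstacle is purely bookkeeping: one has to split $|x|\leq 1$ at the right value of $m$ depending on the parity of $N$ (this is where the $[|t|^{1/2}]$-notation enters through $\lceil N/2\rceil$), and one has to be careful at the boundary $D(x)=1$, where $f(x)=1$ rather than $D(x)^{-s-1}$. Once these are sorted, all three cases reduce to the same telescoping of a geometric series, exactly as in Lemma~\ref{EllOrbIntL3-L1}.
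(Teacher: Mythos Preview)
Your approach is correct and is a genuine alternative to the paper's. The paper splits $B(t)=B_1(t)+B_2(t)$ along the single threshold $|x|=\max(|\tau b|,|t|)$ and then treats the subcases $|t|\le|\tau b|$ and $|t|>|\tau b|$ separately; you instead stratify by annuli $|x|=q^{-m}$ and exploit the parity mismatch $\ord_v(b^2\tau)=2n_b+1$ versus $\ord_v(x^2)=2m$ to read off $|b^2\tau-x^2|$ immediately. Your packaging via the cutoff $M'=\lfloor(N-1)/2\rfloor$ and the identification $q^{-(M'+1)}=[|t|^{1/2}]$ is clean, and the telescoping identity $\tfrac{u}{1-u}+1+\sum_{m=1}^{k}u^{-m}=\tfrac{u^{-k}}{1-u}$ handles both (i) and (ii) uniformly. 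I checked your case (ii) and case (i) computations and they match the stated formulas.

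One genuine slip to fix: your case (iii) bookkeeping is off. The region $|x|\le 1$ has measure $q^{-d/2}$ (not $q^{-d/2}|t|$), so it contributes $q^{-d/2}|t|^{-s-1}$; the region $1<|x|\le|t|$ has measure $q^{-d/2}(|t|-1)$, contributing $q^{-d/2}(|t|-1)|t|^{-s-1}$; together these give $q^{-d/2}|t|^{-s}$. More importantly, the tail $|x|>|t|$ does \emph{not} give the same displayed formula as in (i)--(ii): one finds
\[
\int_{|x|>|t|}(|t|^{-1}|x|^{2})^{-s-1}\,dx
=q^{-d/2}(1-q^{-1})\,|t|^{-s}\,\frac{q^{-2s-1}}{1-q^{-2s-1}},
\]
with $|t|^{-s}$ in place of your $|t|^{s+1}$. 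Summing $q^{-d/2}|t|^{-s}\bigl(1+(1-q^{-1})\tfrac{q^{-2s-1}}{1-q^{-2s-1}}\bigr)$ then gives the claimed $q^{-d/2}|t|^{-s}\tfrac{1-q^{-2s-2}}{1-q^{-2s-1}}$. This is purely arithmetical and does not affect the harder cases (i)--(ii), where your argument is fine.
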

	\begin{proof} Note that $|\tau b|<|\tau| = q^{-1}<1$. As in the proof of Lemma~\ref{EllOrbIntL3-L1}, we write $B(t)=B_1(t)+B_2(t)$, $B_i(t)=\int_{D_i}f(x)\,\d x$ with $D_1=\{x\in F_v|\,|x|\leq \max(|\tau b|,|t|)\,\}$ and $D_2=F_v-D_1$. For $x\in D_2$, we have $|\tau b|<|x|$ and $|x|>|t|$, by which $|t^{-1}(\tau b^2-x^2)|=|t|^{-1}|x|^2>|t|$. Thus $f(x)=\max(|1-x|,|1+x|,|t|^{-1}|x|^2)^{-s-1}$ for $x\in D_2$. We have $f(x)=\max(|1+x|,|1-x|,|t|,|t^{-1}(\tau b^2-x^2)|)^{-s-1}$ for $x\in D_1$. 
		
		\smallskip
		\noindent
		(i) Suppose $|t|\leq |\tau b|$. For $x\in D_1\cap \fp$, we have $|1\pm x|=1$ and $|t|\leq |t|^{-1}|\tau^2 b^2|<|t|^{-1}|\tau b^2|=|t^{-1}(\tau b^2-x^2)|$; thus $f(x)=\max(1,|t|^{-1}|\tau b^2|)^{-s-1}$. The set $D_1\cap (F_v-\fp)$ is empty due to $|\tau b|<1$. Hence
		\begin{align*}
			B_1(t)&=\tint_{\substack{|x|\leq |\tau b| \\ |x|<1}}\max(1,|t|^{-1}|\tau b^2|)^{-s-1}\d x=q^{-d/2}|\tau b|\max(1,|t|^{-1}|\tau b^2|)^{-s-1}. 
		\end{align*}
		For $x\in D_2\cap \fp$, we have $|1\pm x|=1$ and $f(x)=\max(1,|t|^{-1}|x|^2)^{-s-1}$. For $x\in D_2\cap \cO^\times$, we have $|1\pm x|\leq 1<|\tau b|^{-1}\leq |t|^{-1}=|t|^{-1}|x|^2$; thus $f(x)=|t|^{s+1}$. For $x\in D_2\cap(F_v-\cO)$, we have $|t|\leq |\tau b|<1<|x|$, which implies $|x|<|t|^{-1}|x|^2$; thus $f(x)=(|t|^{-1}|x|^{2})^{-s-1}$. Therefore, 
		{\allowdisplaybreaks
			\begin{align*}
				B_2(t)=& \biggl\{\tint_{\substack{x\in D_2\cap \fp \\ |t|^{1/2}<|x|}}(|t|^{-1}|x|^2)^{-s-1} \d x+\tint_{\substack{x\in D_2\cap \fp \\ |t|^{1/2}\geq |x|}}\d x
				+\tint_{D_2\cap \cO^\times} |t|^{s+1} \d x
				+\tint_{D_2\cap (F_v-\cO)}(|t|^{-1}|x|^{2})^{-s-1}\d x
				\biggr\}
				\\
				=& |\tau b|^{z+1}\biggl\{|t|^{s+1} \tint_{\max(|\tau b|,[|t|^{1/2}])<|x|<1}
				|x|^{-2s-2}\d x
				+\int_{|\tau b|<|x|\leq [|t|^{1/2}]}\d x\\
				&+|t|^{s+1}\tint_{|x|=1} \d x
				+|t|^{s+1} \tint_{1<|x|}|{x}|^{-2s-2}\d x
				\biggr\}
				\\
				=&q^{-d/2}(1-q^{-1})\biggl\{\frac{q^{-2s-1}\max(|\tau b|,[|t|^{1/2}])^{-2s-1}-q^{-2s-1}}{1-q^{-2s-1}}\times |t|^{s+1}\\
				&+\delta(|\tau b|<|t|^{1/2})\frac{[|t|^{1/2}]-|\tau b|}{1-q^{-1}} +\frac{q^{-2s-1}}{1-q^{-2s-1}}\times |t|^{s+1}
				\biggr\}.
		\end{align*}}From this, 
		\begin{align*}
			B_2(t)=&q^{-d/2}(1-q^{-1})
			\begin{cases}
				|\tau b|^{-2s-1}|t|^{s+1}\frac{q^{-2s-1}}{1-q^{-2s-1}} \quad \text{if $|\tau b|^{2}\geq |t|$}, \\
				|t|^{s+1}[|t|^{1/2}]^{-2s-1}\frac{q^{-2s-1}}{1-q^{-2s-1}}+\frac{[|t|^{1/2}]-|\tau b|}{1-q^{-1}} \quad \text{if $|\tau b|^2<|t|\leq |\tau b|$}.
			\end{cases}
		\end{align*} 
		(ii) Suppose $|t|>|\tau b|$. Then $D_1=\{x\in F_v|\,|x|\leq |t|\}$. For $x\in D_1\cap \gp$, we have $|1\pm x|= 1$ and $|t^{-1}(\tau b^2-x^2)|\leq |t|^{-1}\max(|\tau b^2|,|x|^2) \leq |t|$; hence $f(x)=\max(1,|t|)^{-s-1}$. For $x\in D_1\cap (F_v-\gp)$, $|1\pm x|\leq |x|\leq |t|$ and $|t^{-1}(\tau b^2-x^2)|\leq |t|$; thus $f(x)=|t|^{-s-1}$. Hence
		\begin{align*}
			B_1(t)&=\tint_{|x|\leq \min(q^{-1},|t|)}\max(1,|t|)^{-s-1}\,\d x+\tint_{1\leq |x|\leq |t|}|t|^{-s-1}\d x
			\\
			&=q^{-d/2}\{\max(1,|t|)^{-s-1}\,\min(q^{-1},|t|)
			+|t|^{-s}\delta(|t|\geq 1)(|t|-1)\}\\
			&=q^{-d/2}
			\begin{cases}
				|t|\quad (|\tau b|<|t|\leq 1), \\
				|t|^{-s}\quad (1<|t|).
			\end{cases}
		\end{align*} 
		We have $D_2=\{x\in F_v||x|>|t|\}$. For $x\in D_2$, we have $|\tau b|<|t|<|x|$ which in turn yields $|\tau b^2|<|x|^2$ and $|t|<|t|^{-1}|x|^2$. Hence $f(x)=\max(|1+x|,|1-x|,|t|^{-1}|x|^2)|^{-s-1}$ for $x\in D_2$. We have
		{\allowdisplaybreaks
			\begin{align*}
				B_2(t)& = \tint_{|t|<|x|<1} \max(1,|t|^{-1}|x|^2)^{-s-1}\d x+\tint_{|x|=1}\delta(|t|<1)|t|^{s+1}\d x \\
				&+\tint_{\max(1,|t|)<|x|}(|t|^{-1}|x|^2)^{-s-1}\d x
				\\
				&=q^{-d/2}(1-q^{-1})|\tau b|^{z+1}\biggl\{\delta(|t|<1)|t|^{s+1}\frac{q^{-2s-1}([|t|^{1/2}]^{-2s-1}-q^{2s+1})}{1-q^{-2s-1}}
				\\
				&\quad+\delta(|t|\leq 1)\frac{[|t|^{1/2}]-|t|}{1-q^{-1}}
				+\delta(|t|<1)|t|^{s+1}
				+|t|^{s+1}\frac{q^{-2s-1}\max(1,|t|)^{-2s-1}}{1-q^{-2s-1}} \biggr\}
				\\
				&=q^{-d/2}(1-q^{-1})
				\begin{cases}
					|t|^{-s}\frac{q^{-2s-1}}{1-q^{-2s-1}} \quad (1<|t|), \\
					|t|^{s+1}[|t|^{1/2}]^{-2s-1}\frac{q^{-2s-1}}{1-q^{-2s-1}}+\frac{[|t|^{1/2}]-|t|}{1-q^{-1}} \quad (|\tau b|<|t|\leq 1).
				\end{cases}
		\end{align*}}Here to have the second equality, we split the integral over $|t|<|x|<1$ to those over $|t|^{1/2}<|x|<1 $ and over $|t|<|x|\leq |t|^{1/2}$.
		
		From the evaluations of $B_1(t)$ and $B_2(t)$, by $B(t)=B_1(t)+B_2(t)$ we are done. \end{proof}
	The following is given by a direct computation.
	\begin{lem}\label{fundamental integral formula} If $\a, \b \in \CC$, then
		$$\int_{|\tau b|^2 < |t| \le 1}[|t|^{1/2}]^{\a}|t|^{\b}\d^\times t=q^{-d/2}\left(\frac{1+q_v^{-\a-\b}}{1-q^{-\a-2\b}} + \frac{-1-q^{-\a-\b}}{1-q^{-\a-2\b}}|\tau b|^{\a+2\b} \right).$$
	\end{lem}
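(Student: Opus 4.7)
The plan is a direct geometric-series computation once one parameterizes $t$ by its valuation. Set $l=\ord_v(t)$ and $n=\ord_v(\tau b)$, so $|\tau b|=q^{-n}$ with $n>0$ (the range of integration is nonempty only in that case). The condition $|\tau b|^2<|t|\le 1$ translates to $0\le l\le 2n-1$. Since the Haar measure on $F_v^\times$ satisfies $\vol(\varpi_v^l\cO_v^\times;d^\times t)=q^{-d/2}$ for every $l\in\ZZ$, the integral reduces to the finite sum
\begin{align*}
	\int_{|\tau b|^2<|t|\le 1}[|t|^{1/2}]^{\a}|t|^{\b}\,d^\times t
	=q^{-d/2}\sum_{l=0}^{2n-1}[q^{-l/2}]^{\a}q^{-l\b}.
\end{align*}

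Next, I split the sum according to the parity of $l$ using the definition of $[\,\cdot\,]$ recalled just before Lemma~\ref{formula of X(t)}. For $l=2m$ we have $[q^{-l/2}]^{\a}q^{-l\b}=q^{-m(\a+2\b)}$, and for $l=2m+1$ we have $[q^{-l/2}]^{\a}q^{-l\b}=q^{-\a-\b}q^{-m(\a+2\b)}$. Thus both the even and the odd contributions are geometric progressions in the ratio $q^{-(\a+2\b)}$ summed over $m=0,1,\dots,n-1$, and combining them gives
\begin{align*}
	\sum_{l=0}^{2n-1}[q^{-l/2}]^{\a}q^{-l\b}
	=(1+q^{-\a-\b})\sum_{m=0}^{n-1}q^{-m(\a+2\b)}
	=(1+q^{-\a-\b})\,\frac{1-q^{-n(\a+2\b)}}{1-q^{-\a-2\b}}.
\end{align*}

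Finally, using $q^{-n(\a+2\b)}=|\tau b|^{\a+2\b}$ and multiplying by the measure factor $q^{-d/2}$, the asserted formula drops out after splitting the numerator into two pieces. There is no real obstacle here; the only thing to be careful about is the normalization of $d^\times t$ (so that $\vol(\cO_v^\times;d^\times t)=q^{-d/2}$, coming from $d^\times t=\zeta_{F_v}(1)\,dt/|t|_v$ with $\int_{\cO_v}dt=q^{-d/2}$) and the correct parity split in the definition of $[q^{-l/2}]$; both are already fixed in the conventions of \S\,2. The identity holds as a meromorphic identity in $(\a,\b)$, the only poles on the right being the expected ones at $q^{-\a-2\b}=1$, which are removable because the left-hand side is manifestly entire in $(\a,\b)$.
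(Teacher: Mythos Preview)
Your proof is correct and is exactly the direct computation the paper has in mind; the paper's own proof is just the phrase ``A direct computation.'' Your parity split of $\ord_v(t)$ together with the geometric-series evaluation and the measure normalization $\vol(\cO_v^\times;d^\times t)=q^{-d/2}$ is precisely what is needed.
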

	
	\begin{lem}\label{explicit of J(a):ramified}
		Let $\ord_v(\tau)=1$. Set $b=a^{-1}$. Suppose $\Re(s)>(|\Re(z)|-1)/2$.
		For $|a|\leq 1$,
		$$\int_{F_v^\times}B(t)|t|^{\frac{z-1}{2}} d^\times t
		= q^{-d}|b|^{-s+\frac{z-1}{2}}(1+q^{\frac{-z-1}{2}})
		\frac{\zeta_{F_v}(s+1+\frac{z-1}{2})\zeta_{F_v}(s+1+\frac{-z-1}{2})}{\zeta_{F_v}(s+1)}.
		$$
		For $|a|>1$,
		$$\int_{F_v^\times}B(t)|t|^{(z-1)/2}d^\times t = q^{-d}\frac{1+q^{-\frac{z+1}{2}}}{\zeta_{F_v}(s+1)}\left( \zeta_{F_v}(z)\zeta_{F_v}\left(s+1+\tfrac{-z-1}{2}\right) +\zeta_{F_v}(-z)\zeta_{F_v}\left(s+1+\tfrac{z-1}{2}\right)|b|^z \right).$$
	\end{lem}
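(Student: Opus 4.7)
The integral $\int_{F_v^\times}B(t)|t|^{(z-1)/2}d^\times t$ is a direct computation by piecewise substitution: the formulas for $B(t)$ when $|\tau|=q^{-1}$ are already supplied by Lemmas~\ref{EllOrbIntL3-L1} (covering $|a|\le 1$ in two regimes of $|t|$ vs.\ $|\tau b|$) and \ref{EllOrbIntL3-L2} (covering $|a|>1$ in three regimes split at $|\tau b|^2$ and $1$). My plan is to plug each piecewise expression into the $t$-integral, evaluate each resulting piece as a one-dimensional geometric sum on a ball or shell, and then combine.

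\textbf{Case $|a|\leq 1$.} First I would simplify the constants appearing in Lemma~\ref{EllOrbIntL3-L1}: the coefficient $q^{-s-1}+(1-q^{-1})q^{-2s-1}/(1-q^{-2s-1})$ rewrites as $(1+q^{-s})q^{-s-1}(1-q^{-s-1})/(1-q^{-2s-1})$, while $1+(1-q^{-1})q^{-2s-1}/(1-q^{-2s-1})$ rewrites as $(1-q^{-s-1})(1+q^{-s-1})/(1-q^{-2s-1})$. Then I would split the $t$-integral at $|t|=|\tau b|$. For $|t|\leq|\tau b|$ the integrand is proportional to $|t|^{s+(z+1)/2}$ and converges (contributing $|\tau b|^{-s+(z-1)/2}/(1-q^{-s-(z+1)/2})$ up to volume constants) provided $\Re(s)>-(\Re(z)+1)/2$; for $|t|>|\tau b|$ the integrand is proportional to $|t|^{-s+(z-1)/2}$ and converges provided $\Re(s)>(\Re(z)-1)/2$, both guaranteed by the hypothesis. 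Adding the two contributions over the common denominator $(1-q^{-2s-1})(1-q^{-s-(z+1)/2})(1-q^{-s+(z-1)/2})$, the main (modest) algebraic task is to verify the identity
\begin{align*}
(1+q^{-s})q^{-s-1}(1-q^{-s+(z-1)/2})+(1+q^{-s-1})q^{-s+(z-1)/2}(1-q^{-s-(z+1)/2})
=q^{-s-1}(1+q^{(z+1)/2})(1-q^{-2s-1}),
\end{align*}
which I plan to check by setting $u=q^{-s-1}$, $w=q^{(z-1)/2}$, expanding both sides and seeing the cancellation $1+qw-qu^2-q^2u^2w=(1+qw)(1-qu^2)$. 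After using $|\tau b|^{-s+(z-1)/2}=q^{s-(z-1)/2}|b|^{-s+(z-1)/2}$ and the rearrangement $q^{-(z-1)/2}q^{-1}(1+q^{(z+1)/2})=1+q^{-(z+1)/2}$, the desired factorization into $\zeta_{F_v}$ values over $\zeta_{F_v}(s+1)$ pops out.

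\textbf{Case $|a|>1$.} Here Lemma~\ref{EllOrbIntL3-L2} partitions into three subregions: $|t|\leq|\tau b|^2$, $|\tau b|^2<|t|\leq 1$, and $|t|>1$. I would compute each of the three $t$-integrals separately; the first and third are again pure geometric sums, while the middle region requires Lemma~\ref{fundamental integral formula} with $\alpha=-2s-1,\ \beta=s+(z+1)/2$ to handle the $[|t|^{1/2}]$ factor. The main obstacle here is the bookkeeping: combining three contributions, each itself a sum of two or three terms, and reducing everything over the common denominator $(1-q^{-s-(z+1)/2})(1-q^{-s+(z-1)/2})(1-q^{-2s-1})(1-q^{-z})(1-q^z)$. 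The decisive simplification will be that the coefficient of $|b|^z$ collects into $\zeta_{F_v}(-z)\zeta_{F_v}(s+1+(z-1)/2)$ while the constant-in-$|b|$ piece collects into $\zeta_{F_v}(z)\zeta_{F_v}(s+1+(-z-1)/2)$; the $z\leftrightarrow -z$ symmetry built into the geometric setup (visible already in the sum structure $|t|^{(z-1)/2}$ combined with $[|t|^{1/2}]^{-2s-1}$) is what forces this clean factorization, and I would exploit it to cut the algebra roughly in half by treating only the $|b|^z$ coefficient in detail and invoking symmetry for the other. The factor $(1+q^{-(z+1)/2})$ emerges from the boundary contributions at $|t|=|\tau b|^2$ via the same $q^{-(z-1)/2}q^{-1}(1+q^{(z+1)/2})$ rearrangement used in the first case.
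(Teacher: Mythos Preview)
Your approach is correct and coincides with the paper's own proof, which simply states that the result follows by a straightforward computation using Lemmas~\ref{EllOrbIntL3-L1}, \ref{EllOrbIntL3-L2}, and \ref{fundamental integral formula}. One small caveat: the $z\leftrightarrow -z$ shortcut you propose for $|a|>1$ is not directly available at the level of $\int_{F_v^\times}B(t)|t|^{(z-1)/2}\,d^\times t$, since this intermediate integral is not itself even in $z$ (the symmetry of $\fE_v^{(z)}$ only emerges after dividing by the $z$-dependent factors $\varphi_{0,v}(1_2)$ and $|\tau b|^{-(z+1)/2}$ in \eqref{EllOrbIntL3-f1}); you will need to track both the constant-in-$|b|$ and the $|b|^z$ coefficients directly, but the algebra remains routine.
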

	\begin{proof} This is proved by a straightforward computation by means of Lemmas~\ref{EllOrbIntL3-L1}, \ref{EllOrbIntL3-L2} and \ref{fundamental integral formula}. \end{proof}
	
	We obtain the formula for $a=\frac{t}{2m_v}\neq 0$ in Theorem~\ref{OrbIntUnifEx} (5) for the case $\tau\in \fp-\fp^2$ from \eqref{EllOrbIntL3-f1} applying Lemma \ref{explicit of J(a):ramified} combined with Lemma~\ref{sphericalftn1value}, Lemma~\ref{OrbIntUnifExL2} (2) and Proposition~\ref{OrbIntUnifExL4} (2). We note that $L(s,\varepsilon_{\Delta, v})=1$.

	\smallskip
	\noindent
	
	Next we consider the case when $\tau=\Delta_v^{0}$ is a non-square unit.
	\begin{lem} \label{EllOrbIntL3-L4}
		Suppose $\tau \in \cO^\times-(\cO^\times)^2$ and $|a|>1$. Set $b=a^{-1}$. Suppose $\Re(s)>-1/2$. 
		\begin{itemize}
			\item[(i)] When $|t|\leq |b|^2$, 
			\begin{align*}
				B(t)&=q^{-d/2}|t|^{s+1}|b|^{-2s-1}{(1-q^{-2s-2})}{(1-q^{-2s-1})^{-1}}.
			\end{align*}
			\item[(ii)] When $|b|^2<|t|\leq 1$, 
			\begin{align*}
				B(t)&=q^{-d/2}\left\{(1-q^{-1})|t|^{s+1}[|t|^{1/2}]^{-2s-1}{q^{-2s-1}}{(1-q^{-2s-1})^{-1}} +[|t|^{1/2}] \right\}.
			\end{align*}
			\item[(iii)] When $1<|t|$, 
			\begin{align*}
				B(t)&=q^{-d/2}|t|^{-s}{(1-q^{-2s-2})}{(1-q^{-2s-1})^{-1}}.
			\end{align*}
		\end{itemize}
	\end{lem}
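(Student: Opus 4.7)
\medskip
\noindent
\textbf{Proof proposal.} The plan is to follow the same pattern used in the proofs of Lemmas \ref{EllOrbIntL3-L1} and \ref{EllOrbIntL3-L2}, namely to split $F_v$ into two pieces on which the integrand of $B(t)$ simplifies to a single power of a well-identified term of the $\max$, and then integrate. The only new input compared with those lemmas is to understand $|b^2\tau - x^2|$ for $\tau \in \cO^\times - (\cO^\times)^2$. First I would observe: if $|x| < |b|$, then clearly $|b^2\tau - x^2| = |b|^2$; while if $|x| = |b|$, writing $x = bu$ with $u \in \cO^\times$ gives $b^2\tau - x^2 = b^2(\tau - u^2)$, and by Lemma \ref{Hensel} $\tau - u^2 \in \cO^\times$ (since $\tau$ is a non-square unit), so again $|b^2\tau - x^2| = |b|^2$. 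For $|x| > |b|$, the non-archimedean triangle inequality gives $|b^2\tau - x^2| = |x|^2$.

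Next I would set $D_1 = \{x \in F_v : |x| \le \max(|b|, |t|)\}$ and $D_2 = F_v - D_1$, and write $B(t) = B_1(t) + B_2(t)$ with $B_i(t) = \int_{D_i} f(x)\,\d x$, where $f$ denotes the integrand. On $D_2$, because $|x| > |b|$ we have $|b^2\tau - x^2| = |x|^2$, and because $|x| > |t|$ we have $|t^{-1}x^2| > |x| \ge |1 \pm x|$ (the latter whenever $|x| \ge 1$); combining with the easy case $|x| < 1$ where $|1\pm x| = 1$, one obtains $f(x) = \max(1, |t|^{-1}|x|^2)^{-s-1}$ on $D_2$, falling to $(|t|^{-1}|x|^2)^{-s-1}$ as soon as $|x| \ge [|t|^{1/2}]_{\mathrm{strict}}$ (the largest element of $|F_v^\times|$ on which $|t|^{-1}|x|^2 \ge 1$). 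This is precisely the mechanism that will produce the $[|t|^{1/2}]$ factor through Lemma \ref{fundamental integral formula}.

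Then I would handle the three ranges of $|t|$ separately. In case (i), $|t| \le |b|^2$, so $\max(|b|,|t|) = |b|$ and $D_1 = \{|x| \le |b|\}$; on $D_1$, $f(x) \equiv (|t|^{-1}|b|^2)^{-s-1}$, yielding $B_1(t) = q^{-d/2}|t|^{s+1}|b|^{-2s-1}$, and $B_2(t)$ is computed from the standard integral $\int_{|x|>|b|}|x|^{-2s-2}\d x = q^{-d/2}(1-q^{-1})q^{-2s-1}(1-q^{-2s-1})^{-1}|b|^{-2s-1}$ (convergent as $\Re(s) > -1/2$). In case (iii), $|t|>1$, so $D_1 = \{|x| \le |t|\}$ and on $D_1$ one checks $f(x) = |t|^{-s-1}$, giving $B_1(t) = q^{-d/2}|t|^{-s}$; $B_2(t)$ uses $\int_{|x|>|t|}|x|^{-2s-2}\d x = q^{-d/2}(1-q^{-1})q^{-2s-1}(1-q^{-2s-1})^{-1}|t|^{-s}$. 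In both cases the sum simplifies via $1 + (1-q^{-1})q^{-2s-1}(1-q^{-2s-1})^{-1} = (1-q^{-2s-2})(1-q^{-2s-1})^{-1}$, matching the claim.

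The main obstacle will be the bookkeeping in case (ii), $|b|^2 < |t| \le 1$, where the ranges of $|x|$ that matter are $|x|\le |b|$, $|b|<|x|<[|t|^{1/2}]_{\mathrm{strict}}$, $[|t|^{1/2}]_{\mathrm{strict}}\le |x| < 1$, and $|x|\ge 1$, the transitions at $|x|=1$ and $|x|=|t|^{1/2}$ producing the combined contribution $[|t|^{1/2}]$ after careful summation; this is where the parity of $\ord_v(t)$ enters through the bracket $[\,\cdot\,]$ and where Lemma \ref{fundamental integral formula} (applied with $|b|$ in place of $|\tau b|$ since $|\tau|=1$) assembles the answer. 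Once all three cases are assembled the resulting formulas for $B(t)$ in (i), (ii), (iii) follow directly, completing the proof.
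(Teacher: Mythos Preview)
Your approach is correct and essentially the same as the paper's: you use the same decomposition $D_1=\{|x|\le\max(|b|,|t|)\}$, $D_2=F_v-D_1$, the same appeal to Lemma~\ref{Hensel} to show $|b^2\tau-x^2|=|b|^2$ whenever $|x|\le|b|$, and the same standard geometric-series evaluations. Cases (i) and (iii) are handled exactly right; the paper organizes the computation slightly differently (splitting according to $|t|\le|b|$ versus $|t|>|b|$ and then reading off the three ranges), but your direct treatment of the three ranges is equivalent.

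One small correction: your invocation of Lemma~\ref{fundamental integral formula} in case (ii) is misplaced. That lemma computes a $t$-integral of the form $\int_{|\tau b|^2<|t|\le1}[|t|^{1/2}]^\alpha|t|^\beta\,\d^\times t$ and is used \emph{after} the present lemma (in the proof of Lemma~\ref{explicit of J(a):non-dyadic, unramified}) to integrate $B(t)|t|^{(z-1)/2}$ over $t$. The $x$-integral defining $B(t)$ in case (ii) needs no such lemma; the factor $[|t|^{1/2}]$ arises simply as the volume (plus a remainder) once you add the contributions from $|x|\le|b|$ and from $|b|<|x|\le[|t|^{1/2}]$, both of which have $f(x)=1$. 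Also, your parenthetical description of $[|t|^{1/2}]_{\mathrm{strict}}$ should say ``smallest'' rather than ``largest''.
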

	\begin{proof} Let $D_1=\{x\in F_v|\,|x|\leq \max(|b|,|t|)\}$ and $D_2=\{x\in F_v|\,|x|>\max(|b|,|t|)\}$, and set $B_i(t)=\int_{D_i}f(x)\d x$ for $i=1,2$ to write $B(t)$ as the sum $B_1(t)+B_2(t)$, where
		$$
		f(x)=\max(|1+x|,|1-x|,|t|,|t^{-1}(\tau b^2-x^2)|)^{-s-1}, x\in F_v.
		$$
		(i) Suppose $|t|\leq |b|$. Then $D_1=\{x\in F_v||x|\leq |b|\}$. Since $|b|<1$, the set $D_1\cap (F-\fp)$ is empty. If $x\in D_1\cap \fp$, then $|1\pm x|=1$ and $|t|\leq |t|^{-1}|b|^{2}=|t^{-1}(\tau b^2-x^2)|$ by Lemma~\ref{Hensel}; hence $f(x)=\max(1,|t|^{-1}|b|^2)^{-s-1}$. Thus
		\begin{align*}
			B_1(t)&=\max(1,|t|^{-1}|b|^2)^{-s-1} \tint_{|x|\leq |b|}\d x=q^{-d/2}|b|\max(1,|t|^{-1}|b|^{2})^{-s-1}.
		\end{align*}
		For $x\in D_2$, we have $|x|>|b|\geq |t|$; hence $|t^{-1}(\tau b^2-x^2)|=|t|^{-1}|x|^{2}>|t|$ and $f(x)=\max(|1-x|,|1+x|,|t|^{-1}|x|^2)^{-s-1}$. Thus $B_2(t)$ equals
		{\allowdisplaybreaks
			\begin{align*}
				&\tint_{|b|<|x|<1} \max(1,|t|^{-1}|x|^2)^{-s-1}\d x+\tint_{|x|=1}|t|^{s+1}\d x	+\tint_{|x|>1} \max(|x|,|t|^{-1}|x|^2)^{-s-1}\d x
				\\
				=& |t|^{s+1}\tint_{\max(|b|,|t|^{1/2})<|x|<1} |x|^{-2s-2}\d x +\tint_{|b|<|x|\leq |t|^{1/2}}\d x\\
				&+|t|^{s+1}\,q^{-d/2}(1-q^{-1})
				+|t|^{s+1}\tint_{|x|>1}|x|^{-2s-2}\d x
				\\
				= & |t|^{s+1} \tint_{\max(|b|,[|t|^{1/2}) <|x|\leq q^{-1}}|x|^{-2s-2}\d x 
				+\tint_{|b|<|x|\leq [|t|^{1/2}]} \d x\\
				&+|t|^{s+1}\times q^{-d/2}(1-q^{-1})
				+|t|^{s+1}\times q^{-d/2}(1-q^{-1})\tfrac{q^{-2s-1}}{1-q^{-2s-1}}
				\\
				= &\,q^{-d/2}(1-q^{-1})\biggl\{
				|t|^{s+1}\tfrac{q^{-2s-1}(\max(|b|,[|t|^{1/2}])^{-2s-1}-q^{2s+1})}{1-q^{-2s-1}}
				+\delta(|b|^2\leq |t|)\tfrac{[|t|^{1/2}]-|b|}{1-q^{-1}}
				+|t|^{s+1}\tfrac{1}{1-q^{-2s-1}}
				\biggr\}.
		\end{align*}}
		(ii) Suppose $|t|>|b|$. Then $D_1=\{x\in F_v|\,|x|\leq |t|\}$. If $x\in D_1\cap \fp$, then $|1\pm x|=1$, $|t^{-1}(\tau b^2-x^2)|\leq |t|^{-1}\max(|b|^2,|x|^2)\leq |t|$; hence $f(x)=\max(1,|t|)^{-s-1}$. If $x\in D_1\cap (F_v-\fp)$, then $|1\pm x|\leq |x|\leq |t|$ and $|t^{-1}(\tau b^2-x^2)|\leq |t|$ as above; hence $f(x)=|t|^{-s-1}$. Thus
		\begin{align*}
			B_1(t)& = 
			\max(1,|t|)^{-s-1}\tint_{x\in D_1\cap \fp} \d x
			+|t|^{-s-1} \tint_{1\leq |x|\leq |t|} \d x
			=q^{-d/2}
			\begin{cases}
				|t| \quad (|t|<1), \\
				|t|^{-s}\quad (|t|\geq 1).
			\end{cases}
		\end{align*} 
		For all $x\in D_2$, we have $|t|< |t|^{-1}|x|^{2}=|t^{-1}(\tau b^2-x^2)|$. If $x\in D_2\cap \fp$, we have $|1\pm x|=1$; thus $f(x)=\max(1,|t|^{-1}|x|^2)^{-s-1}$. If $x\in D_2\cap \cO^\times$, we have $|1\pm x|\leq 1<|t|^{-1}=|t|^{-1}|x|^2$; hence $f(x)=|t|^{s+1}$. Note that $D_2\cap \cO^\times$ is empty unless $|t|<1$. If $x\in D_2\cap (F_v-\cO)$, then $|t|<|x|$ and $|1\pm x|=|x|<|t|^{-1}|x|^2$; hence $f(x)=(|t|^{-1}|x|^2)^{-s-1}$. Thus, 
		{\allowdisplaybreaks\begin{align*}
				B_2(t) =&
				\delta(|t|<1)\tint_{|t|<|x|<1}\max(1,|t|^{-1}|x|^2)^{-s-1}\d x
				+\delta(|t|<1)|t|^{s+1}\tint_{|x|=1}\d x\\
				&+|t|^{s+1}\tint_{|x|>\max(1,|t|)} |x|^{-2s-2}\d x
				\\
				=&
				\delta(|t|<1)|t|^{s+1} \tint_{|t|^{1/2}<|x|<1}|x|^{-2s-2}\d x
				+\delta(|t|<1)\tint_{|t|<|x|\leq |t|^{1/2}}\d x \\
				&\quad +\delta(|t|<1)|t|^{s+1}\,q^{-d/2}(1-q^{-1}) +|t|^{s+1}\,q^{-d/2}(1-q^{-1})\tfrac{q^{-2s-1}\max(1,|t|)^{-2s-1}}{1-q^{-2s-1}}
				\\
				=&
				q^{-d/2}(1-q^{-1}) \biggl\{
				\delta(|t|<1)|t|^{s+1} \tfrac{q^{-2s-1}[|t|^{1/2}]^{-2s-1}-1}{1-q^{-2s-1}}
				+\delta(|t|<1) \tfrac{[|t|^{1/2}]-|t|}{1-q^{-1}}\\
				&+\delta(|t|<1)|t|^{s+1}
				+|t|^{s+1}\tfrac{q^{-2s-1}\max(1,|t|)^{-2s-1}}{1-q^{-2s-1}}
				\biggr\}
				\\
				=&q^{-d/2}(1-q^{-1})
				\begin{cases}
					|t|^{-s}\tfrac{q^{-2s-1}}{1-q^{-2s-1}}\quad (|t|>1), \\
					|t|^{s+1}[|t|^{1/2}]^{-2s-1}\tfrac{q^{-2s-1}}{1-q^{-2s-1}}+\tfrac{[|t|^{1/2}]-|t|}{1-q^{-1}} \quad (|b|<|t|\leq 1).
				\end{cases}
		\end{align*}} From  the evaluations of $B_1(t)$ and $B_2(t)$, we are done.
	\end{proof}

	\begin{lem} \label{EllOrbIntL3-L5}
		Suppose $\tau \in \cO^\times-(\cO^\times)^2$ and $|a|\leq 1$. Set $b=a^{-1}$. If $\Re(s)>-1/2$, 
		\begin{align*}
			B(t)&=q^{-d/2}{(1-q^{-2s-2})}{(1-q^{-2s-1})^{-1}}\begin{cases} |t|^{s+1}|b|^{-2s-1} , \quad (|t|\leq |b|), \\
				|t|^{-s}, \quad (|b|<|t|).
			\end{cases}
		\end{align*}
	\end{lem}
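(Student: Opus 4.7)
The plan is to mimic exactly the dissection strategy used in Lemmas~\ref{EllOrbIntL3-L1}, \ref{EllOrbIntL3-L2}, and \ref{EllOrbIntL3-L4}: write $B(t)=B_1(t)+B_2(t)$ with $B_i(t)=\int_{D_i}f(x)\,\d x$, where $f(x)=\max(|1+x|,|1-x|,|t|,|t^{-1}(b^2\tau-x^2)|)^{-s-1}$, $D_1=\{x\in F_v\mid |x|\le\max(|b|,|t|)\}$, and $D_2=F_v-D_1$. The case is, in fact, structurally simpler than its ramified counterpart (Lemmas~\ref{EllOrbIntL3-L1}, \ref{EllOrbIntL3-L2}) because $|\tau|=1$ forces $|b^2\tau|=|b|^2$, and because $v\in S$ is non-dyadic by assumption (iv) of \S\ref{BasicAssumption}, so Hensel's lemma (Lemma~\ref{Hensel}) gives the clean identity $|b^2\tau-x^2|=\max(|b|^2,|x|^2)$ for \emph{every} $x\in F_v$ (not only away from the boundary $|x|=|b|$).

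In the first case $|t|\le|b|$ we have $D_1=\{|x|\le|b|\}$. The Hensel identity yields $|t^{-1}(b^2\tau-x^2)|=|t|^{-1}|b|^2$ on $D_1$, which dominates both $|1\pm x|\le|b|$ (since $|b|\ge1$) and $|t|\le|b|\le|t|^{-1}|b|^2$, so $f(x)=(|t|^{-1}|b|^2)^{-s-1}$ on $D_1$ and $B_1(t)=q^{-d/2}|b|\cdot(|t|^{-1}|b|^2)^{-s-1}=q^{-d/2}|b|^{-2s-1}|t|^{s+1}$. On $D_2=\{|x|>|b|\}$, the Hensel identity gives $|b^2\tau-x^2|=|x|^2$, whence $|t|^{-1}|x|^2>|x|\ge|1\pm x|$ (using $|x|>|b|\ge1$) and $|t|^{-1}|x|^2>|t|$, so $f(x)=(|t|^{-1}|x|^2)^{-s-1}$. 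A standard geometric-series computation of $|t|^{s+1}\int_{|x|>|b|}|x|^{-2s-2}\d x$ for $\Re(s)>-1/2$ gives $B_2(t)=q^{-d/2}(1-q^{-1})|b|^{-2s-1}|t|^{s+1}\frac{q^{-2s-1}}{1-q^{-2s-1}}$. Adding and simplifying the bracketed factor as $1+(1-q^{-1})\frac{q^{-2s-1}}{1-q^{-2s-1}}=\frac{1-q^{-2s-2}}{1-q^{-2s-1}}$ produces the first formula.

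In the second case $|b|<|t|$, necessarily $|t|\ge q>1$ (as $|b|\ge1$), and $D_1=\{|x|\le|t|\}$. For $x\in D_1$ one has $|b^2\tau-x^2|\le\max(|b|^2,|x|^2)\le|t|^2$, hence $|t^{-1}(b^2\tau-x^2)|\le|t|$; also $|1\pm x|\le\max(1,|x|)\le|t|$, so $f(x)=|t|^{-s-1}$ throughout $D_1$, giving $B_1(t)=q^{-d/2}|t|^{-s}$. On $D_2=\{|x|>|t|\}\subset\{|x|>|b|\}$, the Hensel identity again yields $|b^2\tau-x^2|=|x|^2$, so $|t|^{-1}|x|^2>|x|\ge|1\pm x|$ and $|t|^{-1}|x|^2>|t|$, whence $f(x)=(|t|^{-1}|x|^2)^{-s-1}$. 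Integrating as above produces $B_2(t)=q^{-d/2}(1-q^{-1})|t|^{-s}\frac{q^{-2s-1}}{1-q^{-2s-1}}$, and summing with $B_1(t)$ yields the coefficient $\frac{1-q^{-2s-2}}{1-q^{-2s-1}}$ as required.

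There is no conceptual obstacle, only bookkeeping: the sole nontrivial input is the uniform evaluation of $|b^2\tau-x^2|$ via Hensel's lemma, after which every comparison among the four terms in the max reduces to inequalities between powers of $q$. The main thing to watch is the correct geometric-series computation of $\int_{|x|>R}|x|^{-2s-2}\d x=q^{-d/2}(1-q^{-1})R^{-2s-1}\frac{q^{-2s-1}}{1-q^{-2s-1}}$ on the convergence region $\Re(s)>-1/2$, which is exactly the same manipulation performed in the preceding lemmas.
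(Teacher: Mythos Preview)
Your proof is correct and follows essentially the same approach as the paper: the same decomposition $B(t)=B_1(t)+B_2(t)$ over $D_1=\{|x|\le\max(|b|,|t|)\}$ and $D_2=F_v-D_1$, the same use of Lemma~\ref{Hensel} to evaluate $|b^2\tau-x^2|$, and the same geometric-series computation for $B_2$. Your presentation is slightly more streamlined in stating the identity $|b^2\tau-x^2|=\max(|b|^2,|x|^2)$ uniformly at the outset (the paper instead invokes Hensel in the specific form $|\tau-b^{-2}x^2|=1$ inside case~(i) and uses only the trivial ultrametric bound in case~(ii)), but the arguments are otherwise identical.
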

	\begin{proof}
		Let $D_i$ and $B_i(t)$ $(i=1,2)$ be as in the proof of Lemma~\ref{EllOrbIntL3-L4}. 
		For $x\in D_2$, we have $|t|^{-1}|x|^2>|x|>|b|\ge 1$, $|x|>|t|$, $|1\pm x|=|x|>|t|$ and $|t^{-1}(\tau b^2-x^2)|=|t|^{-1}|x|^2$. Hence $f(x)=(|t|^{-1}|x|^2)^{-s-1}$. Thus
		\begin{align*}
			B_2(t)&=|t|^{s+1} \tint_{\max(|b|,|t|)<|x|} |x|^{-2s-2}\d x
			=q^{-d/2}(1-q^{-1})|t|^{s+1} \frac{q^{-2s-1}\max(|b|,|t|)^{-2s-1}}{1-q^{-2s-1}}.
		\end{align*}
		(i) Suppose $|t|\leq |b|$. Then $D_1=\{x\in F_v||x|\leq |b|\}$. If $x\in D_1$, then $|1\pm x|\leq \max(1,|x|) \leq |b|\leq |t|^{-1}|b|^2$ and $|t^{-1}(\tau b^2-x^2)|=|t|^{-1}|b|^2|\tau-b^{-2}x^{2}|=|t|^{-1}|b|^2$ because $\tau$ is not congruent to a square modulo $\fp$ by assumption (Lemma \ref{Hensel}); hence $f(x)=(|t|^{-1}|b|^2)^{-s-1}$. Thus
		\begin{align*}
			B_1(t)&=\tint_{|x|\leq |b|}(|t|^{-1}|b|^2)^{-s-1}\d x
			=q^{-d/2}|t|^{s+1}|b|^{-2s-1}.
		\end{align*}
		(ii) Suppose $|t|>|b|$. Then $|t|>1$ and $D_1=\{x\in F_v||x|\leq |t|\}$. If $x\in D_1$, then we have $|1\pm x|\leq \max(1,|x|)\leq |t|$ and $|t^{-1}(\tau b^2-x^2)|\leq |t|^{-1}\max(|b|^2,|x|^2)\leq |t|$; hence $f(x)=|t|^{-s-1}$. Thus, 
		\begin{align*}
			B_1(t)& = |t|^{-s-1}\tint_{|x|\leq |t|}\d x 
			=q^{-d/2}|t|^{-s}. 
		\end{align*}
		Having evaluations of $B_i(t)$ and $B(t)=B_1(t)+B_2(t)$, we are done. 
	\end{proof}
	
	\begin{lem}\label{explicit of J(a):non-dyadic, unramified}Let $\tau \in \go^\times-(\go^\times)^2$. Suppose $\Re(s)>(|\Re(z)|-1)/2$.
		For $|a|\leq 1$,
		$$\int_{F_v^\times}B(t)|t|^{\frac{z-1}{2}} d^\times t
		= q^{-d}|b|^{-s+\frac{z-1}{2}}(1-q^{-s-1})
		\frac{\zeta_{F_v}(s+1+\frac{z-1}{2})\zeta_{F_v}(s+1+\frac{-z-1}{2})}{L_{F_v}(s+1, \varepsilon_{\Delta,v})}.
		$$
		For $|a|>1$,
		$$\int_{F_v^\times}B(t)|t|^{(z-1)/2}d^\times t = q^{-d}(1-q^{-s-1})\left( \frac{\zeta_{F_v}(z)\zeta_{F_v}(s+1+\frac{-z-1}{2})}{L_{F_v}(\frac{z+1}{2}, \varepsilon_{\Delta,v})}
		+ \frac{\zeta_{F_v}(-z)\zeta_{F_v}(s+1+\frac{z-1}{2})}{L_{F_v}(\frac{-z+1}{2} , \varepsilon_{\Delta,v})}|b|^z \right).$$
	\end{lem}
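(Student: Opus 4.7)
The plan is to mimic the proof of Lemma~\ref{explicit of J(a):ramified}, which handled the parallel ramified case ($\ord_v(\tau)=1$) by plugging the piecewise formulas of Lemmas~\ref{EllOrbIntL3-L1} and \ref{EllOrbIntL3-L2} into the $t$-integral and invoking Lemma~\ref{fundamental integral formula}. For the present unramified case ($\tau\in\go^\times-(\go^\times)^2$), the substitutes are Lemmas~\ref{EllOrbIntL3-L5} (for $|a|\le 1$) and \ref{EllOrbIntL3-L4} (for $|a|>1$), together with Lemma~\ref{fundamental integral formula}. A crucial bookkeeping ingredient is the identity $L_{F_v}(w,\varepsilon_{\Delta,v})^{-1}=1+q^{-w}$, which is valid precisely because $E_v/F_v$ is the unramified quadratic extension ($q_{E_v}=q^2$, giving $\zeta_{E_v}(w)=\zeta_{F_v}(w)\,L_{F_v}(w,\varepsilon_{\Delta,v})=(1-q^{-2w})^{-1}$), so $L_{F_v}(s+1,\varepsilon_{\Delta,v})$ will emerge from the factor $(1-q^{-2s-2})/(1-q^{-s-1})=1+q^{-s-1}$.

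In the case $|a|\le 1$, Lemma~\ref{EllOrbIntL3-L5} gives $B(t)$ as a clean two-piece function, so the $t$-integral splits as $|b|^{-2s-1}\int_{|t|\le|b|}|t|^{s+(z+1)/2}\d^\times t+\int_{|t|>|b|}|t|^{-s+(z-1)/2}\d^\times t$; the elementary formulas $\int_{|t|\le|b|}|t|^{\alpha}\d^\times t=q^{-d/2}|b|^{\alpha}(1-q^{-\alpha})^{-1}$ and $\int_{|t|>|b|}|t|^{\alpha}\d^\times t=-q^{-d/2}|b|^{\alpha}(1-q^{-\alpha})^{-1}$ (valid for $\Re\alpha>0$, resp. $<0$, which together give the condition $\Re(s)>(|\Re(z)|-1)/2$) combine to produce the common power $|b|^{-s+(z-1)/2}$. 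The two denominators become $1-q^{-s-(z+1)/2}=\zeta_{F_v}(s+1+(z-1)/2)^{-1}$ and, after a sign manipulation, $1-q^{-s+(z-1)/2}=\zeta_{F_v}(s+1+(-z-1)/2)^{-1}$; the prefactor $(1-q^{-2s-2})/(1-q^{-2s-1})=(1-q^{-s-1})(1+q^{-s-1})/(1-q^{-2s-1})$ then rearranges into $(1-q^{-s-1})/L_{F_v}(s+1,\varepsilon_{\Delta,v})$ after the $(1-q^{-2s-1})$ cancels against one of the geometric factors that appears in the algebra, yielding the stated formula.

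In the case $|a|>1$, $B(t)$ is given by the three regions (i), (ii), (iii) of Lemma~\ref{EllOrbIntL3-L4}, so the $t$-integral splits as $I_{\mathrm{i}}+I_{\mathrm{ii}}+I_{\mathrm{iii}}$. The pieces over $|t|\le|b|^2$ and $|t|>1$ are evaluated by the same elementary geometric-series formulas, producing respectively a $|b|^{z}$-term and a constant (in $|b|$) term. The middle piece over $|b|^2<|t|\le 1$ contains the $[|t|^{1/2}]$-factors and is handled by Lemma~\ref{fundamental integral formula} with parameters $(\alpha,\beta)=(1,(z-1)/2)$ and $(\alpha,\beta)=(-2s-1,s+(z+1)/2)$ (note $\alpha+2\beta=z$ in both cases, which matches the denominator $1-q^{-z}$ that appears in the target formula via $\zeta_{F_v}(z)$); each such integral decomposes as a constant plus a multiple of $|b|^{z}$.

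The main obstacle is the final algebraic simplification in the $|a|>1$ case: one has six contributions (two constants in $|b|$, four $|b|^z$-coefficients) which must be collected, and cancellations using $(1-q^{-1})(1-q^{-2s-2})/(1-q^{-2s-1})$, along with telescoping between $(1-q^{-z})^{-1}$-denominators coming from Lemma~\ref{fundamental integral formula} and $(1-q^{-s\pm(z\pm 1)/2})^{-1}$-denominators coming from the direct geometric series, must be verified to reproduce $(1-q^{-s-1})(1+q^{-(z+1)/2})\zeta_{F_v}(z)\zeta_{F_v}(s+1+(-z-1)/2)$ as the $|b|^0$-coefficient and the symmetric expression under $z\mapsto -z$ as the $|b|^z$-coefficient. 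The invariance of the final answer under $z\mapsto -z$ (manifest in the claimed formula) provides a useful consistency check on the bookkeeping.
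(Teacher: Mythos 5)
Your proposal is correct and follows essentially the same route as the paper, whose proof consists precisely of substituting Lemmas~\ref{EllOrbIntL3-L5} (for $|a|\le 1$) and \ref{EllOrbIntL3-L4} (for $|a|>1$) into the $t$-integral and evaluating via elementary geometric series together with Lemma~\ref{fundamental integral formula}, the unramified identity $L_{F_v}(s+1,\varepsilon_{\Delta,v})^{-1}=1+q^{-s-1}$ accounting for the factor $(1-q^{-2s-2})/(1-q^{-2s-1})$ exactly as you describe. The only slip is a harmless miscount in the $|a|>1$ bookkeeping: the six contributions split as three constants (two from region (ii), one from (iii)) and three $|b|^{z}$-coefficients (two from (ii), one from (i)), not two and four, which does not affect the argument.
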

	\begin{proof} This is proved by a straightforward calculation by means of Lemma~\ref{EllOrbIntL3-L4}, \ref{EllOrbIntL3-L5}, and \ref{fundamental integral formula}. 
	\end{proof}
	
	We obtain the formulafor $a=\frac{t}{2m_v}\neq 0$ in Theorem~\ref{OrbIntUnifEx} (5) for the case $\tau\in \cO^\times-(\cO^\times)^2$ from \eqref{EllOrbIntL3-f1} applying Lemma \ref{explicit of J(a):non-dyadic, unramified} combined with Lemma~\ref{sphericalftn1value}, Lemma~\ref{OrbIntUnifExL2} (1) and Proposition~\ref{OrbIntUnifExL4} (1).
	
	The case $a=0$ is included in the case $|a|\le1$.
	Indeed,
	we can evaluate $\fE_v^{(z)}(\hat{\gamma}_v)$ when $0<|a|\le 1$ as
	$$\varphi_{0,v}(1_2)\fE_v(\hat{\gamma}_v) = |a^2-\tau|^{\frac{s+1}{2}}(q^{-\frac{s+1}{2}}-q^{\frac{s+1}{2}})
	\int_{F_v^\times}{\tilde B}(a,t)|t|^{\frac{z-1}{2}}d^\times t,$$
	where we set ${\tilde B}(a,t)=|a|^{-s}|\tau|^{-1}B(a,\tau a^{-1}t)$
	and we write $B(a,t)$ for $B(t)$ to make dependence on $a$ explicit.
	The function $\tilde B(a,t)$ is independent of $a$ by Lemma \ref{EllOrbIntL3-L1} (i) and Lemma \ref{EllOrbIntL3-L5} (i).
	Since $\tilde B(a,t)$ is continuous at $a=0$ by its integral representation,
	we obtain the formula of $\tilde B(0,t)$ by $\lim_{a\rightarrow a}\tilde B(a,t)$.
	Hence the formula in Theorem~\ref{OrbIntUnifEx} (5) is valid when $a=\frac{t}{2m_v}=0$.

	\section*{Acknowledgements}
	The second author was supported by Grant-in-Aid for Scientific research (C) 15K04795.

\medskip
\noindent
{Shingo SUGIYAMA\\
	Department of Mathematics, College of Science and Technology, Nihon University, Suruga-Dai, Kanda, Chiyoda, Tokyo 101-8308, Japan} \\
	{\it E-mail} : {\tt s-sugiyama@math.cst.nihon-u.ac.jp}

\medskip
\noindent
{Masao TSUZUKI \\ Department of Science and Technology, Sophia University, Kioi-cho 7-1 Chiyoda-ku Tokyo, 102-8554, Japan} \\
{\it E-mail} : {\tt m-tsuduk@sophia.ac.jp}

\end{document}